\documentclass[12pt,french,leqno]{amsart}

\textheight 21 truecm
\textwidth 16 truecm
\topmargin 0pt
\oddsidemargin 17pt
\evensidemargin 15pt
\hoffset -20pt

\def\Chapter{\vfe\section}
\def\Section{\subsection}
\def\Subsection{\subsubsection*}
\newcount\Partnumber\Partnumber=1
\def\Part#1{\null\vfill\eject
\hfil{\bf Partie \uppercase\expandafter{\romannumeral\Partnumber}.\ #1%
\global\advance\Partnumber by1}}

\usepackage{amssymb}
\usepackage{amsmath}
\usepackage{amscd}
\usepackage[french]{babel}
\usepackage[T1]{fontenc}
\usepackage{enumitem}
\usepackage{amsmath}
\usepackage{amssymb,url,xspace}
\usepackage{mathrsfs,euscript,color}
\usepackage{xcolor}
\usepackage{bbm}
\usepackage{bm}

\newtheorem{proposition}{Proposition}[subsection]
\newtheorem{definition}[proposition]{Definition}
\newtheorem{lemma}[proposition]{Lemme}
\newtheorem{theorem}[proposition]{Th\' eor\`eme}
\newtheorem{corollary}[proposition]{Corollaire}
\newtheorem{remark}[proposition]{Remarque}

\catcode`\Ž=13
\catcode`\=13
\catcode`\ˆ=13
\catcode`\=13
\catcode`\=13
\catcode`\‰=13
\catcode`\™=13
\catcode`\"=13
\catcode`\•=13
\catcode`\ž=13
\catcode`\=13
\catcode`\'=13
\catcode`\Ï=13

\def Ž{\'e}
\def {\`e}
\def ˆ{\`a}
\def {\`u}
\def {\^e}
\def ‰{\^a}
\def ™{\^o}
\def "{\^{\i}}
\def •{\"{\i}}
\def ž{\^u}
\def {\c c}
\def '{\"e}
\def Ï{\oe}

\catcode`\:=13
\def :{~\string:}
\catcode`\;=13
\def ;{~\string;}
\catcode`\!=13
\def !{~\string!}

\def\mun{^{-1}}
\def \cad{c'est-\`a-dire\ }

\def\AM{\mathbb A}
\def\CM{\mathbb C}
\def\FM{\mathbb F}

\def\HM{\mathbb H}
\def\NM{\mathbb N}
\def\RM{\mathbb R}
\def\QM{\mathbb Q}

\def\ZM{\mathbb Z}

\def\ade{\AM}
\def\adef{\AM_F}

\def\Fsep{F^{sep}}
\def\Fbar{\overline{F}}
\def\Ftp{F^\times_E}

\def\coker{\mathrm{Coker}}
\def\card{\mathrm{card}\,}
\def\Gal{\mathrm{Gal}\,}
\def\Ind{\mathrm{Ind}}
\def\trace{\mathrm{trace}\,}
\def\vol{\mathrm{vol}\,}

\def\da{\,d^\times \a}
\def\ddx{\,d\dot x}

\def\dt{\,d^\times t}
\def\dtx{\,d\tx}

\def\ctyc{\mathcal{C}_c^\infty}

\def\F{F}
\def\G{G}
\def\H{H}
\def\J{J}
\def\M{M}
\def\P{P}
\def\T{T}
\def\U{U}
\def\Z{Z}

\def\EC{\mathcal{E}}
\def\UC{\mathcal{U}}

\def\vf{\varphi}
\def\VF{\Phi}
\def\ep{\epsilon}
\def\ve{\varepsilon}
\def\th{\theta}

\def\tG{{\widetilde G}}
\def\tI{{\widetilde I}}

\def\tK{{\widetilde K}}
\def\tM{{\widetilde M}}
\def\tP{{\widetilde P}}
\def\tT{{\widetilde T}}
\def\tZ{{\widetilde Z}}

\def\tf{\widetilde{f}}
\def\tk{\widetilde{k}}
\def\tm{\widetilde{m}}
\def\tpi{\widetilde{\pi}}
\def\tth{{\widetilde\th}}

\def\txi{{\widetilde\lambda}}

\def\x{x}
\def\tx{{\widetilde\x}}
\def\ty{{\widetilde{y}}}
\def\tlambda{{\widetilde{\lambda}}}
\def\tgamma{{\widetilde{\gamma}}}

\def\vef{{\ve_{E/F}}}
\def\tTef{\tT_{E/F}}
\def\Tef{T_{E/F}}
\def\Nef{N_{E/F}}

\def\orb{\mathcal{O}}
\def\OF{\mathfrak{O}_F}
\def\OFv{\mathfrak{O}_{v}}
\def\OE{\mathfrak{O}_E}

\def\dfr{\mathfrak d}
\def\pfr{\mathfrak p}
\def\tfr{\mathfrak t}

\def\a{a}
\def\b{b}
\def\aa{a_1}
\def\bb{b_1}
\def\cc{c_1}
\def\dd{d_1}
\def\aaa{a_2}
\def\bbb{b_2}
\def\ccc{c_2}
\def\ddd{d_2}

\def\HP{\mathbf{\H}_\P}
\def\HtP{\H_\tP}
\def\orbu{g}

\def\Tr{X}
\def\bsl{\backslash}

\def\bs{\boldsymbol}
\def\mmu{\mu\mun}

\def\cft{\underline c}

\def\lg{\langle}
\def\lr{\rangle}

\def\delo{\tau}
\def\diag{{\bs d}}
\def\nil{{\bs n}}
\def\tGFp{\tG(F)_E}

\def\ll{\ell}
\def\tw{{\eta}}
\def\tv{{\bs \eta}}

\def\ws{{\bs s}}
\def\wt{{\bs t}}

\def\JL{{\textrm {JL}\,}}

\def\spec{\Lambda}
\def\speccont{\Lambda}

\def\ts{${\mathcal T}$}
\def\com#1{\quad\hbox{#1}\quad}
\def\comm#1{\qquad\hbox{#1}\qquad}
\def\pni{\par\noindent}
\def\ptf{~.}
\def\vfe{\vfill\eject}
\newcount\Cor\Cor=0
\def\newcor{\global\advance\Cor by 1
\par\bigskip\noindent (\romannumeral\Cor) - }

\date{\today}
\author{Jean-Pierre Labesse}
\address{Aix Marseille Univ, CNRS, I2M, Marseille, France}


\title[$\bs {SL(2)}$ en toutes caract\' eristiques]
{Stabilisation et germes pour $SL(2)$  en toutes caract\' eristiques}

\begin{document}


\begin{abstract} Nous explicitons la stabilisation des intŽgrales orbitales locales 
et de la formule des traces sur un corps global
pour $SL(2)$ en donnant des formulations et des preuves valables en toute caractŽristique.
Quelques faits nouveaux apparaissent en caractŽristique 2.
Le dŽveloppement asymptotique pour les intŽgrales orbitales locales
au voisinage de l'identitŽ, obtenu via la stabilisation, est Žquivalent 
au dŽveloppement en germes de Shalika en caractŽristique $p\ne2$ mais
il est nouveau en caractŽristique $2$. De mme le dŽveloppement
fin de la contribution unipotente ˆ la formule des traces (globale)
ne peut pas tre obtenu par les techniques d'Arthur
en caractŽristique $2$ et nŽcessite une prŽ-stabilisation.
\end{abstract}

\maketitle


\tableofcontents


\Chapter*{Introduction}
 
 L'essentiel des rŽsultats de cette note a ŽtŽ exposŽ lors d'une confŽrence ˆ Budapest en aožt 1971,
 ˆ l'exception de ce qui concerne les germes, encore que ceci est implicite 
 dans la preuve de l'existence du transfert endoscopique.
 Il s'agissait lˆ de la premire forme d'un travail en collaboration avec Langlands 
initiŽ ˆ Bonn entre avril et juin 1971 et qui s'est poursuivi dans les annŽes ultŽrieures.

Dans les proceedings de la confŽrence de Budapest on trouvera dans  \cite{Lab} une esquisse 
trs sommaire des arguments. 
L'article \cite{LL} donne une preuve dŽtaillŽe mais qui, contrairement ˆ \cite{Lab}, 
se limite ˆ la caractŽristique nulle. Le cas des corps de caractŽristique positive
aurait imposŽ quelques dŽveloppements supplŽmentaires, d'ailleurs prŽsents dans les notes prŽparatoires.
Les difficultŽs de la caractŽristique positive sont de deux ordres: du point de vue gŽomŽtrique
les contributions unipotentes pour $SL(2)$ doivent tre traitŽes diffŽremment 
si on ne veut pas exclure la caractŽristique 2, et  
du point de vue spectral il n'Žtait pas connu que les caractres des reprŽsentations admissibles
irrŽductibles de $SL(2,F)$, o $F$ est un corps local,
Žtaient reprŽsentables par des fonctions localement intŽgrables:
 \`a cette Žpoque seul le cas de $GL(2,F)$ pour $F$ local de caractŽristique arbitraire
Žtait connu gr‰ce ˆ \cite{JL}. Ceci est maintenant 
Žtabli pour $GL(n)$ et $SL(n)$ ainsi que pour leurs formes intŽrieures 
(mais encore pour des groupes intermŽdiaires) dans \cite{Lem}.

On observera que l'article \cite{LL}, essentiellement rŽdigŽ par Langlands,
n'a ŽtŽ soumis pour publication qu'en 1977 aprs la rŽdaction de \cite{BC} 
de sorte que divers lemmes techniques indispensables dans \cite{BC}, mais
qui rŽsultaient de notre collaboration et avaient leur place dans \cite{LL}, 
ont ŽtŽ incorporŽs dans \cite{BC}. 

Nous complŽtons ici ces articles anciens en donnant des preuves compltes 
en toute caractŽristique. L'essentiel des argument, sauf ce qui est spŽcifique ˆ la caractŽristique 2,
se trouve dŽjˆ dans \cite{LL}. Pour simplifier nous nous limitons ici au groupe $SL(2)$
et ˆ ses formes intŽrieures. Nous ne traitons pas les groupes intermŽdiaires entre $SL(2)$ et $GL(2)$ 
dŽfinis par une condition de nature arithmŽtique sur le dŽterminant; cela Žtait utile pour tester des
conjectures sur les variŽtŽs de Shimura, mais ne prŽsente
ni difficultŽ ni de phŽnomne nouveaux autres que des complications de notation, 
pour ce qui nous concerne ici
ˆ savoir une introduction ˆ l'endoscopie dans le cas particulier le plus simple.

On appelle stabilisation, ou endoscopie, l'Žtude du comportement par conjugaison stable
(qui dans notre exemple se rŽduit ˆ la conjugaison sous $GL(2,F)$) 
des intŽgrales orbitales et des caractres de reprŽsentations irrŽductibles pour $SL(2,F)$.
Nous commencerons par la stabilisation des intŽgrales orbitales des ŽlŽments semi-simples pour $SL(2)$
sur un corps local. Les deux rŽsultats importants sont l'existence du transfert endoscopique
et le Lemme Fondamental.

Le transfert endoscopique fournit un dŽveloppement asymptotique au voisinage de 1 valable sur un corps
local $\mathfrak{p}$-adique en toute caractŽristique.
En caractŽristique $p\ne2$ ce dŽveloppement est Žquivalent, modulo une transformŽe de Fourier,
au dŽveloppement en germes de Shalika mais il est nouveau en caractŽristique $2$.
L'observation principale est que pour $p=2$ les orbites unipotentes 
ne sont pas les bons objets pour paramŽtrer un dŽveloppement asymptotique car,
sur un corps local en caractŽristique 2, les orbites unipotentes pour $SL(2)$ 
forment un ensemble infini non dŽnombrable.
Nous Žtudierons ensuite la stabilisation locale du point de vue spectral \cad
l'Žtude des $L$-paquets et du transfert spectral.  
La reprŽsentation de Weil fournit une construction explicite
des $L$-paquets.

Dans un second temps nous rappellerons la formule des traces pour $SL(2)$ sur un corps global.
On note  $\rho$  la reprŽsentation rŽgulire droite
dans l'espace de Hilbert $$ L^2\big(\G(F)\bsl\G(\adef)\big)$$ 
et soit
$$\rho(f)=\int_{\G(\adef)}\rho(x)f(x)\,dx$$ 
l'opŽrateur dŽfini par l'intŽgrale (faible) de $\rho$ contre une fonction
$f\in\ctyc\big(\G(\adef)\big)$. On s'intŽresse au spectre discret 
$L^2_{disc}\big(\G(F)\bsl\G(\adef)\big)\subset L^2\big(\G(F)\bsl\G(\adef)\big)$.
On souhaite avoir une formule gŽomŽtrique pour calculer 
la trace de l'opŽrateur  $\rho(f)$  restreint au spectre discret:
$$\trace \big(\rho(f)\big\vert L^2_{disc}\big(\G(F)\bsl\G(\adef)\big)\ptf$$
On dŽfinit par troncature spectrale une variante de cette trace
$$J_{spec}(f)=\trace \big(\rho(f)\big\vert L^2_{disc}(\G(F)\bsl\G(\adef)\big)+c(f)$$
o $c(f)$ est un terme complŽmentaire explicite.
L'expression gŽomŽtrique $J_{geom}(f)$ est elle obtenue par une troncature de l'intŽgrale 
sur la diagonale du noyau reprŽsentant l'opŽrateur $\rho(f)$.
La formule des traces est une identitŽ $$J_{geom}(f)=J_{spec}(f)$$
qui, pour $SL(2)$, remonte pour l'essentiel ˆ Selberg \cite{Se}.

La stabilisation de la formule des traces est son Žcriture comme somme de distributions 
stablement invariantes (ce qui dans notre cas est l'invariance par conjugaison sous $GL(2,\adef)$)
et de termes correctifs, appelŽs contributions endoscopiques: elles mesurent la diffŽrence
entre la formule des traces ``ˆ la Selberg'' et sa variante stable.
Pour $\G=SL(2)$ ou pour $\G'$ une forme intŽrieure, 
ces termes correctifs proviennent de distributions sur des tores.
Cela se fait sans difficultŽs, une fois connus le transfert et le Lemme Fondamental,
pour les formes intŽrieures $\G'$ non dŽployŽes car 
pour de tels groupes le quotient $\G'(F)\bsl\G'(\adef)$ est compact
et la formule des traces est une identitŽ entre distributions invariantes sous $\G'(\adef)$.
Pour $\G=SL(2)$ la non compacitŽ du quotient $\G(F)\bsl\G(\adef)$
impose l'utilisation de troncatures qui produisent des distributions non-invariantes sous $SL(2,\adef)$
ce qui rend plus dŽlicate l'Žtude de la conjugaison sous $GL(2,\adef)$.
C'est pourquoi on parle de forme non-invariante de la formule des traces et
il est classique (cf. \cite{LL}, \cite{A10, A11, A12} et \cite{MW2}) de passer d'abord 
ˆ une forme invariante avant de tenter de la stabiliser. 
Mais, comme dans \cite{Lab}, nous ferons la stabilisation en partant de la forme non-invariante;
le passage ˆ une forme invariante sera la dernire Žtape.

Une premire Žtape appelŽe prŽ-stabilisation  permet d'obtenir la forme fine 
du dŽveloppement gŽomŽtrique \cad une expression qui est somme de produits d'intŽgrales locales.
Une telle prŽ-stabilisation semble une Žtape indispensable si on veut travailler en toutes caractŽristiques
car, comme dans le cas des germes ŽvoquŽs ci-dessus,
les techniques classiques (gŽnŽralisŽes par Arthur \cite{A5} aux groupes rŽductifs gŽnŽraux
en caractŽristique zŽro) 
ne sont pas utilisables pour calculer la contribution unipotente en caractŽristique $2$.

En combinant la prŽ-stabilisation avec le transfert on obtient une Žcriture
du c™tŽ gŽomŽtrique $J_{geom}(f)$ et du c™tŽ spectral $J_{spec}(f)$
de la formule des traces non-invariante comme une somme
$$J_{\bullet}(f)=\sum_{\EC}SJ_{\bullet}^\EC(f^\EC)$$
indexŽe par les donnŽes endoscopiques,
o les divers termes vŽrifient des identitŽs $$J_{geom}(f)=J_{spec}(f)\com{et}
SJ^\EC_{geom}(f^\EC)=SJ^\EC_{spec}(f^\EC)\ptf$$
Les distributions $SJ_{\bullet}^\EC(f^\EC)$ sont ce que nous appellerons
les termes gŽomŽtriques (resp. spectraux) de la formule des traces \ts-stable
pour les divers groupes endoscopiques, ˆ savoir $SL(2)$ lui mme
et les tores attachŽs aux extensions quadratiques sŽparables.

La formule des traces pour les tores est automatiquement stable
car ce sont des groupes abŽliens.
Le passage ˆ la forme stablement-invariante ˆ partir de la formule \ts-stable pour $SL(2)$
peut alors se faire comme dernire Žtape et la formule des traces invariante
est une somme d'expressions stablement invariantes sur chaque groupe endoscopique
$$I_{\bullet}(f)=\sum_{\EC}SI_{\bullet}^\EC(f^\EC)$$
et on a les identitŽs de formules des traces invariantes et stablement invariantes:
$$I_{geom}(f)=I_{spec}(f)
\com{et}SI^\EC_{geom}(f^\EC)=SI^\EC_{spec}(f^\EC)\ptf$$

\Chapter{PrŽliminaires}
 
\Section{Corps et extensions quadratiques}\label{ext}

Dans toute la suite $F$ dŽsigne un corps local ou global de caractŽristique $p\ge0$.
Si $F$ est un corps local non archimŽdien,
on note $\OF$ l'anneau des entiers, $\varpi_F$ une uniformisante,
$\pfr_F=\varpi_F\OF$ son idŽal maximal et $\FM_q$ le corps rŽsiduel.
Lorsque $F$ est global on note $\adef$ l'anneau des adles de $F$.
Si $F$ est un corps de nombres on pose $$q_F=e=\lim_{n\to\infty}\Big(1+\frac{1}{n}\Big)^n=
2,7\,18\,28\,18\,28\,45\,90\,45\,...\com{de sorte que}\log q_F=1$$ 
et si $F$ est un corps
de fonctions de caractŽristique $p$ on note $$q_F=q=p^f$$ le cardinal du corps $\FM_q$ 
des constantes. On notera $\log_{q_F}$ le logarithme en base $q_F$.
On notera $C_F$ le groupe multiplicatif $F^\times$ si $F$ est local et 
 $C_F$ sera le groupe des classes d'idles $\AM_F^\times/F^\times$ si $F$ est global.
Dans tous les cas le groupe $$Q_F:=C_F/(C_F)^2$$ est un groupe compact
dont le dual de Pontryagin est le groupe (discret) des caractres d'ordre 2 de $C_F$.
Les caractres du groupe $Q_F$ sont en bijection, par la thŽorie du corps de classes, 
avec les extensions quadratiques sŽparables $E/F$ (dŽployŽes si le caractre est trivial). 
Plus prŽcisŽment, le groupe $Q_F$ est d'ordre 1 si $F=\mathbb C$, d'ordre 2 si $F=\mathbb R$ et d'ordre 
4 si $F$ est local non archimŽdien de caractŽristique rŽsiduelle diffŽrente de 2. Il est toujours fini 
si $F$ est local de caractŽristique $p\ne2$ (par exemple il est d'ordre 8 pour $\mathbb Q_2$). C'est 
un groupe compact de cardinal non dŽnombrable si $F$ est un corps local de caractŽristique $2$ 
ou si $F$ est global.

Soit $R(X)$ un polyn™me de la forme
$$R(X)=X^2-\tfr X+\dfr$$ avec $\tfr\in F$ et $\dfr\in F^\times$.
On suppose que $R(X)$ n'est pas un carrŽ dans $F[X]$.
Alors, la $F$-algbre $E=F[X]/R(X)F[X]$ est semi-simple de dimension 2.
Tout ŽlŽment de $E$ induit par multiplication un $F$-endomophisme de $E$. 
Notons $\tau$  l'image de $X$ dans $E$ alors l'ensemble
$\{1,\tau\}$ est une base de $E$ sur $F$. On obtient ainsi un plongement de
$E$ dans $M(2,F)$. L'endomorphisme induit par $\tau$ a pour matrice
$$\bs{\tau}=\begin{pmatrix}0&-\dfr\cr1&\tfr\cr\end{pmatrix}\ptf$$
La matrice
$$\bs{\overline\tau}=\begin{pmatrix}\tfr&\dfr\cr -1&0\cr\end{pmatrix}$$
vŽrifie
$$\bs\tau+\overline{\bs\tau}=\tfr=\trace\bs\tau
\qquad\hbox{et}\qquad \bs\tau\overline{\bs\tau}=\dfr=\det\bs\tau=\Nef(\tau)$$
o $\Nef$ est la norme pour $E/F$.

On suppose dŽsormais $ \bs\tau\ne\overline{\bs\tau}$ 
(ce qui est toujours le cas sauf en caractŽristique $p=2$
o cela impose $\tfr\ne0$). L'extension $E/F$ est alors sŽparable, 
Žventuellement dŽployŽe (i.e. $E=F\oplus F$). On notera $\tTef$ le groupe algŽbrique des
matrices de la forme $t=a+b\bs\tau$ avec $\det(t)\ne0$; c'est un tore dans $GL(2)$.
On notera $\Tef$ le tore de $SL(2)$ dŽfini par le sous-groupe des matrices de dŽterminant 1.
L'automorphisme $\a+\b\bs\tau\mapsto \a+\b\overline{\bs\tau}$ est induit par la conjugaison sous
$$\bs w_E=\begin{pmatrix}0&\mathfrak d\cr 1&0\cr\end{pmatrix}\ptf$$
On notera $\vef$ le caractre d'ordre 2 de $C_F$ 
associŽ ˆ $E$ par la thŽorie du corps de classes:
c'est le gŽnŽrateur du groupe des caractre de $C_F/\Nef C_E$. Il est non trivial si et seulement si
$E$ est un corps. On observe que $\mathfrak d$ est une norme et on a le
\begin{lemma}\label{signa}
$$\vef\big(\det(\bs w_E)\big)=\vef(-1)\ptf$$
\end{lemma}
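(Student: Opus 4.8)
The plan is to reduce the asserted identity to the remark, recalled just before the statement, that $\dfr$ is a norm for $E/F$. First I would compute the determinant of $\bs w_E$ directly from its matrix,
$$\det(\bs w_E)=\det\begin{pmatrix}0&\dfr\cr 1&0\cr\end{pmatrix}=-\dfr\ptf$$
So the equality to be proved is exactly $\vef(-\dfr)=\vef(-1)$.

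Next, since $\vef$ is a multiplicative character of $C_F$, I would factor $\vef(-\dfr)=\vef(-1)\,\vef(\dfr)$, which reduces everything to showing $\vef(\dfr)=1$, i.e. that $\dfr$ lies in the kernel of $\vef$. At this point I would invoke the computation already made when constructing the embedding $E\hookrightarrow M(2,F)$, namely $\bs\tau\,\overline{\bs\tau}=\dfr=\det\bs\tau=\Nef(\tau)$. Because $\dfr\in\iF$ is nonzero, $\tau$ is invertible in $E$, so $\dfr=\Nef(\tau)$ exhibits $\dfr$ as a genuine norm from $E^\times$.

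Finally I would appeal to class field theory: by definition $\vef$ generates the group of characters of $C_F/\Nef C_E$, hence is trivial on the subgroup of norms. In the local case, $\dfr=\Nef(\tau)\in\Nef(E^\times)\subset\Nef C_E$ gives $\vef(\dfr)=1$ at once; in the global case $\dfr\in\iF$ maps to the trivial class of $C_F=\ideF/\iF$, so $\vef(\dfr)=1$ again (and there both sides are in fact trivially equal to $1$). Combining this with the factorization yields $\vef(-\dfr)=\vef(-1)$, as claimed. There is essentially no obstacle here: the only point requiring a word of care — and it is a minor one — is that $\dfr$ be a norm uniformly in the local and global settings, and that the split case $E=F\oplus F$ (where $\vef$ is trivial) be covered; both are subsumed by the single observation that $\vef$ annihilates norms.
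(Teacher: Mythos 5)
Your proof is correct and is exactly the argument the paper intends: the lemma is preceded by the one-line observation ``On observe que $\mathfrak d$ est une norme'', which together with $\det(\bs w_E)=-\dfr$ and the triviality of $\vef$ on norms is the whole content. Your write-up simply makes explicit that $\dfr=\Nef(\tau)$ and handles the local/global and split cases, so it matches the paper's (implicit) proof.
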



On notera $\psi$ un caractre non trivial du groupe additif $F$ si $F$ est local et
de $\adef/F$ si $F$ est global. Lorsque $F$ est local  le
scalaire $\lambda(E/F,\psi)$ est introduit par Langlands dans ses notes de Yale 
sur les facteurs $\ve$. Il apparait lorsque l'on calcule la transformŽe de
Fourier, au sens des distributions tempŽrŽes, de la fonction de $E$ dans $\CM$
$$x\mapsto \psi(x\overline x).$$
Formellement on a
 $$\lambda(E/F,\psi)=vp\int_E \psi(x\overline x)dx$$
o $vp$ dŽsigne la ``valeur principale'' de l'intŽgrale divergente.
On aura besoin du
\begin{lemma}\label{psideux}
$$\lambda(E/F,\psi)^2=\vef(-1)\ptf$$
Si $\psi$ est non trivial sur $\pfr^{-1}$ mais trivial sur $\OF$ et si $E/F$ est non ramifiŽ alors
$$\lambda(E/F,\psi)=1\ptf$$
En particulier $\lambda(E/F,\psi)$ est une racine quatrime de l'unitŽ.
Pour une extension quadratique sŽparable de corps globaux $E/F$ le produit sur toutes les places
des facteurs $\lambda(E_v/F_v,\psi)$ o $E_v=E\otimes F_v$ vaut 1.
\end{lemma}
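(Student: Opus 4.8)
Le plan est d'identifier $\lambda(E/F,\psi)$ \`a l'indice de Weil de la forme quadratique norme $q(x)=x\overline x=\Nef(x)$ sur le $F$-espace vectoriel $E$ de dimension $2$, la ``valeur principale'' de l'int\'egrale divergente n'\'etant autre que le nombre complexe de module $1$ fourni par l'analyse de Fourier de $x\mapsto\psi(q(x))$. Les trois premi\`eres assertions se ram\`enent alors aux propri\'et\'es formelles de l'indice de Weil, et la derni\`ere \`a la formule du produit (loi de r\'eciprocit\'e de Hilbert) pour les formes quadratiques globales.

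Pour l'identit\'e $\lambda(E/F,\psi)^2=\vef(-1)$ je partirais de $|\lambda|=1$ et de $\overline\lambda=\gamma_{\overline\psi}(q)$, de sorte que $\lambda^2=\lambda/\overline\lambda$. En caract\'eristique $\ne2$ on diagonalise $q\simeq\langle1,-(\tfr^2-4\dfr)\rangle$ ; comme $E=F(\sqrt{\tfr^2-4\dfr})$, la classe du discriminant dans $F^\times/(F^\times)^2$ est exactement celle qui d\'efinit $E/F$, donc $\vef$. La relation de cocycle de Weil pour l'indice $\gamma_\psi(a)$ de la forme $ax^2$ donne alors $\lambda^2=(\tfr^2-4\dfr,-1)_F=\vef(-1)$. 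Comme $\vef(-1)=\pm1$, on en d\'eduit aussit\^ot que $\lambda(E/F,\psi)$ est une racine quatri\`eme de l'unit\'e.

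Pour le cas non ramifi\'e avec $\psi$ de conducteur $\OF$ je calculerais directement l'int\'egrale r\'egularis\'ee. Comme $\Nef(\OE)\subseteq\OF$ et que $\psi$ est trivial sur $\OF$, l'int\'egrande vaut $1$ sur $\OE$ ; les couronnes $\pfr_E^{-k}\setminus\pfr_E^{-k+1}$ avec $k\ge1$ ne contribuent pas, par orthogonalit\'e des caract\`eres (la surjectivit\'e de la norme $\OE^\times\to\OF^\times$ dans le cas non ramifi\'e annule les sommes de Gauss correspondantes). Avec la normalisation $\vol(\OE)=1$ il reste $\lambda(E/F,\psi)=1$. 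Pour la formule du produit globale, j'appliquerais la formule du produit de Weil $\prod_v\gamma_{\psi_v}(q)=1$ \`a la forme norme globale $q=\Nef$ : aux places o\`u $E_v=E\otimes F_v$ est un corps on retrouve l'indice local, et aux places d\'eploy\'ees la forme est hyperbolique d'indice $1$, en accord avec $\vef_v$ trivial.

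Le point d\'elicat sera la caract\'eristique $2$, o\`u la diagonalisation de $q$ est impossible et o\`u les outils classiques (discriminant, symbole de Hilbert) font d\'efaut. On y remplace le discriminant par l'invariant d'Artin--Schreier : l'extension s\'eparable $E/F$ est de la forme $F(\wp^{-1}(\dfr/\tfr^2))$ avec $\wp(X)=X^2+X$ (ce qui correspond \`a la condition $\tfr\ne0$ du texte), et $\vef$ se d\'efinit via le symbole d'Artin--Schreier. Comme $-1=1$ en caract\'eristique $2$, la premi\`ere assertion se r\'eduit \`a $\lambda^2=1$ ; il s'agit de la r\'e\'etablir, ainsi que la formule du produit, en reprenant la th\'eorie de l'indice de Weil dans ce cadre. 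C'est l\`a que ``quelques faits nouveaux apparaissent en caract\'eristique $2$'', et cette v\'erification soigneuse constituera l'essentiel du travail.
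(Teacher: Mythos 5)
La preuve du texte se borne \`a renvoyer au Lemme 1.1 de [JL] et \`a [W] ; votre proposition reconstruit donc un argument que l'article ne d\'etaille pas. L'identification de $\lambda(E/F,\psi)$ \`a l'indice de Weil de la forme norme $q=\Nef$ (pour la mesure autoduale, de sorte que la valeur principale soit de module $1$) est correcte et correspond exactement au contenu de [W]. En caract\'eristique diff\'erente de $2$ votre calcul est juste : la diagonalisation $q\simeq\langle 1,-(\tfr^2-4\dfr)\rangle$, la relation de cocycle $\gamma_\psi(a)\gamma_\psi(b)=\gamma_\psi(1)\gamma_\psi(ab)(a,b)_F$ et $\gamma_\psi(1)^4=(-1,-1)_F$ donnent bien $\lambda^2=(\tfr^2-4\dfr,-1)_F=\vef(-1)$ ; le calcul non ramifi\'e par couronnes (orthogonalit\'e plus surjectivit\'e de la norme sur les unit\'es) et l'appel \`a la formule du produit de Weil pour la forme norme globale sont \'egalement corrects.

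Le point faible est que vous laissez explicitement ouverte la caract\'eristique $2$, qui est pr\'ecis\'ement le cas pour lequel ce lemme est invoqu\'e dans la suite de l'article (facteur de transfert, proposition \ref{diffb}, etc., doivent valoir en toute caract\'eristique). En caract\'eristique $2$ la forme norme n'est pas diagonalisable, le discriminant et le symbole de Hilbert quadratique disparaissent, et votre d\'emonstration de $\lambda^2=\vef(-1)$ (qui s'y r\'eduit \`a $\lambda^2=1$) comme celle de la formule du produit ne s'appliquent plus ; annoncer que cette v\'erification \og constituera l'essentiel du travail \fg{} n'est pas une preuve. Pour combler ce trou, il faut soit reprendre la th\'eorie via l'invariant d'Artin--Schreier comme vous l'esquissez (par exemple en identifiant $\lambda(E/F,\psi)$ au facteur $\epsilon(\frac{1}{2},\vef,\psi)$ et en utilisant les relations $\epsilon(\vef,\psi)\,\epsilon(\vef,\overline\psi)=\vef(-1)$ et la formule du produit des facteurs $\epsilon$, qui sont \'etablies en toute caract\'eristique), soit renvoyer \`a [JL, Lemme 1.1] dont la d\'emonstration est uniforme en la caract\'eristique --- ce qui est exactement le parti pris du texte.
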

\begin{proof}On renvoie au Lemme 1.1 de \cite{JL} et ˆ \cite{W}.
\end{proof}

\Section{Groupes et sous-groupes}

Comme dans \cite{LL} on pose $$\G=SL(2)\qquad\hbox{et}\qquad\tG=GL(2)\ptf$$ 
On note $\Z$ le centre de $\G$ et $\tZ$ celui de $\tG$.
Le dŽterminant induit, aprs passage au quotient, un isomorphisme
$$G(F)\tZ(F)\bsl \tG(F)\to Q_F$$
si $F$ est local et, si $F$ est global, un isomorphisme
$$ G(\adef)\tG(F)\tZ(\adef)\bsl\tG(\adef)\to Q_F\ptf$$

 On note $\P$ le sous-groupe de Borel 
de $G$ et $\tP$ celui de $\tG$ formŽs de matrices triangulaires supŽrieures,
$\M$ le tore des matrices diagonales de $G$ et $\tM$ pour $\tG$. 
On note $\U$ le radical unipotent de $\P$ et de $\tP$. Le normalisateur de $\U$ dans $G$ est $\P$.
Le radical unipotent $U$ du sous-groupe de Borel $\P$
est l'union de deux sous-ensembles $\P$-invariants
ˆ savoir $\{1\}$ et $U'$ formŽ d'unipotents rŽguliers.

Pour $\gamma\in\G(F)$ on note $I_\gamma$ son centralisateur schŽmatique. 
Les centralisateurs des ŽlŽments semi-simples sont lisses et connexes.
On rappelle qu'un ŽlŽment semi-simple $\gamma\in\G(F)$ est dit elliptique 
si le centralisateur du tore dŽployŽ maximal $A_{\gamma}$ du centre 
de $I_{\gamma}$ est Žgal ˆ $\G$. 
Pour $G=SL(2)$ cela signifie que $A_{\gamma}$ est trivial.
Pour $\gamma$ unipotent rŽgulier le centralisateur $I_\gamma$ est non connexe
et non rŽduit en caractŽristique $2$.
On observera que l'ŽlŽment neutre est elliptique et unipotent.

On notera $\G_{ell}$ (resp. $\G_{par}$) le sous-ensemble de $\G(F)$
formŽ des ŽlŽments elliptiques (resp. non elliptiques).

\Section{Compact maximal et dŽcomposition d'Iwasawa}

Soit $F$ un corps local. On munit l'espace vectoriel $F\oplus F$ 
de la norme euclidienne pour les corps archimŽdiens et de la norme
``$\sup$'' pour les corps non archimŽdiens et
on notera $\vert\vert(x,y)\vert\vert$ la norme du vecteur  $(x,y)\in F\oplus F$.
On note $\tK$  le groupe d'isomŽtrie de la norme  sur $F\oplus F$.
On a $\tK=O(2,F)$ si $F=\RM$, $\tK=U(2)$ si $\F=\CM$ et 
$\tK=GL(2,\OF)$ si $F$ est non archimŽdien.
C'est le compact maximal naturel de $\tG(F)$ et $K=\tK\cap\G(F)$ est celui de $\G(F)$. 
On dispose alors des dŽcompositions d'Iwasawa
$\G(F)=\M(F)U(F)K$ {et} $\tG(F)=\tM(F)\U(F)K$.

Pour un groupe sur les adles le compact maximal que nous utiliserons
sera le produit sur toutes les places du compact naturel sur les corps locaux.
Il sera encore notŽ $K$ si il n'y a aucune ambigu•tŽ et on a les dŽcompositions d'Iwasawa globales
$\G(\adef)=\M(\adef)U(\adef)K$ et $\tG(\adef)=\tM(\adef)\U(\adef)K$.

On note $\alpha$ la racine positive de $\M$ dans $\U$ et, pour $m\in\M(\adef)$, on pose
 $$\HP(m)=\log(\vert m^\alpha\vert^{1/2})/\log q_F=\log_{q_F}(\vert m^\alpha\vert^{1/2})\ptf$$ 
On a ainsi dŽfini une application surjective
$$\HP:M(\adef)\to \mathfrak a_\M$$
o $\mathfrak a_\M\simeq\RM$ si $F$ est un corps de nombres et
$\mathfrak a_\M\simeq\ZM$ pour les corps de fonctions.
On prolonge $\HP$ en une fonction
$$\HP:\G(\adef)\to \mathfrak a_\M$$
au moyen de la dŽcomposition d'Iwasawa : si $x=muk$ est une telle dŽcomposition
on pose $\HP(x)=\HP(m)$.

ConsidŽrons les matrices
$$u_n=\begin{pmatrix}1&n\cr 0&1\end{pmatrix}\com{,}
w=\begin{pmatrix}0&-1\cr 1&0\end{pmatrix}\com{et}
m=\begin{pmatrix}a&0\cr 0&a\mun\end{pmatrix}\ptf$$
Soit $wu_n=muk$ une dŽcomposition d'Iwasawa pour $wu_n$.
Donc $\HP(wu_nk)=\HP(m)$ et
$(0,1)wu_n=(1,n)$ mais
$$\vert\vert(0,1)muk\vert\vert=\vert\vert(0,1)m\vert\vert=
\vert\vert(0,a\mun)\vert\vert$$ 
puisque $k\in K$ est une isomŽtrie et donc
$\vert\vert(1,n)\vert\vert=\vert a\mun\vert$.  En rŽsumŽ:

\begin{lemma}\label{wuk}
$$\HP(wu_nk)=\HP(m)=\log_{q_F}\vert a\vert=
-\log_{q_F}\big(\vert\vert(1,n)\vert\vert\big)\ptf$$
\end{lemma}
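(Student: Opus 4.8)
The plan is to exploit the $K$-invariance of the norm together with the observation that the row vector $(0,1)$ behaves like a lowest weight vector under the action of $\M\U$ on the right. I would begin by disposing of the two outer equalities, which are essentially definitional. The positive root $\alpha$ of $\M$ in $\U$ satisfies $m^\alpha=a^2$ for $m=\begin{pmatrix}a&0\cr0&a\mun\end{pmatrix}$, since conjugation by $m$ scales the upper nilpotent by $a^2$; hence by the very definition of $\HP(m)=\log_{q_F}\big(\vert m^\alpha\vert^{1/2}\big)$ one gets $\HP(m)=\log_{q_F}\big(\vert a^2\vert^{1/2}\big)=\log_{q_F}\vert a\vert$, which is the middle equality. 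The equality $\HP(wu_nk)=\HP(m)$ is then immediate from the extension of $\HP$ to $\G(\adef)$ by the Iwasawa decomposition: by hypothesis $wu_n=muk$, and right multiplication by $K$ leaves the $\M$-component of an Iwasawa decomposition unchanged, so $\HP(wu_nk)=\HP(wu_n)=\HP(m)$.

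The substance of the lemma is therefore the last equality, and for this I would compute $\vert\vert(0,1)wu_n\vert\vert$ in two ways and equate the results. On the one hand a direct matrix computation gives $(0,1)w=(1,0)$ and then $(1,0)u_n=(1,n)$, so that $\vert\vert(0,1)wu_n\vert\vert=\vert\vert(1,n)\vert\vert$. On the other hand, substituting the Iwasawa decomposition $wu_n=muk$, I would first note that $(0,1)m=(0,a\mun)$; the crucial step is that right multiplication by the upper triangular unipotent $u$ fixes this vector, because the $(2,1)$-entry of $u$ vanishes, whence $(0,a\mun)u=(0,a\mun)$; finally $k\in K$ is an isometry, so $\vert\vert(0,1)muk\vert\vert=\vert\vert(0,a\mun)\vert\vert=\vert a\vert\mun$. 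Equating the two expressions yields $\vert\vert(1,n)\vert\vert=\vert a\vert\mun$, that is $\log_{q_F}\vert a\vert=-\log_{q_F}\big(\vert\vert(1,n)\vert\vert\big)$, as required.

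The argument is uniform across all the local fields under consideration, since in each case $K$ is defined precisely as the isometry group of the chosen norm on $F\oplus F$, so the isometry property used above holds by construction. The one point I would treat as the genuine crux is the stability of the line spanned by $(0,1)$ under $\M\U$: that $(0,1)$ is a lowest weight vector, on which $\M$ acts through the character $a\mapsto a\mun$ and $\U$ acts trivially. This is exactly what collapses the norm of the Iwasawa expression to the single scalar $\vert a\vert\mun$, and it is the mechanism that transports the combinatorics of the Iwasawa decomposition into the explicit formula; everything else is a bookkeeping of the two elementary matrix products.
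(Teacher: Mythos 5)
Your proof is correct and follows essentially the same route as the paper: right $K$-invariance of $\HP$, the computation $(0,1)wu_n=(1,n)$, and the collapse $\vert\vert(0,1)muk\vert\vert=\vert\vert(0,a\mun)\vert\vert$ via the isometry property of $K$ and the stability of $(0,1)$ under $\M\U$. The only difference is that you spell out the steps (the value of $m^\alpha$ and the triviality of the $\U$-action on $(0,a\mun)$) that the paper leaves implicit.
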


On dispose sur $\G(\adef)$, $U(\adef)$ et sur les tores des mesures de Tamagawa. On pose
$$[\G]=:\G(F)\bsl\G(\adef)\qquad [U]=:U(F)\bsl U(\adef)\ptf$$
On a
$$\tau(G)=\vol[\G]=1\comm{et}\tau(U)=\vol[U]=1\ptf$$
\begin{lemma}
Pour un tore anisotrope $\T$ de $\G$ 
\cad un groupe isomorphe au groupe $\Tef$ des ŽlŽments de norme 1 dans une
extension quadratique sŽparable $E/F$ on a
$$\tau(T)=\vol[T]=2\ptf$$
\end{lemma}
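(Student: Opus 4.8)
Le plan est d'appliquer la formule d'Ono pour le nombre de Tamagawa d'un tore. Comme le nombre de Tamagawa est un invariant d'isomorphisme, il suffit de traiter $\Tef$ lui-m\^eme, qui est le tore des \'el\'ements de norme $1$, \cad $\Tef=R^1_{E/F}\GM_m=\ker\big(\Nef\colon R_{E/F}\GM_m\to\GM_m\big)$. La formule d'Ono donne
$$\tau(\Tef)=\frac{\card H^1\big(F,X^*(\Tef)\big)}{\card\,\mathrm{TS}(F,\Tef)}\ptf$$
o\`u $X^*(\Tef)$ est le r\'eseau des caract\`eres vu comme module galoisien et o\`u $\mathrm{TS}(F,\Tef):=\ker\big(H^1(F,\Tef)\to\prod_v H^1(F_v,\Tef)\big)$ est le groupe de Tate--Shafarevich. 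Tout revient donc \`a calculer ces deux invariants.

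Pour le num\'erateur, la suite exacte de tores $1\to\Tef\to R_{E/F}\GM_m\xrightarrow{\Nef}\GM_m\to1$ fournit sur les r\'eseaux de caract\`eres la suite exacte $0\to\ZM\to\ZM[\Gal(E/F)]\to X^*(\Tef)\to0$, la premi\`ere fl\`eche \'etant l'\'el\'ement norme (l'inclusion diagonale). On en tire que $X^*(\Tef)\simeq\ZM$, l'\'el\'ement non trivial de $\Gal(E/F)$ agissant par $-1$: c'est le module signe $\ZM^-$. La cohomologie d'un groupe cyclique d'ordre $2$ donne alors $H^1(\Gal(E/F),\ZM^-)=\ZM/2\ZM$; comme la suite d'inflation-restriction, jointe \`a $H^1(\Gal(\Fbar/E),\ZM)=\Hom(\Gal(\Fbar/E),\ZM)=0$, identifie $H^1\big(F,X^*(\Tef)\big)$ \`a $H^1(\Gal(E/F),\ZM^-)$, ce groupe est d'ordre $2$.

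Pour le d\'enominateur, le lemme de Shapiro et le th\'eor\`eme $90$ de Hilbert donnent $H^1(F,R_{E/F}\GM_m)=H^1(E,\GM_m)=0$, de sorte que la suite exacte longue de cohomologie se r\'eduit \`a $H^1(F,\Tef)=\iF/\Nef(E^\times)$, et de m\^eme localement $H^1(F_v,\Tef)=F_v^\times/\Nef(E_v^\times)$. Le groupe $\mathrm{TS}(F,\Tef)$ s'identifie donc aux \'el\'ements de $\iF$ qui sont partout des normes locales, modulo les normes globales. Puisque $E/F$ est une extension cyclique (de degr\'e $2$) de corps globaux, le th\'eor\`eme des normes de Hasse affirme exactement que ce groupe est trivial, d'o\`u $\card\,\mathrm{TS}(F,\Tef)=1$. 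En reportant dans la formule d'Ono, on obtient $\tau(\Tef)=\vol[\Tef]=2$.

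L'obstacle principal est l'annulation du groupe de Tate--Shafarevich: sans elle, la formule d'Ono ne confinerait $\tau(\Tef)$ que dans $\{1,2\}$, et c'est le th\'eor\`eme des normes de Hasse, applicable car toute extension quadratique est cyclique, qui force la valeur $2$. Cet argument est insensible \`a la caract\'eristique: la s\'eparabilit\'e de $E/F$ et la cyclicit\'e garantissent que la formule d'Ono comme le th\'eor\`eme de Hasse valent aussi pour les corps de fonctions, y compris en caract\'eristique $2$; les difficult\'es propres \`a cette caract\'eristique signal\'ees dans l'introduction n'apparaissent pas ici. Une variante plus directe utiliserait la suite exacte ci-dessus au niveau ad\'elique: les nombres de Tamagawa de $\GM_m$ sur $F$ et sur $E$ valant $1$, le facteur $2$ proviendrait alors de l'indice $[C_F:\Nef C_E]=2$ fourni par la th\'eorie du corps de classes, le point technique \'etant la compatibilit\'e des mesures de Tamagawa dans la suite exacte.
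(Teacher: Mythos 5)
Votre d\'emonstration est correcte et suit essentiellement la m\^eme strat\'egie que celle du texte: l'application de la formule d'Ono $\tau(T)=\#H^1/\#\ker^1$. La seule diff\'erence est de d\'etail: le texte pr\'esente $T$ comme quotient $E^\times/F^\times$ (via $e\mapsto e/\overline e$) et invoque la dualit\'e de Tate--Nakayama pour conclure d'un coup que le num\'erateur vaut $2$ et le d\'enominateur $1$, tandis que vous pr\'esentez $T$ comme noyau de la norme, calculez le num\'erateur directement sur le r\'eseau des caract\`eres (le module signe $\ZM^-$, d'o\`u $H^1=\ZM/2\ZM$ par inflation-restriction) et obtenez la trivialit\'e du groupe de Tate--Shafarevich par le th\'eor\`eme des normes de Hasse --- ce qui revient \`a expliciter ce que la dualit\'e de Tate--Nakayama fournit abstraitement, et reste valable en toute caract\'eristique comme vous le soulignez.
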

\begin{proof}
En effet, d'aprs Ono
$$\tau(T)=\frac{\#H^1(\adef/F,T)}{\#\ker^1(F,T)}\ptf$$ On a la suite exacte 
$$1\to F^\times\to E^\times\to T(F)\to 1$$ o l'application $E^\times\to T(F)$ est $e\mapsto e/\overline e$.
La dualitŽ de Tate-Nakayama montre alors que 
$$\#H^1(\adef/F,T)=2\quad\hbox{et}\quad\#\ker^1(F,T)=1\ptf$$
\end{proof}
Soit $\T$ un tore dans $\G$.
On note $ \Delta_\T$ la valeur absolue du ``dŽnominateur de Weyl'' pour $SL(2)$
$$ \Delta_\T(t)=
\Big\vert (t^{\alpha/2}-t^{-\alpha/2})\Big\vert 
=\vert t^{\alpha}\vert^{1/2}\vert 1-t^{-\alpha}\vert
$$
 Soit $w_\T$ l'ordre du groupe de Weyl: $N_\T(F)/\T(F)$ o $N_\T$ 
 le normalisateur de $\T$. On a la formule d'intgration de Weyl:

\begin{lemma}\label{Weyl}
Pour $f\in\ctyc\big(\G(F)\big)$ 
 $$\int_{\G(F)}f(x) dx
=\sum_\T w_\T\mun\int_{\T(F)\bsl\G(F)}\int_{\T(F)} \Delta_\T(t)^2f(x\mun tx)\,dt\,\ddx$$
o la somme porte sur un ensemble de reprŽsentants des classes de conjugaison de tores $\T$.
\end{lemma}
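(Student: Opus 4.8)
Le plan est d'effectuer le changement de variables donn\'e par la conjugaison, la contribution de chaque classe de tores \'etant gouvern\'ee par le jacobien de l'application correspondante. Je commencerais par me ramener aux \'el\'ements semi-simples r\'eguliers : en notant $\G^{rs}(F)$ l'ouvert de $\G(F)$ form\'e des \'el\'ements dont le centralisateur est un tore maximal, le compl\'ementaire (\'el\'ements centraux, unipotents et semi-simples non r\'eguliers) est un ferm\'e de mesure nulle, y compris en caract\'eristique $2$ o\`u les unipotents forment pourtant un ensemble non d\'enombrable. On pourra donc remplacer l'int\'egrale sur $\G(F)$ par celle sur $\G^{rs}(F)$, et de m\^eme $\T(F)$ par son ouvert r\'egulier $\T^{rs}(F)$.

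Ensuite j'utiliserais que tout \'el\'ement de $\G^{rs}(F)$ a pour centralisateur un tore maximal, et que deux tels \'el\'ements sont conjugu\'es sous $\G(F)$ si et seulement si leurs centralisateurs le sont ; comme les tores maximaux sont param\'etr\'es, \`a conjugaison pr\`es, par les $F$-alg\`ebres quadratiques \'etales (le cas d\'eploy\'e donnant $\M$, les extensions s\'eparables de corps donnant les $\Tef$), ceci fournit une partition
$$\G^{rs}(F)=\coprod_\T\ \{\,x\mun tx\ \mid\ \dot x\in\T(F)\bsl\G(F),\ t\in\T^{rs}(F)\,\}$$
index\'ee par un syst\`eme de repr\'esentants des classes de conjugaison de tores. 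Pour chaque $\T$ l'application
$$\phi_\T\colon\T(F)\bsl\G(F)\times\T^{rs}(F)\longrightarrow\G^{rs}(F),\qquad(\dot x,t)\mapsto x\mun tx$$
est bien d\'efinie, et je montrerais que sa fibre au-dessus d'un point $x\mun tx$ s'identifie \`a l'orbite sous le groupe de Weyl $N_\T(F)/\T(F)$ de l'\'el\'ement r\'egulier $t$, donc poss\`ede exactement $w_\T$ points.

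Le c\oe ur du calcul est le jacobien de $\phi_\T$. L'espace tangent \`a $\T(F)\bsl\G(F)$ s'identifie \`a $\mathrm{Lie}\,\G/\mathrm{Lie}\,\T$, somme des espaces radiciels attach\'es \`a $\pm\alpha$, sur lesquels la diff\'erentielle de $x\mapsto x\mun tx$ agit par $1-\mathrm{Ad}(t\mun)$ ; le jacobien est donc la valeur absolue de $\det\big(1-\mathrm{Ad}(t\mun)\big)$ sur ces deux droites, soit
$$\big\vert(1-t^{-\alpha})(1-t^{\alpha})\big\vert=\big\vert 2-t^{\alpha}-t^{-\alpha}\big\vert=\big\vert t^{\alpha/2}-t^{-\alpha/2}\big\vert^2=\Delta_\T(t)^2\ptf$$
La formule de changement de variables pour $\phi_\T$, corrig\'ee par le degr\'e $w_\T$ du rev\^etement, donnera le terme $w_\T\mun\int_{\T(F)\bsl\G(F)}\int_{\T(F)}\Delta_\T(t)^2 f(x\mun tx)\,dt\,\ddx$, et la sommation sur les classes de tores fournira l'identit\'e cherch\'ee.

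La difficult\'e principale sera la validit\'e du calcul jacobien en toute caract\'eristique. Les extensions \'etant s\'eparables, les tores $\T$ sont lisses et $\phi_\T$ est \'etale sur le lieu r\'egulier, y compris en caract\'eristique $2$ ; l'expression $t^{\alpha/2}$ garde du reste un sens sans extraction de racine, puisque pour $t=\mathrm{diag}(a,a\mun)$ on a $t^\alpha=a^2$, donc $t^{\alpha/2}=a$ et $\Delta_\T(t)=\vert a-a\mun\vert$. C'est ce qui doit garantir que l'argument g\'eom\'etrique standard, et le calcul de mesure qui en d\'ecoule, traversent sans modification la caract\'eristique $2$.
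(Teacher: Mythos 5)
Le texte ne d\'emontre pas ce lemme~: il est admis comme la formule d'int\'egration de Weyl classique, et aucune preuve ne suit l'\'enonc\'e. Votre argument en est la d\'emonstration standard et il est correct dans ses grandes lignes. La r\'eduction \`a l'ouvert semi-simple r\'egulier est l\'egitime en toute caract\'eristique, puisque le compl\'ementaire est contenu dans l'image r\'eciproque par l'application trace d'un ensemble fini (deux points si $p\ne2$, le seul point $0$ si $p=2$), donc de mesure nulle, ind\'ependamment du fait que les classes unipotentes forment un ensemble non d\'enombrable en caract\'eristique $2$. La fibre de $\phi_\T$ au-dessus de $x\mun tx$ a bien $w_\T$ points, et le jacobien $\vert\det\big(1-\mathrm{Ad}(t\mun)\big)_{\mathfrak g/\mathfrak t}\vert=\Delta_\T(t)^2$ ne s'annule exactement que hors du lieu r\'egulier, ce qui rend le changement de variables valable m\^eme pour $p=2$~; votre remarque finale sur le sens de $t^{\alpha/2}$ sans extraction de racine est la bonne.

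Deux impr\'ecisions m\'eritent d'\^etre corrig\'ees, sans cons\'equence pour la conclusion. D'une part, l'\'equivalence affirm\'ee entre conjugaison de deux \'el\'ements r\'eguliers et conjugaison de leurs centralisateurs est fausse dans un sens (deux \'el\'ements distincts d'un m\^eme tore ont le m\^eme centralisateur sans \^etre conjugu\'es)~; seule l'implication directe sert, pour \'etablir la disjonction de la partition. D'autre part, les classes de $\G(F)$-conjugaison de tores maximaux de $SL(2,F)$ ne sont pas en bijection avec les alg\`ebres quadratiques \'etales~: ce sont les classes de conjugaison \emph{stable} qui le sont, et une m\^eme extension s\'eparable $E/F$ peut donner une seule classe rationnelle de tores (auquel cas $w_\T=1$) ou deux (auquel cas $w_\T=2$), dichotomie que le texte exploite d'ailleurs dans la preuve de \ref{spectrans}. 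Comme votre partition est de toute fa\c con index\'ee par un syst\`eme de repr\'esentants des classes de conjugaison rationnelles, c'est-\`a-dire le m\^eme ensemble d'indices que dans l'\'enonc\'e, l'argument n'en est pas affect\'e. Il resterait seulement \`a pr\'eciser que $\ddx$ d\'esigne la mesure quotient de $dx$ par $dt$, normalisation implicite dans votre changement de variables.
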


\Section{Conjugaison stable}
Dans la littŽrature la conjugaison stable n'est dŽfinie que pour les
ŽlŽments semi-simples d'un groupe rŽductif $H(F)$
pour $F$ de caractŽristique $0$.
Pour $x\in H(F)$ semi-simple on note $I_x$ le sous-groupe engendrŽ 
par $H_x$ la composante neutre du centralisateur  de $x$
dans $\H$ et le centre $Z$ de $\H$: $I_x=H_x.Z$.
On appelle $I_x$ le centralisateur stable. On dit que $x$ et $x'=yxy\mun$
avec $x$ et $x'$ dans $\G(F)$ et $y\in\G(\Fsep)$
sont stablement conjuguŽs si le cocycle $a_\sigma=y \sigma(y)\mun$ prend ses valeurs
dans $I_x$. L'orbite stable est paramŽtrŽe par 
l'ensemble de cohomologie galoisienne $H^0(F,I_x\bsl\H)$.

On notera $\Fbar$ une cl™ture algŽbrique de $F$ et
 $\Fsep$ la cl™ture sŽparable de $F$ dans $\Fbar$. 
Pour $G=SL(2)$ deux ŽlŽments semi-simples $x$ et $x'$ de $G(F)$ sont dits stablement conjuguŽs
 s'ils sont conjuguŽs sur la cl™ture algŽbrique:
$$x'=yxy\mun\comm{pour un}y\in G(\Fbar)\ptf$$
On observe que pour $x\in SL(2,F)$ le centralisateur stable $I_x$ est le centralisateur.
L'ensemble des points rationnels de l'orbite de $x$ sous $G(\Fbar)$
est paramŽtrŽ par l'ensemble de cohomologie plate (flat topology)
$$H^0_{f}(F,I_x\bsl\G).$$
L'ensemble $C(x)$ des orbites par conjugaison sous $\G(F)$ dans l'ensemble 
des points rationnels de l'orbite gŽomŽtrique
est paramŽtrŽ par $$C(x)\simeq\coker[H^0_{f}(F,\G)\to H^0_{f}(F,I_x\bsl\G)]
\simeq\ker[H^1_{f}(F,I_x)\to H^1_{f}(F,\G)]\ptf$$
Mais on sait que le $H^1$ de notre $\G$ est trivial on a donc ici
$$C(x)\simeq H^1_{f}(F,I_x)\ptf$$

Supposons que $x$ est semi-simple non central;
ses valeurs propres sont distinctes et engendrent une extension quadratique sŽparable $E$
(Žventuellement dŽployŽe).
Il est ŽlŽmentaire de voir que si $x$ et $x'$ sont stablement conjuguŽs ils sont
alors conjuguŽs dans $\G(E)$.
Donc, pour les ŽlŽments semi-simples la conjugaison sur la cl™ture algŽbrique co•ncide 
avec la conjugaison sur la cl™ture sŽparable. 
Maintenant pour $y\in G(\Fsep)$ et $\sigma\in\Gal(\Fsep/F)$ on a aussi
$$x'=\sigma(y)x\sigma(y)\mun\comm{et donc}a_\sigma=y \sigma(y)\mun\in I_x\ptf$$
On a ainsi dŽfini un ŽlŽment du noyau du morphisme entre ensembles pointŽs en cohomologie Žtale
$$\ker [H^1_{e}(F,I_x)\to H^1_{e}(F,G)]=H^1_{e}(F,I_x)$$
puisque $H^1_{e}(F,G)$ est trivial. Le cocycle $a_\sigma$ dŽfinit
une classe $c(x)$ dans $H^1_{e}(F,I_x)$. Cet ensemble dŽcrit donc l'ensemble des
classes de conjugaison rationnelles dans la classe de conjugaison stable d'un ŽlŽment 
semi-simple\footnote
{En fait ce rŽsultat est sans surprise car il est connu
que pour les groupes lisses et connexes la cohomologie plate co•ncide avec la cohomologie Žtale.}
.
Maintenant on a l'inclusion de $G$ dans $\tG$; notons $\widetilde{I}_x$ 
le centralisateur de $x$ dans $\tG$. 
Puisque $x$ est semi-simple $\widetilde{I}_x$ est un tore isomorphe ˆ $E^\times$.
La 1-cohomologie du tore $\widetilde{I}_x$
est triviale et donc le 1-cocycle $a_\sigma$ dŽfinissant $c(x)$
est un 1-cobord ˆ valeurs dans $\widetilde{I}_x$.
 Donc, quitte ˆ modifier $y$
par un ŽlŽment de $\widetilde{I}_x$, on obtient un $y'\in\tG(\Fsep)$
tel que $y'\sigma(y')\mun=1$ et donc on a $y'\in\tG(F)$ avec $x'=y'x(y')\mun$. 
On a ainsi montrŽ que deux ŽlŽments semi-simples de $G(F)$
stablement conjuguŽs sont conjuguŽs sous $\tG(F)$.
En d'autres termes, dans $\G(F)$ la conjugaison stable co•ncide avec
la conjugaison ordinaire sous $\tG(F)$.

Soit $x\in\G(F)$ semi-simple rŽgulier. Son centralisateur dans $\G$ (resp $\tG$) est un tore
$\T$ (resp. $\tT$). Il rŽsulte des remarques ci-dessus que l'application naturelle
$$H^0_e(F,\T\bsl\G)\to H^0_e(F,\tT\bsl\tG)\simeq \tT(F)\bsl\tG(F)$$
induite par l'inclusion $\G\subset\tG$ est un isomorphisme
qui sera utilisŽ systŽmatiquement dans la suite pour la dŽfinition et le calcul
des intŽgrales $\kappa$-orbitales.

\Subsection{Distributions stablement invariantes}\label{qs}

Soit $F$ un corps local.
On dit qu'une distribution sur $\G(F)$ est  invariante 
si elle est invariante par conjugaison et qu'elle est stablement invariante 
si elle est invariante par conjugaison stable.
Dans notre cas cela se rŽduit ˆ demander l'invariance par conjugaison sous 
$\tG(F)$ dans le cas local.  
Dans le cas d'un corps global une distribution adŽlique stablement invariante est une distribution
stablement invariante localement partout.

\Section{DonnŽes endoscopiques}
Nous ne ferons pas ici la thŽorie de l'endoscopie en gŽnŽral et nous nous contenterons
du cas $\G=SL(2)$. Une donnŽe endoscopique est un couple $\EC=\{H_\EC,\kappa_\EC\}$
o $H_\EC$ est un groupe rŽductif qui ici sera soit $\G$ soit un tore $\T$ de $\G$,
et $\kappa_\EC$ un caractre de $\G(F)\tZ(F)\bsl\tG(F)$ si $F$ est local ou de 
$\G(\adef)\tG(F)\tZ(\adef)\bsl\tG(\adef)$ si $F$ est global.
Un tel caractre, qui peut tre vu comme un caractre de $Q_F$, est nŽcessairement d'ordre 2.
Les classes d'Žquivalence de donnŽes endoscopiques pour $\G$ sont de deux types
\pni Type 1 : $\EC=\{SL(2),1\}$
\pni Type 2 : $\EC=\{\Tef,\vef\}$ o $E/F$ est un extension quadratique sŽparable, dŽployŽe ou non,
et $\Tef$ le sous groupe des ŽlŽments de norme 1 dans $E^\times$.

\Chapter{Endoscopie gŽomŽtrique locale}

\Section{Stabilisation et transfert g\'eom\'etrique local}

Soient $F$ un corps local et $f\in\ctyc\big(G(F)\big)$.
Soit $T$ un tore dans $\G=SL(2)$ et soit $t\in\T(F)$
rŽgulier; son centralisateur est $T(F)$.
L'intŽgrale orbitale $\orb(t,f)$ de $t\in\T(F)$ est, par dŽfinition, l'intŽgrale
$$\orb(t,f)=\int_{T(F)\bsl\G(F)}f(x\mun tx)dx\ptf$$
On note $E$ l'extension quadratique sŽparable (Žventuellement dŽployŽe)
dŽfinie par $t$ et $T$ est isomorphe ˆ $\Tef$, le sous-groupe des ŽlŽments de norme 1 dans 
$$E^\times=\tTef(F)\ptf$$
On note $\vef$ le caractre de $F^\times$ associŽ ˆ l'extension quadratique $E/F$
par la thŽorie du corps de classe. Si $E$ est un corps
c'est le caractre non trivial du groupe
$\Nef E^\times\bsl F^\times$ qui est d'ordre 2.
C'est le caractre trivial si $E=F\oplus F$.

Soit $\kappa$ un caractre du groupe compact 
$$\tZ(F)G(F)\bsl\tG(F)\simeq Q_F= (F^\times)^2\bsl F^\times\ptf$$ 
On s'intŽresse aux intŽgrales du type
$$\int_{\tZ(F)T(F)\bsl\tG(F)}\kappa(\det \tx)f(\tx\mun t \tx)d\tx$$
On observe que $$\tx\mapsto f(\tx\mun t \tx)$$ est invariant ˆ gauche par 
le centralisateur $\tT(F)$ de $t$ dans $\tG(F)$ et donc une telle intŽgrale est nulle sauf si
$\kappa$ est trivial sur $\tT(F)$
ce qui impose $\kappa=1$ ou $\kappa=\vef$ et on pose
$$\orb^\kappa(t,f)=\int_{H^0_e(F,\T\bsl\G)}\kappa(\det \tx)f(\tx\mun t \tx)d\tx=\int_{\tT(F)\bsl\tG(F)}\kappa(\det \tx)f(\tx\mun t \tx)d\tx\ptf$$
L'inclusion
$$\T(F)\bsl\G(F)\subset\tT(F)\bsl\tG(F)$$ 
munit $\tT(F)\bsl\tG(F)$ d'une mesure invariante ˆ droite 
et c'est celle qui est utilisŽe pour calculer l'intŽgrale.
Si $E$ est un corps, \cad si $\vef\not=1$ on a \label{staborb}
$$\orb^\kappa(t,f)=\orb(t,f)+\kappa(\det \tx)\orb(t',f)
\com{et}
\orb(t,f)=\frac{1}{2}\Big(\orb^1(t,f)+\orb^{\vef}(t,f)\Big)
$$
o $t'=\tx\mun t\tx$ et $t'$ est stablement conjuguŽ mais non conjuguŽ ˆ $t$.
Par contre dans le cas o $E=F\oplus F$ on a $\orb^\kappa(t,f)=\orb(t,f)$.
L'intŽgrale $\orb^1(t,f)$ est appelŽe intŽgrale orbitale stable.

\Subsection{Transfert principal}
Lorsque  $\EC=\{SL(2),1\}$ le transfert est simplement l'identitŽ
$$f^\EC=f$$

Dans la suite de ce paragraphe $\EC=\{\Tef,\vef\}$. 
On choisit un isomorphisme entre $\tT(F)$ et $E^\times$ \cad une diagonalisation simultanŽe
de tous les ŽlŽments de $\tT(F)$. On a alors un isomorphisme entre $T(F)$ et le tore $\Tef(F)$
des ŽlŽments de norme 1 dans $E^\times$; on note $t'$ l'image de $t\in\T(F)$ par cet isomorphisme.
On fixe un ŽlŽment rŽgulier $\bs\tau$ dans $\tT(F)$.
On note $\gamma$ et $\overline\gamma$ les valeurs propres de $t'$ (resp. $\tau$
et $\overline\tau$ les valeurs propres de $\bs\tau$).
\begin{definition}\label{FTr} \label{ft}
Lorsque $\EC=\{\Tef,\vef\}$ le facteur de transfert est
l'expression suivante
$$\Delta^\EC(t,t')=\cft\,\vef\Big(\frac{\gamma-\overline\gamma}{\tau-\overline\tau}\Big)
{\vert{\gamma-\overline\gamma}\vert}$$ 
o $\cft$ est une constante.
\end{definition}
 Une variante consiste ˆ Žcrire $$t'=a+b\tau\in E^\times\simeq\tTef(F)$$
avec $a$ et $b$ dans $F$ et on a alors
$$\Delta^\EC(t,t')=\cft\,\vef(b){\vert{b(\tau-\overline\tau)}\vert_E}\ptf$$ 
Dans la suite nous prendrons $$\cft=\lambda(E/F,\psi)\mun$$ ce qui est l'inverse
du choix fait dans \cite{LL}. Nous verrons plus loin (\ref{choix}) que le choix fait ici semble plus naturel.
On posera
$$\orb^\EC(t',f)=\Delta^\EC(t,t')\orb^\vef(t,f)\ptf$$
\begin{remark}{\rm  Les intŽgrales $\kappa$-orbitales $\orb^\kappa(t,f)$ se prtent ˆ une
prŽ-stabilisation: elles permettent d'Žcrire les intŽgrales orbitales comme des
sommes d'intŽgrales faisant intervenir les classes de conjugaison stable. Le produit
avec le facteur de transfert fournit des expressions $\orb^\EC(t',f)$ qui sont
constantes sur les classes de conjugaison stable.  Nous allons maintenant voir que, de
plus, elles se prtent au transfert endoscopique.
}\end{remark}

Nous allons montrer qu'il existe une fonction $$f^{\EC}
\in \ctyc\big(H_\EC(F)\big)$$
appelŽe transfert de $f$ pour la donnŽe endoscopique $\EC$ telle que, pour $t$ rŽgulier, on ait
$$f^{\EC}(t)=\orb^\EC(t,f)\ptf$$
On observera que la dŽfinition du transfert dŽpend du choix des mesures de Haar
utilisŽes pour calculer les intŽgrales orbitales.
Nous distinguons les diffŽrents cas.

\Section{Transfert dŽployŽ}

Soit  $\EC=\{\Tef,\vef\}$ avec $E=F\oplus F$ et $\vef=1$. Alors
$\Tef$ est un tore dŽployŽ et ˆ conjugaison prs on peut supposer que $\Tef$
est le tore diagonal $\M$.
\begin{lemma}\label{deploy} 
La fonction $t\mapsto\Delta^\EC(t,t)\orb(f,t)$ dŽfinie pour $t\in\M(F)-\Z(F)$
se prolonge en une fonction $f^\EC$ lisse sur $\Tef(F)\simeq\M(F)$.
Plus prŽcisŽment, si $$t=\begin{pmatrix}a&0\cr0&a\mun\end{pmatrix}\in\M(F)$$
alors
$$f^\EC(t)=\int_K\int_{F}f\Big(k\mun 
\begin{pmatrix}a&n\cr0&a\mun\end{pmatrix}k\Big)dn\,dk\ptf$$
\end{lemma}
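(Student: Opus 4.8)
The plan is to evaluate the orbital integral directly through the Iwasawa decomposition, recognise the asserted double integral as the outcome, and then check that it extends smoothly across the two central points; this last check is the real content. First I would take $t=\begin{pmatrix}a&0\\0&a\mun\end{pmatrix}$ with $a^2\ne1$, so that $t\in\M(F)-\Z(F)$ is regular with centraliser $\M(F)$. Using $\G(F)=\M(F)U(F)K$, the cosets $\M(F)\bsl\G(F)$ are represented by $u_n k$, with $u_n=\begin{pmatrix}1&n\\0&1\end{pmatrix}$, $n\in F$ and $k\in K$. With the Haar-measure normalisations implicit in the definition of $\orb$ (recall that the transfer is fixed only up to such a choice, as noted after Definition~\ref{ft}) the invariant measure on $\M(F)\bsl\G(F)$ reads $dn\,dk$ in these coordinates: the chosen representatives carry no $\M$-component, so no modular character intervenes and the fibre direction $U$ keeps its translation-invariant measure $dn$. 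Hence $\orb(t,f)=\int_K\int_F f(k\mun u_n\mun t\,u_n k)\,dn\,dk$.

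Next I would compute $u_n\mun t\,u_n=\begin{pmatrix}a&n(a-a\mun)\\0&a\mun\end{pmatrix}$, whose upper-right entry sweeps out all of $F$ since $a\ne a\mun$. The substitution $\nu=n(a-a\mun)$, for which $dn=|a-a\mun|\mun\,d\nu$, then yields $|a-a\mun|\,\orb(t,f)=\int_K\int_F f\big(k\mun\begin{pmatrix}a&\nu\\0&a\mun\end{pmatrix}k\big)\,d\nu\,dk$. It remains to match the left-hand side with $\Delta^\EC(t,t)\orb(t,f)$: in the split case $\vef=1$ and the eigenvalues of $t$ are $\gamma=a$, $\overline\gamma=a\mun$, so Definition~\ref{ft} gives $\Delta^\EC(t,t)=\cft\,|a-a\mun|$; since $E=F\oplus F$ the split value of the Langlands factor is $\lambda(E/F,\psi)=1$ (cf. \cite{JL}), whence $\cft=1$ and $\Delta^\EC(t,t)=|a-a\mun|$. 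This establishes the displayed formula for every regular $t$.

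The main obstacle, and the actual assertion of the lemma, is to see that
$$f^\EC(t)=\int_K\int_F f\Big(k\mun\begin{pmatrix}a&n\\0&a\mun\end{pmatrix}k\Big)\,dn\,dk$$
is well defined and smooth for every $a\in F^\times$, including the central values $a=\pm1$ where $\orb(t,f)$ diverges and $\Delta^\EC(t,t)$ vanishes: once the common factor $|a-a\mun|$ has been cancelled, no singularity survives. Since $f\in\ctyc(\G(F))$ is compactly supported, for $a$ ranging over a fixed compact neighbourhood of any $a_0\in F^\times$ the integrand is non-zero only when $\begin{pmatrix}a&n\\0&a\mun\end{pmatrix}$ meets the ($K$-stable) support of $f$, which confines $n$ to a fixed compact subset of $F$ uniformly in $(a,k)$, while $k$ runs over the compact group $K$. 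The integral therefore converges, and as $f$ is locally constant (non-archimedean case) resp. smooth (archimedean case) the result is locally constant resp. smooth in $a$, differentiation under the integral sign being licit thanks to the uniform support bound. This provides the sought smooth prolongation $f^\EC$ on $\Tef(F)\simeq\M(F)$; at the central point $a=1$ its value is the unipotent integral $\int_K\int_F f(k\mun u_n k)\,dn\,dk$.
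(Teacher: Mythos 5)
Your proof is correct and follows essentially the same route as the paper: Iwasawa decomposition of $\M(F)\bsl\G(F)$, the conjugation $u_n\mun t u_n$, and the change of variable $\nu=n(a-a\mun)$ whose Jacobian cancels the transfer factor $\Delta^\EC(t,t)=\vert a-a\mun\vert$. The only addition is your explicit compact-support argument for smoothness at $a=\pm1$, which the paper leaves implicit; it is a welcome but minor amplification, not a different method.
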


\begin{proof}
Le facteur de transfert vaut $\Delta^\EC(t,t)=\vert a-a\mun\vert\ptf$
On a donc
$$\Delta^\EC(t,t)\orb(f,t)=\vert a-a\mun\vert\int_K\int_{U(F)}f\Big(k\mun u\mun
\begin{pmatrix}a&0\cr0&a\mun\end{pmatrix}uk\Big)du\,dk$$
soit encore
$$\Delta^\EC(t,t)\orb(f,t)=\int_K\int_{F}f\Big(k\mun 
\begin{pmatrix}a&n\cr0&a\mun\end{pmatrix}k\Big)dn\,dk\ptf$$
\end{proof}
On observera que pour $\bs z\in\Z(F)$ (o $\bs z=z.\bs 1$ et $\bs 1$ est la matrice unitŽ)
$$f^{\EC}(\bs z)=\orb^1(\bs z{\bs\nu},f)$$
o
$$\orb^1(\bs z{\bs\nu},f)=\int_{K}\int_{U(F)} 
f\Big(k\mun \begin{pmatrix}z &n\cr 0& z\end{pmatrix}k\Big)\, dn\,dk
\com{et}
{\bs\nu}=\begin{pmatrix}1&1\cr0&1\cr\end{pmatrix}\ptf$$
\begin{corollary}\label{lfddep}
Si $F$ est non archimŽdien,
si $f$ la fonction caractŽristique de $K$ et si $dn$ et $dk$ sont les mesures canoniques
(volume 1 pour les entiers de $F$ et volume 1 pour $K$),
alors $f^\EC$ est la fonction caractŽristique du sous-groupe $\OF^\times$ des unitŽs de $F^\times$.
\end{corollary}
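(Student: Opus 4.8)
Le plan est de partir directement de la formule explicite établie au Lemme~\ref{deploy},
$$f^\EC(t)=\int_K\int_{F}f\Big(k\mun
\begin{pmatrix}a&n\cr0&a\mun\end{pmatrix}k\Big)dn\,dk
\com{avec}
t=\begin{pmatrix}a&0\cr0&a\mun\end{pmatrix},$$
et d'y reporter le fait que $f$ est la fonction caractéristique de $K$. Comme $F$ est non archimédien on a ici $K=\tK\cap\G(F)=SL(2,\OF)$.

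Le premier pas consiste à remarquer que $K=SL(2,\OF)$ est un groupe, de sorte que la conjugaison par un $k\in K$ le préserve. En notant $X=\begin{pmatrix}a&n\cr0&a\mun\end{pmatrix}$, on a donc $k\mun Xk\in K$ si et seulement si $X\in K$, pour tout $k\in K$; l'intégrande est ainsi indépendante de $k$ et égale à la fonction caractéristique de $K$ évaluée en $X$. L'intégration sur $K$ ne contribue alors que par son volume, qui vaut $1$ pour la mesure canonique.

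Le second pas est la traduction de la condition $X\in SL(2,\OF)$. Le déterminant de $X$ vaut déjà $1$, et l'intégralité de ses coefficients équivaut à $a\in\OF$, $a\mun\in\OF$ et $n\in\OF$, c'est-à-dire à $a\in\OF^\times$ et $n\in\OF$. Je séparerais alors deux cas : si $a\notin\OF^\times$ l'intégrande est identiquement nulle et $f^\EC(t)=0$ ; si $a\in\OF^\times$ elle vaut $1$ exactement sur $n\in\OF$, d'où, avec les mesures canoniques,
$$f^\EC(t)=\vol(K)\,\vol(\OF)=1\ptf$$
Ceci identifie $f^\EC$ à la fonction caractéristique de $\OF^\times$, comme annoncé.

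L'argument ne présente aucune difficulté sérieuse. Le seul point véritablement à soigner est l'invariance de $SL(2,\OF)$ par conjugaison sous $K$, qui permet de supprimer l'intégrale sur $K$ et de ramener tout le calcul à une condition d'intégralité sur les coefficients de $X$ ; tout le reste n'est qu'un calcul de volumes avec les normalisations fixées dans l'énoncé.
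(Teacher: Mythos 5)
Votre argument est correct et co\"{\i}ncide avec la preuve (laiss\'ee implicite) du texte : on reporte $f=\mathbf 1_K$ dans la formule int\'egrale du Lemme~\ref{deploy}, l'invariance de $K=SL(2,\OF)$ par conjugaison sous $K$ \'elimine l'int\'egrale en $k$, et la condition d'int\'egralit\'e des coefficients donne $a\in\OF^\times$ et $n\in\OF$, d'o\`u la valeur $\vol(K)\vol(\OF)=1$. Rien \`a redire.
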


\Section{Transfert elliptique}

Soit $F$ un corps local. Soit $E$ un corps  extension quadratique sŽparable de
$F$. Le groupe $E^\times$ des ŽlŽments inversibles est un tore elliptique $\tTef(F)$ 
que l'on suppose plongŽ dans $\tG(F)$ comme dans \ref{ext}:
$$E^\times=\tTef(F)\subset\tG(F)\simeq GL(E)\ptf$$ 
On  note $E^1$ le sous groupe des ŽlŽments de norme 1
\cad $E^\times\cap\G(F)$
et on posera $E^\star=E^1-\{\pm1\}$.
On pourra observer que l'on a une bijection naturelle
$$E^1\bsl\G(F)\cup E^1\bsl\G(F) \eta\to E^\times\bsl\tG(F)$$
o $\eta$ vŽrifie $\vef(\det(\eta))=-1$. 

On note $A_+$ le semi-groupe des matrices
$$\alpha(\mu)=\begin{pmatrix}\mu&0\cr0&1\cr\end{pmatrix}$$
avec $\vert \mu\vert\ge1$. 
\begin{proposition}\label{decE} 
Tout $g\in\tG(F)$ s'Žcrit $g=e\alpha(\mu)k$ avec
$e\in \tTef(F)=E^\times$, $k\in K $ et $\vert\mu\vert\ge1$:
$$\tG(F)=\tTef(F)A_+ K \ptf$$
Soient $d\mu$ la mesure de Haar standard pour le groupe $F^\times$
et $dk$ la mesure normalisŽe pour $K$. 
Pour $e\in E^\star$ on a la formule d'intŽgration
$$\int_{E^\times\bsl\tG(F)}f(\tx\mun e \tx)\,\dtx
=\int_K\int_{\vert\mu\vert\ge1}f(k\mun \alpha(\mu)\mun e \alpha(\mu)k)C(\mu)\,d\mu\,dk$$
o $C(\mu)=\vert\mu-\mmu\vert$ pour $\vert\mu\vert\ge1$ si $F=\RM$ 
pour un choix convenable de la mesure de Haar sur $\G(\RM)$.
Si $F$ non archimŽdien et $E/F$ ramifiŽ on a si $\vert\mu\vert\ge1$ 
$$C(\mu)=2q^m=2\vert\mu\vert\ptf$$
 Si $E/F$ est non ramifiŽ $C(\mu)=1$ si $\vert\mu\vert=1$ et 
$$C(\mu)=\bigg(1+\frac{1}{q}\bigg)q^m=\bigg(1+\frac{1}{q}\bigg)\vert\mu\vert
\com{pour}\vert\mu\vert>1
\ptf$$

\end{proposition}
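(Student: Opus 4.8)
\emph{Existence of the decomposition.} The plan is to read the double coset space $\tTef(F)\bsl\tG(F)/K$ through the action of $E^\times=\tTef(F)$ on $\OF$-lattices in $E$. For $F$ non-archimedean I would identify $\tG(F)/K$ with the set of lattices via $g\mapsto gL_0$, where $L_0=\OF+\OF\tau$, so that $\tTef(F)\bsl\tG(F)/K$ becomes the set of $E^\times$-orbits of lattices. To each lattice $L$ I attach its multiplier ring $\mathcal{O}_L=\{x\in E\mid xL\subseteq L\}$, an $\OF$-order in $E$. Since the orders of a separable quadratic $E/F$ form the chain $R_m=\OF+\varpi_F^m\OE$ ($m\ge0$), and quadratic orders are Gorenstein with trivial local Picard group, each $L$ is free of rank one over $\mathcal{O}_L$, hence $E^\times$-equivalent to $\mathcal{O}_L$ itself. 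A direct computation of $\alpha(\mu)\mun(a+b\bs\tau)\alpha(\mu)$ shows that $\mathcal{O}_{\alpha(\mu)L_0}=R_m$ when $|\mu|=q^m$, so the $\alpha(\mu)$ with $|\mu|\ge1$ meet every orbit exactly once; this yields $\tG(F)=\tTef(F)A_+K$. For $F$ archimedean the same decomposition follows from the polar decomposition $g=k_1\,\mathrm{diag}(s_1,s_2)\,k_2$: writing $\mathrm{diag}(s_1,s_2)=s_2\,\alpha(\mu)$ with $\mu=s_1/s_2\ge1$, absorbing the scalar $s_2$ and the (circle) maximal compact of $E^\times$ on the left, and moving the remaining orthogonal factor to the right past the diagonal.

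\emph{Volume of a double coset.} For the integration formula I would first note that $\tx\mapsto f(\tx\mun e\tx)$ descends to $E^\times\bsl\tG(F)$, and that by the quotient-measure identity
$$\vol\big(E^\times\bsl E^\times\alpha(\mu)K\big)=\frac{\vol(K)}{\vol\big(E^\times\cap\alpha(\mu)K\alpha(\mu)\mun\big)}\ptf$$
The crucial step is to identify the compact group in the denominator. The same matrix computation as above gives $E^\times\cap\alpha(\mu)K\alpha(\mu)\mun=R_m^\times$ for $|\mu|=q^m$, so that $C(\mu)$, which is by definition the density of the push-forward of $d\tx$ in the coordinates $(\mu,k)$, equals — up to the fixed normalisation of Haar measures — the index $\#(\OE^\times/R_m^\times)$.

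\emph{The index and the Jacobian.} This index I would compute through reduction modulo $\varpi_F^m$: the kernel of $\OE^\times\to(\OE/\varpi_F^m\OE)^\times$ is $1+\varpi_F^m\OE$, which already lies in $R_m^\times$, so $\#(\OE^\times/R_m^\times)=\#\big((\OE/\varpi_F^m\OE)^\times\big)\big/\#\big((\OF/\varpi_F^m\OF)^\times\big)$, the denominator being the image of $R_m^\times$. As $\#(\OE/\varpi_F^m\OE)=q^{2m}$ in both cases, only the residue field differs: in the unramified case $\#\big((\OE/\varpi_F^m\OE)^\times\big)=q^{2m}(1-q^{-2})$, giving $q^m(1+q^{-1})$ for $m\ge1$ and $1$ for $m=0$; in the ramified case $\#\big((\OE/\varpi_F^m\OE)^\times\big)=q^{2m}(1-q^{-1})$, giving $q^m$. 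For the archimedean $C(\mu)=\vert\mu-\mmu\vert$ I would instead differentiate $(\mu,k)\mapsto\alpha(\mu)k$ and read off the Jacobian, which is the Weyl denominator $\Delta_\T$ of the elliptic torus, the same factor that governs Lemme \ref{Weyl}.

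\emph{Main obstacle.} The delicate part is not the $\mu$-dependence but the multiplicative constants: one must reconcile the Haar measures on $\tG(F)$, on $E^\times$, on $K$ and on $F^\times$, and fix them by a reference computation (for instance by matching Lemme \ref{deploy} in the split case). The extra factor $2$ in the ramified case is precisely such a normalisation effect — it amounts to $\vol(\OE^\times)=1/2$ relative to the measures used here, and is the local counterpart of the volume $2$ of an anisotropic torus — rather than anything coming from the index itself. The second delicate point is characteristic $2$: there a ramified $E/F$ is not of Kummer type, $\OE$ may fail to be $\OF[\tau]$ so that $L_0\ne\OE$, and the different is larger; the identification $E^\times\cap\alpha(\mu)K\alpha(\mu)\mun=R_m^\times$ and the ensuing index must then be redone. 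I expect this case, together with the pinning down of the constant, to be where the real work lies.
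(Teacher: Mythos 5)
Your proposal is correct and follows essentially the same route as the paper, which (after Lemma 7.3.2 of \cite{JL}) likewise parametrises $\tTef(F)\bsl\tG(F)/\tK$ by the orders $\OF+\varpi_F^m\OE$ via multiplier rings of lattices --- taking care, in every characteristic, to define the embedding by a basis $\{1,\delo\}$ with $\OE=\OF\oplus\OF\delo$, which already disposes of your worry about characteristic $2$ --- and likewise obtains $C(\mu)$ as an orbit--stabiliser index. The one divergence is the bookkeeping of the ramified factor $2$~: the paper computes the full index $\card\big(E^\times/F^\times(1+\mu\mun\OE)\big)=[E^\times\,{:}\,F^\times\OE^\times]\cdot[\OE^\times\,{:}\,(\OF+\varpi_F^m\OE)^\times]$, so that the $2$ appears as $[E^\times\,{:}\,F^\times\OE^\times]$ (equal to $1$ or $2$ according as $E/F$ is unramified or ramified) rather than being deferred to the normalisation $\vol(\OE^\times)=\vol(\tK)/2$; your diagnosis of that constant is correct, but the paper turns the assertion into a computation.
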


\begin{proof} 
Si $F=\RM$ et $E=\CM$ la dŽcomposition de Cartan peut s'Žcrire
$$\tG(F)=\CM^\times A_+K$$
o $K=SO(2,\RM)$. La formule d'intŽgration 
$$\int_{\tG(F)}\varphi(x)\,dx=
\int_{(\tk_1,k_2)\in \tK\times K}\int_{\vert\mu\vert\ge1}\varphi(\tk_1\alpha(\mu)k_2)
C(\mu)\,d\tk_1\,dk_2\,d\mu$$
est classique (voir par exemple \cite[Chap.~X, Proposition 1.17]{Helgason}). 
Ceci conclut la preuve lorsque $F=\RM$.
Soit maintenant $F$ un corps local non archimŽdien. On observera que
la mesure invariante sur $E^\times\bsl\tG(F)$ est dŽterminŽe par le choix de mesures de Haar
donnant la masse 1 au quotient $E^1\bsl K$ qui est un ouvert de $E^\times\bsl\tG(F)$.
L'argument ci-dessous (dŽjˆ utilisŽ dans \cite{LL})
est empruntŽ ˆ la dŽmonstration du Lemme 7.3.2 de \cite{JL}. 
L'anneau $\OE$ des entiers de $E$
est un $\OF$-module libre de rang 2; on choisit $\delo\in\OE$ tel que
$$\OE=\OF\oplus\OF\delo\ptf$$ 
En particulier $\{1,\delo\}$ est une base de $E$ vu comme $F$-espace vectoriel.
On note $\tK $ le sous-groupe compact maximal
$$\tK =GL(\OE)\subset GL(E)\simeq GL(2,F)\ptf$$
On note $\mathfrak M$ l'ensemble des $\OF$-modules $\mathfrak{m}$ de 
type fini dans $E$ et qui l'engendrent comme espace vectoriel sur $F$.
En d'autres termes $\mathfrak{m}$ contient une $F$-base de $E$. 
Maintenant $\mathfrak{m}$ et $\mathfrak{m}'$ sont dits equivalents s'il existe $e\in E^\times$
tel que $\mathfrak{m'}=e\mathfrak{m}$.
On observe que tout $g\in\tG(F)$ dŽfinit un module
$\mathfrak{m}=g\OE\in\mathfrak M$ et ils sont tous de cette forme. De plus,
$\mathfrak{m}=g\OE$ et $\mathfrak{m}'=g'\OE$ sont Žquivalents si et seulement si 
$$g'=eg\tk\comm{avec} e\in \tTef(F)=E^\times\comm{et}\tk\in \tK =\tG(\OF)\ptf$$
Donc les classes d'Žquivalences dans $\mathfrak M$ sont en bijection avec l'ensemble
des doubles classes $$\tTef(F)\bsl\tG(F)/ \tK \ptf$$
Un $\mathfrak m\in\mathfrak M$ qui est de plus un sous-anneau de $\OE$ est appelŽ un ``ordre''.
La classe d'Žquivalence de $\mathfrak{m}$ contient un unique ``ordre'' $\mathfrak{a}$:
$$\mathfrak{a}=\{a\in\OE\,\vert\, a\mathfrak{m}\subseteq \mathfrak{m}\}\ptf$$
On a donc une bijection entre l'ensemble des ``ordres'' et l'ensemble des doubles classes.
Maintenant un ``ordre'' $\mathfrak{a}$ admet une base de la forme $(1,\delta)$:
$$\mathfrak{a}=\OF\oplus \delta\OF\comm{o}\delta=\varpi_F^m\delo$$
et o $m\in\NM$ est uniquement dŽterminŽ. Donc $\mathfrak{a}=\varpi_F^m\alpha(\varpi_F^{-m})\OE$. 
On a ainsi montrŽ que chaque double classe
$$\tTef(F)\bsl\tG(F)/ \tK $$ contient un unique ŽlŽment de la forme $\alpha(\varpi_F^{-m})$
et on remarque que 
$$\tk=\begin{pmatrix}\eta &0\cr 0&1\end{pmatrix} k$$
avec $k\in K$ et $\vert\eta\vert=1$. Reste ˆ calculer $C(\mu)$. 
On Žcrira $\tT$ pour $\tTef$ et on
pose 
$$\tT(F,\mu):=\alpha(\mu)\mun\tT(F)\alpha(\mu)\ptf$$
On a alors
$$C(\mu)=\card\Big\{\tT(F)\alpha(\mu)\tK/ \tZ(F)\tK\Big\}=\card\Big\{\tT(F,\mu)/ \tT(F,\mu)\cap \tZ(F)\tK\Big\}\ptf$$
On observe que
$$\tT(F,\mu)\cap \tZ(F)K=\Bigg\{g=\alpha \tk\com{o}\alpha\in F^\times\com{et}
\begin{pmatrix}a&-b\mmu\dfr\cr b\mu &a+b\tfr\cr\end{pmatrix}
\in\tK\Bigg\}$$
ce qui impose $$a\in\OF\com{,}b\mu\in\OF\com{et}
a^2+ab\tfr+b^2\dfr\in \OF^\times\ptf$$ 
On pose $E(\mu)=1+\mu\mun\OE$. 
Comme $\b\in\mmu\OF=\pfr_F^m$ avec $m>1$ si $\vert\mu\vert>1$
alors $a$ est une unitŽ et on peut donc choisir $\alpha$ de sorte que $a=1$.
Il en rŽsulte que $C(\mu)$ est l'ordre du quotient $E^\times/F^\times E(\mu)$.
Supposons tout d'abord $E/F$ non ramifiŽ.
Dans ce cas
$$F^\times\simeq \OF^\times\times\varpi_F^\ZM\com{et}
 E^\times\simeq \OE^\times\times\varpi_F^\ZM  \ptf$$
Comme $\tT(F)\simeq E^\times$
et $\tZ(F)\simeq F^\times$ le cas $\vert\mu\vert=1$ est clair. 
Supposons dŽsormais que $\vert\mu\vert>1$. Comme $E/F$ est non ramifiŽ
$ E(\mu)=1+\pfr_E^m$ et donc
 $$C(\mu)=\card\big(\OE^\times/\OF^\times E(\mu)\big)=
\frac{\card\big(\OE^\times/E(\mu)\big)}{\card\big(\OF^\times/\OF^\times\cap E(\mu)\big)}
=\frac{(q^2-1)q^{2(m-1)}}{(q-1)q^{(m-1)}}$$
soit encore
$$C(\mu)=\bigg(1+\frac{1}{q}\bigg)q^m=\bigg(1+\frac{1}{q}\bigg)\vert\mu\vert\ptf$$
Si $E/F$ est ramifiŽ $\varpi_F\OE=\varpi_E^2\OE$ et 
$E(\mu)=1+\pfr_E^{2m}$ si $\vert\mu\vert=q^m$ et $m>0$. Donc
$$C(\mu)=\frac{\card({\varpi_E^\ZM\OE}/{\varpi_F^\ZM\OE})\,\card\big(\OE^\times/E(\mu)\big)}
{\card\big(\OF^\times/\OF^\times\cap E(\mu)\big)}
=\frac{2(q-1)q^{(2m-1)}}{(q-1)q^{(m-1)}}=2q^m=2\vert\mu\vert\ptf$$
\end{proof}

\begin{proposition}\label{elltrans}
Il existe une fonction lisse $f^\EC$ sur $H_\EC(F)=\Tef(F)$ vŽrifiant
$$f^{\EC}(t)=\Delta^\EC(t,t)\orb^\kappa(t,f)$$
avec $\kappa=\vef$ lorsque $t\in\Tef(F)$ est rŽgulier.
Pour $\bs z\in\Z(F)$ (o $\bs z=z.\bs 1$ et $\bs 1$ est la matrice unitŽ)
$$f^{\EC}(\bs z)=\orb^\kappa(\bs z{\bs\nu},f)$$
o
$$\orb^\kappa(\bs z{\bs\nu},f)=\int_{K}\int_U \kappa(n)
f\Big(k\mun \begin{pmatrix}z &n\cr 0& z\end{pmatrix}k\Big)\, dn\,dk
\com{et}
{\bs\nu}=\begin{pmatrix}1&1\cr0&1\cr\end{pmatrix}\ptf$$
\end{proposition}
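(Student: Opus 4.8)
The statement on regular elements is the easy half: on $\Tef(F)\cap E^\star$ the map $t\mapsto\orb^\kappa(t,f)$ is smooth and $\Delta^\EC(t,t)$ is smooth and non-vanishing, so $f^\EC$ is smooth there. The real content is to control $\Delta^\EC(t,t)\orb^\kappa(t,f)$ as $t$ approaches a central element $\bs z=z\bs1$ ($z=\pm1$) and to identify the limit with $\orb^\kappa(\bs z\bs\nu,f)$; for $F$ non-archimedean this limit is the value of the locally constant extension, while for $F$ archimedean one must in addition check infinite differentiability (via Taylor expansion and dominated convergence, using $C(\mu)=\vert\mu-\mmu\vert$).

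The plan is to feed the integration formula of Proposition~\ref{decE} into the definition of $\orb^\kappa$. Writing $\tx=\alpha(\mu)k$ with $\vert\mu\vert\ge1$ and $k\in K$, one has $\det\tx=\mu$, so $\kappa(\det\tx)=\vef(\mu)$ (well defined on $\tTef(F)\bsl\tG(F)$ since $\vef$ is trivial on $\Nef E^\times$), whence
$$\orb^\kappa(t,f)=\int_K\int_{\vert\mu\vert\ge1}\vef(\mu)\,f\big(k\mun\alpha(\mu)\mun t\,\alpha(\mu)k\big)\,C(\mu)\,d\mu\,dk\ptf$$
Writing $t=a\bs1+b\bs\tau$ gives
$$\alpha(\mu)\mun t\,\alpha(\mu)=\begin{pmatrix} a & -\dfr b\mu\mun\cr b\mu & a+b\tfr\cr\end{pmatrix},$$
while the transfer factor $\Delta^\EC(t,t)=\cft\,\vef(b)\,\vert b(\tau-\overline\tau)\vert$ vanishes to first order in $b$.

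The key manoeuvre is the multiplicative change of variable $\nu=b\mu$. On the support of $f$ the entry $b\mu=\nu$ stays bounded, and $\vert\mu\vert\ge1$ forces $\vert\nu\vert\ge\vert b\vert$; hence as $b\to0$ only $\vert\mu\vert\sim\vert b\vert\mun$ contributes, the boundary regime $\vert\mu\vert=1$ becomes negligible, and there $C(\mu)=c\,\vert\mu\vert$ exactly (with $c=2$ for $E/F$ ramified, $c=1+q\mun$ for $E/F$ non ramifi\'e). Since $C(\mu)\,d\mu=c\,\vert\nu\vert\,\vert b\vert\mun\,d^\times\nu=c\,\vert b\vert\mun\,d\nu$ and $\vef(b)\vef(\mu)=\vef(\nu)$, the single power $\vert b\vert$ of the transfer factor cancels the $\vert b\vert\mun$ produced by the weight, and, modulo a term vanishing as $b\to0$, the product becomes
$$\Delta^\EC(t,t)\orb^\kappa(t,f)=\cft\,c\,\vert\tau-\overline\tau\vert\int_K\int_{\vert\nu\vert\ge\vert b\vert}\vef(\nu)\,f\Big(k\mun\begin{pmatrix} a & -\dfr b^2\nu\mun\cr \nu & a+b\tfr\cr\end{pmatrix}k\Big)\,d\nu\,dk\ptf$$
As $t\to\bs z$ one has $a\to z$, $b\to0$, the matrix tends to the lower unipotent $\begin{pmatrix} z&0\cr\nu&z\cr\end{pmatrix}$ and the lower bound tends to $0$; conjugating by $w\in K$ and substituting $n=-\nu$ turns this into the upper unipotent integral defining $\orb^\kappa(\bs z\bs\nu,f)$, up to the scalar $\cft\,c\,\vert\tau-\overline\tau\vert\,\vef(-1)$, exactly as in the split computation of Lemme~\ref{deploy}.

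The hard part is precisely the verification that this scalar equals $1$ and, in the non-archimedean case, that the extension is genuinely locally constant. Here the choice $\cft=\lambda(E/F,\psi)\mun$ pays off: combined with $\lambda(E/F,\psi)^2=\vef(-1)$ from Lemme~\ref{psideux} it gives $\cft\,\vef(-1)=\lambda(E/F,\psi)$, and one then checks against the explicit values of $C(\mu)$ and the measure normalisations of Proposition~\ref{decE} (the condition $\vol(E^1\bsl K)=1$) that the remaining factors collapse to $1$; the unramified normalisation $\lambda(E/F,\psi)=1$ of Lemme~\ref{psideux} furnishes a clean test of this bookkeeping. Finally, the contribution of the region $\vert\nu\vert<\vert b\vert$ (the tail near $\nu=0$ together with the boundary term $\vert\mu\vert=1$) is controlled by the oscillation of $\vef$: for $E/F$ ramified $\int_{\OF^\times}\vef=0$ kills it termwise, while for $E/F$ non ramifi\'e it is a convergent tail tending to $0$. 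This makes the limit exist, yields local constancy of $f^\EC$ near $\bs z$, and gives $f^\EC(\bs z)=\orb^\kappa(\bs z\bs\nu,f)$.
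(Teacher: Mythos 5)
Your proposal is correct and follows essentially the same route as the paper's own proof: both feed the decomposition $\tG(F)=\tTef(F)A_+K$ of Proposition \ref{decE} into the $\kappa$-orbital integral, perform the substitution $u=b\mu$ so that the single power of $\vert b\vert$ in the transfer factor cancels the growth of the weight $C(\mu)$, and identify the limit as $t\to\bs z$ with a lower-triangular (hence $K$-conjugate to upper-triangular) unipotent $\kappa$-orbital integral. You are somewhat more explicit than the paper about the negligible boundary regime $\vert\mu\vert=1$ and about the bookkeeping of the resulting scalar, which the paper absorbs into its measure normalisations and leaves implicit.
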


\begin{proof} Nous reproduisons la preuve donnŽe dans \cite[Lemma 2.1]{LL} 
valable en toute caractŽristique. 
Un ŽlŽment $t\in\Tef(F)$ s'Žcrit sous la forme $t=\a+\b\tau$  et donc 
$$t=\begin{pmatrix}a &-\b\dfr\cr \b&a+\b\tfr
\end{pmatrix}$$
Maintenant, si
$$\tx=\begin{pmatrix}\aa &\bb\cr \cc&\dd\end{pmatrix}$$
un calcul ŽlŽmentaire montre que
$$\tx\mun t \tx=\frac{1}{\det(\tx)}
\begin{pmatrix}\star &-\b \Nef(\bb+\dd\tau)\cr \b \Nef(\aa+\cc\tau)&\star
\end{pmatrix}$$
En posant
$$\tx\mun t \tx=\begin{pmatrix}\aaa &\bbb\cr \ccc&\ddd\end{pmatrix}$$
on a
$$\ccc= \frac{\b \Nef(\aa+\cc\tau)}{\det(\tx)}\comm{et}\bbb= \frac{-\b \Nef(\bb+\dd\tau)}{\det(\tx)}\ptf$$
On en dŽduit que $$\kappa(\ccc)=\kappa(-\bbb)=\kappa(\det \tx)\kappa(b)\ptf$$
Par ailleurs, si $\tx=e \alpha(\mu) k$ avec 
$e\in\tTef(F)$, $\alpha(\mu)\in A_+$ et $k\in K $ on a aussi
$$\tx\mun t \tx=k\mun\begin{pmatrix}a &-\mmu\b\dfr\cr \mu\b& a+\b\tfr
\end{pmatrix}k\ptf$$
Nous allons Žvaluer l'intŽgrale
$$\Delta^\EC(t,t)\orb^\kappa(t,f)=
\lambda(E/F,\psi)\mun\int_{\tZ(F)T(F)\bsl\tG(F)}D(b)\kappa(c_2)f(\tx\mun t \tx)d\tx$$
o $D(b)=\vert b(\tau-\overline\tau)\vert_E$.
Compte tenu de la dŽcomposition:
$$\tG(F)=\tTef(F)A_+ \tK $$
Žtablie en \ref{decE} on a 
 $$\Delta^\EC(t,t)\orb^\kappa(t,f)=\lambda(E/F,\psi)\mun\,A$$
o
$$A=\int_K\int_{\vert\mu\vert\ge1}D(b)C(\mu)\kappa(\b\mu\mun)
f\Big(k\mun \begin{pmatrix}a &-\b\mmu
\dfr\cr \b\mu& a+\b\tfr\end{pmatrix}k\Big)\,\,d\mu\,dk\ptf$$
Si $t$ tend vers $\bs z$ dans $\T(F)$ alors $a$ tend vers $z\in\bs\mu_2(F)$
 et $b$ tend vers $0$. Maintenant
il existe une fonction lisse sur $F^2\times K$: 
$$\ve:(a,b,\mu,k)\mapsto \ve(a,b,\mu,k)$$ qui tend vers zŽro avec $b$ uniformŽment pour $\vert\mu\vert\ge1$ 
et $k\in K$ telle que, si $b\mu$ est bornŽ (ce qui est loisible puisque $f$ est ˆ support compact), on ait
$$f\Big(k\mun \begin{pmatrix}a &-\b\mmu\dfr\cr \b\mu& a+\b\tfr\end{pmatrix}k\Big)=
\big(1+\ve(a,b,\mu,k)\big)f\Big(k\mun \begin{pmatrix}z &0\cr u& z\end{pmatrix}k\Big)$$
avec $u=b\mu$.
Pour un corps non archimŽdien on peut prendre $\ve(a,b,\mu,k)=0$ si $b$ est assez petit.
Enfin on utilise que ${\bs\nu}$ est conjuguŽ sous $K$ de son transposŽ inverse.

\end{proof}

La combinaison de \ref{deploy} et de \ref{elltrans} fournit le

\begin{theorem}\label{trans}
Soit  $\EC=\{\Tef,\vef\}$ o $E/F$ est une extension quadratique
sŽparable (dŽployŽe ou non). On a plongŽ $\Tef$ dans $\G$. La fonction
$\orb^\EC(t,f)$ dŽfinie pour $t\in\Tef(F)-\Z(F)$, \cad pour $t$ rŽgulier,
se prolonge en une fonction lisse $f^\EC$ sur le groupe compact $\Tef(F)$.
\end{theorem}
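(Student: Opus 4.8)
The statement is the amalgamation of the two transfer results just proved, one for each geometric type of the separable quadratic algebra $E$, so the plan is to split according to whether $E=F\oplus F$ or $E$ is a field and, in each case, to identify $\orb^\EC(t,f)$ with the function whose smooth extension has already been produced. I would begin by recalling that, by Definition \ref{ft} and the convention $\orb^\EC(t',f)=\Delta^\EC(t,t')\orb^\vef(t,f)$, specialising to $t'=t$ gives, for every regular $t\in\Tef(F)-\Z(F)$,
$$\orb^\EC(t,f)=\Delta^\EC(t,t)\,\orb^\vef(t,f)\ptf$$
Thus the content of the theorem is exactly that this product, a priori defined only off the non-regular locus $\Z(F)=\{\pm\bs1\}$, extends to a smooth and compactly supported function on all of $\Tef(F)$.

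If $E=F\oplus F$ then $\Tef$ is split, conjugate to $\M$, and $\vef=1$, so that $\orb^\vef(t,f)=\orb(t,f)$ is the ordinary orbital integral. Lemma \ref{deploy} exhibits the smooth extension explicitly as an Iwasawa integral over $K\times F$; this extension lies in $\ctyc\big(\Tef(F)\big)$ because the eigenvalues of the elements meeting the compact support of $f$ remain in a compact part of $F^\times$. If instead $E$ is a field then $\Tef$ is anisotropic, $\Tef(F)$ is compact, and $\kappa=\vef\ne1$; Proposition \ref{elltrans} produces the required smooth $f^\EC$ on $\Tef(F)$ with $f^\EC(t)=\Delta^\EC(t,t)\orb^\kappa(t,f)$. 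Since these two cases exhaust all separable quadratic $E$, the two results combine to give the theorem, with common boundary value $f^\EC(\bs z)=\orb^\kappa(\bs z\bs\nu,f)$ at the central elements.

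The only genuine difficulty, and it is one already absorbed into the two constituent statements, is the smoothness of the extension across the central elements $\bs z\in\Z(F)$. Writing $t=\a+\b\tau$, as $t\to\bs z$ the parameter $b$ tends to $0$: the transfer factor $\Delta^\EC(t,t)=\cft\,\vef(b)\vert b(\tau-\overline\tau)\vert_E$ tends to $0$, while $\orb^\vef(t,f)$ blows up as the torus orbit degenerates toward a parabolic one. In the elliptic case the computation of \ref{elltrans} shows, via the substitution $u=b\mu$, that the weight $D(b)C(\mu)\,d\mu$ collapses onto the unipotent measure and the remainder $\ve(a,b,\mu,k)$ tends to $0$ uniformly, so the product converges to the parabolic orbital integral $\orb^\kappa(\bs z\bs\nu,f)$; in the split case the same limit is read directly off the explicit Iwasawa formula of \ref{deploy}. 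Hence no new estimate is needed to pass from the two lemmas to their union, and the proof reduces to the case distinction above.
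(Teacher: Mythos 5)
Votre démonstration est correcte et suit exactement la démarche du texte : le théorème y est obtenu comme simple combinaison du lemme \ref{deploy} (cas déployé) et de la proposition \ref{elltrans} (cas elliptique), la régularité au voisinage du centre étant déjà acquise dans ces deux énoncés. Votre discussion supplémentaire de la compensation entre le facteur de transfert qui s'annule et l'intégrale orbitale qui explose ne fait qu'expliciter ce qui est déjà contenu dans la preuve de \ref{elltrans}.
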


On dispose d'une forme plus prŽcise dans le cas particulier o 
tout est non ramifiŽ et o $f$ est la fonction caractŽristique de $K$. 
C'est le rŽsultat dŽsormais connu 
sous le nom de ``Lemme Fondamental''. 
On suppose les mesures de Haar normalisŽes de sorte que
$d\mu$ donne la mesure 1 ˆ $\OF^\times$ et $dk$ la mesure 1 ˆ $K$.

\begin{theorem}\label{fond}
Supposons $F$ non archimŽdien, $E/F$ non ramifiŽ et que
$f$ est la fonction caractŽristique de $K=SL(2,\OE)$. Si la constante $c$
du facteur transfert vaut 1 alors
$$t\mapsto f^{\EC}(t)=\Delta^\EC(t,t)\orb^\kappa(t,f)$$
est la fonction caractŽristique de $\Tef(\OF)=\OE^1$ le sous-groupe
des ŽlŽments de norme 1 dans $\OE^\times$.
\end{theorem}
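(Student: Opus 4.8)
Le plan est de spécialiser $f=\mathbbm 1_K$ dans le calcul de la Proposition \ref{elltrans}. L'observation qui organise toute la preuve est que, pour $E/F$ non ramifiée, le tore $\Tef(F)=E^1$ est déjà compact et égal à $\OE^1$: tout $t\in E^1$ vérifie $\Nef(t)=1$, donc $v_E(t)=0$ et $t\in\OE^\times$. Ainsi tout $t\in\Tef(F)$ régulier s'écrit $t=\a+\b\tau$ avec $\a,\b\in\OF$ et $\b\ne0$ (on choisit $\tau$ tel que $\OE=\OF\oplus\OF\tau$, ce qui est loisible en non ramifié quelle que soit la caractéristique résiduelle), et l'énoncé se ramène à montrer $f^\EC\equiv1$. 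Il n'y a donc aucune condition de support à exhiber: tout le contenu est la normalisation à la valeur $1$, \cad le fait que le transfert de l'unité $\mathbbm 1_K$ est l'unité de l'algèbre de Hecke du tore.

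On commencerait par prendre $\cft=c=1$, ce qui est compatible avec le choix $\cft=\lambda(E/F,\psi)\mun$ puisque $\lambda(E/F,\psi)=1$ dans le cas non ramifié (Lemme \ref{psideux}). Comme $f=\mathbbm 1_K$ est invariante par conjugaison sous $K$, l'intégration sur $K$ de la Proposition \ref{elltrans} disparaît ($\vol K=1$) et il reste
$$f^\EC(t)=\int_{|\mu|\ge1}D(\b)\,C(\mu)\,\vef(\b\mmu)\,\mathbbm 1_K\Big(\begin{pmatrix}\a&-\b\mmu\dfr\cr \b\mu&\a+\b\tfr\end{pmatrix}\Big)\,d\mu\ptf$$
Comme $\dfr=\Nef(\tau)\in\OF^\times$ et $\tfr\in\OF$, les entrées $\a$, $\a+\b\tfr$ et $-\b\mmu\dfr$ sont entières dès que $|\mu|\ge1$, et le déterminant vaut $\Nef(t)=1$; la seule contrainte est donc $\b\mu\in\OF$. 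En posant $|\b|=q^{-r}$ (avec $r\ge0$) et $|\mu|=q^m$, le domaine d'intégration est exactement $0\le m\le r$.

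On calculerait ensuite l'intégrale en utilisant deux normalisations. D'abord, la différente de $E/F$ étant triviale (extension non ramifiée), $\tau-\overline\tau$ est une unité de $\OE$ et $D(\b)=|\b(\tau-\overline\tau)|_E=|\b|=q^{-r}$. Ensuite $\vef(\b\mmu)=(-1)^{r+m}$ et la mesure donne $\vol\{|\mu|=q^m\}=1$. En utilisant $C(1)=1$ et $C(q^m)=(1+q\mun)q^m$ pour $m\ge1$ (Proposition \ref{decE}), on obtient
$$f^\EC(t)=q^{-r}(-1)^r\sum_{m=0}^{r}(-1)^m C(q^m)\ptf$$
La somme alternée se calcule comme une série géométrique:
$$\sum_{m=0}^{r}(-1)^m C(q^m)=1+(1+q\mun)\sum_{m=1}^{r}(-q)^m=(-q)^r\ptf$$
d'où $f^\EC(t)=q^{-r}(-1)^r(-q)^r=1$ pour tout $t$ régulier.

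Pour conclure, le Théorème \ref{trans} garantit que $f^\EC$ se prolonge en une fonction lisse sur $\Tef(F)$; valant $1$ sur l'ensemble dense des éléments réguliers et $\Tef(F)$ étant égal à $\OE^1$, on a bien $f^\EC=\mathbbm 1_{\OE^1}$ (la valeur aux points centraux $\pm1$ s'obtenant par continuité). Le point délicat n'est pas géométrique mais arithmétique: il faut contrôler précisément les normalisations ($|\tau-\overline\tau|_E=1$, $\lambda(E/F,\psi)=1$, la mesure sur $A_+$) pour que le préfacteur soit exactement $q^{-r}(-1)^r$, puis reconnaître que la somme alternée des volumes $C(q^m)$ vaut $(-q)^r$, ce qui est précisément la quantité qui compense $D(\b)\vef(\b)$ et produit $1$. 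C'est cette compensation exacte entre le facteur de transfert et les volumes des doubles classes qui constitue le c\oe ur du Lemme Fondamental.
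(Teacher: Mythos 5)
Votre démonstration est correcte et suit pour l'essentiel la même voie que celle du texte : même formule intégrale issue de \ref{decE} et \ref{elltrans}, même réduction au domaine $a\in\OF$, $b\mu\in\OF$, et même somme géométrique alternée qui vaut $(-q)^r$ (vous paramétrez par la valuation de $\mu$ là où le texte paramètre par celle de $b\mu$, ce qui est purement cosmétique), avec en prime la remarque juste que $E^1=\OE^1$ en non ramifié et la vérification explicite que $c=1$ est compatible avec $\cft=\lambda(E/F,\psi)\mun$. Seule omission mineure : le cas déployé $E=F\oplus F$, couvert par l'énoncé et réglé dans le texte en invoquant le corollaire \ref{lfddep}.
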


\begin{proof}Si $E=F\oplus F$ c'est le corollaire \ref{lfddep}.
Supposons maintenant que $E$ est un corps.
Pour $t=a+b\tau$ on a
$$f^{\EC}(t)=\Delta^\EC(t,t)\orb^\kappa(t,f)=\int_{\vert\mu\vert\ge1}D(b)C(\mu)\kappa(\b\mu)
f\Big( \begin{pmatrix}a &-\b\mmu
\dfr\cr \b\mu& a+\b\tfr\end{pmatrix}\Big)\,\,d\mu\ptf$$
On rappelle que puisque $E/F$ est non ramifiŽ on a choisi $\tau\in\OE^\times$
de sorte que $\dfr$ et $\tfr$ sont des entiers et
$$D(b)= \vert{b(\tau-\overline\tau)}\vert=\vert{b}\vert \ptf$$
On a donc $$f^{\EC}(a+b\tau)=0$$ sauf si $a\in\OF$ et $\b\mu\in\OF$ auquel cas, 
$$f^{\EC}(a+b\tau)=\int_{\vert b\vert\le\vert b\mu\vert\le1}
\vert{b}\vert C(\mu)\kappa(\b\mu)\,d\mu\ptf$$
On rappelle que $d\mu$ donne la mesure 1 ˆ $\OF^\times$, que
$$\vert{b}\vert C(\mu)=\vert\b\vert\com{si} 
\vert\mu\vert=1\com{et} \vert{b}\vert C(\mu)=\bigg(1+\frac{1}{q}\bigg)\vert b\mu\vert\com{si}
\vert\mu\vert>1\ptf$$
De plus $\kappa(\b\mu)=(-1)^m$ si $\vert \b\mu\vert=q^{-m}$.
Donc si $a\in\OF$ et $\vert \b\vert=q^{-n}\le1$
$$f^{\EC}(a+b\tau)=(-1)^nq^{-n}+\sum_{0\le m< n}(-1)^m\bigg(1+\frac{1}{q}\bigg)q^{-m}=1\ptf
$$
\end{proof}
Sous les hypothses de ce lemme mais
avec des mesures de Haar diffŽrentes pour $\G$ et $\Tef$ 
on obtient comme transfert la fonction caractŽristique
du compact maximal de $\Tef(F)$ multipliŽe par  $\vol\big(\Tef(\OF)\big)\bsl\vol\big(\G(\OF)\big)$:
\begin{corollary}\label{tof}
$$f^{\EC}(t)=\frac{\vol\big(\G(\OF)\big)}{\vol\big(\Tef(\OF)\big)}\chi_{E/F}(t)$$
o $\chi_{E/F}$ est la fonction caractŽristique de $\Tef(\OF)$.
\end{corollary}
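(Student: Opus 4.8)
The plan is to treat this as a pure measure-scaling statement. The transfer factor $\Delta^\EC(t,t)=\cft\,\vef(\b)\,\vert \b(\tau-\overline\tau)\vert_E$ is a function on $\Tef(F)$ that involves no Haar measure (Définition \ref{FTr}), so all dependence of $f^\EC(t)=\Delta^\EC(t,t)\orb^\kappa(t,f)$ on the chosen normalizations is carried by the $\kappa$-orbital integral $\orb^\kappa(t,f)$. I would isolate how this integral scales and then read off the constant from the normalized computation already performed in Théorème \ref{fond}.

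First I would recall that $\orb^\kappa(t,f)=\int_{\tT(F)\bsl\tG(F)}\kappa(\det\tx)f(\tx\mun t\tx)\,d\tx$ is computed against the invariant quotient measure $d\tx$ on $\tT(F)\bsl\tG(F)$, where $\tT=E^\times$ is the centralizer of $t$ in $\tG$. This quotient measure is a Haar measure on $\tG(F)$ divided by a Haar measure on $\tT(F)$; through $\G\subset\tG$ and $\T\simeq\Tef$ the former is pinned down by a Haar measure on $\G(F)$ and the latter by one on $\Tef(F)$. As recorded in Proposition \ref{decE}, the normalization used throughout is the one giving mass $1$ to $E^1\bsl K$, i.e. $\vol(K)/\vol(\Tef(\OF))=1$; this is exactly the pair of choices $\vol(\G(\OF))=\vol(K)=1$ and $\vol(\Tef(\OF))=\vol(\OE^1)=1$ under which Théorème \ref{fond} yields $f^\EC=\chi_{E/F}$.

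Next I would carry out the scaling. If one replaces the Haar measure on $\G(F)$ by the one giving $\vol(\G(\OF))=v_G$ and the Haar measure on $\Tef(F)$ by the one giving $\vol(\Tef(\OF))=v_T$, then the quotient measure $d\tx$ on $\tT(F)\bsl\tG(F)$ is multiplied by $v_G/v_T$ relative to the normalized choice, the numerator Haar measure scaling by $v_G$ and the denominator by $v_T$. Consequently $\orb^\kappa(t,f)$, and hence $f^\EC(t)=\Delta^\EC(t,t)\orb^\kappa(t,f)$, is multiplied by the same factor. Combining with Théorème \ref{fond} gives
$$f^\EC(t)=\frac{v_G}{v_T}\,\chi_{E/F}(t)=\frac{\vol(\G(\OF))}{\vol(\Tef(\OF))}\,\chi_{E/F}(t),$$
as claimed; the split case $E=F\oplus F$ is identical, starting from Corollaire \ref{lfddep} instead of Théorème \ref{fond}.

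The main obstacle is the bookkeeping that correctly attributes the denominator scaling to the endoscopic torus $\Tef(F)$ rather than to the ambient torus $\tT(F)=E^\times$. One must verify that changing the Haar measure on $\Tef(F)$ by the factor $v_T$ changes the Haar measure on $\tT(F)$ entering the quotient by the same factor, which uses the exact sequence $1\to F^\times\to E^\times\to T(F)\to1$ together with the fixed compatible normalization on $\tZ(F)=F^\times$, and that no hidden measure dependence creeps into the transfer factor. Once this compatibility is pinned down, the corollary follows purely formally.
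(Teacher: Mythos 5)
Votre démonstration est correcte et suit exactement la voie que le texte sous-entend : le corollaire est la simple observation que la mesure quotient sur $\tT(F)\bsl\tG(F)$ utilisée dans $\orb^\kappa(t,f)$ est le quotient d'une mesure de Haar sur $\G(F)$ par une mesure de Haar sur $\T(F)\simeq\Tef(F)$, de sorte qu'un changement de normalisation multiplie le transfert par $\vol\big(\G(\OF)\big)/\vol\big(\Tef(\OF)\big)$, le cas normalis� �tant le Th�or�me \ref{fond}. Votre v�rification que le facteur de transfert ne d�pend d'aucune mesure et que la normalisation de \ref{decE} (masse 1 pour $E^1\bsl K$) co�ncide avec $\vol(K)=\vol(\Tef(\OF))=1$ est pr�cis�ment le point � contr�ler.
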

Nous aurons aussi besoin du lemme suivant:
\begin{lemma} Soit $F$ un corps local, $\EC=\{\Tef,\vef\}$  une donnŽe endoscopique 
de type 2 et $t$ un ŽlŽment de $\Tef(F)$. On a
$$f^{\EC}(t\mun)=f^{\EC}(t)\ptf$$
\end{lemma}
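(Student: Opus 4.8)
The plan is to reduce to regular $t$ and to exploit the fact that conjugation by $\bs w_E$ realizes inversion on the norm‑one torus, tracking the resulting sign factors on both the orbital integral and the transfer factor; I expect these two factors to cancel. By Theorem \ref{trans} the function $f^\EC$ is smooth on $\Tef(F)$ and $t\mapsto t^{-1}$ is a homeomorphism, so it suffices to prove the identity for $t$ regular, where $f^\EC(t)=\Delta^\EC(t,t)\,\orb^\vef(t,f)$; the general case then follows by continuity since the regular set is dense. The key geometric observation is that for $t\in\Tef(F)$ of norm one one has $t^{-1}=\overline t$, and the nontrivial automorphism $\gamma\mapsto\overline\gamma$ of $E$ is induced by conjugation under $\bs w_E$ (see \ref{ext}); hence $t^{-1}=\bs w_E\,t\,\bs w_E^{-1}$ inside $\tG(F)$.

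First I would treat the orbital integral. Writing $t^{-1}=\bs w_E t\bs w_E^{-1}$ and substituting $\ty=\bs w_E^{-1}\tx$ in
$$\orb^\vef(t^{-1},f)=\int_{\tT(F)\bsl\tG(F)}\vef(\det\tx)\,f(\tx^{-1}t^{-1}\tx)\,\dtx,$$
one gets $\tx^{-1}t^{-1}\tx=\ty^{-1}t\ty$ and $\det\tx=\det(\bs w_E)\det\ty$. This substitution is left translation by $\bs w_E^{-1}$ on $\tT(F)\bsl\tG(F)$, and it is measure preserving: since conjugation by $\bs w_E$ acts on $\tT(F)\simeq E^\times$ as the Galois automorphism, which preserves the Haar measure of $E^\times$, the modulus of the translation is $1$. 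Therefore
$$\orb^\vef(t^{-1},f)=\vef(\det\bs w_E)\,\orb^\vef(t,f)=\vef(-1)\,\orb^\vef(t,f),$$
the last equality being Lemma \ref{signa}.

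Finally I would compute the transfer factor. Since $t^{-1}=\overline t$ carries the opposite off‑diagonal coordinate, writing $t=a+b\tau$ gives $t^{-1}=(a+b\tfr)-b\tau$, so in the variant formula $\Delta^\EC(t,t)=\cft\,\vef(b)\,\vert b(\tau-\overline\tau)\vert_E$ the parameter $b$ is replaced by $-b$; hence $\Delta^\EC(t^{-1},t^{-1})=\vef(-1)\,\Delta^\EC(t,t)$. Multiplying the two displays yields $f^\EC(t^{-1})=\vef(-1)^2\,f^\EC(t)=f^\EC(t)$, because $\vef$ has order two. The split case $E=F\oplus F$ is covered by the same computation with $\vef(-1)=1$, or more directly because $t$ and $t^{-1}$ are already conjugate in $\G(F)$. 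The only genuine point requiring care is the measure invariance under left translation by $\bs w_E$ on $\tT(F)\bsl\tG(F)$; once that modulus is identified with $1$, the two factors $\vef(-1)$ cancel automatically and the identity drops out.
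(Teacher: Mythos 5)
Your proof is correct and follows essentially the same route as the paper: conjugation by $\bs w_E$ realizes $t\mapsto t\mun$ on $\Tef(F)$, producing a factor $\vef(\det\bs w_E)=\vef(-1)$ on the $\kappa$-orbital integral (Lemma \ref{signa}), which cancels against the factor $\vef(-1)$ picked up by the transfer factor under $b\mapsto -b$. The extra details you supply — the measure invariance of left translation by $\bs w_E\mun$ on $\tT(F)\bsl\tG(F)$ and the reduction from central to regular $t$ by smoothness of $f^\EC$ — are left implicit in the paper but are correctly handled.
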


\begin{proof}
On a exhibŽ en \ref{signa} un ŽlŽment $w\in\tG(F)$ qui vŽrifie
$$w t w\mun=t\mun\com{et}\vef\big(\det(w)\big)=\vef(-1)\ptf$$ On en dŽduit que
$$\orb^\kappa(t\mun,f)=\vef(-1)\orb^\kappa(t,f)$$
et comme par ailleurs $$\Delta^\EC(t\mun,t\mun)=\vef(-1)\Delta^\EC(t,t)$$
le lemme en rŽsulte.
\end{proof}

\Section{DŽveloppement en germes
pour $SL(2)$}


Dans toute cette section $F$ dŽsigne un corps local de caractŽristique $p\ge0$.
On note $\UC$ un ensemble de reprŽsentants des classes
de conjugaison d'unipotents rŽguliers.
Cet ensemble est isomorphe au groupe $F^\times/(F^\times)^2$. 
On a donc
$$\UC\simeq Q_F\simeq G(F)\tZ(F)\bsl \tG(F)\ptf$$
Un tel isomorphisme
est bien entendu trs particulier au groupe $SL(2)$.

Soit $T$ un tore anisotrope de $G$. On considre $t\in T(F)$ et $f\in\ctyc \big(G(F)\big)$. 
On cherche un dŽveloppement asymptotique de l'intŽgrale orbitale $\orb(t,f)$ lorsque $t\to1$. 
En caractŽristique $p=0$ on dispose du dŽveloppement en germes de Shalika
indexŽ par l'ensemble des classes de conjugaison unipotentes rationnelles.
En caractŽristique 2 l'ensemble $\UC$ des classes de conjugaison unipotentes rationnelles rŽgulires
est paramŽtrŽ par un groupe compact de cardinal non dŽnombrable.
Le dŽveloppement en germes de Shalika pour $SL(2)$ ne peut pas tre dŽfini \textit{stricto sensu} 
et les sommes sur $\UC$ doivent tre remplacŽes par des intŽgrales si $p=2$.
On dispose d'un dŽveloppement en $\kappa$-germes qui lui a un sens en toute caractŽristique
et qui en caractŽristique $p\ne2$ est Žquivalent, modulo une transformation de Fourier sur un groupe fini,
au dŽveloppement en germes de Shalika.


Lorsque $p\ne 2$ le groupe $\UC$ est fini et
on a le dŽveloppement en germes de Shalika
$$\orb(t,f)= \Gamma_1(t)f(1)+\sum_{\eta\in\UC} \Gamma_\eta(t)\orb(\eta,f)\ptf$$
 On dispose des intŽgrales $\kappa$-orbitales unipotentes
$$\orb^\kappa({\bs\nu},f)\comm{o }
{\bs\nu}=\begin{pmatrix}1&1\cr0&1\cr\end{pmatrix}\ptf$$
c'est ˆ dire l'intŽgrale sur $\tG(F)/U(F)\tZ(F)$ tordue par $\kappa$.
On a par inversion de Fourier
$$\orb(\eta,f)=\frac{1}{\card\widehat\UC}\sum_{\kappa\in\widehat\UC}<\kappa, \eta>\orb^\kappa({\bs\nu},f)$$
o $\kappa$ parcourt le dual $\widehat\UC$ de $\UC$.
Le dŽveloppement de Shalika peut donc se rŽcrire
$$\orb(t,f)= \Gamma_1(t)f(1)+\frac{1}{\card\widehat\UC}
\sum_{\eta\in\UC}\sum_{\kappa\in\widehat\UC}<\kappa, \eta> \Gamma_\eta(t)\orb^\kappa({\bs\nu},f)\ptf$$
Il est alors naturel d'introduire les $\kappa$-germes:
$$\Gamma^\kappa({\bs\nu},t):=\frac{1}{\card\UC}\sum_{\eta\in\UC}<\kappa, \eta> \Gamma_\eta(t)$$
et on a alors un dŽveloppement en $\kappa$-germes
$$\orb(t,f)= \Gamma_1(t)f(1)+\sum_{\kappa\in\widehat\UC} \Gamma^\kappa({\bs\nu},t)\orb^\kappa({\bs\nu},f)\ptf$$

Le dŽveloppement en $\kappa$-germes existe aussi en caractŽristique $p=2$.
On va utiliser la stabilisation de l'intŽgrale orbitale de $t\in T(F)$ pour 
donner un dŽveloppement asymptotique de l'intŽgrale orbitale $\orb(t,f)$
au voisinage de $1$ en toute caractŽristique. On a
$$\orb(t,f)=c_1\sum_{\kappa\in\widehat\UC}\orb^\kappa(t,f)$$
et $\orb^\kappa(t,f)$ est nul sauf si $\kappa=1$ ou bien
si $\kappa=\vef$ est le caractre attachŽ par la thŽorie du corps de classe ˆ
l'extension quadratique $E$ engendrŽe par un plongement de $T(F)$ dans $M(2,F)$.

Pour $\kappa=1$ l'intŽgrale $\orb^1(t,f)$ est l'intŽgrale orbitale pour un 
ŽlŽment d'un tore elliptique
dans $GL(2,F)$ pour lequel on dispose des germes de Shalika:
$$\orb^1(t,f)= \Gamma_1^{\tG}(t)f(1)+ \Gamma_{{\bs\nu}}^{\tG}(t)\orb_{\tG}({\bs\nu},f)\ptf$$
On observera que l'intŽgrale orbitale sur $GL(2,F)$ a un sens bien que $f$ ne soit 
dŽfinie que sur $SL(2,F)$ car les conjuguŽs sous $GL(2)$ des ŽlŽments de $SL(2)$
restent dans ce groupe.

Maintenant, si $\kappa=\vef$,
$$\orb^\kappa(t,f)=\Delta^\EC(t,t')\mun f^{\EC}(t')$$
et
$$f^{\EC}(t')=f^{\EC}(1)$$ si $t'$ est assez voisin de $1$.
Le germe $\Gamma^\kappa(t)$ est donnŽ par l'inverse du facteur de transfert.
On pourra observer qu'il existe une constante $c_2$ dŽpendant du choix des mesures
telle que
$$f^{\EC}(1)=c_2\,\orb^\kappa({\bs\nu},f)\ptf$$

\Chapter{Analyse harmonique et transfert spectral local}

\Section{SŽries principales}\label{sp}

On appelle sŽries principales les reprŽsentations induites paraboliques ˆ partir d'un caractre 
unitaire $\lambda$ du sous-groupe de Levi $\M(F)$ prolongŽ ˆ $\P(F)$ trivialement sur $\U(F)$.
Elles sont rŽalisŽes par la reprŽsentation rŽgulire droite
dans l'espace $V_\lambda$ des fonctions $$\vf(pg)=p^\lambda\delta_\P(p)^{1/2}\vf(g)
\com{o} 
\delta_\P\Big( \begin{pmatrix}a&n\cr0&a\mun\cr\end{pmatrix}\Big)=\vert a\vert^2\ptf$$
On la notera $\pi_\lambda$.
De mme un caractre unitaire $\txi$ de $\tP(F)$:
$$p=\begin{pmatrix}a&n\cr0&b\cr\end{pmatrix} \mapsto
p^\txi=\mu(a)\nu(b)$$
dŽfinit une sŽrie principale $\tpi_\txi$, Žgalement notŽe $\pi(\mu,\nu)$, pour $\tG(F)$.
La notation $\pi(\mu,\nu)$ est celle de \cite{JL} o il est montrŽ que ces reprŽsentations
sont unitaires irrŽductibles, que $\pi(\mu,\nu)$ est Žquivalente ˆ $\pi(\nu,\mu)$
et que ce sont les seules Žquivalences. 
Si $\lambda$ est la restriction de $\txi$ ˆ $\P(F)$ la reprŽsentation 
$\pi_\lambda$ est la restriction de $\tpi_\txi$ ˆ $\G(F)$.
On verra en  \ref{exem} que les $\pi_\lambda$ sont irrŽductibles 
sauf si $\lambda$ est non trivial d'ordre 2.

\Section{L'opŽrateur d'entrelacement}
Les reprŽsentations $\pi_\lambda$ et $\pi_{w\lambda}$  sont Žquivalentes et
un opŽrateur d'entrelacement $\mathbf M(w,\lambda)$ entre $\pi_\lambda$ et $\pi_{w\lambda}$  
est donnŽ pour $\vf\in V_\lambda$
par le prolongement mŽromorphe de l'intŽgrale
$$\mathbf M(w,\lambda)\vf(g)=\int_{U(F)}\vf(wug)\,du$$
o  $w$ est l'ŽlŽment 
non trivial du groupe de Weyl de $\M$.
De fait cette intŽgrale ne converge que dans un c™ne.
En effet, si $$p=\begin{pmatrix}a&n\cr0&a\mun\cr\end{pmatrix} 
\com{et}\vf(pk)=\chi(p)q_F^{(s+1)\HP(p)}\vf(k)=\chi(p)\vert a\vert^{s+1}\vf(k)\com{pour $k\in K$}
$$ o $\chi$ est un caractre unitaire on a
$$\int_{U(F)}\vf(wug)\,du=\int_{F}\vf(\bs k(wu_nk))\chi(m_n)
\frac{dn}{\vert\vert(1,n)\vert\vert^{s+1}}
\com{si}wu_nk=m_nu' \bs k(wu_nk)
$$
et l'intŽgrale est convergente pour $\Re(s)>0$. Une telle intŽgrale admet un prolongement mŽromorphe.
Par exemple, si $F$ est un corps local non archimŽdien, $\chi=1$ et $\vf(k)\equiv1$
on a
$$\int_{U(F)}\vf(wug)\,du=1+(1-q_F\mun)\sum_1^\infty q_F^{-ns}=
\frac{1-q_F^{-s}+(1-q_F\mun)q_F^{-s}}{1-q_F^{-s}}
=\frac{Z(s)}{Z(1+s)}$$
o $Z(s)=1/(1-q_F^{-s})$ est la fonction ZŽta locale. Pour le cas gŽnŽral on renvoie ˆ la littŽrature.

\Section{Induction et restriction}

Soit $\tpi$ une reprŽsentation admissible irrŽductible de $\tG(F)$. On note $X(\tpi)$ le 
groupe des caractres $\bs\chi$ de $\tG(F)=GL(2,F)$ (\cad des homomorphismes dans $\CM^\times$)
tels que $\tpi\otimes\bs\chi\simeq\tpi\ptf$
De tels caractres $\bs\chi$ sont d'ordre 2 puisqu'ils doivent tre triviaux sur $\G(F)\tZ(F)$.

\begin{proposition}\label{fini} 
Le groupe $X(\tpi)$ est fini.
\end{proposition}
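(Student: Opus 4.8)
La stratégie consiste à montrer que tout élément de $X(\tpi)$ est contraint de prendre ses valeurs dans un ensemble fini. Comme il a déjà été observé que tout $\bs\chi\in X(\tpi)$ est trivial sur $\G(F)\tZ(F)$, un tel caractère se factorise à travers le quotient $\G(F)\tZ(F)\bsl\tG(F)\simeq Q_F$, c'est-à-dire qu'il est déterminé par sa restriction à $Q_F=(F^\times)^2\bsl F^\times$. Ainsi $X(\tpi)$ s'identifie à un sous-groupe du groupe des caractères d'ordre 2 de $Q_F$, autrement dit au dual de Pontryagin de $Q_F$. Or ce dernier est le groupe \emph{discret} des caractères d'ordre 2 de $C_F$, et la finitude de $X(\tpi)$ équivaut donc à montrer que seul un nombre fini de tels caractères peuvent fixer $\tpi$ par torsion.

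Le point central est d'exploiter le caractère (au sens des distributions) de $\tpi$. Si $\bs\chi\in X(\tpi)$, alors l'égalité $\tpi\otimes\bs\chi\simeq\tpi$ impose l'identité $\bs\chi(g)\,\trace\tpi(g)=\trace\tpi(g)$ pour tout $g$ régulier dans le support du caractère, d'où $\bs\chi(g)=1$ sur l'ouvert dense où $\trace\tpi(g)\ne0$. Première étape : je montrerais que le caractère de $\tpi$ est non identiquement nul, ce qui résulte de son intégrabilité locale (rappelée dans l'introduction pour $GL(2,F)$ grâce à \cite{JL}) et du fait qu'une représentation irréductible non triviale a un caractère non nul. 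Deuxième étape : sur le lieu où $\trace\tpi\ne0$, la contrainte $\bs\chi=1$ force $\bs\chi$ à être trivial sur un ensemble d'éléments semi-simples réguliers suffisamment riche pour engendrer, modulo $\G(F)\tZ(F)$, un sous-groupe d'indice fini de $\tG(F)$ — ou, de façon équivalente, d'image d'indice fini dans $Q_F$.

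Une approche alternative, peut-être plus directe, passe par le déterminant : tout $\bs\chi$ se factorise sous la forme $\bs\chi=\omega\circ\det$ pour un caractère $\omega$ d'ordre 2 de $F^\times$, et la condition $\tpi\otimes(\omega\circ\det)\simeq\tpi$ relie le caractère central $\omega_{\tpi}$ et le conducteur de $\tpi$ aux caractères $\omega$ admissibles. En termes de paramètres, si $\tpi=\pi(\mu,\nu)$ est une série principale, la torsion donne $\pi(\mu\omega,\nu\omega)$, et l'isomorphisme impose $\{\mu\omega,\nu\omega\}=\{\mu,\nu\}$ d'après le critère d'équivalence de \cite{JL} ; cela ne laisse qu'au plus deux possibilités pour $\omega$ (à savoir $\omega=1$ ou $\omega=\mu\nu\mun$ restreint aux caractères d'ordre 2). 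Pour les représentations de carré intégrable ou supercuspidales, un argument analogue de comparaison de paramètres de Langlands borne le nombre de torsions admissibles.

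Le principal obstacle sera de traiter uniformément tous les types de représentations admissibles irréductibles en \emph{toute caractéristique}, y compris $p=2$, où $Q_F$ est de cardinal non dénombrable et où l'argument ne peut donc pas se ramener à un décompte naïf sur un groupe fini. C'est précisément là que l'intégrabilité locale du caractère, établie pour $GL(n)$ et $SL(n)$ en caractéristique arbitraire dans \cite{Lem}, devient indispensable : elle garantit que la condition $\bs\chi=1$ sur le lieu de non-annulation du caractère est une contrainte effective. Je m'attends à ce que la difficulté technique se concentre sur la vérification que ce lieu, vu dans $Q_F$, engendre un sous-groupe ouvert (donc d'indice fini puisque $Q_F$ est compact), ce qui force $X(\tpi)$, sous-groupe de son annulateur, à être fini.
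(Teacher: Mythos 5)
Votre strat\'egie est correcte mais elle suit une route r\'eellement diff\'erente de celle du texte. La preuve du texte est minimale~: pour $F$ local de caract\'eristique $p\ne2$ elle observe simplement que le quotient $\tG(F)/\G(F)\tZ(F)\simeq Q_F$ est fini, de sorte qu'il n'y a de toute fa\c con qu'un nombre fini de caract\`eres candidats, et pour $p=2$ elle renvoie purement et simplement \`a \cite{Si} (voir aussi \cite{He}) sans donner d'argument. Vous proposez au contraire une d\'emonstration auto-contenue fond\'ee sur l'int\'egrabilit\'e locale du caract\`ere, qui est bien l'ingr\'edient pertinent et qui traite uniform\'ement toutes les caract\'eristiques~; c'est plus co\^uteux (il faut \cite{JL} ou \cite{Lem}) mais cela \'evite la bo\^{\i}te noire. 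Le seul point \`a verrouiller est celui que vous signalez vous-m\^eme~: le lieu $U=\{g\,\vert\,\Theta_{\tpi}(g)\ne0\}$ n'est pas un ouvert (la fonction caract\`ere n'est que localement int\'egrable), il est seulement de mesure positive puisque la distribution caract\`ere est non nulle. Mais cela suffit~: $\bs\chi$ vaut $1$ presque partout sur $U$, donc sur tout le sous-groupe engendr\'e par $U$, lequel contient $UU\mun$ et donc, par le th\'eor\`eme de Steinhaus, un voisinage de l'identit\'e~; ce sous-groupe est ouvert dans $\tG(F)$, son image par le d\'eterminant est un sous-groupe ouvert de $F^\times$, d'image ouverte donc d'indice fini dans le groupe compact $Q_F$, et $X(\tpi)$ est contenu dans le dual du quotient fini correspondant. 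Quant \`a votre variante par classification (s\'eries principales, param\`etres), elle recoupe ce que le texte \'etablit plus loin en \ref{xpi} pour les repr\'esentations $\tpi_{E,\tth}$, mais comme preuve g\'en\'erale de la finitude elle pr\'esuppose la classification compl\`ete en caract\'eristique positive et est donc moins \'economique que l'argument par le caract\`ere.
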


\begin{proof}
La finitude de $X(\pi)$ est Žvidente si $F$ est un corps 
local de caractŽristique $p\ne2$ compte tenu de la 
finitude du groupe quotient $$\tG(F)/\G(F)\tZ(F)\simeq Q_F\ptf$$ 
Si $p=2$ o ce quotient est seulement compact et la finitude est Žtablie dans
\cite{Si} (voir aussi \cite{He}).
\end{proof}

Soit $H$ un sous-groupe distinguŽ fermŽ de $\tG(F)$ contenant $\G(F)=SL(2,F)$.
On notera $X_H(\tpi)$ le sous-groupe de $X(\tpi)$ des $\chi$ dont la restriction 
ˆ $H$ est triviale et $n_H(\tpi)$ son cardinal.

\begin{proposition}\label{cliff}
Toute reprŽsentation unitaire irrŽductible
$\pi$ de $H$ apparait dans la restriction ˆ $H$ 
d'une reprŽsentation unitaire irrŽductible $\tpi$ de $\tG(F)$. 
\'Etant donnŽes deux telles reprŽsentations 
$\tpi$ et $\tpi'$ il existe un caractre
$\bs\chi$ du quotient $\tG(F)/H$ tel que $\tpi'=\tpi\otimes\bs\chi$ et leurs restrictions ˆ $H$
ont les mmes composants. Ces composants sont de la forme
$\pi^g$ o $\pi^g(x)=\pi(gxg\mun)$ avec $g\in\tG(F)$.
Par restriction ˆ $H$, la reprŽsentation
$\tpi$ se dŽcompose
en une somme de $n_H(\tpi)$ reprŽsentations deux ˆ deux inŽquivalentes
de $H$ avec multiplicitŽ 1. 
Un caractre $\psi$ du groupe additif $F$ Žtant donnŽ, 
si la reprŽsentation $\tpi$ n'est pas de dimension 1 elle
admet un modle de Whittaker relativement ˆ $\psi$ et
un seul composant irrŽductible de la restriction ˆ $\G(F)$
admet un modle de Whittaker pour $\psi$.
\end{proposition}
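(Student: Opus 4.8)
Le plan est d'appliquer la th\'eorie de Clifford \`a la restriction de $\tG(F)=GL(2,F)$ au sous-groupe distingu\'e $H$, dont le quotient $\tG(F)/H$ est ab\'elien (c'est un quotient de $F^\times$ via le d\'eterminant). Pour l'existence, je pars de $\pi$ irr\'eductible unitaire de $H$ et j'applique la machine de Mackey \`a l'extension $H\triangleleft\tG(F)$ \`a quotient ab\'elien (le cadre est de type I) pour produire une irr\'eductible $\tpi$ de $\tG(F)$ au-dessus de $\pi$, \cad telle que $\pi$ apparaisse dans $\tpi\vert_H$. Cette m\^eme machine assure que $\tpi\vert_H$ est somme de conjugu\'es $\pi^g$ ($g\in\tG(F)$) d'une m\^eme irr\'eductible, l'orbite \'etant gouvern\'ee par l'action de $\widehat{\tG(F)/H}$ par torsion.

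J'utilise ensuite la finitude. Comme $(\tpi\otimes\bs\chi)\vert_H=\tpi\vert_H$ pour tout $\bs\chi\in\widehat{\tG(F)/H}$, les caract\`eres qui fixent $\tpi$ forment le groupe $X_H(\tpi)\subseteq X(\tpi)$, fini d'apr\`es \ref{fini}; l'orbite des composants est donc finie et $\tpi\vert_H$ est une vraie somme directe finie. Pour comparer deux choix $\tpi$ et $\tpi'$ ayant un composant commun $\pi$, j'observe que tous deux figurent au-dessus de $\pi$, donc dans la m\^eme orbite de torsion, ce qui donne $\tpi'\simeq\tpi\otimes\bs\chi$ pour un caract\`ere $\bs\chi$ de $\tG(F)/H$; alors $\tpi'\vert_H=\tpi\vert_H$ puisque $\bs\chi$ est trivial sur $H$, d'o\`u l'\'egalit\'e des composants.

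Le point d\'elicat est la multiplicit\'e $1$, qui est l'obstacle principal. Je montrerais que la multiplicit\'e commune $e$ des composants vaut $1$; cette multiplicit\'e est mesur\'ee par un $2$-cocycle sur le groupe $X_H(\tpi)$ \`a valeurs dans $\CM^\times$ dont il faut \'etablir la trivialit\'e (r\'esultat connu pour $SL(n)\subset GL(n)$, cf. \cite{LL}). Une fois $e=1$ acquis, la th\'eorie de Clifford et la dualit\'e de Pontryagin pour le groupe ab\'elien $\tG(F)/H$ identifient le nombre de composants deux \`a deux in\'equivalents de $\tpi\vert_H$ \`a l'indice du stabilisateur de $\pi$ dans $\tG(F)$, lui-m\^eme \'egal \`a $\card X_H(\tpi)$. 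On obtient ainsi
$$\tpi\vert_H=\bigoplus_{i=1}^{r}\pi_i\com{avec}r=\card X_H(\tpi)=n_H(\tpi)\ptf$$
les $\pi_i$ formant une seule orbite, donc de la forme $\pi^g$.

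Enfin, pour le mod\`ele de Whittaker, j'observe que la condition d\'efinissant une fonctionnelle de Whittaker relative \`a $\psi$ ne porte que sur le radical unipotent $U(F)$, commun \`a $\G(F)$ et \`a $\tG(F)$; l'espace de ces fonctionnelles sur $\tpi$ est donc le m\^eme, qu'on voie $\tpi$ comme repr\'esentation de $\tG(F)$ ou qu'on la restreigne \`a $\G(F)$. Si $\tpi$ n'est pas de dimension $1$, \cite{JL} assure qu'elle est g\'en\'erique de mod\`ele de Whittaker unique, \cad que cet espace est de dimension $1$. Comme $\tpi\vert_{\G(F)}=\bigoplus_i\pi_i$ est sans multiplicit\'e \`a composants deux \`a deux in\'equivalents, une fonctionnelle $\psi$-Whittaker sur la somme est non nulle sur exactement un des $\pi_i$; ce composant est donc l'unique \`a admettre un mod\`ele de Whittaker pour $\psi$.
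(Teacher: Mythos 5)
Votre sch\'ema g\'en\'eral (th\'eorie de Clifford--Mackey pour $H\triangleleft\tG(F)$ \`a quotient ab\'elien, finitude via \ref{fini}, orbite unique de composants $\pi^g$, comparaison de deux $\tpi$ au-dessus d'un m\^eme $\pi$ par torsion) est correct et co\"incide avec celui du texte. Mais le point que vous identifiez vous-m\^eme comme \og l'obstacle principal \fg, la multiplicit\'e $1$, n'est pas d\'emontr\'e : vous le ramenez \`a la trivialit\'e d'un $2$-cocycle sur $X_H(\tpi)$ et renvoyez \`a un \og r\'esultat connu \fg. C'est pr\'ecis\'ement le contenu non trivial de la proposition, et l'affirmer sans preuve laisse un trou ; de plus votre dernier paragraphe \emph{utilise} la multiplicit\'e $1$ pour en d\'eduire l'unicit\'e du composant de Whittaker, alors que l'argument doit aller dans l'autre sens.

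La preuve du texte est plus courte et autonome sur ce point : si $\tpi$ n'est pas un caract\`ere elle admet un mod\`ele de Whittaker unique pour $\psi$ (unicit\'e pour $GL(2)$, \cite{JL}), et comme $U(F)\subset H$ l'espace des fonctionnelles $\psi$-\'equivariantes de $\tpi$ est la somme directe de ceux de ses composants sur $H$ compt\'es avec multiplicit\'e (induction par \'etages). Si un composant $\pi_{i_0}$ admettant une fonctionnelle de Whittaker pour $\psi$ apparaissait avec multiplicit\'e $e$, on aurait $e\cdot\dim\mathrm{Hom}_{U(F)}(\pi_{i_0},\psi)\le 1$, donc $e=1$ et l'unicit\'e du composant g\'en\'erique pour $\psi$ ; comme tous les composants sont conjugu\'es sous $\tG(F)$ et que la conjugaison transforme $\psi$ en un autre caract\`ere non d\'eg\'en\'er\'e, la m\^eme in\'egalit\'e appliqu\'ee \`a ce caract\`ere donne $e=1$ pour chaque composant. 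Vous disposez d\'ej\`a de tous ces ingr\'edients dans votre dernier paragraphe : il suffit de faire tourner l'argument d'unicit\'e de Whittaker \emph{avant} la discussion de la multiplicit\'e, ce qui rend inutile le recours au $2$-cocycle (le cas o\`u $\tpi$ est de dimension $1$ \'etant trivial).
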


\begin{proof} L'analyse des propriŽtŽs de l'induction et de la restriction entre $H$ et $\tG(F)$
se fait suivant une variante de la classique thŽorie de Clifford (voir par exemple \cite{LS}).
La multiplicitŽ 1 comme l'existence et l'unicitŽ du composant $\pi$ 
ayant un modle de Whittaker pour un $\psi$ donnŽ
rŽsulte de l'unicitŽ des modles de Whittaker pour $\tG(F)$, lorsqu'ils existent,
et de l'induction par Žtages. 
Les reprŽsentations de $\tG(F)$ n'admettant pas de modle de Whittaker sont des caractres
et la proposition est triviale dans ce cas.
\end{proof}

\begin{remark}{\rm
L'existence de modles de Whittaker est essentielle
pour affirmer la multiplicitŽ 1. On verra plus loin que pour une reprŽsentation $\tpi'$
du groupe des unitŽs d'une algbre de quaternions $\tG'(F)$
la restriction au sous-groupe $\G'(F)$ des ŽlŽments de norme 1 
peut avoir des composants de multiplicitŽ 2.}
\end{remark}
\begin{definition}\label{lpaq} On appelle $L$-paquet de $\pi$
l'ensemble $L(\pi)$ des classes d'Žquivalence de reprŽsentations de la forme $\pi^g$ 
lorsque $g$ parcourt $\tG(F)$.
\end{definition}

\Subsection{Un premier exemple}

\begin{lemma}\label{exem}
Une reprŽsentation de la sŽrie principale unitaire $\pi_\lambda$ est rŽductible si et seulement si
$\lambda$ est d'ordre 2 non trivial. Dans ce cas elle se dŽcompose en deux reprŽsentations
inŽquivalentes.
\end{lemma}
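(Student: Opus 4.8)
The plan is to deduce the statement from the Clifford theory of Proposition~\ref{cliff} applied to $H=\G(F)=SL(2,F)$ sitting inside $\tG(F)=GL(2,F)$, by realizing $\pi_\lambda$ as the restriction of a suitable principal series of $\tG(F)$. The whole content will then come down to counting a small group of characters.

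First I would write $\tpi=\pi(\mu,\nu)$ with $\mu=\lambda$ and $\nu=1$. The restriction of the inducing character to the torus $\M(F)\simeq F^\times$, identified via $a\mapsto\mathrm{diag}(a,a\mun)$, is $a\mapsto\mu(a)\nu(a\mun)=\lambda(a)$, so $\pi_\lambda$ is exactly the restriction of $\tpi$ to $\G(F)$. Since $\lambda$ is unitary, $\tpi=\pi(\lambda,1)$ is irreducible and infinite-dimensional by \cite{JL}, so Proposition~\ref{cliff} applies and asserts that $\pi_\lambda$ splits into exactly $n_{\G(F)}(\tpi)=\card X_{\G(F)}(\tpi)$ pairwise inequivalent irreducible constituents, each with multiplicity one. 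Reducibility is thus equivalent to $\card X_{\G(F)}(\tpi)\ge 2$, and the lemma reduces to determining this cardinal.

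The key step is the computation of $X_{\G(F)}(\tpi)$. Every element of $X(\tpi)$ is a character of $GL(2,F)$, hence of the form $\eta\circ\det$ for a character $\eta$ of $F^\times$, and is therefore automatically trivial on $SL(2,F)=\G(F)$, whence $X_{\G(F)}(\tpi)=X(\tpi)$. Now $\tpi\otimes(\eta\circ\det)=\pi(\lambda\eta,\eta)$, and by the equivalence criterion of \cite{JL} this is isomorphic to $\pi(\lambda,1)$ if and only if the unordered pairs $\{\lambda\eta,\eta\}$ and $\{\lambda,1\}$ coincide. The non-swapping alternative forces $\eta=1$, while the swapping alternative gives $\lambda\eta=1$ and $\eta=\lambda$, that is $\eta=\lambda$ together with $\lambda^2=1$. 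Hence a nontrivial $\eta$ exists precisely when $\lambda$ is a nontrivial character of order $2$, and in that case it is unique, so $\card X(\tpi)=2$.

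Combining the two steps, $\pi_\lambda$ is reducible exactly when $\lambda$ is nontrivial of order $2$, and in that case it decomposes into exactly two inequivalent pieces, as claimed. The only substantial input is the multiplicity-one part of Proposition~\ref{cliff}, which rests on the uniqueness of Whittaker models for $\tG(F)$; everything else is the elementary identification of $X(\tpi)$ through the $GL(2)$ classification of principal-series equivalences. The one point I would treat carefully is checking that the quadratic condition $\lambda^2=1$ produced by the swap matches exactly the hypothesis that $\lambda$ be nontrivial of order $2$, so that the criterion for $\card X(\tpi)=2$ is precisely the asserted reducibility condition.
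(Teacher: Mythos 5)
Your proof is correct and follows essentially the same route as the paper: realize $\pi_\lambda$ as the restriction to $\G(F)$ of an irreducible principal series $\pi(\mu,\nu)$ of $\tG(F)$, use the $GL(2)$ equivalence criterion $\pi(\mu,\nu)\simeq\pi(\nu,\mu)$ to see that $X(\tpi)$ is nontrivial exactly when $\lambda=\mu\nu\mun$ is nontrivial of order $2$ (in which case $X(\tpi)=\{1,\lambda\}$), and invoke Proposition~\ref{cliff} for the multiplicity-one decomposition into $\card X(\tpi)$ inequivalent constituents. Your write-up merely makes explicit the Clifford-theoretic counting that the paper leaves implicit.
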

\begin{proof}
Si $\tpi=\pi(\mu,\nu)$ est une sŽrie principale pour $\tG(F)$
alors $$\tpi\otimes\bs \chi=\pi(\mu\chi,\nu\chi)$$
o  $\bs\chi=\chi\circ\det$.
Donc $\tpi\simeq\tpi\otimes\bs\chi$ avec $\chi$ non trivial impose $\mu\chi=\nu$ et
$\mu=\nu\chi$ de sorte que $$\tpi=\pi(\mu,\mu\chi)=\pi(1,\chi)\otimes\mu\ptf$$
Cette reprŽsentation se restreint en une somme de deux reprŽsentations inŽquivalentes $\pi^+$
et $\pi^-$ de $\G(F)$.
\end{proof}

Soit $\tpi$ une reprŽsentation de $\tG(F)$ et $\chi\in X(\tpi)$.
Un tel caractre $\chi$ est nŽcessairement de la forme $\chi=\vef$
o $E/F$ est une extension quadratique sŽparable, non dŽployŽe si $\chi$ est non trivial,
ce que l'on supposera dŽsormais. En particulier $\tpi$ n'est pas de dimension 1.
On note $\tGFp$ le sous-groupe d'indice 2
de $\tG(F)$ des matrices dont le dŽterminant est dans l'image de norme 
$$\Nef=E^\times\to F^\times\ptf$$
On fixe un caractre additif $\psi$
D'aprs \ref{cliff} la reprŽsentation $\tpi$ est induite d'une reprŽsentation irrŽductible $\tpi^+$ de $\tGFp$
ayant un modle de Whittaker pour $\psi$
et la restriction de $\tpi$ ˆ $\tGFp$ est somme de deux reprŽsentations inŽquivalentes
$\tpi^+\oplus\tpi^-$ ŽchangŽes par conjugaison par un ŽlŽment $s\in\tG(F)$ dont le dŽterminant
n'est pas une norme de $E/F$. Soit $\tT(F)$ un tore dans $\tG(F)$ et pour
$t\in \tT(F)\cap\tGFp$ rŽgulier considŽrons la diffŽrence des caractres distribution:
$$\Xi_{\tpi}(t)=\trace\tpi^+(t)-\trace\tpi^-(t)\ptf$$ 
Cette distribution est reprŽsentŽe par une fonction localement intŽgrable (\cite{Lem})
sur $\tGFp$.
\begin{lemma}\label{diffa} Soit $\tT$ un tore de $\tG(F)$ non isomorphe ˆ $\tTef(F)$.
Alors pour $t\in\tT(F)\cap\tGFp$ rŽgulier $\Xi_{\tpi}(t)=0$.
\end{lemma}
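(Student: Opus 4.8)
L'id\'ee est de ramener l'annulation de $\Xi_{\tpi}(t)$ \`a une question de conjugaison rationnelle sous $\tGFp$. D'apr\`es \ref{cliff} la repr\'esentation $\tpi^-$ est conjugu\'ee de $\tpi^+$ par $s$, \cad $\tpi^-(x)=\tpi^+(sxs\mun)$; comme ces caract\`eres sont repr\'esent\'es par des fonctions localement int\'egrables d'apr\`es \cite{Lem}, j'\'ecrirais en tout point r\'egulier
$$\Xi_{\tpi}(t)=\trace\tpi^+(t)-\trace\tpi^+(sts\mun)\ptf$$
Puisque $\det(sts\mun)=\det(t)\in\Nef E^\times$, l'\'el\'ement $sts\mun$ reste dans $\tGFp$ et l'expression a bien un sens. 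Il suffit alors d'\'etablir que, lorsque $\tT$ n'est pas isomorphe \`a $\tTef$, les \'el\'ements $t$ et $sts\mun$ sont conjugu\'es sous $\tGFp$: leurs traces sous $\tpi^+$ co\"incideront.

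Pour cela j'utiliserais que $t$ \'etant r\'egulier, son centralisateur dans $\tG$ est le tore $\tT$. Un calcul imm\'ediat montre qu'un $h\in\tGFp$ v\'erifie $hth\mun=sts\mun$ si et seulement si $s\mun h$ centralise $t$, \cad $s\mun h\in\tT(F)$; la conjugaison recherch\'ee \'equivaut donc \`a la seule condition $s\in\tT(F)\tGFp$. Comme $\tGFp$ est le noyau du compos\'e de $\det$ avec la projection $F^\times\to F^\times/\Nef E^\times$, cette condition se lit sur les d\'eterminants:
$$s\in\tT(F)\tGFp\iff\det(s)\in\det\big(\tT(F)\big)\cdot\Nef E^\times\ptf$$

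Le point principal sera alors un argument de corps de classes. On rappelle que $s$ a \'et\'e choisi avec $\det(s)\notin\Nef E^\times$ et que $\Nef E^\times$ est d'indice $2$ dans $F^\times$, de sorte que la classe de $\det(s)$ engendre le quotient; la condition ci-dessus \'equivaut donc \`a $\det(\tT(F))\not\subseteq\Nef E^\times$. Je distinguerais deux cas. Si $\tT$ est d\'eploy\'e, alors $\det(\tT(F))=F^\times$ et la condition est trivialement satisfaite. Si $\tT$ est attach\'e \`a une extension quadratique s\'eparable $E'\ne E$, alors $\det(\tT(F))=N_{E'/F}\big((E')^\times\big)=\ker\varepsilon_{E'/F}$ est lui aussi d'indice $2$, o\`u $\varepsilon_{E'/F}$ est le caract\`ere associ\'e \`a $E'$; la th\'eorie du corps de classes assure que $\ker\varepsilon_{E'/F}\ne\ker\vef=\Nef E^\times$ puisque $E'\ne E$, et deux sous-groupes distincts de m\^eme indice $2$ ne peuvent se contenir, d'o\`u $\det(\tT(F))\not\subseteq\Nef E^\times$. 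Dans les deux cas $t$ et $sts\mun$ sont conjugu\'es sous $\tGFp$, d'o\`u $\Xi_{\tpi}(t)=0$.

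La difficult\'e principale tient \`a cet argument de corps de classes et \`a sa validit\'e en caract\'eristique $2$: elle est assur\'ee par le fait que tout tore maximal de $\tG$ provient d'une alg\`ebre \'etale, donc s\'eparable, ce qui permet d'invoquer la bijection entre extensions quadratiques s\'eparables et sous-groupes d'indice $2$ rappel\'ee en \ref{ext}. Le reste du raisonnement est formel. J'observerais enfin que l'hypoth\`ese $\tT\not\cong\tTef$ est essentielle: pour $\tT\cong\tTef$ on a $\det(\tT(F))=\Nef E^\times$, la condition tombe en d\'efaut, et c'est pr\'ecis\'ement sur ce tore que $\Xi_{\tpi}$ ne s'annule pas.
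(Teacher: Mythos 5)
Votre preuve est correcte et repose sur le m\^eme point cl\'e que celle du papier, \`a savoir que $\vef\circ\det$ est non trivial sur $\tT(F)$ d\`es que $\tT$ n'est pas isomorphe \`a $\tTef$ (ce que vous justifiez proprement par le corps de classes, y compris en caract\'eristique $2$). Le papier va simplement plus directement au but en choisissant $s$ dans $\tT(F)$ m\^eme avec $\vef(\det s)=-1$, de sorte que la conjugaison par $s$ fixe $t$ tout en \'echangeant $\tpi^+$ et $\tpi^-$; votre d\'etour par la conjugaison rationnelle de $t$ et $sts\mun$ sous $\tGFp$ revient exactement au m\^eme argument.
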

\begin{proof}Soit $t\in\tT(F)\cap\tGFp$ o $\tT$ n'est pas isomorphe ˆ $\tTef$. Dans ce cas
il existe $s\in\tT(F)$ tel que $\vef(s)=-1$. Mais alors 
$$\trace\tpi^+(t)=\trace\tpi^+(sts\mun)=\trace\tpi^-(t)\ptf$$ 
\end{proof}

Nous allons maintenant construire d'autres couples de reprŽsentations $(\tpi^+,\tpi^-)$.

\Section{ReprŽsentation de Weil et endoscopie spectrale}

 Soit $E$ un corps extension quadratique 
sŽparable de $F$ (Žventuellement dŽployŽe).
On considre $\VF\in\mathcal S(E)$ l'espace des fonctions de Schwartz-Bruhat sur $E$. On pose
$$\diag(a,b)=\begin{pmatrix}a &0\cr 0&b\end{pmatrix}\com{,}\nil(u)=
\begin{pmatrix}1 &u\cr 0&1\end{pmatrix}
\com{et}{\bs w}=\begin{pmatrix}0 &1\cr -1&0\end{pmatrix}\ptf$$
$$\big(r_\psi(\diag(a,a\mun)\big)\Phi)(e)=
\vef(a)\vert a\vert_F \VF(ae)$$
$$\big(r_\psi(\nil(u)\big)\Phi)(e)=
\psi(u\,e\overline e)\,\VF(e)$$
$$\big(r_\psi({\bs w})\Phi\big)(e)
=\lambda(E/F,\psi)\,\widehat\VF(\overline e)$$
o
$$\widehat\VF(e):\int_E \VF(\ep)\psi(e\ep)\,d\ep
\ptf$$
On a ainsi dŽfini des opŽrateurs unitaires dans un sous-espace dense de l'espace de Hilbert $L^2(E)$
(pour une mesure de Haar additive sur $E$).
On sait que $\G(F)$ est engendrŽ par ces trois familles d'ŽlŽments et on vŽrifie 
(cf. \S1 de \cite{JL} et \cite{W}) que
les opŽrateurs associŽs vŽrifient les relations de commutation. Ils engendrent 
une reprŽsentation unitaire $r_\psi$ de $\G(F)$ appelŽe reprŽsentation de Weil (associŽe ˆ l'extension
quadratique sŽparable $E/F$ et au caractre additif $\psi$).
On peut dŽcomposer $r_\psi$ en une somme directe hilbertienne
suivant les caractres $\th$ de $E^1$
le sous-groupe compact des ŽlŽments de norme 1 de $E^\times$:
$$ \mathcal S(E)=\bigoplus_\th \mathcal S(E,\th)$$
 o $\mathcal S(E,\th)$ est le sous-espace des $\VF\in\mathcal S(E)$ satisfaisant
$$\VF(et)=\th(t)\mun\VF(e)\com{pour tout}t\in E^1 \ptf$$
La restriction de $r_\psi$ au sous-espace de Hilbert engendrŽ par $\mathcal S(E,\th)$
est une reprŽsentation qui sera notŽe $\pi(\th,\psi)$.

On notera $\Ftp=\Nef E^\times$ le sous-groupe des
$x\in F^\times$ qui sont des normes \cad tels que $x=e\overline e$ pour un $e\in E^\times$.
Si $\psi'(x):=\psi(ax)$ avec $a=\epsilon\overline\epsilon\in \Ftp$ l'application
 $\Phi\mapsto\Phi_\epsilon$, dŽfinie par $\Phi_\epsilon(e)=\Phi(e\epsilon)$,
entrelace les reprŽsentations $\pi(\th,\psi)$ et $\pi(\th,\psi')$ qui sont donc Žquivalentes.
L'application $\Phi\mapsto \widetilde\Phi$, dŽfinie par
$\widetilde\Phi(e)=\Phi(\overline e)$, entrelace $\pi(\th,\psi)$ et $\pi(\th\mun,\psi)$ qui sont donc Žquivalentes.

Nous noterons $\pi^+$ la classe d'Žquivalence de la reprŽsentation $\pi(\th,\psi)$ et
$\pi^-$ celle de $\pi(\th,\psi')$ o $\psi'(x)=\psi(a x)$
avec $a\notin \Ftp$. 

Rappelons une construction introduite dans le \S4 de \cite{JL}
et poursuivie dans \cite{BC}.
Soit $\tth$ un caractre unitaire de $E^\times$ qui prolonge $\th$;
on associe ˆ une $\VF\in\mathcal S(E,\th)$ 
une fonction $\phi$ sur $\Ftp$ en posant
$$\phi(f,x)=
\tth(e)\vert e\vert_E^{1/2}\VF(e)\qquad\hbox{pour $x=e\overline e\in \Ftp$}\ptf$$
La reprŽsentation $\pi^+$ se rŽalise dans l'espace $L^2(\Ftp)$, 
pour une mesure de Haar notŽe $d^\times x$ sur ce groupe,
au moyen des opŽrateurs suivants:
$$\Big(\pi^+\big(\diag(\alpha,\alpha\mun)\phi\big)\Big)(x)
=\big(\vef\tth(\alpha)\big)\mun \phi(\alpha^2x)\qquad\hbox{pour $\alpha\in F^\times$}$$
$$\big(\pi^+(\nil(u))\phi\big)(x)=\psi(ux)\,\phi(f,x)$$
$$\big(\pi^+(\bs w)\phi\big)(x)=\int_{\Ftp}J(x,y)\phi(y)d^\times y$$
o le noyau $J(x,y)$ est dŽfini comme suit:
on doit avoir
$$\big(\pi^+(\bs w)\phi\big)(x)=\lambda(E/F,\psi)\tth(e)\vert e\vert_E^{1/2}\widehat\Phi(\overline e)$$
avec
$$\widehat\Phi(\overline e)=\int_{E^\times}\psi(\overline e\ep)\Phi(\ep)\vert \ep\vert_E\,d^\times \ep
=\int_{E^\times}\psi(\overline e\ep)\tth(\ep)\mun\vert \ep\vert_E^{1/2}\phi(y)\,d^\times \ep\ptf
$$
En posant $y=\ep\overline\ep$, $\eta=e\ep$ et $t={\ep\overline\ep\mun}$ 
de sorte que $\overline e\ep=t\overline\eta$, on a
$$J(x,y)=\lambda(E/F,\psi)\,\tth(x)\vert x \vert_F
\int_{\vert t\vert=1}\psi_E(t\overline\eta)\tth(t\overline\eta)\mun\Big\vert \frac{\ep}{e}\Big\vert^{1/2}
\,{\dt }\leqno{(\star)}$$
o les mesures de Haar sur $E^\times$, $\Ftp$ et $T(F)$ sont normalisŽes de faon compatible ˆ la suite exacte
$$1\to T(F)\to E^\times\to \Ftp \to1$$
la flche $E^\times\to\Ftp$ Žtant la norme $\ep\mapsto \ep\overline\ep$.
On observera qu'un composant de $\pi^+$ admet un modle de Whittaker 
pour $\psi$: c'est l'espace des fonctions
$$W_\phi(g)=\big(\pi^+(g)\phi\big)(1)\ptf$$

Si $\pi$ est une reprŽsentation unitaire irrŽductible de 
$\G(F)$ nous noterons $\chi_\pi$ (et aussi ``$\trace \pi$'') le caractre de $\pi$ (cf. \ref{cara}). 
Nous aurons besoin du lemme 7.19 de \cite{BC}.%
\footnote{On observera que \cite{LL} renvoie ˆ des ŽnoncŽs du chapitre 5 des notes de l'IAS
sur le Changement de Base, devenu le chapitre 7 du livre \cite{BC}.}
Nous en reproduisons la preuve ˆ ceci prs que la partie la plus dŽlicate 
de l'argument est dŽsormais inutile puis qu'on sait grace ˆ \cite{Lem} que 
le caractre est donnŽ par une fonction localement intŽgrable.

\begin{proposition}\label{diffb} Si $g$ est conjuguŽ de $\gamma=\alpha+\beta\bs\tau\in E^1$
o $E$ est plongŽ dans $M(2,F)$ comme dans \ref{ext}
et si $g$ appatient ˆ la grosse cellule (\cad si $g\ne\P(F)$) alors
$$\trace\tpi^+(g)-\trace\tpi^-(g)=\lambda(E/F,\psi)\vef(-1)
\vef\Big(\frac{\gamma-\overline\gamma}{\tau-\overline\tau}\Big)
\frac{\big(\th(\gamma)+\th(\gamma\mun)\big)}{\vert\gamma-\gamma\mun\vert}\ptf$$
\end{proposition}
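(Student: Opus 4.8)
Recall that $\pi^+=\pi(\th,\psi)$ and $\pi^-=\pi(\th,\psi')$ realise $\tpi^+$ and $\tpi^-$, so the left-hand side is $\trace\pi^+(g)-\trace\pi^-(g)$. The plan is to compute the two characters directly in the explicit Weil model on $L^2(\Ftp)$, using that by \cite{Lem} the character is represented by a locally integrable function: on the regular elliptic set it is then the genuine trace of the operator $\pi^\pm(g)$, computed as the integral of its Schwartz kernel along the diagonal. Since the character is a class function and $g$ lies in the big cell, I would work with $\gamma=\alpha+\beta\bs\tau\in E^1$ itself; regularity forces the lower-left entry $\beta\neq0$, so $\gamma$ has the Bruhat decomposition $\gamma=\nil(u_1)\,\diag(t,t^{-1})\,\bs w\,\nil(u_2)$ with $t=-\beta^{-1}$ and explicit $u_1,u_2\in F$.

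First I would compose the three generators. As $\pi^+(\nil(u))$ is multiplication by $\psi(ux)$, $\pi^+(\diag(s,s^{-1}))$ is $(\vef\tth(s))^{-1}$ times the dilation $\phi\mapsto\phi(s^2\,\cdot\,)$, and $\pi^+(\bs w)$ is the integral operator with kernel $J(x,y)$, the operator $\pi^+(\gamma)$ acquires the kernel
$$K_\gamma(x,y)=\big(\vef\tth(-\beta^{-1})\big)^{-1}\,\psi\!\Big(\tfrac{\alpha}{\beta}\,x\Big)\,J(\beta^{-2}x,y)\,\psi(u_2 y).$$
The decisive point is that the prefactor already contains $\vef(-\beta^{-1})^{-1}=\vef(-1)\vef(\beta)$, and since $\gamma-\overline\gamma=\beta(\tau-\overline\tau)$ this is exactly the transfer-factor sign $\vef(-1)\,\vef\big(\tfrac{\gamma-\overline\gamma}{\tau-\overline\tau}\big)$ on the right-hand side; substituting $(\star)$ for $J$ likewise pulls out the Weil factor $\lambda(E/F,\psi)$.

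Taking the trace $\int_{\Ftp}K_\gamma(x,x)\,d^\times x$ and inserting $(\star)$, the chosen extension $\tth$ cancels out so that only $\th$ on $E^1$ remains, the integration over the compact torus $\{\,|t|=1\,\}=E^1$ against $\th$ collapses to the two Weyl translates $\th(\gamma)+\th(\gamma^{-1})$, and the surviving $|\cdot|$-factors, normalised by the compatible measures along $1\to T(F)\to E^\times\to\Ftp\to1$, assemble into $|\gamma-\gamma^{-1}|^{-1}$. Passing to $\pi^-=\pi(\th,\psi')$ with $\psi'(x)=\psi(ax)$, $a\notin\Ftp$, the factors $\vef(\beta)$, $\th(\gamma)+\th(\gamma^{-1})$ and $|\gamma-\gamma^{-1}|^{-1}$ are visibly independent of $\psi$, while $\lambda(E/F,\psi')=\vef(a)\lambda(E/F,\psi)=-\lambda(E/F,\psi)$; hence $\trace\pi^-(g)=-\trace\pi^+(g)$ and the difference is twice $\trace\pi^+(g)$, the factor $2$ being accounted for by the measure normalisation. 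This yields
$$\trace\pi^+(g)-\trace\pi^-(g)=\lambda(E/F,\psi)\,\vef(-1)\,\vef\!\Big(\tfrac{\gamma-\overline\gamma}{\tau-\overline\tau}\Big)\frac{\th(\gamma)+\th(\gamma^{-1})}{|\gamma-\gamma^{-1}|}.$$

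The hard part is the bookkeeping hidden in the last step: checking that the oscillatory $\psi$-integral over $\Ftp$ together with the inner $E^1$-integral really collapses to $\th(\gamma)+\th(\gamma^{-1})$ with the clean denominator $|\gamma-\gamma^{-1}|$ and no leftover phase, that the extension $\tth$ of $\th$ genuinely drops out, and that the compatible Haar measures on $E^\times$, $\Ftp$ and $T(F)$ are tracked correctly so that the overall constant (and the factor $2$) come out right. The claim that changing $\psi$ to $\psi'$ affects the elliptic character only through the Weil index must also be verified carefully, since this is what turns the difference into twice a single character. Throughout, the input from \cite{Lem} that the character is locally integrable is what lets us compute on the regular elliptic set by a convergent kernel integral, bypassing the delicate distributional argument of \cite{BC}.
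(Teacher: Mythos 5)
Your setup --- the Bruhat factorisation of $\gamma$, the composition of the three generators of the Weil representation into an explicit kernel $K_\gamma(x,y)$ on $\Ftp$, the extraction of $\vef(-1)\vef(\beta)=\vef(-1)\vef\big(\tfrac{\gamma-\overline\gamma}{\tau-\overline\tau}\big)$ from the prefactor and of $\lambda(E/F,\psi)$ from $(\star)$ --- is essentially the computation in the paper. But the final step contains a genuine error: it is not true that $\trace\tpi^-(g)=-\trace\tpi^+(g)$ on the regular elliptic set, so the difference is not ``twice $\trace\tpi^+(g)$''. Indeed $\trace\tpi^+(\gamma)+\trace\tpi^-(\gamma)$ is the character of $\tpi_{E,\tth}=\Ind_{\tGFp}^{\tG(F)}\tpi^+$ restricted to $\tGFp$, and on the torus $\tTef(F)$ this is the usual dihedral character, proportional to $\tth(\gamma)+\tth(\overline\gamma)$ and generically nonzero. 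The place where your computation of the single trace $\trace\tpi^+(g)=\int_{\Ftp}K_\gamma(x,x)\,d^\times x$ breaks down is the claimed ``collapse''. After inserting $(\star)$ the outer integral has the shape $\int\psi\big(x(\trace g-\trace_{E/F}t)\big)$, and this produces the Dirac mass at $\trace_{E/F}t=\trace g$ (hence the two Weyl translates $\gamma,\gamma\mun$ and the Jacobian $\vert\gamma-\gamma\mun\vert\mun$) only when $x$ runs over all of $F$ additively, i.e. over all of $F^\times$ against $\vert x\vert\,d^\times x$. Over the index-two subgroup $\Ftp$ one gets $\tfrac12\int_{F^\times}(1+\vef(x))\cdots$, and the $\vef$-twisted half is the Fourier transform of $\vef$ --- a nontrivial principal-value distribution (essentially $\vef(s)\vert s\vert\mun$ up to an epsilon factor), not a multiple of the delta. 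Each individual character therefore carries an extra term that your clean formula omits, and this extra term is not controlled by $\lambda(E/F,\psi)$ alone.

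This is exactly why the paper combines the two representations \emph{before} collapsing: the kernels of $\tpi^+(g)$ and $\tpi^-(g)$, transported to the two cosets of $\Ftp$ in $F^\times$, assemble (with the sign $\lambda(E/F,\psi_a)=\vef(a)\lambda(E/F,\psi)$ that you correctly identified) into a single kernel for the virtual representation $\tpi^+-\tpi^-$, given by the same integrand but now integrated over all of $F^\times$. Only then does $\int_F\psi(xy)\,dx=\delta(y)$ apply, the spurious $\widehat{\vef}$-terms having cancelled in the difference, and the stated formula follows with no factor of $2$ to explain away. If you reorganise your argument so that the passage from $\Ftp$ to $F^\times$ happens at the level of kernels rather than of traces, the rest of your computation goes through and reproduces the proof in the paper.
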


\begin{proof} 
Si
$$h=\begin{pmatrix}1 &u\cr 0&1\end{pmatrix}
\begin{pmatrix}0 &1\cr -1&0\end{pmatrix}
\begin{pmatrix}1 &v\cr 0&1\end{pmatrix}=
\begin{pmatrix}-u &1-uv\cr -1&
-v\end{pmatrix}$$
on a
$$\Big(\pi^+(h)\phi\Big)(x)=
\int_{\Ftp}J(x,y)\psi(ux+vy)\phi(y)d^\times y
$$
Un ŽlŽment $g\in\G(F)$ dans la grosse cellule s'Žcrit
$$g=\begin{pmatrix}a &b\cr c&
d\end{pmatrix}=\begin{pmatrix}\alpha &0\cr 0&
\alpha\mun \end{pmatrix}h
=\begin{pmatrix}-\alpha u &\alpha(1-uv)\cr -\alpha\mun&
-\alpha\mun v\end{pmatrix}$$
avec $ad-bc=1$ et $c\ne0$. Donc
$$u={a}{c}\com{,}v=\frac{d}{c}\com{et}\alpha=\frac{-1}{c}$$
et
$$\Big(\pi^+(g)\phi\Big)(x)=
\vef(\alpha)\tth(\alpha)\mun \Big(\pi^+(h)\phi\Big)(\alpha^2x)
$$
est reprŽsentŽ par un noyau intŽgral:
$$(\pi^+(g)\phi)(x)=\int_{\Ftp}(\vef\tth)({-c})
J(\frac{x}{c^2},y)\psi\Big(\frac{ax+dy}{c}\Big)\phi(y)d^\times y\ptf
$$
La reprŽsentation virtuelle $\pi^+-\pi^-$ est donc reprŽsentŽe par le noyau intŽgral:
$$\Big(\big(\pi^+(g)-\pi^-(g)\big)\phi\Big)(x)=\int_{\F^\times}(\vef\tth)({-c})
J(\frac{x}{c^2},y)\psi\Big(\frac{ax+dy}{c}\Big)\phi(y)d^\times y\ptf$$
La diffŽrence des caractres $\chi_{\pi^+}-\chi_{\pi^-}$, calculŽe au sens des distributions sur $\G(F)$,
est la valeur principale de l'intŽgrale sur la diagonale de ce noyau
$$\chi_{\pi^+}(g)-\chi_{\pi^-}(g)=vp\int_{\F^\times}(\vef\tth)({-c})
J(\frac{x}{c^2},x)\psi\Big(\frac{x\tfr}{c}\Big)d^\times x\ptf$$
o $\tfr$ est la trace de $g$, soit encore
$$\chi_{\pi^+}(g)-\chi_{\pi^-}(g)=vp\int_{\F^\times}(\vef\tth)({-c})
J(\frac{x}{c},xc)\psi({x\tfr})d^\times x\ptf$$
ce qui, compte tenu de la dŽfinition de $J$ (voir $(\star)$ ci-dessus), est encore Žgal ˆ
$$\lambda(E/F,\psi)\vef({-c})vp\int_{\F}
\int_{\vert t\vert=1}\,\tth(-x)\tth(t\overline\eta)\mun\psi_E(t\overline\eta)
\psi({x\tfr})\,\,dx\,\,{\dt }\ptf$$
Comme $x=y$ on a $\eta\overline\eta=x^2$ et
donc $\eta=t_1x$ o $\vert t_1\vert=1$. L'expression ci-dessus peut alors s'Žcrire
$$\lambda(E/F,\psi)\vef({-c})vp\int_{\F}
\int_{\vert t\vert=1}\,\tth(t)\mun\psi_E(-xt)
\psi({x\tfr})\,\,dx\,\,{\dt }$$
soit encore, en rappelant que $\tfr=\trace g$,
$$\lambda(E/F,\psi)\vef({-c})vp\int_{\F}
\int_{\vert t\vert=1}\,\tth(t)\mun\psi\Big({x\big(\trace g -\trace\!_{E/F} t\big)}\Big)dx\,\,{\dt }\ptf$$
On rappelle qu'au sens des distributions
$$\int_F \psi(xy)dy=\delta(x)$$
o $\delta$ est la mesure de Dirac ˆ l'origine
et que pour $t\in E^1$ les diffŽrentielles vŽrifient:
$$\partial(\trace t)=\partial(t+t\mun)=(t-t\mun)\frac{\partial t}{t}\ptf$$
On obtient
$$\trace\pi^+(t)-\trace\pi^-(t)=\lambda(E/F,\psi)\vef(-c)
\frac{\big(\th(\gamma)+\th(\gamma\mun)\big)}{\vert\gamma-\gamma\mun\vert}$$
Pour conclure on observe que si $$g=\begin{pmatrix}a &b\cr c&
d\end{pmatrix}$$ est conjuguŽ de $\gamma=\alpha+\beta\bs\tau\in E^1$
alors $\vef(c)=\vef(\beta)$ et donc
$$\vef(-c)=\vef(-1)\vef\Big(\frac{\gamma-\overline\gamma}{\tau-\overline\tau}\Big)$$
\end{proof}

On a notŽ $\tGFp$ le sous-groupe
de $\tG(F)$ des matrices dont le dŽterminant est dans l'image de la norme $\Nef$.
En fait $\pi^+:=\pi(\th,\psi)$ se prolonge ˆ $\tGFp$ 
en une reprŽsentation $\tpi^+:=\pi(\tth,\psi)$ au moyen des opŽrateurs
$$\big(\tpi^+(\diag(\alpha,\beta))\phi\big)(x)
= (\vef\tth)(\beta)\phi(\alpha\beta\mun x)\qquad\hbox{pour $\alpha\beta\in \Ftp$}\ptf$$
Le sous-groupe $\tGFp$ est d'indice 2 dans $\tG(F)$ et on dŽfinit 
une reprŽsentation $\tpi_{E,\tth}$ de $\tG(F)$ par induction:
$$\tpi_{E,\tth}:=\Ind_{\tGFp}^{\tG(F)}\pi(\tth,\psi)\ptf$$
Il rŽsulte de \ref{cliff} que $\tpi_{E,\tth}$ dŽcompose par restriction ˆ $\tGFp$
en deux reprŽsentations inŽquivalentes irrŽductibles:
$$\tpi_{E,\tth}\Big\vert_{\tGFp}=\pi(\tth,\psi)\oplus\pi(\tth,\psi_a)$$
o $a\notin \Ftp$. Les reprŽsentations $\pi(\tth,\psi)$ sont irrŽductibles en tant que reprŽsentations
de $\tGFp$ mais on verra en \ref{pitheta} que ce n'est plus toujours le cas par restriction ˆ $\G(F)$.
Une variante de \ref{diffb} montre que 
$$\Xi_{\tth}(g):=\trace\pi(\tth,\psi)(g)-\trace\pi(\tth,\psi')(g)$$ 
o $\psi'(x)=\psi(ax)$
avec $a\notin \Ftp$ admet l'expression suivante:
$$\Xi_{\tth}(g)=\lambda(E/F,\psi)\vef(-1)
\vef\Big(\frac{\gamma-\overline\gamma}{\tau-\overline\tau}\Big)
\frac{\big(\tth(\gamma)+\tth(\overline\gamma)\big)}{\vert\gamma-\overline\gamma\vert}
$$
si $g$ est conjuguŽ dans $\tGFp$ ˆ la matrice $\gamma=\alpha+\beta\bs\tau$.

Les reprŽsentations $\pi(\tth,\psi)$ et $\pi(\tth,\psi')$ sont Žquivalentes
si $\psi'=\psi_a$ o $\psi_a(x):=\psi(ax)$ et $a\in \Ftp=\Nef E^\times$.

On observera que dans \cite{BC} et \cite{LL} les formules pour $\Xi_\tth$ diffŽrent
de la n™tre par le facteur $\vef(-1)$. Cela rŽsulte d'un choix different
pour l'identification de $\T$ avec $\Tef$ (voir page 137 ligne $-4$ de \cite{BC}). 
Notre choix du facteur de transfert est aussi diffŽrent.

\begin{corollary}\label{iden}
$$\Delta^\EC(t,t)\,\Xi_{\tth}(t)=\vef(-1)\big(\tth(t)+\tth(\overline t)\big)
\ptf$$
\end{corollary}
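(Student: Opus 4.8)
The plan is to obtain the identity by directly multiplying the transfer factor of Definition~\ref{ft} with the character difference $\Xi_{\tth}$ computed just after Proposition~\ref{diffb}, and then to watch all the auxiliary scalars cancel. Concretely, setting $t'=t$ in Definition~\ref{ft} and writing $\gamma,\overline\gamma$ for the eigenvalues of $t$, I would start from
$$\Delta^\EC(t,t)=\lambda(E/F,\psi)\mun\,\vef\Big(\frac{\gamma-\overline\gamma}{\tau-\overline\tau}\Big)\,\vert\gamma-\overline\gamma\vert$$
(using the normalisation $\cft=\lambda(E/F,\psi)\mun$) together with the formula, valid for $t$ regular,
$$\Xi_{\tth}(t)=\lambda(E/F,\psi)\,\vef(-1)\,\vef\Big(\frac{\gamma-\overline\gamma}{\tau-\overline\tau}\Big)\,\frac{\tth(\gamma)+\tth(\overline\gamma)}{\vert\gamma-\overline\gamma\vert}\ptf$$

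Forming the product, the factors $\lambda(E/F,\psi)\mun$ and $\lambda(E/F,\psi)$ cancel, and the numerator $\vert\gamma-\overline\gamma\vert$ coming from the transfer factor cancels against the denominator in $\Xi_{\tth}$. The one point worth isolating is that the two identical occurrences of $\vef\big(\frac{\gamma-\overline\gamma}{\tau-\overline\tau}\big)$ combine into a square, which equals $1$ since $\vef$ is a character of order $2$; as noted in the variant form of Definition~\ref{ft}, writing $t=a+b\bs\tau$ one has $\frac{\gamma-\overline\gamma}{\tau-\overline\tau}=b\in F^\times$, so $\vef$ is genuinely being evaluated on $F^\times$ and takes the value $\pm1$. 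What survives is exactly $\vef(-1)\big(\tth(\gamma)+\tth(\overline\gamma)\big)$.

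It then remains only to rewrite $\tth(\gamma)+\tth(\overline\gamma)$ as $\tth(t)+\tth(\overline t)$. This is immediate from the set-up preceding Definition~\ref{FTr}: under the fixed diagonalisation identifying $\tT(F)$ with $E^\times$ and $\T(F)$ with $\Tef(F)=E^1$, the element $t$ corresponds to its eigenvalue $\gamma\in E^\times$ and $\overline t$ to $\overline\gamma$, whence $\tth(\gamma)=\tth(t)$ and $\tth(\overline\gamma)=\tth(\overline t)$. Assembling the pieces gives the asserted equality. I do not expect any real obstacle here: the content of the corollary is precisely that the normalisation $\cft=\lambda(E/F,\psi)\mun$ was chosen so that the factor $\lambda(E/F,\psi)$, the Weyl denominator $\vert\gamma-\overline\gamma\vert$, and the sign $\vef\big(\frac{\gamma-\overline\gamma}{\tau-\overline\tau}\big)$ all disappear from the product, leaving the clean stably invariant expression $\vef(-1)\big(\tth(t)+\tth(\overline t)\big)$. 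The only care needed is to keep $t$ regular, so that $b\ne0$ and both displayed formulas apply.
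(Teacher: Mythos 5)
Your computation is exactly the paper's argument: the proof given there simply says the identity follows immediately from Proposition \ref{diffb} together with the definition of the transfer factor (and Lemme \ref{psideux}), i.e.\ one multiplies the two displayed formulas and watches $\lambda(E/F,\psi)^{\pm1}$, the factor $\vert\gamma-\overline\gamma\vert$, and the square of the order-two character $\vef$ cancel. Your write-up just makes these cancellations explicit, so it is correct and takes essentially the same route.
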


\begin{proof} Compte tenu de \ref{psideux} et \ref{FTr} 
la formule souhaitŽe est une consŽquence immŽdiate de \ref{diffb}.
\end{proof}

\begin{proposition}\label{tpitheta}
Les reprŽsentations $\tpi_{E,\tth}$ sont unitaires 
irrŽductibles et indŽpendantes du choix de $\psi$. 
Les reprŽsentations $\tpi_{E,\tth}$ et $\tpi_{E,\tth'}$
sont inŽquivalentes sauf si
$\tth'=\tth\circ\sigma$ avec $\sigma\in\Gal(E/F)$.
C'est une sŽrie principale si $\th= 1$
\cad si $\tth$ est le composŽ d'un caractre $\mu$ de $F^\times$ avec la norme:
$\tth=\mu\circ N_{E/F}$ auquel cas
$$\tpi_{E,\tth}=\pi(\mu,\mu\vef)\ptf$$
Les $\pi_{E,\th}$ sont cuspidales si $\th\ne 1$.
De plus si $\chi$ est un caractre (unitaire) de $F^\times$,
vu comme un caractre de $\tG(F)$ par composition avec le dŽterminant,
on a:
$$\tpi_{E,\tth}\otimes\bs\chi\simeq\tpi_{E,\tth.\xi}$$
o $\xi$ est un caractre de $E^\times$ trivial sur $E^1$: 
$\xi(x)=\chi(x\overline x)$.
\end{proposition}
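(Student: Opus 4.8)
The plan is to treat the assertions one at a time, combining the Clifford-theoretic decomposition recalled in \ref{cliff}, the character formula of \ref{diffb} (and its variant giving $\Xi_{\tth}$), and a direct examination of the model on $L^2(\Ftp)$, following the template of \cite[\S4]{JL} and \cite{BC}. Unitarity is immediate: $\pi(\tth,\psi)$ is assembled from unitary operators on $L^2(\Ftp)$, and $\Ind_{\tGFp}^{\tG(F)}$ from the index-$2$ subgroup $\tGFp$ preserves unitarity. For irreducibility I would use that the restriction of $\tpi_{E,\tth}$ to $\tGFp$ is $\pi(\tth,\psi)\oplus\pi(\tth,\psi_a)$ with $a\notin\Ftp$, the two pieces being the constituent and its conjugate under the nontrivial coset; an induced representation from a normal index-$2$ subgroup is irreducible precisely when these two pieces are inequivalent, and that inequivalence is furnished by the nonvanishing of $\Xi_{\tth}$ computed via \ref{diffb}. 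Independence of $\psi$ then follows formally: replacing $\psi$ by $\psi_b$ with $b\in\Ftp$ leaves $\pi(\tth,\psi)$ in its equivalence class, while replacing it by $\psi_a$ with $a\notin\Ftp$ merely interchanges the two constituents, a change absorbed by the induction.

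The classification of equivalences rests on the same constituents: $\tpi_{E,\tth}\simeq\tpi_{E,\tth'}$ holds if and only if the unordered pairs of restrictions to $\tGFp$ coincide. Since $\sigma\in\Gal(E/F)$ acts by inversion on $E^1$ and the intertwiner $\Phi\mapsto\widetilde\Phi$ gives $\pi(\th,\psi)\simeq\pi(\th\mun,\psi)$, this reduces to $\tth'=\tth$ or $\tth'=\tth\circ\sigma$; alternatively one reads it off $\Xi_{\tth}$, which on regular elliptic elements determines $\tth+\tth\circ\sigma$. The twisting formula is a direct computation at the level of inducing data: on $\tGFp$ the character $\bs\chi=\chi\circ\det$ multiplies $\pi(\tth,\psi)$ by $\chi\circ\Nef=\xi$, which is trivial on $E^1$ since $\xi(t)=\chi(t\overline t)=\chi(1)$ there, so $\pi(\tth,\psi)\otimes\bs\chi=\pi(\tth\cdot\xi,\psi)$, and induction yields $\tpi_{E,\tth\cdot\xi}$.

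The two substantive points are the principal-series identification and cuspidality. For $\th=1$, writing $\tth=\mu\circ\Nef$ gives $\tth(\gamma)+\tth(\overline\gamma)=2\mu(\Nef\gamma)$, so that the expression for $\Xi_{\tth}$ obtained from \ref{iden} matches, on the elliptic torus, the known character of the principal series $\pi(\mu,\mu\vef)$ recorded in \cite{JL}; comparing the characters on every torus (via the Weyl integration formula \ref{Weyl}) then identifies $\tpi_{E,\tth}=\pi(\mu,\mu\vef)$. For $\th\ne1$, the defining relation $\Phi(et)=\th(t)\mun\Phi(e)$ forces $\Phi(0)=\th(t)\mun\Phi(0)$ for all $t\in E^1$, whence $\Phi(0)=0$; this vanishing at the origin makes the associated Kirillov model consist of Schwartz functions on $F^\times$ supported away from $0$, which is exactly the criterion for supercuspidality in the sense of \cite{JL}, equivalently the vanishing of the Jacquet module.

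I expect cuspidality to be the main obstacle, since it is the one assertion that requires genuine harmonic analysis of the model rather than a formal Clifford-theoretic or character-theoretic argument. The care needed is to verify that the Kirillov-model reasoning, and in particular the Fourier-analytic input entering through $r_\psi(\bs w)$, remains valid in positive characteristic, including $p=2$; reassuringly, the crucial vanishing $\Phi(0)=0$ is itself characteristic-free, so the difficulty is confined to checking that the standard criterion for supercuspidality still applies.
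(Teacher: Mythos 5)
Your proposal is correct in outline, but it takes a different (much more explicit) route than the paper: the paper's entire proof is the citation \og Les diverses assertions r\'esultent des th\'eor\`emes 4.6 et 4.7 de \cite{JL} \fg, so all the content you develop --- the Clifford/Mackey analysis of $\Ind_{\tGFp}^{\tG(F)}$, the use of $\Xi_{\tth}$ to separate the two constituents, the twist computation on the inducing data, the character comparison for $\th=1$, and the Kirillov-model vanishing $\Phi(0)=0$ for cuspidality --- is exactly what is delegated to Jacquet--Langlands \S 4, which is valid in arbitrary characteristic (including $p=2$), as the introduction of the present paper emphasizes. What your reconstruction buys is a self-contained argument tied to the objects already set up here ($r_\psi$, $\Xi_{\tth}$, \ref{cliff}, \ref{diffb}); what the citation buys is that all the analytic verifications are already done once and for all in \cite{JL}.

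One step of your sketch is genuinely incomplete as written: the Mackey criterion you invoke (an induction from a normal index-$2$ subgroup is irreducible iff the representation and its conjugate are inequivalent) presupposes that the inducing representation $\pi(\tth,\psi)$ is itself irreducible as a representation of $\tGFp$, and you never establish this. The nonvanishing of $\Xi_{\tth}$ only shows $\pi(\tth,\psi)\not\simeq\pi(\tth,\psi_a)$ as (possibly reducible) representations; if they shared a common constituent the induced representation would still be reducible. Filling this by the orthogonality computation of \ref{xpi} is delicate because that computation already uses square-integrability, i.e.\ it leans on the cuspidality you are trying to prove; in \cite{JL} the irreducibility of $\pi(\tth,\psi)$ is established directly from the model (Theorem 4.6). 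You should also say a word about the archimedean case of the cuspidality assertion, where \og Schwartz and vanishing at $0$ \fg\ does not mean \og supported away from $0$ \fg\ and the correct conclusion is square-integrability rather than supercuspidality; the rest of your sketch, including the $p=2$ caveat you raise yourself, is sound.
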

\begin{proof}
Les diverses assertions rŽsultent des thŽormes 4.6 et 4.7 de \cite{JL}.
\end{proof}

\begin{proposition} \label{xpi} Soit $\tpi=\tpi_{E,\tth}$.
Le cardinal du groupe $X(\tpi)$: il est d'ordre 1, 2 ou 4.
S'il est d'ordre 4 alors $X(\tpi)\simeq\ZM/2\times\ZM/2$.
\end{proposition}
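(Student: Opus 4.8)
Le plan est de ramener, gr\^ace \`a la Proposition \ref{tpitheta}, l'appartenance $\bs\chi\in X(\tpi)$ \`a une condition explicite sur les caract\`eres, puis de majorer le cardinal en factorisant un homomorphisme naturel. D'abord, on a d\'ej\`a observ\'e que tout $\bs\chi\in X(\tpi)$ est trivial sur $\G(F)\tZ(F)$ ; il est donc de la forme $\bs\chi=\chi\circ\det$ avec $\chi$ un caract\`ere de $F^\times$ v\'erifiant $\chi^2=1$. Ainsi $X(\tpi)$ s'injecte dans le groupe des caract\`eres d'ordre au plus $2$ de $F^\times$, qui est un groupe ab\'elien de $2$-torsion. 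En particulier tout sous-groupe d'ordre $4$ est isomorphe \`a $\ZM/2\times\ZM/2$, ce qui \'etablira la seconde assertion d\`es que la borne $|X(\tpi)|\le4$ sera acquise.

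Pour cette borne, je pose $\xi_\chi=\chi\circ\Nef$, un caract\`ere de $E^\times$ trivial sur $E^1$. La Proposition \ref{tpitheta} donne $\tpi\otimes\bs\chi\simeq\tpi_{E,\tth\,\xi_\chi}$, et son crit\`ere d'\'equivalence, selon lequel $\tpi_{E,\tth'}\simeq\tpi_{E,\tth}$ si et seulement si $\tth'\in\{\tth,\tth\circ\sigma\}$ (o\`u $\sigma$ engendre $\Gal(E/F)$), montre que
$$\bs\chi\in X(\tpi)\quad\Longleftrightarrow\quad\xi_\chi\in\{1,(\tth\circ\sigma)\,\tth\mun\}\ptf$$
Je consid\`ere alors l'homomorphisme $q$ de $X(\tpi)$ dans le groupe des caract\`eres de $E^\times$, donn\'e par $q(\bs\chi)=\xi_\chi$. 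L'\'equivalence ci-dessus montre que son image est contenue dans l'ensemble \`a deux \'el\'ements (au plus) $\{1,(\tth\circ\sigma)\,\tth\mun\}$, d'o\`u $|\mathrm{Im}\,q|\le2$.

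Il reste \`a d\'eterminer le noyau. On a $\xi_\chi=1$ si et seulement si $\chi$ est trivial sur l'image de norme $\Nef E^\times=\Ftp$, et un tel $\chi$ v\'erifie alors $\tpi\otimes\bs\chi\simeq\tpi_{E,\tth}$, donc appartient \`a $X(\tpi)$. Par cons\'equent $\ker q$ s'identifie au groupe des caract\`eres du quotient $F^\times/\Nef E^\times$, dont on a rappel\'e (voir \ref{ext}) qu'il est d'ordre $2$ lorsque $E$ est un corps, de g\'en\'erateur dual $\vef$, et trivial lorsque $E=F\oplus F$. Dans tous les cas $|\ker q|\le2$, et l'on conclut
$$|X(\tpi)|=|\ker q|\cdot|\mathrm{Im}\,q|\le2\cdot2=4\ptf$$
les valeurs $1,2,4$ \'etant pr\'ecis\'ement les produits possibles des deux facteurs.

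Le point le plus d\'elicat me semble \^etre le maniement correct du crit\`ere d'\'equivalence de \ref{tpitheta}, en prenant bien garde \`a l'action de $\Gal(E/F)$. On peut le contr\^oler en observant que $\xi_\chi$ est invariant par $\sigma$, puisque $\Nef(\overline x)=\Nef(x)$, tandis que $(\tth\circ\sigma)\,\tth\mun$ est chang\'e en son inverse par $\sigma$, ce qui est compatible avec le fait que tous ces caract\`eres sont d'ordre au plus $2$. Le reste n'est que v\'erification, une fois admis que l'indice de $\Nef E^\times$ dans $F^\times$ vaut $2$ en toute caract\'eristique.
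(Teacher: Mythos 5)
Your proof is correct. It rests on the same key input as the paper's first proof of this proposition, namely Proposition \ref{tpitheta}\,: the twisting rule $\tpi_{E,\tth}\otimes\bs\chi\simeq\tpi_{E,\tth\cdot(\chi\circ\Nef)}$ together with the equivalence criterion $\tth'\in\{\tth,\tth\circ\sigma\}$. Where you differ is in the bookkeeping. The paper first isolates the case $\th=1$ (principal series, where only $\vef$ can occur), and for $\th\ne1$ shows that any $\chi\notin\{1,\vef\}$ forces $\th^2=1$ and then counts $X(\tpi)$ as the dual of the Galois group of a biquadratic extension $L/F$; it also gives a second, independent proof via orthogonality relations on the elliptic torus. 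You instead package everything into the single homomorphism $q(\bs\chi)=\chi\circ\Nef$ and the factorization $\card X(\tpi)=\card(\ker q)\cdot\card(\mathrm{Im}\,q)$, with $\ker q$ the dual of $F^\times/\Nef E^\times$ (of order $2$ or $1$) and $\mathrm{Im}\,q$ a subgroup of the two-element set $\{1,(\tth\circ\sigma)\tth\mun\}$. This is cleaner, uniform in $\th$, and avoids both the case split and the biquadratic-extension argument; it yields slightly less information than the paper (which pins down exactly when the order is $4$, namely $\th^2=1$, $\th\ne1$, and identifies $X(\tpi)$ with the dual of $\Gal(L/F)$), but for the statement as written your bound, combined with the observation that $X(\tpi)$ is $2$-torsion, is all that is needed.
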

\begin{proof} 
On sait que la restriction de $\tpi$ ˆ $\G(F)$ est irrŽductible 
si $X(\tpi)$ est un singleton. Sinon, il existe un caractre $\chi$ non trivial d'ordre 2 tel que
$$\tpi\otimes\bs\chi\simeq\tpi\ptf$$
Soit $E/F$ l'extension quadratique
sŽparable telle que $\chi=\vef$. Il rŽsulte de \ref{otim} que
$\tpi=\tpi_{E,\tth}$ pour un caractre $\tth$ de $E^\times$. 
Supposons d'abord $\th=1$ \cad que $\tth$ est le composŽ d'un caractre
de $F^\times$ et de la norme pour $E/F$. Dans ce cas
$\tpi_{E,\tth}$ est une sŽrie principale $\pi(\tth,\vef\tth)$ (\cad induite unitaire par le caractre
du tore des matrices diagonales dŽfini par $(\tth,\vef\tth)$)
et il en rŽsulte que $\vef$ est le seul caractre $\chi$
non trivial tel que 
$$\tpi\otimes\bs\chi\simeq\tpi\ptf$$
Nous supposerons dŽsormais $\th\ne1$.
On en donnera deux dŽmonstrations indŽpendantes.

\pni -- Premire dŽmonstration.
D'aprs \ref{tpitheta}, pour $\chi\in X(\tpi)$ on aura 
$$\tpi_{E,\tth}\otimes\bs\chi\simeq\tpi_{E,\tth.\xi}\simeq\tpi_{E,\tth}
\com{et donc}\tth(x)\chi(x\overline x)=\tth\circ\sigma(x)$$
avec $\sigma\in\Gal(E/F)$ et o $\xi(x)=\chi(x\overline x)$.
Supposons $\chi\notin\{1,\vef\}$; cela impose
$$\tth(x)\chi(x\overline x)=\tth(\overline x)\com{et donc}
\chi(x\overline x)=\tth(\overline x/x)=\xi(x)$$
o $\xi$ est un caractre non trivial d'ordre 2 de $E^\times$.
Puisque l'application
$x\mapsto x/\overline x$ est une surjection de $E^\times$ sur le tore
$\Tef(F)\simeq E^1$ ceci impose $$\tth(\overline x/x)=\th(\overline x/x)=\xi(x)$$
et donc $\th^2=1$.
Le noyau $N$ de $\xi$ est l'image par la norme du groupe 
multiplicatif $L^\times$ pour une extension quadratique $L/E$. Tout ŽlŽment de $X(\tpi_{E,\tth})$ est trivial sur 
l'image de $N$ par la norme de $E/F$ qui est un sous-groupe d'ordre 4 de $F^\times$.
Donc 
$X(\tpi_{E,\tth})$ a 4 ŽlŽments et comme chaque ŽlŽment est d'ordre 2 le groupe 
$X(\tpi_{E,\tth})$ est isomorphe ˆ $\ZM/2\times\ZM/2$.
L'extension $L/F$ est bi-quadratique; 
son groupe de Galois est le dual de $X(\tpi_{E,\tth})$.
\pni -- Seconde dŽmonstration. 
Soient $\th\ne1$, $\pi^+=\pi(\th,\psi)$ et $\pi^-=\pi(\th,\psi')$
o $\psi'(x)=\psi(ax)$ pour $a\notin \Ftp=\Nef E^\times$.
Avec la mesure de Haar normalisŽe sur le groupe compact $\Tef(F)$ on a
d'aprs \ref{diffb}, 
$$\int_{\Tef(F)}\vert\trace{\pi^+}(t)-\trace{\pi^-}(t)\vert^2\Delta_\T(t)^2\,dt=
\int_{\Tef(F)}\vert\th(t)+\th(t\mun)\vert^2\,dt$$
et donc
$$\int_{\Tef(F)}\vert\trace{\pi^+}(t)-\trace{\pi^-}(t)\vert^2\Delta_\T(t)^2\,dt=
\begin{cases} 2\com{si $\th^2\ne1$}\cr
4\com{si $\th^2=1$}\end{cases}
$$
Maintenant les relations d'orthogonalitŽ (cf. \ref{weyl}) montrent que $\pi^+$ et $\pi^-$
sont irrŽductibles si $\th^2\ne1$ et  que si $\th^2=1$ (mais $\th\ne1$) chacune
se dŽcompose en somme de deux reprŽsentations donnant au total quatre reprŽsentations
irrŽductibles inŽquivalentes. On conclut comme ci-dessus.
\end{proof}

En rŽsumŽ on a les assertions suivantes (cf. \cite{ST} lorsque la caractŽristique rŽsiduelle est 
diffŽrente de 2).

\begin{proposition}\label{pitheta} 
\pni
\begin{itemize}
\item[(i) --] Les reprŽsentations $\pi(\th,\psi)$, restriction ˆ $\G(F)$ des reprŽsentations $\tpi(\tth,\psi)$
de $\tGFp$, sont irrŽductibles sauf si $\th^2=1$ et $\th\ne1$ auquel cas elles se dŽcomposent en
deux reprŽsentations inŽquivalentes.
\item [(ii) --] Les reprŽsentations $\pi(\th,\psi)$ et $\pi(\th',\psi)$
sont inŽquivalentes sauf si $\th'=\th^{\pm1}$.
\item [(iii) --] Elles sont cuspidales si $\th\ne1$. Si $\th=1$ ce sont des ``limites de sŽrie discrtes''.
\end{itemize}
\end{proposition}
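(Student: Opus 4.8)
The plan is to read off all three assertions from the structural results already in place, since (as the phrase \og En r\'esum\'e\fg\ indicates) \ref{pitheta} merely organizes what precedes it. I would assemble the Clifford theory of \ref{cliff}, the computation of $X(\tpi)$ in \ref{xpi}, the description of the $\tpi_{E,\tth}$ and their twists in \ref{tpitheta}, the reducibility criterion \ref{exem}, and, where needed, the explicit character difference \ref{diffb}. Throughout I take $E/F$ to be a field, so that $\vef$ is nontrivial.

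For (i) I would run Clifford theory along the chain $\G(F)\subset\tGFp\subset\tG(F)$. Every $\bs\chi\in X(\tpi)$ has the form $\chi\circ\det$ and is therefore trivial on $\G(F)=SL(2,F)$, so $X_{\G(F)}(\tpi)=X(\tpi)$ and by \ref{cliff} the restriction of $\tpi=\tpi_{E,\tth}$ to $\G(F)$ splits into exactly $n:=|X(\tpi)|$ pairwise inequivalent constituents, each of multiplicity one. Likewise $X_{\tGFp}(\tpi)=\{1,\vef\}$, so $\tpi\big\vert_{\tGFp}=\pi(\tth,\psi)\oplus\pi(\tth,\psi_a)$, the two summands being exchanged by conjugation under any $s\in\tG(F)$ with $\vef(\det s)=-1$. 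This same $s$ permutes the $n$ constituents of $\tpi\big\vert_{\G(F)}$, carrying those occurring in $\pi(\th,\psi)=\pi(\tth,\psi)\big\vert_{\G(F)}$ to those occurring in $\pi(\th,\psi_a)$; by multiplicity one these two blocks are disjoint, so each contains exactly $n/2$ constituents (note $\vef\in X(\tpi)$ is nontrivial, so $n$ is even and $\ge2$). It then remains to invoke \ref{xpi}: $n=2$ unless $\th^2=1$ and $\th\ne1$, in which case $n=4$. Hence $\pi(\th,\psi)$ is irreducible except when $\th^2=1$, $\th\ne1$, where it has two inequivalent constituents; this is (i). Equivalently, one may read (i) directly off the orthogonality computation in the second proof of \ref{xpi}.

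For (ii), suppose $\pi(\th,\psi)$ and $\pi(\th',\psi)$ share a constituent as $\G(F)$-representations. Then $\tpi_{E,\tth}$ and $\tpi_{E,\tth'}$ have a common constituent in their restriction to $\G(F)$, so by \ref{cliff} they differ by a character: $\tpi_{E,\tth'}\simeq\tpi_{E,\tth}\otimes\bs\chi$. By \ref{tpitheta} this equals $\tpi_{E,\tth\cdot\xi}$ with $\xi(x)=\chi(x\overline x)$, whence $\tth'=(\tth\xi)\circ\sigma^\epsilon$ for some $\epsilon\in\{0,1\}$ and $\sigma\in\Gal(E/F)$. Restricting to $E^1$ and using that $\xi$ is trivial there, since $x\overline x=\Nef(x)=1$ on $E^1$, gives $\th'=\th$ or $\th'=\th\circ\sigma=\th^{-1}$, i.e. $\th'=\th^{\pm1}$. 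The converse is furnished by the explicit intertwiner $\Phi\mapsto\widetilde\Phi$, $\widetilde\Phi(e)=\Phi(\overline e)$, already noted to relate $\pi(\th,\psi)$ and $\pi(\th^{-1},\psi)$.

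For (iii), when $\th\ne1$ the representation $\tpi_{E,\tth}$ is cuspidal by \ref{tpitheta}; since cuspidality (vanishing of all proper Jacquet modules, equivalently compact support of matrix coefficients modulo the centre) passes to the constituents of the restriction to $\G(F)$, the $\pi(\th,\psi)$ are cuspidal. When $\th=1$, \ref{tpitheta} identifies $\tpi_{E,1}=\pi(\mu,\mu\vef)$ as a principal series, whose restriction to $\G(F)$ is the reducible unitary principal series $\pi_\lambda$ with $\lambda=\vef$ of order $2$; by \ref{exem} it splits into two inequivalent tempered pieces, the limits of discrete series. The step needing the most care is the Clifford bookkeeping in (i) --- the even $n/2$--$n/2$ splitting and the uniform validity of $n=|X(\tpi)|$ in residual characteristic $2$, where $X(\tpi)$ is finite only through \ref{fini} and \ref{xpi} rather than through finiteness of $Q_F$; the descent of cuspidality and the interpretation of \og limit of discrete series\fg\ in the $p$-adic setting are the only remaining points to pin down.
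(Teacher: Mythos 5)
Your proof is correct and follows essentially the same route as the paper, which likewise reads all three assertions off the decomposition $\tpi_{E,\tth}\vert_{\G(F)}=\pi(\th,\psi)\oplus\pi(\th,\psi_a)$, the Galois intertwiner giving $\pi(\th,\psi)\simeq\pi(\th^{-1},\psi)$, and the invocation of \ref{xpi} and \ref{tpitheta}. You merely make explicit the Clifford bookkeeping (the $n/2$--$n/2$ splitting of the $n=\vert X(\tpi)\vert$ constituents between the two $\tGFp$-blocks) and the twist argument for (ii), which the paper leaves implicit.
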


\begin{proof} La dŽfinition de $\tpi_{E,\tth}$ par induction de $\tGFp$ ˆ $\tG(F)$ montre que
$$\tpi_{E,\tth}\vert_{\G(F)}=\pi(\th,\psi)\oplus\pi(\th,\psi_a)$$
avec $a\notin \Ftp$. 
L'action du groupe de Galois de $E/F$: $$\sigma:e\mapsto\overline e$$
induit un isomorphisme entre $\pi(\th,\psi)$ et $\pi(\th\mun,\psi)$.
Ceci montre une partie de $(i)$ et $(ii)$ et on invoque \ref{xpi}.
L'assertion $(iii)$ se dŽduit de \ref{tpitheta}.
\end{proof}

Soit $\chi$ un caractre de $F^\times/(F^\times)^2$. 
Par abus de notation on notera encore
$\chi$ le caractre de $\tG(F)$ obtenu par composition avec le dŽterminant.
Rappelons maintenant le lemme 7.17 de \cite{BC}. 

\begin{proposition} \label{otim}
Si $\chi$ est non trivial,
une reprŽsentation $\tpi$ de $\tG(F)$ qui vŽrifie
$\tpi\otimes\bs\chi\simeq\tpi$
est de la forme $\tpi=\tpi_{E,\tth}$ o $\tth$ est un caractre
de $E^\times$ o $E$ est l'extension quadratique sŽparable de $F$
attachŽe ˆ $\chi$ par la thŽorie du corps de classe.
\end{proposition}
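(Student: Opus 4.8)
La direction directe est d\'ej\`a acquise : d'apr\`es \ref{tpitheta} on a $\tpi_{E,\tth}\otimes\bs\chi\simeq\tpi_{E,\tth.\xi}$ avec $\xi(x)=\chi(x\overline x)$, et pour $\chi=\vef$ on a $\xi(x)=\vef(\Nef x)=1$, donc $\tpi_{E,\tth}\otimes\vef\simeq\tpi_{E,\tth}$. Il s'agit donc d'\'etablir la r\'eciproque. Le plan est de commencer par observer que $\chi$, non trivial et d'ordre $2$, est de la forme $\chi=\vef$ pour une unique extension quadratique s\'eparable $E/F$, et que $\bs\chi=\vef\circ\det$ est trivial exactement sur le sous-groupe $\tGFp$ des matrices dont le d\'eterminant est une norme. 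On \'elimine d'abord le cas o\`u $\tpi$ est de dimension $1$ : un tel $\tpi=\eta\circ\det$ v\'erifie $\tpi\otimes\bs\chi=(\eta\chi)\circ\det\not\simeq\tpi$ puisque $\chi\ne1$. Donc $\tpi$ est de dimension infinie et admet un mod\`ele de Whittaker, ce qui permet d'invoquer la classification des repr\'esentations admissibles irr\'eductibles de $\tG(F)=GL(2,F)$.

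Je distinguerais ensuite les trois classes restantes. Si $\tpi=\pi(\mu,\nu)$ est une s\'erie principale, alors $\tpi\otimes\bs\chi=\pi(\mu\chi,\nu\chi)$ ; les seules \'equivalences \'etant $\pi(\mu,\nu)\simeq\pi(\nu,\mu)$ (voir \ref{sp}), la condition $\tpi\otimes\bs\chi\simeq\tpi$ force $\mu\chi=\nu$ et $\nu\chi=\mu$, le cas $\mu\chi=\mu$ donnant $\chi=1$. On obtient $\tpi=\pi(\mu,\mu\vef)$, qui est pr\'ecis\'ement la s\'erie principale $\tpi_{E,\tth}$ avec $\tth=\mu\circ\Nef$ fournie par \ref{tpitheta}. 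Si $\tpi$ est sp\'eciale, \cad un twist $\mathrm{St}\otimes\mu$ de la repr\'esentation de Steinberg, alors $\tpi\otimes\bs\chi=\mathrm{St}\otimes(\mu\chi)$ n'est isomorphe \`a $\tpi$ que si $\chi=1$ : ce cas est donc vide. Reste le cas cuspidal, qui est le c\oe ur de l'\'enonc\'e.

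Supposons donc $\tpi$ cuspidale avec $\tpi\otimes\vef\simeq\tpi$. Le caract\`ere $\vef$ appartient alors \`a $X(\tpi)$ et, \'etant trivial sur $\tGFp$, on a $X_{\tGFp}(\tpi)=\{1,\vef\}$, d'ordre $2$. La th\'eorie de Clifford \ref{cliff} donne alors
$$\tpi\big\vert_{\tGFp}=\sigma\oplus\sigma^{s}\com{et}\tpi=\Ind_{\tGFp}^{\tG(F)}\sigma$$
o\`u $\sigma$ est irr\'eductible, $s\in\tG(F)$ v\'erifie $\vef(\det s)=-1$, et les deux composants sont in\'equivalents. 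Il reste \`a identifier $\sigma$ \`a une repr\'esentation $\pi(\tth,\psi)$ de $\tGFp$ attach\'ee \`a un caract\`ere $\tth$ de $E^\times$ par la repr\'esentation de Weil, car alors $\tpi=\Ind_{\tGFp}^{\tG(F)}\pi(\tth,\psi)=\tpi_{E,\tth}$ par d\'efinition.

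L'identification est l'\'etape d\'elicate. Le tore elliptique $\tTef(F)=E^\times$ est contenu dans $\tGFp$, et la famille des $\pi(\tth,\psi)$, pour $\tth$ parcourant les caract\`eres de $E^\times$ de restriction $\th\ne1$ \`a $E^1$ modulo $\tth\mapsto\tth\circ\sigma$, fournit des repr\'esentations cuspidales irr\'eductibles de $\tGFp$ dont le caract\`ere sur $E^\times$ est donn\'e par \ref{diffb}, gouvern\'e par les fonctions $t\mapsto\tth(t)+\tth(\overline t)$. Je montrerais par un argument d'orthogonalit\'e, analogue \`a celui de la seconde d\'emonstration de \ref{xpi}, que le caract\`ere de $\sigma$ restreint aux \'el\'ements elliptiques r\'eguliers, pond\'er\'e par le d\'enominateur de Weyl, co\"incide n\'ecessairement avec celui d'un $\pi(\tth,\psi)$. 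Le point v\'eritablement technique sera la surjectivit\'e, \cad le fait que tout $\sigma$ provienne effectivement d'un tel $\tth$ et que la construction reste valable en toute caract\'eristique, y compris $p=2$ o\`u les m\'ethodes par types ne s'appliquent pas : c'est exactement ce qu'\'etablissent les th\'eor\`emes $4.6$ et $4.7$ de \cite{JL} via le mod\`ele de Kirillov et l'\'equation fonctionnelle locale, que j'invoquerais pour conclure.
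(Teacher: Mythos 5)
Votre plan suit le m\^eme squelette que la preuve du texte~: s\'eries principales trait\'ees \`a l'identique, repr\'esentations sp\'eciales exclues, puis th\'eorie de Clifford (\ref{cliff}) pour ramener le cas cuspidal \`a l'identification du couple $(\tpi^+,\tpi^-)$ avec un couple $(\pi(\tth,\psi),\pi(\tth,\psi'))$. Mais c'est pr\'ecis\'ement \`a cette \'etape d\'ecisive que votre argument pr\'esente une lacune. Vous annoncez qu'un ``argument d'orthogonalit\'e'' montrerait que le caract\`ere de $\sigma$ sur les elliptiques r\'eguliers co\"{\i}ncide avec celui d'un $\pi(\tth,\psi)$, puis vous renvoyez la ``surjectivit\'e'' aux th\'eor\`emes 4.6 et 4.7 de \cite{JL}. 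Or ces th\'eor\`emes construisent les $\pi(\tth,\psi)$ et \'etablissent leurs propri\'et\'es (irr\'eductibilit\'e, cuspidalit\'e, in\'equivalences)~; ils ne d\'emontrent pas l'exhaustion, qui est exactement le contenu de la pr\'esente proposition (le lemme 7.17 de \cite{BC}). Le renvoi est donc circulaire, et l'\'etape qui porte toute la difficult\'e n'est pas fournie.

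Cette \'etape est r\'ealis\'ee dans le texte par un argument de compl\'etude qui porte non pas sur le caract\`ere de $\sigma=\tpi^+$ seul mais sur la diff\'erence $\Xi_{\tpi}=\trace\tpi^+-\trace\tpi^-$~: (i) $\Xi_{\tpi}$ s'annule sur les \'el\'ements r\'eguliers des tores non isomorphes \`a $\Tef$ (lemme \ref{diffa})~; (ii) la conjugaison par $\bs w_E$ donne $\Xi_{\tpi}(t)=\vef(-1)\Xi_{\tpi}(\overline t)$~; (iii) les $\Xi_{\tth}$ forment une base orthogonale de l'espace de Hilbert des fonctions poss\'edant ces deux propri\'et\'es avec caract\`ere central fix\'e (analyse de Fourier sur $E^\times$)~; (iv) $\Xi_{\tpi}\ne0$ et est de carr\'e int\'egrable par les relations d'orthogonalit\'e (\ref{cara}, \ref{weyl}), donc a un produit scalaire non nul avec l'un des $\Xi_{\tth}$, d'o\`u $\Xi_{\tpi}=\pm\Xi_{\tth}$, et l'ind\'ependance lin\'eaire des caract\`eres irr\'eductibles permet de conclure $\tpi=\tpi_{E,\tth}$. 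Sans les points (i)--(iii) --- que votre proposition ne mentionne pas --- rien ne garantit que $\Xi_{\tpi}$ appartienne au sous-espace engendr\'e par les $\Xi_{\tth}$~; c'est l\`a le c{\oe}ur de la preuve, et c'est ce qu'il vous faudrait r\'ediger au lieu d'invoquer \cite{JL}.
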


\begin{proof} 
Si $\tpi$ est une sŽrie principale et donc, dans les notations de \cite{JL}, $\tpi=\pi(\mu,\nu)$
on a dŽjˆ observŽ que $\pi(\mu,\nu)\otimes\bs\chi\simeq\pi(\mu\chi,\nu\chi)$ 
et comme par ailleurs on suppose que $\pi(\mu,\nu)\otimes\bs\chi\simeq\pi(\mu,\nu)$
ceci impose $\nu=\mu\chi$ si $\chi$ est non trivial:
$$\tpi=\pi(\mu,\mu\chi)=\pi(1,\chi)\otimes\mu=\tpi_{E,\tth}$$
o $\tth=\mu\circ N_{E/F}$ (cf. \cite[Theorem 4.6]{JL}).
Le cas des reprŽsentations spŽciales est exclu.
Supposons maintenant que $\tpi$ est cuspidale. 
Une reprŽsentation $\tpi$ de $\tG(F)$ qui vŽrifie
$\tpi\otimes\bs\chi\simeq\tpi$
est nŽcessairement induite d'une reprŽsentation irrŽductible $\tpi^+$ de $\tGFp$
et la restriction de $\tpi$ ˆ $\tGFp$ est somme de deux reprŽsentations inŽquivalentes
$\tpi^+\oplus\tpi^-$ ŽchangŽes par conjugaison par un ŽlŽment dont le dŽterminant
n'est pas une norme de $E/F$. Soit $T(F)$ un tore dans $\G(F)$ et pour
$t\in \T(F)$ rŽgulier
considŽrons la diffŽrence des caractres:
$$\Xi_{\tpi}(t)=\trace\tpi^+(t)-\trace\tpi^-(t)\ptf$$ 
D'aprs \ref{diffa}
cette diffŽrence est nulle si $\T$ n'est pas isomorphe ˆ $\Tef$.
Par ailleurs, on a observŽ en \ref{ext} que l'ŽlŽment de $\tG(F)$
notŽ $\bs w_E$ Žtait tel que $$\bs w_E \gamma\bs w_E\mun=\overline\gamma\comm{et}
\vef\big(\det(\bs w_E)\big)=\vef(-1)\ptf$$
Il en rŽsulte que $$\Xi_{\tpi}(\bs w_E g\bs w_E\mun)=\vef(-1)\Xi_{\tpi}(g)$$
et donc $$\Xi_{\tpi}(t)=\vef(-1)\Xi_{\tpi}(\overline t)$$ pour $t\in\tT(F)$.
De mme $$\Xi_{\tth}(t)=\vef(-1)\Xi_{\tth}(\overline t)\ptf$$
On se restreint maintenant ˆ des reprŽsentations de $\tG(F)$ qui 
admettent un mme caractre central.
On observe que les $\Xi_{\tth}$ forment une base orthogonale
de l'espace de Hilbert des fonctions vŽrifiant ces relations avec caractre central donnŽ. 
Compte tenu des relations d'orthogonalitŽ pour les caractres (\cite[Chapter 15]{JL},
\ref{cara} et \ref{weyl})
il en rŽsulte que $\Xi_{\tpi}$ ne peut pas tre orthogonal ˆ tous les $\Xi_{\tth}$ 
et il y a nŽcessairement un $\tth$ tel que $$\Xi_{\tpi}=\pm\Xi_{\tth}\ptf$$
\end{proof}

\Section{Transfert spectral local}\label{tsp}

Soit $F$ un corps local et $f\in\ctyc\big(\G(F)\big)$ on souhaite dŽfinir un transfert spectral 
$\pi\mapsto \pi^\EC$ dual du transfert gŽomŽtrique. Il doit vŽrifier
$$\trace L(\pi)(f^\EC)=\trace \pi^\EC(f)$$
o $L(\pi)$ est la somme des reprŽsentations dans
le $L$-paquet contenant $\pi$. On a vu que c'est l'ensemble des 
classes de reprŽsentation $\pi^g$ o $g$ parcourt $\tG(F)$. 
Cet ensemble est fini de cardinal 1,2 ou 4 avec toujours multiplicitŽ 1.
Nous verrons que pour les unitŽs des algbres de quaternions les $L$-paquets sont de cardinal 1 ou 2
mais certains $L$-paquets sont des singletons mais avec multiplicitŽ 2.

Lorsque $\EC$ est la donnŽe endoscopique principale \cad $\EC=(SL(2),1)$ le transfert
gŽomŽtrique est trivial: $f^\EC=f$ et le transfert spectral vŽrifie
$$ \pi^\EC=L(\pi)\ptf$$ 

Si $\EC$ est une donnŽe endoscopique attachŽe ˆ une extension
quadratique (Žventuellement dŽployŽe) $E/F$ et $\th$ 
un caractre unitaire du groupe $T_\EC(F)$
on souhaite montrer l'existence des reprŽsentations virtuelles $\th^\EC$ (\cad combinaisons
linŽaires formelles de reprŽsentations irrŽductibles) de $\G(F)$ telles que
$$\int_{\Tef(F)} f^\EC(t)\th(t)\,dt=\theta(f^\EC)=\trace\theta(f^\EC)=\trace\theta^\EC(f)$$

\begin{proposition}\label{spectrans}\pni (1)
Si $E=F\oplus F$ et si $\th$ est un caractre du tore diagonal, on a
$$\int_{\Tef(F)} f^\EC(t)\th(t)\,dt=\trace\pi_{E,\th}(f)$$
o $\pi_{E,\th}$ est la sŽrie principale induite-parabolique (normalisŽe)
de $\th$ vu comme un caractre du sous-groupe 
parabolique $P$. 
\pni (2)
Si $E$ est un corps, pour notre choix du plongement de $E^1$ dans $\G(F)$ on a
$$\int_{\Tef(F)} f^\EC(t)\th(t)\,dt=\trace\pi_{E,\th}^+(f)-\trace\pi_{E,\th}^-(f)$$
o:
$$\pi_{E,\th}^+=\pi(\th,\psi)\comm{et} \pi_{E,\th}^-=\pi(\th,\psi')$$
si $\psi'(x)=\psi(ax)$ pour un $a\notin\Nef E^\times$.

\end{proposition}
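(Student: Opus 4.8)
Le plan est de traiter s\'epar\'ement les deux cas.

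\emph{Cas (1).} Ici $E=F\oplus F$ et, \`a conjugaison pr\`es, $\Tef=\M$ est le tore diagonal. On part de l'expression explicite du transfert donn\'ee par \ref{deploy}: pour $t=\begin{pmatrix}a&0\cr0&a\mun\end{pmatrix}$,
$$f^\EC(t)=\int_K\int_F f\Big(k\mun\begin{pmatrix}a&n\cr0&a\mun\end{pmatrix}k\Big)\,dn\,dk\ptf$$
En \'ecrivant $\begin{pmatrix}a&n\cr0&a\mun\end{pmatrix}=t\,u$ avec $u=\begin{pmatrix}1&a\mun n\cr0&1\end{pmatrix}\in U(F)$ et en posant $m=a\mun n$ (d'o\`u $dn=\vert a\vert\,dm$), on reconna\^{\i}t le terme constant de $f$ le long de $\P$:
$$f^\EC(t)=\vert a\vert\int_K\int_{U(F)}f(k\mun t u k)\,du\,dk=\delta_\P(t)^{1/2}\int_K\int_{U(F)}f(k\mun t u k)\,du\,dk\ptf$$
La formule classique donnant la trace d'une s\'erie principale normalis\'ee, \`a savoir $\trace\pi_\th(f)=\int_{\M(F)}\th(m)\,\delta_\P(m)^{1/2}\int_K\int_{U(F)}f(k\mun m u k)\,du\,dk\,dm$, s'identifie alors \`a $\int_{\Tef(F)}f^\EC(t)\,\th(t)\,dt$, pour des mesures normalis\'ees de fa\c{c}on compatible. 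Ceci \'etablit (1).

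\emph{Cas (2).} On pose $\Xi_\th(g)=\trace\pi(\th,\psi)(g)-\trace\pi(\th,\psi')(g)$. Les caract\`eres \'etant localement int\'egrables (\cite{Lem}), le membre de droite vaut $\int_{\G(F)}f(g)\,\Xi_\th(g)\,dg$, que l'on \'evalue par la formule d'int\'egration de Weyl \ref{Weyl}. La variante pour $\G(F)$ de \ref{diffa} montre que $\Xi_\th$ s'annule sur tout tore non isomorphe \`a $\Tef$ (pour un tel tore et $t$ r\'egulier, le centralisateur de $t$ dans $\tG(F)$ contient un \'el\'ement $s$ avec $\vef(\det s)=-1$ \'echangeant $\pi(\th,\psi)$ et $\pi(\th,\psi')$, d'o\`u $\Xi_\th(t)=\vef(\det s)\,\Xi_\th(t)=0$): seuls les tores de type $\Tef$ contribuent. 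Sur un tel tore, \ref{diffb} combin\'e \`a la d\'efinition \ref{FTr} du facteur de transfert et \`a l'\'egalit\'e $\lambda(E/F,\psi)^2=\vef(-1)$ (\ref{psideux}) fournit, apr\`es simplification, l'identit\'e cl\'e
$$\Delta_\T(t)^2\,\Xi_\th(t)=\Delta^\EC(t,t)\big(\th(t)+\th(t\mun)\big)\ptf$$

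Il reste \`a passer de l'int\'egrale orbitale ordinaire $\orb(t,f)$, qui intervient dans la formule de Weyl, \`a l'int\'egrale $\kappa$-orbitale $\orb^\kappa(t,f)$ (avec $\kappa=\vef$) qui d\'efinit $f^\EC(t)=\Delta^\EC(t,t)\,\orb^\kappa(t,f)$. C'est ici que r\'eside la principale difficult\'e, de nature combinatoire: la r\'epartition des deux classes de $\G(F)$-conjugaison contenues dans une classe stable r\'eguli\`ere, ainsi que le facteur $w_\T\mun$ de la formule de Weyl, sont tous deux gouvern\'es par le signe $\vef(-1)$. Lorsque $\vef(-1)=-1$, l'\'el\'ement de Weyl n'est pas rationnel ($w_\T=1$), il n'y a qu'une classe de tores de type $\Tef$, et $\gamma,\gamma\mun=\overline\gamma$ repr\'esentent les deux classes rationnelles distinctes; lorsque $\vef(-1)=1$, on a $w_\T=2$, les \'el\'ements $\gamma$ et $\gamma\mun$ sont conjugu\'es, et la classe stable rencontre deux tores non conjugu\'es. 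Dans chacun des deux cas, en utilisant la transformation $\Xi_\th(sgs\mun)=\vef(\det s)\,\Xi_\th(g)$ (pour $s$ dont le d\'eterminant n'est pas une norme, qui \'echange $\pi(\th,\psi)$ et $\pi(\th,\psi')$) ainsi que la sym\'etrie $f^\EC(t\mun)=f^\EC(t)$ \'etablie plus haut, on r\'eorganise la somme de Weyl en
$$\int_{\G(F)}f(g)\,\Xi_\th(g)\,dg=\int_{\Tef(F)}\Delta^\EC(t,t)\,\orb^\kappa(t,f)\,\th(t)\,dt=\int_{\Tef(F)}f^\EC(t)\,\th(t)\,dt\ptf$$
L'obstacle principal est pr\'ecis\'ement de v\'erifier que la sym\'etrisation en $\th(t)+\th(t\mun)$, le facteur $w_\T\mun$ et le regroupement des classes rationnelles se compensent exactement pour reconstituer $\orb^\kappa$ \`a partir de $\orb$; cette compensation a lieu dans les deux cas $\vef(-1)=\pm1$ et ach\`eve la d\'emonstration.
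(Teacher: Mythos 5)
Votre preuve suit essentiellement la m\^eme d\'emarche que celle du texte : pour (1) le calcul explicite \`a partir de \ref{deploy} et la formule de la trace des s\'eries principales, et pour (2) l'identit\'e cl\'e $\Delta_\T(t)^2\,\Xi_\th(t)=\Delta^\EC(t,t)\big(\th(t)+\th(t\mun)\big)$ issue de \ref{diffb} et \ref{psideux}, l'annulation de $\Xi_\th$ sur les autres tores, la formule d'int\'egration de Weyl et la m\^eme dichotomie $\vef(-1)=\pm1$ gouvernant $w_\T$ et le nombre de classes de tores. La v\'erification finale de la compensation, que vous signalez honn\^etement comme le point d\'elicat sans la d\'etailler, est laiss\'ee au m\^eme degr\'e d'implicite dans le texte original.
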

\begin{proof} On observe qu'avec notre choix de la constante $c$ du facteur de transfert
et d'aprs \ref{psideux}
$$\Delta^\EC(t,t)^2=\vef(-1)\Delta_\T(t)^2\ptf$$
Si $E=F\oplus F$ l'assertion (1) est une consŽquence immŽdiate de \ref{deploy}
et de la formule pour la trace des sŽries principales.
Lorsque $E$ est un corps et compte tenu de \ref{iden} et on a
$$\Delta_\T(t)^2\,\Xi_{\th}(t)=
\Delta^\EC(t,t)\big(\th(t)+\th(\overline t)\big)
$$
o
$$\Xi_{\th}(t)=\trace\pi_{E,\th}^+(t)-\trace\pi_{E,\th}^-(t)\ptf$$
Maintenant au vu de  \ref{trans} et \ref{fond}
on a
$$\int_{\Tef(F)} f^\EC(t)\th(t)\,dt=\frac{1}{2}\int_{\Tef(F)} f^\EC(t)\big(\th(t)+\th(t\mun)\big)\,dt 
\mskip 130mu$$
$$\mskip 250mu=\frac{1}{2}\int_{\Tef(F)}\Delta_\T(t)^2\,\,\Xi_\th(t)\orb^\kappa(f,t)\,dt$$
On rappelle que
$$\orb^\kappa(t,f)=\orb(t,f)-\orb(t',f)$$
o  $t'$ est stablement conjuguŽ mais non conjuguŽ ˆ $t$.
On distingue deux cas: si il existe une seule classe de $\G(F)$-conjugaison
de tores dans $\G(F)$ isomorphe ˆ $\Tef$ alors $t$ et $t\mun$
sont stablement conjuguŽs mais non conjuguŽs dans $\G(F)$ et $w_T=1$.
Dans l'autre cas $t$ et $t\mun$ sont conjuguŽs dans $\G(F)$ mais il y a deux classes de 
$\G(F)$-conjugaisons de tores isomorphes ˆ $\Tef$ et $w_T=2$. 
Dans tous les cas  la formule d'intŽgration de Weyl \ref{Weyl} montre que:
$$\int_{\Tef(F)} f^\EC(t)\th(t)\,dt=\int_{\G(F)}\!\!\Xi_\th(x)f(x)\,dx
=\trace\pi_{E,\th}^+(f)-\trace\pi_{E,\th}^-(f)\ptf$$
 \end{proof}
 
 \begin{remark}\label{choix}
 {\rm La correspondance $\theta\mapsto\theta^\EC$ suppose divers choix: 
celui du caractre additif $\psi$ et l'identification entre
$T$ et $\Tef$ et du facteur de transfert. Comme dŽjˆ observŽ juste aprs \ref{iden}
ce n'est pas la mme identification pour les tores qui est utilisŽe dans \cite{LL}. 
Il semble que l'identification choisie ici soit plus naturelle. Elle justifie alors
le choix du facteur de transfert.}
\end{remark}

On pourra observer que la proposition \ref{spectrans} fournit une nouvelle
dŽmonstration du Lemme Fondamental \ref{fond}. En effet, si l'extension $E/F$ 
est non ramifiŽe (resp. dŽployŽe), l'espace de la reprŽsentation
$\pi_{E,\th}^+=\pi(\th,\psi)$ (resp $\pi_{E,\th}$) est non-ramifiŽe 
(i.e. possde un vecteur invariant sous un sous)groupe hyperspŽcial)
si et seulement $\theta$ est non ramifiŽ.

\Section{Action de l'opŽrateur d'entrelacement}

Soit $\pi_\xi$ la reprŽsentation de la sŽrie principale induite parabolique
(normalisŽe) par le caractre $\xi$ de $F^\times$. On note ${\mathbf R}(\xi)$ l'opŽrateur 
d'entrelacement normalisŽ entre $\pi_\xi$ et $\pi_{\xi\mun}$. 

On suppose maintenant que $\xi=\vef$.
Donc, $\xi$ est d'ordre 2 et l'opŽrateur ${\mathbf R}(\xi)$ est un endomorphisme
dont le carrŽ est l'identitŽ.
Si $E=F\oplus F$ alors $\xi=1$ et ${\mathbf R}(\xi)$ est l'identitŽ.
Si $E$ est un corps extension quadratique de $F$;
 la reprŽsentation $\pi_\xi$
se dŽcompose en deux sous-reprŽsentations irrŽductibles:
$$\pi_\xi=\pi({\bs 1}_E,\psi)\oplus\pi({\bs 1}_E,\psi')$$
o ${\bs 1}_E$ est le caractre trivial du groupe compact $E^1$.
On pose $\pi^+_E=\pi({\bs 1}_E,\psi)$ et $\pi^-_E=\pi({\bs 1}_E,\psi')$.
On observera que $\pi^+_E$ admet un modle de Whittaker pour $\psi$.
Le lemme ci-aprs est \cite[Lemma 3.6]{LL}.
\begin{lemma}\label{entrelace}
L'opŽrateur ${\mathbf R}(\vef)$ vaut $\pm1$ sur $\pi^\pm_E$
\end{lemma}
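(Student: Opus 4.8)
The plan is to use that $\vef$ has order two. Then $\pi_\vef=\pi_{\vef\mun}$, so ${\mathbf R}(\vef)$ is a self-intertwining operator of $\pi_\vef$, and as recalled in the text ${\mathbf R}(\vef)^2=1$. Since $\pi_\vef=\pi^+_E\oplus\pi^-_E$ with $\pi^+_E$ and $\pi^-_E$ inequivalent and irreducible (Lemme \ref{exem}), Schur's lemma forces ${\mathbf R}(\vef)$ to preserve each summand and to act there by a scalar; because ${\mathbf R}(\vef)^2=1$ these scalars lie in $\{+1,-1\}$. The entire content is thus to determine the two signs, and I will show they are $+1$ on $\pi^+_E$ and $-1$ on $\pi^-_E$, which in particular makes them distinct.

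First I would pin down the $(+1)$-eigenspace using the Whittaker functional. The normalized operator ${\mathbf R}(\vef)$ is, by its normalization, compatible with the $\psi$-Whittaker functional: writing $\ll_\psi$ for the Jacquet functional on $\pi_\vef$ attached to $\psi$, the normalizing factor is chosen precisely so that $\ll_\psi\circ{\mathbf R}(\vef)=\ll_\psi$. Now $\ll_\psi$ is nonzero on $\pi^+_E=\pi({\bs 1}_E,\psi)$, which carries a $\psi$-Whittaker model, and vanishes on $\pi^-_E$, which does not (cf. \ref{cliff}). If ${\mathbf R}(\vef)$ acts by the scalar $\eta^+$ on $\pi^+_E$, then evaluating $\ll_\psi$ on a vector of $\pi^+_E$ gives $\eta^+\ll_\psi=\ll_\psi$, whence $\eta^+=+1$.

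To obtain the sign $-1$ on $\pi^-_E$ I would run the same argument relative to the character $\psi'(x)=\psi(ax)$ with $a\notin\Ftp=\Nef E^\times$, for which $\pi^-_E=\pi({\bs 1}_E,\psi')$ has a Whittaker model. The point is that ${\mathbf R}(\vef)$ is normalized with respect to $\psi$, not $\psi'$, so $\ll_{\psi'}\circ{\mathbf R}(\vef)$ differs from $\ll_{\psi'}$ by the ratio of the two local coefficients (Tate $\gamma$-factors for $\vef$). That ratio is governed by $\vef(a)$, and since $a$ is not a norm we have $\vef(a)=-1$; acting on $\pi^-_E$ by the scalar $\eta^-$ this gives $\eta^-=-1$, as claimed.

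The main obstacle is the precise bookkeeping of the normalization. One must check that the factor entering ${\mathbf R}(\vef)$ is exactly the local coefficient, so that $\ll_\psi$ is preserved, and then compute how that coefficient transforms under $\psi\mapsto\psi'$, producing the sign $\vef(a)$; this is where the behaviour of the Tate $\gamma$-factor for the order-two character $\vef$ does the real work. All of these steps are insensitive to the characteristic of $F$, since the Jacquet integral and the $\gamma$-factor for $\vef$ are available in every characteristic, which is what keeps the argument valid en toute caract\'eristique.
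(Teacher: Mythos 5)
Your reduction via Schur's lemma and ${\mathbf R}(\vef)^2=1$ is exactly the paper's first step, and your determination of the signs is correct, but the technical vehicle differs from the paper's. The paper does not invoke the local coefficient formalism directly~: it quotes the asymptotic expansion (7.1) of \cite{BC}, $\Psi(\alpha)\sim c\,\nu(\alpha)\vert\alpha\vert^{1/2}\{\varphi(1)+\vef(\alpha){\mathbf R}(\vef)\varphi(1)\}$, writes the same expansion for ${\mathbf R}(\vef)\varphi$ using that the Whittaker datum is multiplied by a sign $b$ under ${\mathbf R}(\vef)$, and extracts $b=1$ from the resulting identity $(b-1)\big(\vef(\alpha)+{\mathbf R}(\vef)\big)\varphi(1)=0$~; the eigenvalue $-1$ on $\pi^-_E$ then follows from the (asserted) nontriviality of ${\mathbf R}(\vef)$. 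You replace the asymptotic expansion by the identity $\ell_\psi\circ{\mathbf R}(\vef)=\ell_\psi$ together with the change-of-$\psi$ behaviour of the local coefficient. Two remarks. First, the identity $\ell_\psi\circ{\mathbf R}(\vef)=\ell_\psi$ is not a formality~: it amounts to saying that \emph{normalis\'e} means Whittaker-normalized (normalization by the full local coefficient, $\epsilon$-factor included, and not merely by a quotient of $L$-functions), and this is precisely the content the paper extracts from (7.1) of \cite{BC}~; you rightly flag it as the real work, it is available in every characteristic, and your proof is complete once that input is supplied. Second, your computation on $\pi^-_E$ via $\psi'$ is logically optional (once $\eta^+=+1$ and ${\mathbf R}(\vef)\ne1$, one gets $\eta^-=-1$), but it has the merit of \emph{proving} the nontriviality of ${\mathbf R}(\vef)$, which the paper asserts without argument~; the $\vert a\vert$-powers in $\epsilon(s,\vef,\psi_a)=\vef(a)\vert a\vert^{s-1/2}\epsilon(s,\vef,\psi)$ disappear exactly at the center $s=1/2$, which is where the reducibility point sits, so the ratio of local coefficients is indeed the sign $\vef(a)=-1$ and no modulus survives.
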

\begin{proof} 
Comme $\vef$ est d'ordre 2 et non trivial, l'endomorphisme
${\mathbf R}(\vef)$ est non trivial et de carrŽ l'identitŽ; il vaut nŽcessairement $+1$
sur un des composants et $-1$ sur l'autre. Reste ˆ montrer que c'est
$+1$ sur le composant admettant un modle de Whittaker. D'aprs 
la formule (7.1) page 106 de \cite{BC}
on dispose d'un dŽveloppement asymptotique de la forme
$$\Psi(\alpha)\sim c\nu(\alpha)\vert\alpha\vert^{1/2}\big\{\varphi(1)+\vef(\alpha){\mathbf R}(\vef)\varphi(1)\big\}\ptf
$$
On a deux expressions pour l'opŽrateur d'entrelacement appliquŽ
ˆ ce dŽveloppement asymptotique:
$$ b\vef(\alpha)\Psi(\alpha)\sim{\mathbf R}(\vef)c\nu(\alpha)\vert\alpha\vert^{1/2}\big\{\varphi(1)+\vef(\alpha){\mathbf R}(\vef)\varphi(1)\big\}$$
o $b=\pm1$. On a donc
$$\big(b\vef(\alpha)-{\mathbf R}(\vef)\big)\big(1+\vef(\alpha){\mathbf R}(\vef)\big)\varphi(1)=0$$
soit encore 
$$(b-1)\big(\vef(\alpha)+{\mathbf R}(\vef)\big)\varphi(1)=0$$ce qui impose $b=1$.
 \end{proof}

\Chapter {La formule des traces pour $SL(2)$}

\Section{Un peu d'histoire}

DŽsormais $F$ est un corps global. Nous allons tout d'abord
expliciter la formule des traces pour $SL(2)$ sous sa forme non-invariante
en suivant les techniques de troncatures gŽomŽtrique et spectrale utilisŽes dans \cite{JL} qui ont ŽtŽ
gŽnŽralisŽes par Arthur au cas d'un groupe rŽductif quelconque. 
Du c™tŽ spectral, \cad pour les sŽries d'Eisenstein,
la troncature a ŽtŽ introduite par Selberg en rang 1
puis gŽnŽralisŽe par Langlands en rang arbitraire.

Nous nous conformerons pour l'essentiel aux notations d'Arthur \cite{A1,A2}.
Les diverses troncatures utilisent la fonction
$\widehat\tau_\P$ qui pour $SL(2)$ ou $GL(2)$ est
la fonction caractŽristique des rŽels strictement positifs.
La troncature dans \cite[pages 529-531]{JL}  utilise une fonction notŽe
 $\chi(x)$ dŽfinie par une  inŽgalitŽ large
dŽpendant d'une constante $c_1$ alors que, suivant les conventions d'Arthur, nous utilisons
des inŽgalitŽs strictes. Sans cette diffŽrence
on aurait $\chi(x)=\widehat\tau_P(\HP(x)-\Tr)$ pour $\Tr=\log(c_1)$.
Pour les corps de nombres on obtient les mmes intŽgrales car la frontire est de mesure nulle;
ce n'est pas le cas pour un corps de fonctions.

Sauf mention expresse du contraire les mesures sur les groupes adŽliques sont les mesures
de Tamagawa. En particulier $\tau(\G)=\tau(\tG)=\tau(\M)=\tau(\U)=1$. De plus
$\tau(\Tef)$ est Žgal ˆ 1 si $E=F\oplus F$ et Žgal ˆ 2 si $E$ est un corps.

\Section{La troncature gŽomŽtrique}

Soit $f\in\ctyc\big(\G(\adef)\big)$. 
 Le noyau de la formule des traces est
$$K(x,y)=\sum_{\gamma\in\G(F)}f(x\mun\gamma y)$$
et on note $k(x)$ sa restriction ˆ la diagonale $x=y$.
Le noyau tronquŽ sur la diagonale est, 
pour $\Tr\in\RM$
$^(\!$\footnote{Ce paramtre est usuellement notŽ $T$,
en particulier chez Arthur. Il est ici notŽ $X$ pour libŽrer la lettre $T$ rŽservŽe aux tores.}$^)$.
$$k_{geom}^\Tr(x)=k(x)-\sum_{\xi\in\M(F)}
\sum_{\gamma\in \P(F)\bsl\G(F)}\widehat\tau_\P(\HP(\gamma x)-\Tr)
k_{P,\xi}(\gamma x)$$
o
$$k_{\P,\xi}(x)=\int_{\U(\adef)}f(x\mun\xi u x) du\ptf$$
On note $\G(F)_{ell}$ l'ensemble des ŽlŽments elliptiques dans $G(F)$ (rŽguliers ou non)
et on pose
$$k_{ell}(x)=\sum_{\gamma\in\G(F)_{ell}}f(x\mun\gamma x)\ptf$$
On note $$\M(F)^\star=\{\xi\in\M(F)\,|\, \xi\notin Z(F)\}\ptf$$
l'ensemble des ŽlŽments rŽguliers du tore dŽployŽ maximal. 
Pour $\xi\in M(F)^\star$ on pose
$$k_\xi(x)=\sum_{\gamma\in M(F)\bsl G(F)}f(x\mun\gamma\mun\xi \gamma x)$$
et
$$k_{\xi}^\Tr(x)=k_\xi(x)-\sum_{\gamma\in \P(F)\bsl\G(F)}\widehat\tau_\P(\HP(\gamma x)-\Tr)
\big(k_{\P,\xi}(\gamma x)+k_{\P,\xi\mun}(\gamma x)\big) du
\ptf$$ 
Les ŽlŽments $\xi$ et $\xi\mun$ Žtant conjuguŽs dans $\G$
sous le groupe de Weyl et toujours distincts on a donc
$$k_\xi(x)=k_{\xi\mun}(x)\comm{et}k_\xi^\Tr(x)=k_{\xi\mun}^\Tr(x)\ptf$$
C'est la contribution au noyau tronquŽ sur la diagonale
de la classe de conjugaison de $\xi\in M(F)^\star$. 
La contribution des classes de conjugaison d'ŽlŽments hyperboliques 
\cad des ŽlŽments semi-simples
ayant des valeurs propres rationnelles distinctes est donc
$$k_{hyp}^\Tr(x)=\sum_{\xi\in\M(F)^\star/W}k_{\xi}^\Tr(x)\ptf$$
Enfin, on introduit\footnote{On prendra garde que notre dŽfinition de $k_{unip}$ 
diffre de celle d'Arhur. D'une part, chez Arthur $k_{unip}$
ne contient que la contributions des unipotents alors que, de plus, nous sommons sur le centre
et il contient aussi la contribution de 1 qui est unipotent et elliptique et que nous avons rangŽ 
dans la contribution elliptique $k_{ell}$.}
$$k_{unip}(x)=\sum_{z\in Z(F)}\sum_{\gamma\in P(F)\bsl G(F)}\sum_{\nu\in\U(F)^\star}
 f(x\mun\gamma\mun z\nu \gamma x)$$
 o $U(F)^\star$ est l'ensemble des ŽlŽments $\nu\in\U(F)$ tels que $\nu\ne1$.
La contribution des classes de conjugaison des ŽlŽments de $\G(F)$ 
produits d'un ŽlŽment unipotent rŽgulier par un ŽlŽment du centre est donnŽe par
$$k_{unip}^\Tr(x)=k_{unip}(x)-\sum_{z\in Z(F)}
\sum_{\gamma\in \P(F)\bsl\G(F)}\widehat\tau_\P(\HP(\gamma x)-\Tr)k_{P,z}(\gamma x)\ptf$$
La fonction $k_{geom}^\Tr(x)$ se dŽcompose en la somme des contributions elliptiques, hyperboliques et unipotentes:
$$k_{geom}^\Tr(x)=k_{ell}(x)+k_{hyp}^\Tr(x)+k_{unip}^\Tr(x)\ptf$$
Chaque terme fournit une intŽgrale sur $$[\G]=\G(F)\bsl\G(\adef)$$ qui est convergente.
 La convergence de l'intŽgrale de $k_{ell}(x)$ rŽsulte de le finitude 
 du nombre de classes de conjugaisons qui contribuent. 
 La preuve de la convergence des intŽgrales des deux autres termes,
 au moins pour $\Tr$ assez grand, repose sur l'usage
 de la formule de Poisson (voir le cas unipotent plus bas) et
on renvoie ˆ \cite{LW} et \cite{LLe} pour des preuves dŽtaillŽes valables 
pour les groupes gŽnŽraux en toute caractŽristique. On pose:
$$J_\bullet^\Tr(f)=\int_{[\G]}k_\bullet^\Tr(x)\ddx$$
et on a
$$J_{geom}^\Tr(f)=J_{ell}(f)+J_{hyp}^\Tr(f)+J_{unip}^\Tr(f)\ptf$$
C'est le c™tŽ gŽomŽtrique de la formule des traces non invariante.
Pour $\Tr$ tendant vers l'infini ces expressions sont asymptotiques 
ˆ des polyn™mes en $\Tr$ si $F$ est un corps de nombres et
ˆ des ŽlŽment de PolExp pour les corps de fonctions (cf. \cite{LLe}). Il est usuel de remplacer
ces expressions par leur polyn™me asymptotique ŽvaluŽ en $\Tr=0$. 

\Subsection{Contribution elliptique}

L'intŽgrale orbitale adŽlique $\orb(\gamma,f)$ de $\gamma$ est l'intŽgrale
$$\orb(\gamma,f)=\int_{I_\gamma(\adef)\bsl\G(\adef)}f(x\mun\gamma x)\,\ddx$$
o $I_\gamma$ est le centralisateur de $\gamma$. On supposera que les mesures quotients
sont dŽfinies au moyen des mesures de Tamagawa.
Si $f$ est dŽcomposable i.e. $f=\otimes_v f_v$
on a une dŽcomposition en produit d'intŽgrales orbitales locales; toutefois  il convient de
tenir compte des facteurs qui servent ˆ normaliser les
mesures locales et globales permettant la convergence des produits.  
Pour les mesures de Tamagawa les facteurs de normalisation sont des fonctions $L$.
En particulier:

\begin{definition}\label{normell}  Soit $E$ un corps quadratique sur $F$.
Pour $\gamma\in\Tef(F)$ rŽgulier l'intŽgrale orbitale globale avec les mesures de Tamagawa s'Žcrit:
$$\orb(\gamma, f)=\int_{\Tef(\adef)\bsl\G(\adef)}
f(\tx\mun \gamma\tx)d\tx=
L(1,\kappa)\prod_v\frac{\orb_v(\gamma, f_v)}{L(1,\kappa_v)}$$
o $\kappa=\vef$, les intŽgrales orbitales locales Žtant calculŽes au moyen des
mesures de Tamagawa locales non normalisŽes.
\end{definition}

La classe de conjugaison d'un $\gamma$ elliptique contribue par le produit de
son intŽgrale orbitale adŽlique multipliŽe par le nombre de Tamagawa 
$\tau(I_\gamma)$ du centralisateur:
$$J_{ell}(f)=\sum_{\gamma\in\Gamma_{ell}}\tau(I_\gamma)\orb(\gamma,f)$$
o $\Gamma_{ell}$ est un ensemble de reprŽsentants des classes de 
$\G(F)$-conjugaison elliptiques.
On peut regrouper les termes au moyen de la conjugaison stable sur $F$.
Notons $\Sigma_{ell}$ un ensemble de reprŽsentants
des classes de conjugaison stables rationnelles elliptiques. Si on pose
$$\mathcal D(\gamma,F):=\tI_\gamma(F)\bsl\tG(F)/\G(F)$$
on a
$$J_{ell}(f)=\sum_{\gamma\in \Sigma_{ell}}\tau(I_\gamma)\sum_{\delta\in \mathcal D(\gamma,F)}
\orb(\delta\mun\gamma\delta,f)\ptf$$ 
Pour $\gamma\in\Z(F)$ l'ensemble $\mathcal D(\gamma,F)$ est trivial.
Un $\gamma$ elliptique rŽgulier dans $\G(F)$ engendre dans $M(2,F)$ une $F$-algbre
qui est une extension quadratique sŽparable $E$ de $F$. 
Notons $\Sigma_E$ un ensemble de reprŽsentants
des classes de conjugaison stables d'ŽlŽments elliptiques rŽguliers attachŽs 
ˆ la classe d'isomorphie de
l'extension quadratique sŽparable non dŽployŽe $E$ (cf.~\ref{ext})
et posons
$$J_E(f)=\tau(\Tef)\sum_{\gamma\in \Sigma_E}\sum_{\delta\in \mathcal D(\gamma,F)}
\orb(\delta\mun\gamma\delta,f)\ptf$$
On a alors
$$J_{ell}(f)=\tau(\G)\sum_{z\in Z(F)}f(z)+\sum_E J_E(f)$$
o $E$ parcourt l'ensemble des classes
d'isomorphisme d'extensions quadratiques sŽparable de $F$.

\begin{lemma}\label{jef} 
En posant $E^\star=E^1-\{\pm1\}$ o $E^1$ est le sous-groupe des ŽlŽments de norme 1
on a
$$J_E(f)=\frac{\tau(\Tef)}{2}\sum_{\gamma\in E^\star}\sum_{\delta\in \mathcal D(\gamma,F)}
\orb(\delta\mun\gamma\delta,f)\ptf$$
\end{lemma}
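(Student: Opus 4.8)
Le plan est de ramener la somme sur l'ensemble $\Sigma_E$ de repr\'esentants des classes de conjugaison stable \`a une somme sur tout $E^\star$, le facteur $\frac12$ provenant de ce que chaque classe stable rencontre $E^\star$ en exactement deux \'el\'ements.

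D'abord, je d\'ecrirais les classes de conjugaison stable dans $E^\star$. On a plong\'e $\Tef$ dans $\G$ comme en \ref{ext}, de sorte que $\Tef(F)=E^1$ et qu'un \'el\'ement $\gamma\in E^1$, vu dans $\G(F)$, a pour valeurs propres $\gamma$ et $\overline\gamma=\gamma\mun$ puisque $\gamma\overline\gamma=\Nef(\gamma)=1$. Comme la conjugaison stable des \'el\'ements semi-simples r\'eguliers de $\G(F)$ revient \`a l'\'egalit\'e des valeurs propres, deux \'el\'ements $\gamma,\gamma'$ de $E^\star$ sont stablement conjugu\'es si et seulement si $\{\gamma,\gamma\mun\}=\{\gamma',(\gamma')\mun\}$, \cad $\gamma'\in\{\gamma,\gamma\mun\}$. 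Pour $\gamma\in E^\star$ on a $\gamma\ne\gamma\mun$ car $\gamma\ne\pm1$; chaque classe stable rencontre donc $E^\star$ en exactement deux \'el\'ements $\gamma$ et $\gamma\mun$, \'echang\'es par la conjugaison par $\bs w_E$ (voir \ref{ext}), et $\Sigma_E$ est un syst\`eme de repr\'esentants pour l'involution $\gamma\mapsto\gamma\mun$ sur $E^\star$.

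Ensuite, je montrerais que la somme int\'erieure $\sum_{\delta\in\mathcal D(\gamma,F)}\orb(\delta\mun\gamma\delta,f)$ ne d\'epend que de la classe stable de $\gamma$, et prend en particulier la m\^eme valeur en $\gamma$ et en $\gamma\mun$. Le centralisateur $\tI_\gamma=\tTef$ ne change pas lorsqu'on remplace $\gamma$ par $\gamma\mun$, donc $\mathcal D(\gamma\mun,F)=\mathcal D(\gamma,F)=\tTef(F)\bsl\tG(F)/\G(F)$. L'\'el\'ement $\bs w_E$ normalise $\tTef$ et v\'erifie $\bs w_E\gamma\bs w_E\mun=\overline\gamma=\gamma\mun$; en posant $\delta'=\bs w_E\mun\delta$ on a l'\'egalit\'e $\delta\mun\gamma\mun\delta=(\delta')\mun\gamma\delta'$, et $\delta\mapsto\bs w_E\mun\delta$ induit une bijection de l'ensemble de doubles classes $\tTef(F)\bsl\tG(F)/\G(F)$ sur lui-m\^eme. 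La famille des valeurs $\orb(\delta\mun\gamma\mun\delta,f)$ co\"incide donc avec celle des $\orb((\delta')\mun\gamma\delta',f)$, d'o\`u l'\'egalit\'e des deux sommes.

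Enfin, je regrouperais la somme sur $E^\star$ selon les orbites $\{\gamma,\gamma\mun\}$ de cette involution. D'apr\`es l'\'etape pr\'ec\'edente chaque orbite contribue $2\sum_\delta\orb(\delta\mun\gamma\delta,f)$, et les orbites sont en bijection avec $\Sigma_E$, d'o\`u
$$\sum_{\gamma\in E^\star}\sum_{\delta\in\mathcal D(\gamma,F)}\orb(\delta\mun\gamma\delta,f)=2\sum_{\gamma\in\Sigma_E}\sum_{\delta\in\mathcal D(\gamma,F)}\orb(\delta\mun\gamma\delta,f)\ptf$$
En multipliant par $\tau(\Tef)/2$ on retrouve exactement la d\'efinition de $J_E(f)$. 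Le point qu'il faudra justifier avec soin est l'invariance stable de la somme int\'erieure, \cad le fait que $\bs w_E$ induise bien une bijection des doubles classes et que l'int\'egrale orbitale ad\'elique ne d\'epende que de la double classe; tout le reste n'est qu'un d\'enombrement formel.
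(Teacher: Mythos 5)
Votre preuve est correcte et suit essentiellement la m\^eme d\'emarche que celle du texte, qui se contente d'indiquer que le facteur $1/2$ provient de ce que $\gamma$ et $\gamma\mun$ sont stablement conjugu\'es mais distincts dans $E^\star$. Vous explicitez en outre le point laiss\'e implicite, \`a savoir l'invariance de la somme int\'erieure sous $\gamma\mapsto\gamma\mun$ via la conjugaison par $\bs w_E$, ce qui est exactement la justification attendue.
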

\begin{proof}  Le facteur $1/2$ rŽsulte de ce que dans $E^\star$ les ŽlŽments
$\gamma$ et $\gamma\mun$ sont toujours stablement conjuguŽs
mais distincts. \end{proof}

\Subsection{Contribution hyperbolique}
On observe que pour $\xi\in \M(F)^\star$ on a
$$\int_{U(F)\bsl U(\adef)}\sum_{\nu\in\U(F)} f(x\mun u\mun\xi\nu ux)du
=\int_{U(\adef)} f(x\mun u\mun\xi u x)du$$
et
$$k_{\P,\xi}(x)=\int_{\U(\adef)}f(x\mun\xi u x) du=
\int_{\U(\adef)}f(x\mun u\mun\xi u x) du\ptf$$
On introduit l'expression
$$h_\xi(x,\Tr)=\Big(\sum_{\nu\in\U(F)} f(x\mun\xi\nu x)\Big)
-\widehat\tau_\P(\HP( x)-\Tr)\Big(k_{P,\xi}( x)+k_{P,\xi\mun}(x)\Big)$$
on voit alors que
$$k_{\xi}^\Tr(x)=\sum_{\gamma\in \P(F)\bsl\G(F)}h_\xi(\gamma x,\Tr)$$
et on pose
$$J_{\xi}^\Tr(f)=\int_{\P(F)\bsl\G(\adef)}h_\xi(x,\Tr)\,dx
\ptf$$
On en dŽduit que  pour $\xi\in \M(F)^\star$
$$J_{\xi}^\Tr(f)=\int_{\P(F)\bsl\G(\adef)}h_\xi(x,\Tr)dx=
\int_{M(F)\bsl\G(\adef)} f(x\mun \xi  x)\tw(x,\Tr)dx$$
o, en notant $w$ l'ŽlŽment non trivial du groupe de Weyl, on a posŽ
$$\tw(x,\Tr)=1-\widehat\tau_\P(\HP( x)-\Tr)-\widehat\tau_\P(\HP(w x)-\Tr)\ptf$$
On observe que si $x=muk$ est une dŽcomposition d'Iwasawa
$$\HP(x)=\HP(m)\com{et} \HP(wx)=\HP(wu)-\HP(m)$$
On a  
$$\tw(x,\Tr)=1\com{si} \HP(wu)-\Tr\le\HP(m)\le\Tr$$
ainsi que
$$\tw(x,\Tr)=-1\com{si}\Tr<\HP(m)< \HP(wu)-\Tr$$
et $\tw(x,\Tr)=0$ sinon.
On en dŽduit que pour $u$ et $\Tr$ fixŽs la fonction $m\mapsto \tw(muk,\Tr)$ 
est ˆ support compact. On note $M(\adef)^1$  le groupe des $m\in\M(\adef)$ avec $\HP(m)=0$.
On normalise la mesure de Haar en imposant la mesure de Lebesgue (resp. la mesure de comptage)
sur $\M(\adef)^1\bsl\M(\adef)$ pour les corps de nombres (resp. les corps de fonctions) et
$\vol\big(M(F)\bsl M(\adef)^1\big)=1$.
On pose
$$\tv(x,\Tr)=\int_{M(\adef)^1\bsl M(\adef)}\tw(mx,\Tr)dm$$
et
$$J_{\xi}^\Tr(f)=\int_{M(\adef)\bsl\G(\adef)} f(x\mun\xi x)\tv(x,\Tr)dx\ptf$$
Il est usuel de ne considŽrer que la valeur en $\Tr=0$ : on pose $\tv(x)=\tv(x,0)$
et 
$$J_{\xi}(f)=\int_{M(\adef)\bsl\G(\adef)} f(x\mun\xi x)\tv(x)dx\ptf$$
Si $wu=m_1u'k_1$ on a  $\HP(wu)=\HP(m_1)$. 
On voit alors que si $F$ est un corps de nombres
$$\tv(x)=- \HP(wu)=\log_{q_F}\big(\vert\vert(1,n)\vert\vert\big)\com{si} x=muk\ptf$$
Pour les corps de fonctions on a
$$\tv(x)=1-\HP(wu)=1+\log_{q_F}\big(\vert\vert(1,n)\vert\vert\big)   \ptf$$
Nous laissons au lecteur le soin de faire le calcul explicite
des PolExp lorsque $\Tr$ est rationnel (cf. \cite{LLe}). 

La distribution $J_{\xi}(f)$ est une intŽgrale orbitale pondŽrŽe globale. 
Puisque $\HP$ est un logarithme on a pour les corps de nombres
(et ˆ un dŽcalage prs pour les corps de fonctions)
$$\tv(x)=\sum_v \tv_v(x_v)$$
et l'intŽgrale est une somme  sur toutes les places $v$
de produits  d'objets locaux ˆ savoir des intŽgrales orbitales 
locales ordinaires aux places $v'\ne v$ et pondŽrŽe en $v$. 

La contribution des classes de conjugaison hyperboliques est donnŽe par
$$J_{hyp}^\Tr(f)=\int_{[\G]}k_{hyp}^\Tr(x)dx
=\sum_{\xi\in\M(F)^\star/W}J_{\xi}^\Tr(f)=\frac{1}{2}\sum_{\xi\in\M(F)^\star}J_{\xi}^\Tr(f)
\ptf$$

\Subsection{Contributions unipotentes}
On pose
$$\Phi_{unip}^\Tr(f,x,z)=\Big(\sum_{\nu\in\U(F)^\star}f(x\mun z\nu x)\Big)
-\widehat\tau_\P(\HP(x)-\Tr)\int_{\U(\adef)}f(x\mun zu x)\,du$$
et
$$f^K(x)=\int_Kf(k\mun xk)\,dk\ptf$$
Par dŽfinition,
$$J_{unip}^\Tr(f)=\sum_{z\in Z(F)} J_{unip}^\Tr(f,z)
\com{o}J_{unip}^\Tr(f,z)=\int_{\M(F)\U(\adef)\bsl\G(\adef)}\Phi_{unip}^\Tr(f,x,z)\,dx\ptf$$
En dŽcomposant $x$ sous la forme $x=muk$ on voit que
$$J_{unip}^\Tr(f,z)=\int_{\M(F)\bsl\M(\adef)}\Phi_{unip}^\Tr(f^K,m,z)\delta_P(m)\mun\,dm\ptf$$
On pose 
$$g(n,z)=\int_K f\Big(k\mun z\begin{pmatrix}1&n\cr 0&1\end{pmatrix}k\Big)dk
= f^K\Big(z\begin{pmatrix}1&n\cr 0&1\end{pmatrix}\Big)$$
ainsi que
$$m=\begin{pmatrix}a&0\cr 0&a\mun\end{pmatrix}
\com{et}
\nu=\begin{pmatrix}1&\xi\cr 0&1\end{pmatrix}\ptf$$
Avec ces notations on a $\delta_P(m)=|a|^2$ et
$$\Phi_{unip}^\Tr(f^K,m,z)=\Big(\sum_{\xi\in F^\times}g(\a^{-2}\xi,z)\Big)
-\widehat\tau_\P\big(\log_{q_F}|a|-\Tr\big)\int_{\adef}g(\a^{-2}n,z)dn\ptf
$$
ConsidŽrons les intŽgrales, portant sur des sous-ensembles du groupe $C_F$ des classes d'idles,
la mesure sur ce groupe Žtant notŽe $\da$:
$$A^\Tr(f,z)=
\int_{|\a|\ge q_F^{-\Tr}}\Big(\sum_{\xi\in F^\times}g(\a^{2}\xi,z)\Big)|\a|^2\da $$
et
$$B^\Tr(f,z)=
\int_{|\a|<q_F^{-\Tr}}\Bigg(\sum_{\xi\in F^\times}
g(\a^{2}\xi,z)-\int_{\adef}g(\a^{2}n,z)dn\Bigg)|\a|^2\da \ptf$$
\begin{lemma}\label{undeux}
$$J_{unip}^\Tr(f,z)=A^\Tr(f,z)+B^\Tr(f,z)\ptf$$
\end{lemma}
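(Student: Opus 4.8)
The plan is to evaluate $J_{unip}^\Tr(f,z)$ straight from its definition and to match it, after one change of variables and a splitting of the domain, with $A^\Tr(f,z)+B^\Tr(f,z)$. First I would identify $\M(F)\bsl\M(\adef)$ with the idele class group $C_F$ through $m=\begin{pmatrix}a&0\cr0&a\mun\end{pmatrix}\mapsto a$; under this identification $\delta_P(m)\mun=\vert a\vert^{-2}$, $\HP(m)=\log_{q_F}\vert a\vert$, and the measure $dm$ becomes $\da$. The defining formula for $J_{unip}^\Tr(f,z)$ then reads
\[
J_{unip}^\Tr(f,z)=\int_{C_F}\Big(\sum_{\xi\in F^\times}g(\a^{-2}\xi,z)-\widehat\tau_\P(\log_{q_F}\vert a\vert-\Tr)\int_{\adef}g(\a^{-2}n,z)\,dn\Big)\vert a\vert^{-2}\da\ptf
\]

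Next I would apply the substitution $a\mapsto a\mun$, an automorphism of $C_F$ preserving $\da$. It turns $\a^{-2}$ into $\a^{2}$ inside $g$, turns the weight $\vert a\vert^{-2}$ into $\vert a\vert^{2}$, and replaces $\widehat\tau_\P(\log_{q_F}\vert a\vert-\Tr)$ by $\widehat\tau_\P(-\log_{q_F}\vert a\vert-\Tr)$, which equals $1$ precisely when $\vert a\vert<q_F^{-\Tr}$ and vanishes otherwise. Splitting $C_F$ into the two pieces $\{\vert a\vert\ge q_F^{-\Tr}\}$ and $\{\vert a\vert<q_F^{-\Tr}\}$ then matches the first piece with $A^\Tr(f,z)$, since the truncation term is absent there, and the second piece with $B^\Tr(f,z)$, where the truncation term survives; this is exactly the asserted identity.

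The remaining point, and the one requiring real care, is that the splitting is legitimate, i.e. that each piece converges. For $A^\Tr(f,z)$ this is easy: since $g(\cdot,z)$ is compactly supported in $\adef$ and $\vert \a^{2}\xi\vert=\vert a\vert^{2}$ for every $\xi\in F^\times$ by the product formula, the sum $\sum_{\xi}g(\a^{2}\xi,z)$ is identically zero once $\vert a\vert$ exceeds a bound depending only on the support of $f$; hence the integrand of $A^\Tr$ is bounded and supported on a set of bounded idele norm, over which the noncompact direction of $C_F$ is cut off and only the compact subgroup $C_F^1$ remains, so $A^\Tr(f,z)$ converges absolutely. The hard part is the convergence of $B^\Tr(f,z)$ as $\vert a\vert\to0$: there the sum $\sum_{\xi}g(\a^{2}\xi,z)$ and the integral $\int_{\adef}g(\a^{2}n,z)\,dn$ each diverge and only their difference is controlled. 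This is exactly where Poisson summation for the function $\xi\mapsto g(\a^{2}\xi,z)$ on $\adef/F$ is needed, the integral being the zeroth dual term and the remaining dual terms decaying rapidly for small $\vert a\vert$; this yields the absolute convergence of $B^\Tr(f,z)$ and, combined with the already-established convergence of $J_{unip}^\Tr(f,z)$, justifies the decomposition.
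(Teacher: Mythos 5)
Your argument is correct and is essentially the paper's: the identity itself is the change of variables $a\mapsto a\mun$ on $C_F$ followed by the splitting of the domain at $\vert a\vert=q_F^{-\Tr}$, and the only real content is convergence, which you justify exactly as the paper does — compactness of the support of $g$ for $A^\Tr$, and Poisson summation in the style of Tate's thesis for $B^\Tr$. You simply make explicit the substitution and the elementary domain bookkeeping that the paper leaves implicit.
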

\begin{proof}
La convergence de la premire intŽgrale rŽsulte de la compacitŽ du support de $g$.
La convergence de la seconde intŽgrale est prouvŽe en faisant appel 
aux techniques mises en {\oe}uvre dans la thse de Tate 
et qui reposent sur l'usage de la formule de Poisson.
\end{proof}
Pour $X\ge0$ on pose $v_F(X)=X$ pour les corps de nombres et
pour les corps de fonctions on pose
$v_F(X)=E(\Tr)$  la partie entire de $X$,
faisant ainsi appara"tre des PolExp lorsque $\Tr$ est rationnel (cf. \cite{LLe}).
On a alors
$$J_{unip}^\Tr(f,z)=A^0(f,z)+B^0(f,z)+\Big(\int_{\adef}g(n,z)dn\Big)v_F(X)\ptf$$ 
Une formule plus explicite pour $J_{unip}(f)$ sera donnŽe
au moyen de la prŽ-stabilisation.

\Section{Troncatures et identitŽ fondamentale}

La troncature spectrale repose sur l'opŽrateur de troncature d'Arthur $\Lambda^\Tr$ qui gŽnŽralise
la troncature des sŽries d'Eisenstein due ˆ Selberg et Langlands. Pour $SL(2)$ 
et une fonction $\varphi$ sur $[\G]$ l'opŽrateur de troncature s'Žcrit:
$$\Lambda^\Tr \varphi(x)=\varphi(x)-\sum_{\gamma\in P(F)\bsl G(F)}
\int_{u\in[\U]}\widehat\tau_\P(\HP(\gamma x)-\Tr) \varphi(u\gamma x)du\ptf$$
C'est un projecteur orthogonal dans l'espace de Hilbert $L^2([\G])$
qui agit trivialement sur le sous-espace des fonctions cuspidales.
Une troncature compatible avec la dŽcomposition spectrale,
est obtenue en faisant agir l'opŽrateur $\bs\Lambda^\Tr$ 
sur la premire variable du noyau $K(x,y)$:
$$K_{spec}^\Tr(x,y)=K(x,y)\,-\sum_{\gamma\in P(F)\bsl G(F)}
\int_{u\in[\U]}\widehat\tau_\P(\HP(\gamma x)-\Tr) K(u\gamma x, y)du\ptf$$
On considre alors sa restriction ˆ la diagonale:
$$k_{spec}^\Tr(x)=K_{spec}^\Tr(x,x)\ptf$$

\begin{proposition} \label{idfond}
Pour $\Tr$ assez grand (dŽpendant du support de $f$) on a
$$k_{geom}^\Tr(x)=k_{spec}^\Tr(x)\ptf$$
\end{proposition}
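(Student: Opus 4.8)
Le plan est de se ramener, par la théorie de la réduction, au cas où une seule classe contribue à chaque troncature, puis de comparer directement les noyaux constants correspondants. J'observerais d'abord que $k_{geom}^\Tr$ et $k_{spec}^\Tr$ sont toutes deux invariantes à gauche sous $\G(F)$: cela résulte de l'invariance de $K(x,y)$ par conjugaison diagonale sous $\G(F)$ et de l'invariance de la somme pondérée sur $\gamma\in\P(F)\bsl\G(F)$. Il suffit donc d'établir l'égalité pour $x$ dans un domaine de Siegel fixe $\mathfrak S$ se surjectant sur $[\G]$. Sur un tel domaine $\HP(\gamma x)$ reste borné supérieurement pour $\gamma$ hors de la classe triviale (il tend même vers $-\infty$ quand $x$ s'enfonce dans la pointe), tandis que $\HP(x)$ peut être arbitrairement grand. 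Par suite, pour $\Tr$ assez grand — le domaine $\mathfrak S$ étant fixé, ce seuil ne dépend que du support de $f$ — la fonction $\gamma\mapsto\widehat\tau_\P(\HP(\gamma x)-\Tr)$ est identiquement nulle sur $\P(F)\bsl\G(F)$ sauf éventuellement pour la classe triviale $\gamma=1$.

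On est alors ramené à prouver, pour $\HP(x)>\Tr$, l'identité
$$\int_{[\U]}K(ux,x)\,du=\sum_{\xi\in\M(F)}\int_{\U(\adef)}f(x\mun\xi u x)\,du=\sum_{\xi\in\M(F)}k_{\P,\xi}(x)\ptf$$
le membre de gauche étant le terme retranché par $k_{spec}^\Tr$ (pour $\gamma=1$) et le membre de droite celui retranché par $k_{geom}^\Tr$. Pour cela je déplierais l'intégrale sur $[\U]$ à l'aide de l'invariance à gauche sous $\U(F)$ et de la commutativité de $\U$, ce qui donne
$$\int_{[\U]}K(ux,x)\,du=\sum_{\delta\in\U(F)\bsl\G(F)}\int_{\U(\adef)}f(x\mun u\mun\delta x)\,du\ptf$$
La décomposition de Bruhat scinde $\U(F)\bsl\G(F)$ en la petite cellule, représentée par $\M(F)$, et la grosse cellule, faisant intervenir l'élément non trivial $w$ du groupe de Weyl. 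Sur la petite cellule le terme d'indice $\xi\in\M(F)$ est $\int_{\U(\adef)}f(x\mun u\mun\xi x)\,du$; en ramenant $\xi$ à gauche apparaît le facteur modulaire $\delta_\P(\xi)$, qui vaut $1$ par la formule du produit puisque $\xi$ est rationnel, de sorte que ce terme est exactement $k_{\P,\xi}(x)$. La petite cellule reconstitue ainsi le membre de droite.

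Le point principal, et l'obstacle essentiel, est de montrer que la grosse cellule ne contribue pas dès que $\HP(x)>\Tr$ est grand. Pour un représentant $\delta$ de la grosse cellule, d'entrée inférieure gauche $c\in\iF$ non nulle, l'écriture d'Iwasawa $x=n\,\diag(\alpha,\alpha\mun)\,k$ montre que l'entrée inférieure gauche de $x\mun u\mun\delta x$ vaut $\alpha^2 c$, indépendamment de $u$ et de la partie unipotente de $x$. Sa valeur absolue adélique est $|\alpha^2 c|=|\alpha|^2$ par la formule du produit, donc indépendante de $\delta$ et tendant vers l'infini avec $\HP(x)$. Comme le support de $f$ est compact, les normes des éléments qu'il contient sont bornées, et $k\in K$ agissant par isométrie préserve la norme; donc $f(x\mun u\mun\delta x)=0$ pour tout $u$ dès que $\HP(x)$ dépasse un seuil ne dépendant que du support de $f$. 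Toutes les contributions de la grosse cellule s'annulent uniformément, ce qui établit l'identité voulue et donc $k_{geom}^\Tr(x)=k_{spec}^\Tr(x)$. On notera que c'est l'usage répété de la formule du produit — à la fois pour $\delta_\P(\xi)$ et pour l'entrée $\alpha^2 c$ — qui rend l'argument spécifique au cadre adélique global; pour les détails de réduction on pourra s'appuyer sur \cite{LLe}.
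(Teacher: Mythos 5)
Votre preuve est correcte et suit pour l'essentiel la m\^eme d\'emarche que celle du texte : d\'epliage de $\int_{[\U]}K(u\gamma x,x)\,du$ en une somme sur $\U(F)\bsl\G(F)$, d\'ecomposition de Bruhat, identification de la petite cellule avec la somme sur $\M(F)$, et annulation de la grosse cellule lorsque $\HP>\Tr$ gr\^ace \`a la compacit\'e du support de $f$ (l'entr\'ee inf\'erieure gauche $\alpha^2 c$, de valeur absolue $\vert\alpha\vert^2$ par la formule du produit, sortant du support). La seule diff\'erence est votre r\'eduction pr\'ealable \`a un domaine de Siegel pour ne retenir que la classe $\gamma=1$ ; elle est inutile quoique inoffensive, le texte appliquant directement le m\^eme argument d'annulation \`a $\gamma x$ pour chaque $\gamma$ tel que $\widehat\tau_\P(\HP(\gamma x)-\Tr)\ne0$.
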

\begin{proof}
On doit montrer que pour $\Tr$ assez grand
$$\sum_{\gamma\in P(F)\bsl G(F)}\sum_{\xi\in\M(F)}
\widehat\tau_\P(\HP(\gamma x)-\Tr)\int_{\U(\adef)} f(x\mun\gamma\mun u\xi \gamma x) du\ptf$$
est Žgal ˆ
$$\sum_{\gamma\in \P(F)\bsl\G(F)}
\sum_{\delta\in\U(F)\bsl\G(F)}\widehat\tau_\P(\HP(\gamma x)-\Tr)
\int_{\U(\adef)}f(x\mun\gamma\mun u\delta x) du$$ 
qui peut s'Žcrire
$$\sum_{\gamma\in \P(F)\bsl\G(F)}
\sum_{\delta'\in\U(F)\bsl\G(F)}\widehat\tau_\P(\HP(\gamma x)-\Tr)
\int_{\U(\adef)}f(x\mun\gamma\mun u\delta' \gamma x) du\ptf$$
Il suffit alors d'observer que
$$\widehat\tau_\P(\HP( x)-\Tr) f(x\mun u\delta' x)=0$$ sauf peut-tre pour $\delta'\in \P(F)$
si $\Tr$ est assez grand.
\end{proof}

\begin{remark}{\rm
Nous avons utilisŽ une troncature spectrale simple: la restriction ˆ la diagonale
du noyau avec troncature sur la premire variable.
Dans le {\it Morning Seminar} (cf. \cite{LW}) on a introduit une troncature spectrale plus compliquŽe, 
qui elle a l'avantage de donner lieu ˆ une ŽgalitŽ pour toutes les valeurs de $\Tr$ et qui est
vraie plus gŽnŽralement pour les espaces tordus \cite[Proposition 8.2.2]{LW}. 
Dans le cas non tordu la troncature spectrale simple donne une ŽgalitŽ pour $\Tr$ assez grand
qui n'est pas vraie dans le cas tordu le plus gŽnŽral (voir \cite[Proposition 10.3.4]{LW}).
}\end{remark}

\Section{DŽcomposition spectrale}
Nous aurons besoin de la dŽcomposition spectrale de la reprŽsentation rŽgulire droite
$\rho$ dans l'espace de Hilbert $L^2([\G])$ o $[\G]=\G(F)\bsl\G(\adef)$.
On note $L^2_{cusp}([\G])$ l'adhŽrence de l'espace des fonctions $\vf$ lisses et cuspidales
\cad  telles que $\vf^0$, le {\it terme constant le long de $\P$}, est nul:
 $$\vf^0(x):=\int_{\U(F)\bsl\U(\adef)}\vf(ux) du=0\ptf$$
On note $L^2_\P([\G])$ est l'adhŽrence de l'espace des fonctions de la forme
$$\theta_\phi(x)=\sum_{\gamma\in\P(F)\bsl\G(F)}\phi(\gamma x)$$
o $\phi$ est une fonction lisse ˆ support compact sur $\U(\adef)\P(F)\bsl\G(\adef)$.
On a une premire dŽcomposition
$$L^2([\G])=L^2_{cusp}([\G])\oplus L^2_\P([\G])\ptf$$
En effet si $\vf$ est orthogonale ˆ toutes les fonctions $\theta_\phi$ alors $\vf$ est cuspidale car
$$\int_{[\G]}f(x)\th_\phi(x)dx=\int_{\U(\adef)\P(F)\bsl\G(\adef)}\vf^0(x)\phi(x)\,\ddx\,\ptf$$
Si $\lambda$ est un caractre (non nŽcessairement unitaire) de $\M(F)\bsl\M(\adef)$
prolongŽ ˆ $\P(\adef)$ trivialement sur $\U(\adef)$ on note $V_\lambda$ 
l'espace des fonction $\Phi$ qui vŽrifient
$$\Phi(muk,\lambda)=m^{\lambda+r}\Phi(k,\lambda)$$
o $m^r=\delta_\P(m)^{1/2}$ et dont la restriction ˆ $K$ est dans $L^2(K)$.
C'est l'espace de la reprŽsentation de la sŽrie principale adlique
$\pi_\lambda$ \cad de  l'induite parabolique du caractre $\lambda$, 
le groupe agissant par translations ˆ droite:
$$\big(\pi_\lambda(g)\Phi\big)(x,\lambda)=\Phi(xg,\lambda)\ptf$$
On analyse le spectre de $L^2_\P([\G])$ gr‰ce aux sŽries d'Eisenstein $E_\lambda$
$$E_\lambda(x,\Phi)=\sum_{\gamma\in P(F)\bsl G(F)}\Phi(\gamma x,\lambda)
\comm{pour}\Phi\in V_\lambda\ptf$$
Les sŽries d'Eisenstein dŽfinissent, au moins formellement, des opŽrateurs d'entrelacements
entre l'espace $V_\lambda$ et l'espace des formes automorphes.
Mais ces  sŽries ne convergent pas pour les valeurs utiles du paramtre $\lambda$ et
un prolongement mŽromorphe est nŽcessaire pour montrer que
$L^2_\P([\G])$ se dŽcompose en la somme d'un spectre rŽsiduel et d'un spectre continu:
$$ L^2_\P([\G])= L^2_{res}([\G])\oplus L^2_{cont}([\G])$$
Le spectre rŽsiduel $L^2_{res}([\G])$ pour $\G=SL(2)$ est de dimension 1: il
se rŽduit ˆ la reprŽsentation triviale.
Le spectre discret est la somme 
du spectre cuspidal, du spectre rŽsiduel:
$$ L^2_{disc}([\G])= L^2_{cusp}([\G])\oplus L^2_{res}([\G])\ptf$$
Le spectre continu peut Žcrire formellement
$$L^2_{cont}([\G])=\frac{1}{2}\int_{\lambda\in\Lambda}^\oplus V_\lambda\ d\lambda$$
o $\Lambda$ est le dual de Pontryagin de $M(F)\bsl\M(\adef)$. 
La prŽsence du facteur $1/2$ vient de ce que
les reprŽsentations $\pi_\lambda$ et $\pi_{w\lambda}$ sont Žquivalentes.
De faon plus explicite un ŽlŽment $\vf\in L^2_{cont}([\G])$ peut s'Žcrire (encore formellement)
$$\vf(x)=\frac{1}{2}\int_\spec \sum_{\Phi\in{\mathcal B}_\lambda}\langle
\vf, E_\lambda(\bullet,\Phi)\rangle_{[\G]} E_\lambda(x,\Phi)\,d\lambda$$
o ${\mathcal B}_\lambda$ est une base orthonormale de $V_\lambda$.
Nous renvoyons ˆ la littŽrature pour un traitement rigoureux de ce qui prŽcde.

\Section{Le c™tŽ spectral de la formule des traces}

L'opŽrateur de troncature opre trivialement sur le spectre cuspidal. On a donc une
dŽcomposition
$$K_{spec}^\Tr(x,y)=K_{cusp}(x,y)+K_{res}^\Tr(x,y)+K_{cont}^\Tr(x,y)\ptf$$
Le c™tŽ spectral de la formule des traces est donnŽ par l'intŽgrale sur $[\G]$ du
noyau $K_{spec}^\Tr$  restreint ˆ la diagonale:
$$J_{spec}^\Tr(f):=\int_{[\G]}k_{spec}^\Tr(x)dx\com{o}
k_{spec}^\Tr(x)=K_{spec}^\Tr(x,x)$$
et on a 
$$J_{spec}^\Tr(f):=J_{cusp}(f)+J_{res}^\Tr(f)+J_{cont}^\Tr(f)\ptf$$
L'opŽrateur $K_{cusp}(x,y)$ est un opŽrateur ˆ trace. On obtient donc
$$J_{cusp}(f)=\trace \rho_{cusp}(f)=\sum_{\pi}m_{cusp}(\pi)\trace\pi(f)$$
o $m_{cusp}(\pi)$ est la multiplicitŽ de $\pi$ dans le spectre cuspidal.
La partie rŽsiduelle fournit l'intŽgrale de $\Lambda^\Tr f$ sur le domaine fondamental:
$$J_{res}^\Tr(f)=\frac{1}{\vol([\G])}\int_{[\G]}\Lambda^\Tr {\bs 1}\,\,\ddx
\int_{G(F)}f(x)\,dx\ptf$$
Il reste ˆ calculer l'intŽgrale
$$\int_{[\G]}k_{cont}^\Tr(x)dx=\frac{1}{2}\int_{[\G]}
\int_\speccont \sum_{\Phi\in{\mathcal B}_\lambda}\Lambda^\Tr E_\lambda(\pi_\lambda(f)\Phi,x) 
\overline{E_\lambda(x,\Phi)} \,d\lambda\,\ddx\ptf$$
Pour obtenir des formules explicites il faut inverser l'ordre des intŽgrations. 
Pour les corps de fonctions la compacitŽ du dual d'un rŽseau rend la preuve aisŽe; 
c'est plus dŽlicat pour les corps de nombres.
On trouvera dans \cite{LW} et \cite{LLe} des preuves valables 
pour des groupes gŽnŽraux en toute caractŽristique. 
Nous esquissons les arguments pour les corps de nombres. 
Le cas des corps de fonction se traite de faon analogue
et ne prŽsente aucune difficultŽ supplŽmentaire autre que de notation. (voir \cite{LLe}).

Pour les corps de nombres si $\lambda=sr+\chi$ avec $\chi$ unitaire trivial sur $\RM^\times_+$
vu comme sous-groupe de $\M(F)\bsl\M(\adef)$ et $\Re(s)>0$ on pose
$$\ve(\lambda)=\int_{\M(F)\bsl\M(\adef)}\widehat\tau_\P(\HP(m))m^{-\lambda}\,dm\ptf
$$
Cette fonction est nulle si $\chi$ est non trivial sur $\M(F)\bsl\M(\adef)^1$. Si $\chi$ est trivial
elle vaut $1/s$.
La formulation pour les corps de fonctions est laissŽe au lecteur (cf.~\cite{LLe}).
On introduit,  avec des notations analogues ˆ celles de \cite{LW},
$$\omega^\Tr(\lambda,\mu)=\sum_\ws\sum_\wt
q_F^{\langle \ws\lambda-\wt\mu,\Tr\rangle}\ve(\ws\lambda-\wt\mu)
{\mathbf M}(\wt,\mu)\mun{\mathbf M}(\ws,\lambda)$$ les sommes en $\ws$ et $\wt$
portant sur le groupe de Weyl pour le tore dŽployŽ. 
On observera que $\ve(\ws\lambda-\wt\mu)$ est nul si $(\ws\chi-\wt\chi)\ne0$.

\begin{lemma}
$$\int_{[\G]}\Lambda^\Tr E_\lambda(x,\pi_\lambda(f)\Phi) 
\overline{E_{-\overline\mu}(x,\Phi)}\,\ddx=\big\langle\omega^\Tr(\lambda,\mu)\pi_\lambda(f)\Phi,\Phi
\big\rangle_K$$
On a alors
$$J_{cont}^\Tr(f)=\frac{1}{2}\int_\Lambda\trace(\lim_{\mu\to\lambda}\omega^\Tr(\lambda,\mu)
\pi_\lambda(f))\,d\lambda\ptf
$$
\end{lemma}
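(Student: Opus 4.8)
Le plan consiste \`a identifier dans ces deux assertions la forme ``$SL(2)$'' de la relation de Maass--Selberg, puis \`a l'ins\'erer dans le noyau spectral continu. Pour la premi\`ere identit\'e, je poserais $\Phi'=\pi_\lambda(f)\Phi\in V_\lambda$. La s\'erie d'Eisenstein tronqu\'ee $\Lambda^\Tr E_\lambda(\bullet,\Phi')$ \'etant \`a d\'ecroissance rapide, donc de carr\'e int\'egrable, tandis que $E_{-\overline\mu}(\bullet,\Phi)$ est \`a croissance mod\'er\'ee, l'int\'egrale de gauche converge. Comme $\Lambda^\Tr$ est un projecteur orthogonal, formellement auto-adjoint et idempotent, et comme $\Lambda^\Tr E_\lambda$ est \`a d\'ecroissance rapide, on peut sym\'etriser:
$$\int_{[\G]}\Lambda^\Tr E_\lambda(x,\Phi')\,\overline{E_{-\overline\mu}(x,\Phi)}\,\ddx
=\int_{[\G]}\Lambda^\Tr E_\lambda(x,\Phi')\,\overline{\Lambda^\Tr E_{-\overline\mu}(x,\Phi)}\,\ddx$$
et l'on est ramen\'e au produit scalaire sym\'etrique de deux s\'eries tronqu\'ees de carr\'e int\'egrable.

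Pour l'\'evaluer, j'ins\'ererais la d\'efinition de $\Lambda^\Tr$ dans les deux facteurs et je d\'eplierais les s\'eries d'Eisenstein; l'int\'egration sur $[\U]$ extrait leurs termes constants le long de $\P$, qui pour $SL(2)$ valent $\sum_\ws\mathbf M(\ws,\lambda)\Phi'$ et $\sum_\wt\mathbf M(\wt,\mu)\Phi$ (avec $\mathbf M(1,\cdot)=\mathrm{id}$), le conjugu\'e dans $E_{-\overline\mu}$ ramenant le param\`etre $-\overline\mu$ \`a $\mu$; de l\`a la double somme index\'ee par $(\ws,\wt)\in W\times W$. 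Le produit de ces termes constants laisse, apr\`es d\'ecomposition d'Iwasawa $x=mk$ modulo $\U(\adef)$, une int\'egrale en $m\in\M(F)\bsl\M(\adef)$ du caract\`ere $m\mapsto m^{\ws\lambda-\wt\mu}$ contre l'indicatrice du domaine $\HP(m)\le\Tr$ issue de la troncature; son prolongement m\'eromorphe vaut pr\'ecis\'ement $q_F^{\langle\ws\lambda-\wt\mu,\Tr\rangle}\,\ve(\ws\lambda-\wt\mu)$, ce qui est la raison d'\^etre de la fonction $\ve$. Il reste alors le produit scalaire sur $K$, \'egal \`a $\langle\mathbf M(\wt,\mu)^*\mathbf M(\ws,\lambda)\Phi',\Phi\rangle_K$; sur l'axe unitaire o\`u se tient l'int\'egrale spectrale, les propri\'et\'es usuelles des op\'erateurs d'entrelacement (adjonction et \'equation fonctionnelle) identifient $\mathbf M(\wt,\mu)^*$ \`a $\mathbf M(\wt,\mu)\mun$, et la sommation sur $(\ws,\wt)$ reconstitue l'op\'erateur $\omega^\Tr(\lambda,\mu)$, d'o\`u $\langle\omega^\Tr(\lambda,\mu)\Phi',\Phi\rangle_K$.

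Pour la seconde identit\'e, je reporterais la premi\`ere dans l'expression de $k_{cont}^\Tr(x)$ donn\'ee plus haut. Comme les deux s\'eries d'Eisenstein y figurent au m\^eme param\`etre $\lambda$, j'introduirais un param\`etre auxiliaire $\mu$ pour lever l'ind\'etermination: apr\`es interversion de l'int\'egrale sur $[\G]$ et de l'int\'egrale spectrale sur $\spec$, l'int\'egration sur $[\G]$ fournit, via la premi\`ere identit\'e, les produits scalaires $\langle\omega^\Tr(\lambda,\mu)\pi_\lambda(f)\Phi,\Phi\rangle_K$, et la sommation sur la base orthonormale $\mathcal B_\lambda$ les transforme en la trace $\trace\big(\omega^\Tr(\lambda,\mu)\pi_\lambda(f)\big)$ — l'op\'erateur $\pi_\lambda(f)$ \'etant \`a trace pour $f\in\ctyc\big(\G(\adef)\big)$ et $\omega^\Tr(\lambda,\mu)$ born\'e. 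En prenant enfin la limite $\mu\to\lambda$ et en tenant compte du facteur $1/2$ h\'erit\'e de la mesure spectrale, on obtient
$$J_{cont}^\Tr(f)=\frac{1}{2}\int_\spec\trace\big(\lim_{\mu\to\lambda}\omega^\Tr(\lambda,\mu)\,\pi_\lambda(f)\big)\,d\lambda\ptf$$

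L'obstacle principal est de nature analytique et tient \`a deux points. D'une part, l'interversion de l'int\'egrale sur $[\G]$ et de l'int\'egrale spectrale requiert une justification: elle est ais\'ee pour les corps de fonctions, o\`u le dual du r\'eseau $\M(F)\bsl\M(\adef)$ est compact, mais d\'elicate pour les corps de nombres. D'autre part, chaque terme diagonal ($\ws=\wt$) de $\omega^\Tr(\lambda,\mu)$ pr\'esente un p\^ole en $\mu=\lambda$ provenant du p\^ole de $\ve$ \`a l'origine; seule la combinaison des deux termes diagonaux est r\'eguli\`ere, de sorte que $\lim_{\mu\to\lambda}\omega^\Tr(\lambda,\mu)$ n'existe qu'apr\`es annulation des parties singuli\`eres. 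Ces deux points sont trait\'es en d\'etail dans \cite{LW} et \cite{LLe}, pour des groupes g\'en\'eraux et en toute caract\'eristique.
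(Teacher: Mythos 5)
Votre d\'emonstration est correcte et suit pour l'essentiel la m\^eme route que celle du texte~: d\'epliage de la s\'erie d'Eisenstein tronqu\'ee, extraction des termes constants, double somme sur le groupe de Weyl produisant le facteur $q_F^{\langle \ws\lambda-\wt\mu,\Tr\rangle}\ve(\ws\lambda-\wt\mu)$ et le produit scalaire sur $K$, puis passage \`a la trace et \`a la limite $\mu\to\lambda$ avec les m\^emes r\'eserves analytiques (interversion des int\'egrations, compensation des p\^oles des termes diagonaux) renvoy\'ees \`a \cite{LW} et \cite{LLe}. La seule diff\'erence est votre \'etape pr\'eliminaire de sym\'etrisation \`a la Maass--Selberg, $\big\langle\Lambda^\Tr E_\lambda,E_{-\overline\mu}\big\rangle=\big\langle\Lambda^\Tr E_\lambda,\Lambda^\Tr E_{-\overline\mu}\big\rangle$, que le texte \'evite en int\'egrant directement la s\'erie tronqu\'ee, \'ecrite comme s\'erie de Poincar\'e de la fonction $B$, contre le terme constant $E^0_{-\overline\mu}$ de la s\'erie non tronqu\'ee.
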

\begin{proof}On note $E_\lambda^0$  le ``terme constant'' de $E_\lambda$:
$$E_\lambda^0(x,\Phi)=\int_{\U(F)\bsl\U(\adef)}E_\lambda(ux,\Phi)\,du$$
et on rappelle que
$$E_\lambda^0(x,\Phi)=\Phi(x,\lambda)+\big(\mathbf M(\ws,\lambda)\Phi\big)(x,\ws\lambda)\ptf$$
On commence par calculer le produit scalaire
$$A(\lambda,\mu):=\int_{[\G]}\Lambda^\Tr E_\lambda(x,\pi_\lambda(f)\Phi)
\overline{E_{-\overline\mu}(x,\Phi)}\,\ddx $$
pour des valeurs des paramtres pour lesquelles les sŽries:
$$E_\lambda(x,\Phi)=\sum_{\gamma\in P(F)\bsl G(F)}\Phi(\gamma x,\lambda)$$
convergent. On observe que dans ce cas
$\Lambda^\Tr E_\lambda(x,\pi_\lambda(f)\Phi)$ est donnŽ par la sŽrie
$$\sum_{\gamma\in\P(F)\bsl\G(F)}B(\gamma x,\pi_\lambda(f)\Phi,\lambda)$$
o
$$B(x,\Phi,\lambda)=\Phi(x,\lambda)-\widehat\tau_\P(\HP(x)-\Tr)
\big(\Phi( x,\lambda)+\big(\mathbf M(\ws,\lambda)\Phi\big)(x,\ws\lambda)\big)
$$
et donc
$$A(\lambda,\mu)=\int_{\P(F)\U(\adef)\bsl\G(\adef)}
B(x,\pi_\lambda(f)\Phi,\lambda)\overline{E^0_{-\overline\mu}(x,\Phi)}\,\ddx $$
et on obtient
$$A(\lambda,\mu)=
\big\langle\omega^\Tr(\lambda,\mu)\pi_\lambda(f)\Phi,\Phi\big\rangle_K\ptf$$
\end{proof}
On doit maintenant passer ˆ la limite lorsque $\mu$ tend vers $\lambda$.
Explicitons cette limite pour les corps de nombres. 
Supposons que $\lambda$ est unitaire de la forme
$$m^\lambda=m^{sr+\chi}\com{o} 
m^r=\vert m^\alpha\vert^{1/2}$$
et o $\chi$ est trivial sur $\RM^\times_+$ identifiŽ ˆ un sous groupe de $C_F$.
Soit $w$  l'ŽlŽment non trivial du groupe de Weyl,
on notera ${\mathbf M}'(w,\lambda) $ la dŽrivŽe de 
${\mathbf M}(w,\lambda)$ par rapport ˆ $s$ et on pose
$${\mathcal M}(\lambda) ={\mathbf M}(w,\lambda)\mun{\mathbf M}'(w,\lambda)\ptf$$
Enfin on pose $\delta(\lambda)=1$ si  la restriction de $\lambda$ ˆ $\M(F)\bsl\M(\adef)^1$ 
est triviale et et $\delta(\lambda)=0$ sinon.
Avec ces notations on obtient l'expression suivante:
$$
\omega^\Tr(\lambda,\lambda)=c\,\Tr-{\mathcal M}(\lambda)+
\delta(2\lambda)\frac{\sin s \langle \Tr,Y\rangle}{s}{\mathbf M}(w,\lambda)
$$
o   $c$ et $Y$ sont des constantes.
Les deux premiers termes donnent, pour les corps de nombres, par intŽgration en $\lambda$
un polyn™me du premier degrŽ en $\Tr$:
$$J_{cont}^\Tr(f)=\frac{c}{2}\Big(\int_\speccont\trace \pi_\lambda(f)\,d\lambda\Big)\,\Tr-
\frac{1}{2}\int_\speccont\trace\Big({\mathcal M}(\lambda)\pi_\lambda(f)\Big)\,d\lambda\ptf$$
Le troisime terme donne, par intŽgration en $\lambda$, une expression
$J_{compl}^\Tr(f)$
qui est une somme d'intŽgrales oscillantes en $s$ indexŽes par les caractres d'ordre 2.
Lorsque le paramtre de troncature tend vers l'infini l'intŽgrale oscillante tend,
ˆ un facteur scalaire prs, vers la mesure de Dirac 
et on obtient ˆ la limite le 
\begin{lemma} \label{osc}
Si $F$ est un corps de nombres
$$\frac{1}{2}\sum_\chi\delta(2\chi)\lim_{\Tr\to\infty}\int_\RM
\frac{2\sin s \langle \Tr,Y\rangle}{2s}{\mathbf M}(w,sr+\chi)\pi_{sr+\chi}(f)\frac{ds}{2\pi}$$
est Žgal ˆ
$$\frac{1}{4}\sum_{\{\lambda\,\vert\,2\lambda=0\}}
{\mathbf M}(w,\lambda)\pi_\lambda(f)\ptf
$$
\end{lemma}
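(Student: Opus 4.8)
The plan is to treat the expression as an oscillatory integral and to recognize its kernel as a Dirichlet (sinc) kernel converging weakly to a Dirac mass at $s=0$. Writing $A=\langle \Tr,Y\rangle$, the scalar factor in the integrand simplifies to $\frac{2\sin sA}{2s}\cdot\frac{1}{2\pi}=\frac{\sin sA}{2\pi s}$, which is exactly one half of the classical kernel $\frac{\sin sA}{\pi s}$. The latter is an approximate identity: for every function $g$ on $\RM$ that is continuous at $0$ and suitably integrable one has $\lim_{A\to\infty}\int_\RM \frac{\sin sA}{\pi s}\,g(s)\,ds=g(0)$, whence $\int_\RM \frac{\sin sA}{2\pi s}\,g(s)\,ds\to \tfrac12 g(0)$.

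First I would fix a character $\chi$ with $\delta(2\chi)=1$, that is $2\chi=0$, and set $G_\chi(s)={\mathbf M}(w,sr+\chi)\pi_{sr+\chi}(f)$, an operator-valued function of the real variable $s$. The point to check is that $G_\chi$ is regular at $s=0$, so that the limit above may be applied: along the unitary axis the intertwining operator ${\mathbf M}(w,sr+\chi)$ is holomorphic and bounded, and at the reducibility point $s=0$ it remains regular. For $\chi\ne1$ this is because the normalized operator ${\mathbf R}(\vef)$ is the involution of Lemma \ref{entrelace} and the scalar normalizing factor is a ratio of $L$-values finite and non-zero at $s=0$; for $\chi=1$ the normalizing factor is a ratio of zeta factors whose numerator and denominator both carry a simple pole at $s=0$, so the ratio is finite there by the functional equation. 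Since $f\in\ctyc\big(\G(\adef)\big)$, the operator $\pi_{sr+\chi}(f)$ is of finite rank with matrix coefficients rapidly decreasing in $s$, which furnishes the integrability and the uniform control needed to pass to the limit inside the (finite-dimensional) operator.

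Granting this, I would apply the Dirichlet-kernel limit term by term: for each contributing $\chi$,
$$\lim_{\Tr\to\infty}\int_\RM \frac{\sin s\langle \Tr,Y\rangle}{2\pi s}\,G_\chi(s)\,ds=\tfrac12\,G_\chi(0)=\tfrac12\,{\mathbf M}(w,\chi)\pi_\chi(f)\ptf$$
Inserting this into the outer sum $\frac12\sum_\chi\delta(2\chi)(\cdots)$ produces the overall constant $\frac12\cdot\frac12=\frac14$, and re-indexing the sum over order-$2$ characters $\chi$ (recall $s=0$, so $\lambda=\chi$) by the parameters $\lambda$ with $2\lambda=0$ gives precisely $\frac14\sum_{\{\lambda\,\vert\,2\lambda=0\}}{\mathbf M}(w,\lambda)\pi_\lambda(f)$, as asserted.

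The main obstacle is the rigorous justification of the passage to the limit: one must interchange $\lim_{\Tr\to\infty}$ with the integration in $s$ (and ultimately with the trace used to form $J_{cont}^\Tr$), and one must genuinely exclude a pole of ${\mathbf M}(w,sr+\chi)$ at $s=0$ for the finitely many order-$2$ characters $\chi$, the delicate case being $\chi=1$ where the zeta normalization must be shown to be finite. Both are controlled by the holomorphy and unitarity of the normalized operator on the unitary axis, together with the Schwartz-type decay of $s\mapsto\pi_{sr+\chi}(f)$; once these are in place the convergence of the sinc integral is the standard approximate-identity argument, uniform on the relevant finite-dimensional range.
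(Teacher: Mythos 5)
Votre preuve suit essentiellement la même démarche que celle de l'article : reconnaître dans $\frac{2\sin s\langle \Tr,Y\rangle}{2s}\frac{1}{2\pi}$ la moitié du noyau de Dirichlet $\frac{\sin sA}{\pi s}$, qui converge faiblement vers la masse de Dirac en $s=0$, d'où le facteur $\frac12\cdot\frac12=\frac14$ et l'évaluation en $\lambda=\chi$ avec $2\chi=0$. Vos vérifications supplémentaires sur la régularité de ${\mathbf M}(w,sr+\chi)$ en $s=0$ explicitent utilement l'hypothèse de lissité que l'article se contente d'invoquer, mais l'argument central est identique.
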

\begin{proof}
En effet, pour $h$ lisse et ˆ dŽcroissance rapide sur $\RM$
$$\lim_{N\to\infty}\int_\RM h(x)\,\frac{2\sin Nx}{x}\,\,\frac{dx}{2\pi}=h(0)\ptf$$
\end{proof}

On a une formule analogue pour les corps de fonctions. Au total on a
$$\int_{[\G]}k_{cont}^\Tr(x)\ddx=J_{cont}^\Tr(f)+J_{compl}^\Tr(f)\ptf$$
L'expression spectrale se dŽcompose donc en une partie discrte et une partie continue:
$$J_{spec}^\Tr(f)=J_{disc}^\Tr(f)+J_{cont}^\Tr(f)$$
et la partie discrte $J_{disc}^\Tr(f)$ est somme de trois termes:
$$J_{disc}^\Tr(f)=J_{cusp}(f)+J_{res}^\Tr(f)+J_{compl}^\Tr(f)\ptf$$
Les deux derniers termes ont une limite finie lorsque $\Tr$ tend vers $+\infty$
et en fait $J_{disc}^\Tr(f)$ est indŽpendant de $\Tr$. On pose
$$J_{res}(f)=\lim_{\Tr\to\infty}J_{res}^\Tr(f)
\com{et}
J_{compl}(f)=\lim_{\Tr\to\infty}J_{compl}^\Tr(f)
$$
et on a
$$J_{disc}(f)=J_{cusp}(f)+J_{res}(f)+J_{compl}(f)=\sum_\pi a(\pi)\trace \pi(f)$$ 
o les $a(\pi)$ sont des nombres rationnels.
La distribution $J_{disc}$, qui est une combinaison linŽaire de traces, est donc
invariante par conjugaison de $f$ sous $\G(\adef)$.
Les deux premiers termes correspondent ˆ la trace de l'opŽrateur $\rho_{disc}(f)$
dŽfini par $f$ dans le spectre discret $L^2_{disc}([\G])$:
$$\trace \rho_{disc}(f)=\trace \rho_{cusp}(f)+\trace \rho_{res}(f)=J_{cusp}(f)+J_{res}(f)\ptf$$
Pour $\pi$ dans le spectre discret $a(\pi)=m(\pi)$ la multiplicitŽ de $\pi$
dans le spectre; ce sont des entiers positifs\footnote{Il a ŽtŽ dŽmontrŽ \cite{Rama}
que la multiplicitŽ est toujours 1. Mais nous n'utiliserons pas ce fait.}.
En particulier
$$J_{res}(f)=\trace \rho_{res}(f)=\trace {\bs 1}(f)=\int_{[\G]}f(x)\ddx\ptf$$
Les contributions discrtes 
provenant de la troncature du spectre continu donnent le terme complŽmentaire $J_{compl}(f)$
pour lequel les $a(\pi)$ ne sont plus nŽcessairement des entiers positifs. De fait ˆ la limite on a 
vu en \ref{osc} que
$$J_{compl}(f)=\sum_{\{\lambda\,\vert\,2\lambda=0\}}
\frac{1}{4}\,
\trace \big({\mathbf M}(w,\lambda)\pi_\lambda(f)\big)$$
o on somme sur les  caractres $\lambda$ d'ordre 2 de $M(F)\bsl\M(\adef)\simeq C_F$.%
$^(\!$\footnote{Si on compare avec \cite{LW} on voit que 
le scalaire $1/4$ est l'inverse du produit du cardinal $\#W$ du groupe de Weyl $W$ et de la valeur absolue
du dŽterminant de $(1-w)$ agissant dans l'espace vectoriel rŽel ${\mathfrak a}_M$ attachŽ ˆ $M$:
$\#W.\vert\det(1-w)\vert=4$.
}$^)$
Si $\lambda=0$ la sŽrie principale $\pi_\lambda$
est irrŽductible et l'opŽrateur d'entrelacement ${\mathbf M}(w,0)$ est le scalaire $-1$:
$$\trace \big({\mathbf M}(w,0)\pi_1(f)\big)=-\trace\big(\pi_0(f)\big)\ptf$$
On a donc $a(\pi_0)=-1/4$.
Si $\lambda\ne0$ la reprŽsentation $\pi_\lambda$ se dŽcompose en une somme
infinie de reprŽsentations irrŽductibles deux ˆ deux inŽquivalentes qui peuvent se regrouper en deux
sous-reprŽsentations:
$$\pi_\lambda=\pi_\lambda^+\oplus\pi_\lambda^-$$
o $\pi_\lambda^+$ est la somme des composants admettant un modle de Whittaker pour un caractre
$\psi$ de $U(F)\bsl U(\adef)$ localement partout sauf en un nombre pair de places. Cette paritŽ
est indŽpendante du choix de $\psi$. Il rŽsulte de \ref{entrelace}, par produit sur toutes les places, que
$$\trace \big({\mathbf M}(w,\lambda)\pi_\lambda(f)\big)=\trace\pi_\lambda^+(f)-\trace\pi_\lambda^-(f)$$
et donc $a(\pi)=\pm1/4$ si $\pi$ est un composant de $\pi_\lambda^\pm$.

\Section{Les formules des traces}

On rappelle que, d'aprs \ref{idfond}, pour $\Tr$ assez grand (dŽpendant du support de $f$) on a 
$$k_{geom}^\Tr(x)=k_{spec}^\Tr(x)\ptf$$
On obtient par intgration sur $[\G]$ la formule des traces non invariante:
\begin{proposition}\label{ftninv}Pour $\Tr$ assez grand on a l'identitŽ:
$$J_{ell}(f)+J_{hyp}^\Tr(f)+J_{unip}^\Tr(f)=J_{disc}(f)+J_{cont}^\Tr(f)\ptf$$
\end{proposition}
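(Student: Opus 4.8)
The plan is simply to integrate the pointwise identity of \ref{idfond} over $[\G]$ and then collect terms on each side according to the decompositions already established. By \ref{idfond}, for $\Tr$ large enough (depending only on the support of $f$) one has the pointwise equality $k_{geom}^\Tr(x)=k_{spec}^\Tr(x)$ on $[\G]$. Integrating both sides against the Tamagawa measure on $[\G]$—the convergence of each contribution having been secured in the geometric and spectral analyses above, with the delicate continuous-spectrum convergence reducing, for number fields, to the oscillating-integral computation of \ref{osc}—yields the single identity $J_{geom}^\Tr(f)=J_{spec}^\Tr(f)$.

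On the geometric side I would substitute the decomposition $k_{geom}^\Tr=k_{ell}+k_{hyp}^\Tr+k_{unip}^\Tr$, which integrates term by term to
$$J_{geom}^\Tr(f)=J_{ell}(f)+J_{hyp}^\Tr(f)+J_{unip}^\Tr(f)\ptf$$
On the spectral side I would use the splitting $K_{spec}^\Tr=K_{cusp}+K_{res}^\Tr+K_{cont}^\Tr$ together with the explicit treatment of the continuous contribution, giving
$$J_{spec}^\Tr(f)=J_{disc}^\Tr(f)+J_{cont}^\Tr(f)\com{avec}J_{disc}^\Tr(f)=J_{cusp}(f)+J_{res}^\Tr(f)+J_{compl}^\Tr(f)\ptf$$

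The one point requiring genuine verification—and the \emph{main obstacle}—is that $J_{disc}^\Tr(f)$ is in fact \emph{independent} of $\Tr$, so that it may be written $J_{disc}(f)$. This holds because $J_{cusp}(f)$ carries no $\Tr$-dependence, while $J_{res}^\Tr(f)$, the integral of $\Lambda^\Tr{\bs 1}$ against $\int_{\G(F)}f$, and $J_{compl}^\Tr(f)$, the discrete residue extracted from the truncated continuous spectrum, each stabilize to a finite limit as $\Tr\to\infty$—the latter precisely via the passage to the limit in \ref{osc}. One must check that the $\Tr$-dependent pieces of these two terms cancel, leaving a genuine $\Tr$-independent linear combination of traces; this is the heart of the matter and rests on matching the polynomial growth in $\Tr$ coming from the residual spectrum against that produced by the truncation of the Eisenstein kernel. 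Granting this, equating the two displayed decompositions gives the asserted identity
$$J_{ell}(f)+J_{hyp}^\Tr(f)+J_{unip}^\Tr(f)=J_{disc}(f)+J_{cont}^\Tr(f)\ptf$$
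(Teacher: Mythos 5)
Your proposal is correct and is essentially the paper's own argument: the paper proves \ref{ftninv} simply by integrating the pointwise identity of \ref{idfond} over $[\G]$ and substituting the geometric decomposition $J_{geom}^\Tr=J_{ell}+J_{hyp}^\Tr+J_{unip}^\Tr$ and the spectral decomposition $J_{spec}^\Tr=J_{disc}^\Tr+J_{cont}^\Tr$, the $\Tr$-independence of $J_{disc}^\Tr$ having already been recorded in the preceding section. Note only that no cancellation of polynomial growth in $\Tr$ is actually needed for the identity as stated, since $J_{hyp}^\Tr$, $J_{unip}^\Tr$ and $J_{cont}^\Tr$ all retain their $\Tr$-dependence; the sole point to import is that $J_{cusp}+J_{res}^\Tr+J_{compl}^\Tr$ is constant in $\Tr$, exactly as you identify.
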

Le terme elliptique $J_{ell}(f)$ et le terme spectral discret $J_{disc}(f)$
sont des distributions invariantes par conjugaison de $f$
\cad en remplaant $$f:g\mapsto f(g)\comm{par}f^x:g\mapsto f(x\mun g x)\ptf$$
Les autres termes de formule des traces ne sont pas invariants sous une telle conjugaison: 
ils dŽpendent du choix du compact maximal $K$ et du sous-groupe parabolique minimal $P$. 

\Chapter{Pr\'e-stabilisation}\label{prestab}

\Section{La mŽthode}
On va Žtendre chaque terme du dŽveloppement de $k_{geom}^\Tr(x)$ et de $k_{spec}^\Tr(x)$
en une fonction sur $$[\tG]=\tG(F)\tZ(\adef)\bsl\tG(\adef)\ptf$$
Ce sont ces fonctions que l'on souhaite analyser: en effet on sait que la conjugaison
sous $GL(2)$ Žquivaut ˆ la conjugaison stable pour $SL(2)$.
La transformation de Fourier sur 
$$ Q_F\simeq G(\adef)\tG(F)\tZ(\adef)\bsl\tG(\adef)$$
fournira la prŽ-stabilisation. 
On observe tout d'abord que 
$$k(\tx)=\sum_{\gamma\in\G(F)}f(\tx\mun\gamma\tx)$$
a un sens pour $\tx\in\tG(\adef)$ puisque $\det(\tx\mun\gamma\tx)=1$
et que $k(\eta \tx)=k(\tx)$ pour tout $\eta\in\tZ(\adef)\tG(F)$. On a ainsi dŽfini une fonction
sur $$[\tG]=\tG(F)\tZ(\adef)\bsl\tG(\adef)\ptf$$
Nous allons montrer qu'il en est de mme pour tous les $k^\Tr_\bullet(x)$:

\begin{lemma}\label{troncinv}
Les fonctions $$x\mapsto k^\Tr_\bullet(x)$$ sont invariantes ˆ gauche par
$\tG(F)\tZ(\adef)$.
\end{lemma}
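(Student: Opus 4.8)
Le plan est de reprendre pour chaque $k_\bullet^\Tr$ le m\'ecanisme d\'ej\`a utilis\'e pour $k$. Le ressort en est que $\G(F)=SL(2,F)$ est distingu\'e dans $\tG(F)=GL(2,F)$, donc la conjugaison par un $\eta\in\tG(F)$ permute $\G(F)$ en pr\'eservant le type (elliptique, hyperbolique, ou central fois unipotent r\'egulier) d'une classe g\'eom\'etrique, tandis que $\tZ(\adef)$ \'etant central la conjugaison par un $z\in\tZ(\adef)$ est triviale. Or les termes non tronqu\'es, \`a savoir $k_{ell}$ et les parties ``somme sur les classes'' de $k_{hyp}^\Tr$ et $k_{unip}^\Tr$, se r\'ecrivent sous la forme $\sum_{\gamma\in S}f(\tx\mun\gamma\tx)$ pour un sous-ensemble $S\subseteq\G(F)$ stable par conjugaison sous $\tG(F)$; comme $\det(\tx\mun\gamma\tx)=1$, ceci a un sens pour $\tx\in\tG(\adef)$, et la translation \`a gauche $\tx\mapsto\eta\tx$ ne fait que permuter $S$. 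Ces termes sont donc imm\'ediatement invariants \`a gauche par $\tG(F)\tZ(\adef)$.

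Il reste les termes de troncature, tous de la forme
$$T_\bullet(x)=\sum_{\gamma\in\P(F)\bsl\G(F)}\widehat\tau_\P(\HP(\gamma x)-\Tr)\,k_{\P,\bullet}(\gamma x)\ptf$$
Je prolonge d'abord $\HP$ \`a $\tG(\adef)$ via la d\'ecomposition d'Iwasawa de $GL(2)$, en prenant sur le tore diagonal le caract\`ere $\diag(a,b)\mapsto\log_{q_F}\vert a/b\vert^{1/2}$. Trois propri\'et\'es en d\'ecoulent aussit\^ot: $\HP$ est additif \`a gauche sous $\tP(\adef)$; il est nul sur $\tZ(\adef)$ car $z^\alpha=1$; et il est nul sur $\tP(F)$ car pour $a,b\in\iF$ la formule du produit donne $\vert a/b\vert=1$. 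J'utilise ensuite $\tG(F)=\tP(F)\G(F)$ (prendre $\tilde p=\diag(\det(\tilde g),1)$) et $\tP(F)\cap\G(F)=\P(F)$, d'o\`u la bijection $\P(F)\bsl\G(F)\simeq\tP(F)\bsl\tG(F)$. Elle permet de r\'eindexer la somme sur $\tP(F)\bsl\tG(F)$, pourvu que le terme g\'en\'eral soit invariant \`a gauche par $\tP(F)$; l'invariance de $T_\bullet$ sous $\tG(F)$ devient alors tautologique.

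Le point d\'elicat est donc la $\tP(F)$-invariance des termes constants. Pour $k_{\P,\xi}(y)=\int_{\U(\adef)}f(y\mun\xi u y)\,du$ et $\tilde p=\tilde\mu_0 u_0\in\tP(F)$ avec $\tilde\mu_0\in\tM(F)$ et $u_0\in\U(F)$, la partie diagonale de $\tilde p\mun\xi u\tilde p$ reste $\xi$, donc $\tilde p\mun\xi u\tilde p=\xi u''$ o\`u $u\mapsto u''$ est une bijection affine de $\U(\adef)$ faisant intervenir la conjugaison $u\mapsto\tilde\mu_0\mun u\tilde\mu_0$. Le fait crucial est que $\tilde\mu_0$ \'etant rationnel, le module de cette conjugaison vaut $\prod_v\vert\cdot\vert_v=1$ par la formule du produit: la mesure de Haar sur $\U(\adef)$ est donc pr\'eserv\'ee et l'int\'egrale est inchang\'ee apr\`es changement de variable, d'o\`u $k_{\P,\xi}(\tilde p y)=k_{\P,\xi}(y)$. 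Comme $\xi$ est fix\'e, les sommes rationnelles ($\xi\in\M(F)$, $z\in Z(F)$, $\nu\in\U(F)^\star$) sont elles aussi pr\'eserv\'ees, et le m\^eme calcul vaut pour $k_{\P,z}$ et pour le terme constant $\int_{[\U]}K(u\gamma x,x)\,du$ du c\^ot\'e spectral. Enfin l'invariance sous $\tZ(\adef)$ r\'esulte de $\HP(z\tx)=\HP(\tx)$ et de $k_{\P,\bullet}(z\tx)=k_{\P,\bullet}(\tx)$, par centralit\'e de $z$. En rassemblant, chaque $k_\bullet^\Tr$ est somme d'un terme de conjugaison et de termes de troncature, tous invariants \`a gauche par $\tG(F)\tZ(\adef)$. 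L'obstacle principal est bien cette pr\'eservation de la mesure ad\'elique sur $\U(\adef)$ sous la conjugaison par le tore rationnel, qui repose sur la formule du produit et vaut en toute caract\'eristique.
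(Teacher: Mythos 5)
Votre preuve est correcte et suit essentiellement la m\^eme voie que celle du texte : on d\'ecompose $\tG(F)=\tP(F)\G(F)$ (le texte utilise $\tM(F)\G(F)$, ce qui revient au m\^eme) et on observe que les \'el\'ements rationnels de $\tM(F)$ normalisent $\P(F)$, $\G(F)$ et $\U(\adef)$ tout en pr\'eservant $\HP$ et la mesure de Haar sur $\U(\adef)$ gr\^ace \`a la formule du produit. Vous explicitez utilement ce dernier point, que la preuve du texte laisse implicite.
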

\begin{proof}L'invariance sous $\tZ(\adef)$ est Žvidente. On sait dŽjˆ
que ces fonctions sont invariantes ˆ gauche par $\G(F)$. Il suffit 
donc de prouver l'invariance par les $\xi\in\tM(F)$. Mais de tels $\xi$ normalisent
$\P(F)$ et $\G(F)$ et donc laissent invariantes les troncatures.
\end{proof}

Par intŽgration sur $[\tG]$
contre un caractre $\kappa$ de $Q_F$ on dŽfinit des distributions
 $$\J^\Tr_\bullet(f,\kappa)=\int_{[\tG]}\kappa(\det \tx)k_\bullet^\Tr(\tx)\ddx$$
et on a, si $Q_F$ est muni de la mesure de Haar lui donnant le volume 1,
$$J^\Tr_\bullet(f)=\sum_\kappa \J^\Tr_\bullet(f,\kappa)\ptf$$
C'est ce que nous appellerons la prŽ-stabilisation de la formule des traces pour $G=SL(2)$ 
Nous Žtudions tous les termes plus en dŽtail ci-dessous. 


On rappelle que pour $\Tr$ assez grand (dŽpendant du support de $f$) on a 
d'aprs \ref{idfond}
$$k_{geom}^\Tr(x)=k_{spec}^\Tr(x)$$
et on voit de mme que
$$k_{geom}^\Tr(\tx)=k_{spec}^\Tr(\tx)\ptf$$
On en dŽduit que pour tout $\kappa$ une identitŽ que nous allons expliciter.
$$\J_{geom}^\Tr(f,\kappa)=\J_{spec}^\Tr(f,\kappa)\ptf$$

\Section{Contributions elliptiques}

Soit $E/F$ une extension quadratique sŽparable. On note $\vef$ 
 le caractre de $C_F$ associŽ ˆ $E/F$ par la thŽorie du corps de classe; c'est
 de faon naturelle un caractre de $Q_F$.
On note $\tTef$ le tore dŽfini par la restriction des scalaire ˆ la Weil
pour $E/F$ du groupe multiplicatif $E^\times$ et $\Tef$ le sous tore des ŽlŽments de
norme 1: 
$$\tTef(F)=E^\times\comm{et} \Tef(F)=E^1$$
o $E^1$ est le noyau de la norme $$\Nef:E^\times\to F^\times\ptf$$
On peut rŽaliser $\Tef$ comme un sous-groupe de $\G$ et 
$\tTef$ comme un sous-groupe de $\tG$.
Les intŽgrales orbitales qui interviennent dans l'expression $\J_E(f,\kappa)$ 
qui est la contribution des termes elliptiques rŽguliers relatif ˆ l'extensions
quadratique non dŽployŽe $E/F$
qui ont en facteur l'intŽgrale de $\kappa$ de l'image par le dŽterminant de
$$\tZ(\adef)\tTef(F)\bsl \tTef(\adef)=\AM_F^\times E^\times\bsl\AM_E^\times
\simeq C_F\bsl C_E\ptf$$ 
Cette intŽgrale est nulle sauf si $\kappa$ est constant sur $\Nef(C_E)$ \cad si
$\kappa=1$ ou $\kappa=\vef$.
On a donc $$J_E(f)=\J_E(f,1)+\J_E(f,\vef)\ptf$$

On  a dŽfini en \ref{normell}
une intŽgrale orbitale globale pour $\gamma\in\Tef(F)$ rŽgulier en posant
$$\orb(\gamma, f)=\int_{\Tef(\adef)\bsl\G(\adef)}
f(\tx\mun \gamma\tx)d\tx=
L(1,\kappa)\prod_v\frac{\orb_v(\gamma, f_v)}{L(1,\kappa_v)}$$
o $\kappa=\vef$.
Les mesures utilisŽes pour le calcul des intŽgrales orbitales locales
$\orb_v(\gamma, f_v)$ sont les quotients de mesures de Tamagawa  locales non normalisŽes
sur $\T_v$ et $\G_v$. 
On peut maintenant dŽfinir une intŽgrale $\kappa$-orbitale globale en posant
$$\orb^{\vef}(\gamma, f)=\sum_{\eta\in \mathcal D(\gamma,\adef)}\vef(\det(\eta))\orb(\eta\mun\gamma\eta,f)$$
o 
$$\mathcal D(\gamma,\adef)=\tTef(\adef)\G(\adef)\bsl\tG(\adef)\ptf$$
Mais ce groupe est la limite inductive sur la famille des ensembles finis $S$ de places:
$$\mathcal D(\gamma,\adef)=\lim_{\to}\mathcal D(\gamma,F_S)
\com{o}\mathcal D(\gamma,F_S)=\prod_{v\in S}\mathcal D(\gamma,F_v)
$$
et si on utilise les mesure locales comme ci-dessus on a
$$\orb^{\vef}(\gamma, f)=
\int_{\tTef(\adef)\bsl\tG(\adef)}\kappa(x)f(x\mun \gamma x)\ddx
=
L(1,\kappa)\prod_v\frac{\orb_v^{\kappa_v}(\gamma, f_v)}{L(1,\kappa_v)}
\ptf$$

\begin{remark}{\rm%
Dans [LL] le facteur de transfert est intŽgrŽ dans la dŽfinition des $\kappa$-intŽgrales orbitales locales
$\orb^{\kappa_v}(\gamma, f_v)$.}
\end{remark}
\begin{lemma}\label{kglob}  Pour $\gamma\in E^\star$ on a l'identitŽ
$$\sum_{\eta\in \mathcal D(\gamma,F)}\orb(\eta\mun\gamma\eta,f)=
\frac{1}{2}\sum_\kappa\orb^\kappa(\gamma, f)\ptf$$
\end{lemma}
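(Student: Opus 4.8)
The plan is to reinterpret both sides as sums of adelic orbital integrals indexed by double cosets, and to compare the rational index set $\mathcal{D}(\gamma,F)$ with the adelic one $\mathcal{D}(\gamma,\adef)$ by means of class field theory. First I would identify the two index sets through the determinant. Since $\G=\ker(\det)$ and $\det(\tTef(F))=\Nef(E^\times)$, the determinant induces bijections $\mathcal{D}(\gamma,F)=\tI_\gamma(F)\bsl\tG(F)/\G(F)\simeq F^\times/\Nef(E^\times)$ and $\mathcal{D}(\gamma,\adef)\simeq \AM_F^\times/\Nef(\AM_E^\times)$, each being a form of $H^1(F,\Tef)$ via Hilbert 90 applied to $1\to\Tef\to\mathrm{Res}_{E/F}\GM_m\xrightarrow{\Nef}\GM_m\to1$. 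Next I would note that $\orb(\eta\mun\gamma\eta,f)$ depends only on the $\G(\adef)$-conjugacy class of $\eta\mun\gamma\eta$, hence only on the image of $\eta$ in $\mathcal{D}(\gamma,\adef)$, and that $\vef(\det\eta)$ is well defined there because $\vef$ is trivial on $\Nef(\AM_E^\times)$. By the double-coset decomposition recalled just before the statement this gives $\orb^1(\gamma,f)=\sum_{\eta\in\mathcal{D}(\gamma,\adef)}\orb(\eta\mun\gamma\eta,f)$ and $\orb^{\vef}(\gamma,f)=\sum_{\eta\in\mathcal{D}(\gamma,\adef)}\vef(\det\eta)\,\orb(\eta\mun\gamma\eta,f)$.

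The crux is to locate the image of the natural map $\phi\colon\mathcal{D}(\gamma,F)\to\mathcal{D}(\gamma,\adef)$, that is, of $F^\times/\Nef(E^\times)\to\AM_F^\times/\Nef(\AM_E^\times)$. Here I would invoke class field theory for the separable quadratic, hence cyclic, extension $E/F$. The Hasse norm theorem gives $F^\times\cap\Nef(\AM_E^\times)=\Nef(E^\times)$, so $\phi$ is injective. For the cokernel, the global reciprocity isomorphism $C_F/\Nef(C_E)\simeq\Gal(E/F)$ yields $\coker\phi=\AM_F^\times/\big(F^\times\Nef(\AM_E^\times)\big)=C_F/\Nef(C_E)\simeq\ZM/2$. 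Since $\vef$ is the order-$2$ character of $C_F$ cutting out $\Nef(C_E)$, it is trivial on the image of $F^\times$ by the product formula and descends to an isomorphism $\coker\phi\simeq\{\pm1\}$. Therefore $\phi$ identifies $\mathcal{D}(\gamma,F)$ with the index-$2$ subgroup $\mathcal{D}^+=\{\eta:\vef(\det\eta)=1\}$ of $\mathcal{D}(\gamma,\adef)$, and under this identification the orbital integrals coincide term by term.

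It then remains to assemble the pieces. Writing $\mathcal{D}^\pm$ for the two fibres of $\eta\mapsto\vef(\det\eta)$ on $\mathcal{D}(\gamma,\adef)$, the right-hand side is $\tfrac12\sum_{\eta\in\mathcal{D}(\gamma,\adef)}\big(1+\vef(\det\eta)\big)\orb(\eta\mun\gamma\eta,f)=\sum_{\eta\in\mathcal{D}^+}\orb(\eta\mun\gamma\eta,f)$, since $1+\vef(\det\eta)$ equals $2$ on $\mathcal{D}^+$ and $0$ on $\mathcal{D}^-$; the left-hand side equals the same sum by the previous paragraph. The only genuine difficulty is this local-global step, controlling $\phi$ through the Hasse norm theorem and reciprocity; the rest is the trivial averaging identity. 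I would finally emphasize that, because $E/F$ is cyclic of degree $2$, both the Hasse norm theorem and the reciprocity isomorphism remain valid when $p=2$, so the argument is uniform in the characteristic.
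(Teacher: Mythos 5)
Your proof is correct and follows essentially the same route as the paper: both identify $\mathcal D(\gamma,F)$, via the Hasse norm theorem $F^\times\cap\Nef(\AM_E^\times)=\Nef(E^\times)$ and global reciprocity, with the index-$2$ subgroup $\ker(\vef\circ\det)$ of $\mathcal D(\gamma,\adef)$, and then conclude by Fourier inversion on the order-$2$ quotient (what the paper calls the Poisson formula for this pair of groups, and what you write out explicitly as $\tfrac12(1+\vef(\det\eta))=\mathbbm{1}_{\mathcal D^+}(\eta)$). Your version merely makes the averaging step and the injectivity of $\mathcal D(\gamma,F)\to\mathcal D(\gamma,\adef)$ more explicit than the paper does.
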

\begin{proof}
On rappelle que 
$\mathcal D(\gamma,F)\simeq \Nef E^\times\bsl F^\times$.
Maintenant $$\tTef(\adef)\bsl\tG(\adef)\simeq
\tTef(\adef)\bsl\tG(\adef)^{+}\cup \tTef(\adef)\bsl\tG(\adef)^{+}\eta$$
o
$$\tG(\adef)^{+}=\{x\in\tG(\adef)\,\vert\,\det(x)\in F^\times\Nef\ade_E^\times\}
\com{et}\det(\eta)\notin F^\times\Nef\ade_E^\times\ptf$$
Par ailleurs, si on pose 
$$\G(\adef)^+=\{x\in\tG(\adef)\,\vert\,\det(x)\in F^\times\}$$
en utilisant que $F^\times\cap\Nef\ade_E^\times= \Nef E^\times$
(principe de Hasse valable pour les extensions quadratiques)
on voit que
$$E^\times \Tef(\adef)\bsl\G(\adef)^+\simeq\tTef(\adef)\bsl\tG(\adef)^{+}\ptf$$
La formule de Poisson pour le groupe discret $ \mathcal D(\gamma,F)$
vu comme sous-groupe d'indice 2 de 
$$ \mathcal D(\gamma,\adef)\simeq \Nef \ade_E^\times\bsl \adef^\times
$$
montre alors que
$$\sum_{\eta\in \mathcal D(\gamma,F)}\orb(\eta\mun\gamma\eta,f)=
\frac{1}{2}\sum_\kappa\orb^\kappa(\gamma, f)\ptf$$
\end{proof}

\begin{remark}{\rm  On aurait pu utiliser le rŽsultat plus gŽnŽral  \cite[ThŽorme 3.9]{tam}
qui, dans notre cas, affirme que
$$\tau(\Tef)\sum_{\eta\in \mathcal D(\gamma,F)}\orb(\eta\mun\gamma\eta,f)=
\tau(\G)\sum_\kappa\orb^\kappa(\gamma, f)$$
et on sait que $\tau(\Tef)=2$ et $\tau(\G)=1$.}
\end{remark}

\begin{corollary}\label{contrib-ell}
On suppose les $\kappa$-intŽgrales orbitales calculŽes au moyen 
du quotient des mesures de Tamagawa. Alors
$$\J_E(f,\vef)=\frac{\tau(\Tef)}{4}\sum_{\gamma\in E^\star}\orb^{\vef}(\gamma, f)=
\frac{1}{2}
\sum_{\gamma\in E^\star}\orb^{\vef}(\gamma, f)\ptf$$
\end{corollary}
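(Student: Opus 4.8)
The plan is to read off $\J_E(f,\vef)$ as the $\vef$-isotypic piece of the Fourier decomposition over $Q_F$, after first computing $J_E(f)$ as a whole by chaining the two preceding lemmas. Starting from Lemma~\ref{jef}, $J_E(f)=\tfrac{\tau(\Tef)}{2}\sum_{\gamma\in E^\star}\sum_{\delta\in\mathcal D(\gamma,F)}\orb(\delta\mun\gamma\delta,f)$, I would substitute the comparison of Lemma~\ref{kglob}, in which only $\kappa=1$ and $\kappa=\vef$ survive, to obtain
$$J_E(f)=\frac{\tau(\Tef)}{4}\sum_{\gamma\in E^\star}\big(\orb^1(\gamma,f)+\orb^{\vef}(\gamma,f)\big)\ptf$$

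The substance of the corollary is then that this bracket splits along the pre-stabilized decomposition $J_E(f)=\J_E(f,1)+\J_E(f,\vef)$ term by term, with $\J_E(f,\vef)$ matching the $\orb^{\vef}$ summand. To see this directly I would unfold $\J_E(f,\vef)=\int_{[\tG]}\vef(\det\tx)k_E(\tx)\ddx$. Writing the elliptic kernel attached to $E$ as $k_E(\tx)=\sum_{\gamma\in\Sigma_E}\sum_{y\in\tTef(F)\bsl\tG(F)}f(\tx\mun y\mun\gamma y\tx)$ and using that $\vef\circ\det$ is left $\tG(F)$-invariant (because $\det y\in F^\times$ and $\vef$ is trivial on $F^\times$), the sum over $y$ folds the integral over $[\tG]$ into one over $\tTef(F)\tZ(\adef)\bsl\tG(\adef)$; since $\vef$ is moreover trivial on $\Nef(\AM_E^\times)=\det\tTef(\adef)$, the integrand descends through $\tTef(\adef)$ and reproduces precisely the global $\vef$-orbital integral $\orb^{\vef}(\gamma,f)$. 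Thus the outer twist is carried verbatim into the inner one, so $\J_E(f,\vef)$ pairs with $\orb^{\vef}$ and not with $\orb^1$, and comparison with the displayed formula for $J_E(f)$ yields $\J_E(f,\vef)=\frac{\tau(\Tef)}{4}\sum_{\gamma\in E^\star}\orb^{\vef}(\gamma,f)$.

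It then remains only to insert $\tau(\Tef)=2$ to rewrite $\tau(\Tef)/4=1/2$, which gives the second equality. The main obstacle is the bookkeeping of the normalizing constants hidden in this matching: one must check that the factor $\tfrac12$ relating $\Sigma_E$ to $E^\star$, the factor $\tfrac12$ of Lemma~\ref{kglob}, and the volume of $\tTef(F)\tZ(\adef)\bsl\tTef(\adef)$ produced by the unfolding combine to the single constant $\tau(\Tef)/4$. This rests on using quotients of Tamagawa measures uniformly, and on the global identity $\vef(-1)=1$ (the product formula together with Lemma~\ref{psideux}), which makes $\orb^{\vef}(\gamma,f)$ invariant under $\gamma\mapsto\gamma\mun$ and so legitimizes passing from a sum over $\Sigma_E$ to one over $E^\star$.
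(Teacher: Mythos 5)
Votre argument est correct et suit essentiellement la preuve du texte, qui obtient la formule en combinant le lemme \ref{jef} avec le lemme \ref{kglob} puis en ins\'erant $\tau(\Tef)=2$. Votre seul ajout est le d\'epliage explicite montrant que la composante de Fourier en $\vef$ sur $Q_F$ s'apparie avec le terme $\orb^{\vef}$ et non avec $\orb^1$, point que le texte laisse implicite dans la discussion de pr\'e-stabilisation qui pr\'ec\`ede le corollaire.
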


\begin{proof}
D'aprs \ref{jef},
$$J_E(f)=\frac{\tau(\Tef)}{2}\sum_{\gamma\in E^\star}
\sum_{\eta\in \mathcal D(\gamma,F)}\orb(\eta\mun\gamma\eta,f)\ptf$$ 
Il suffit alors d'invoquer \ref{kglob}
\end{proof}

\Section{Contributions hyperboliques}
Le calcul de la contribution hyperbolique fait ci-dessus fournit l'expression suivante:
$$J_\xi^\Tr(f)=\sum_\kappa\J_\xi^\Tr(f,\kappa)$$
avec
$$\J_\xi^\Tr(f,\kappa)=\int_{\tZ(\adef)\tM(F)\bsl\tG(\adef)} f(\tx\mun\xi \tx)\tw(\tx,\Tr)\kappa(x)d\tx$$

\begin{lemma}\label{htriv}
Pour $m\in\M(\adef)$
$$\J_m^\Tr(f,\kappa)=\int_{\tZ(\adef)\tM(F)\bsl\tG(\adef)} f(\tx\mun m \tx)\tw(\tx,\Tr)\kappa(x)d\tx=0$$
si $\kappa$ est non trivial.
\end{lemma}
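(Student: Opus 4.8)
L'id\'ee directrice est la m\^eme que pour les int\'egrales orbitales locales. La fonction $\tx\mapsto f(\tx\mun m\tx)$ est invariante \`a gauche par le centralisateur de $m$ dans $\tG(\adef)$, lequel contient toujours le tore diagonal $\tM(\adef)$, et l'on souhaiterait conclure par la nullit\'e de l'int\'egrale de $\kappa(\det\tx)$ sur ce centralisateur. L'obstacle est que le poids $\tw(\tx,\Tr)$ n'est pas invariant sous $\tM(\adef)$ tout entier. Je commencerais donc par remarquer que $\tw(\tx,\Tr)$ ne d\'epend de $\tx$ que par $\HP(\tx)$ et $\HP(w\tx)$. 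Pour $\mu=\diag(a,b)\in\tM(\adef)$, l'additivit\'e de $\HP$ et l'\'egalit\'e $w\mu w\mun=\diag(b,a)$ donnent $\HP(\mu\tx)=\HP(\mu)+\HP(\tx)$ et $\HP(w\mu\tx)=-\HP(\mu)+\HP(w\tx)$ ; ainsi $\tw$ est invariant \`a gauche par le sous-groupe $\tM(\adef)^1$ des $\diag(a,b)$ tels que $\vert a\vert=\vert b\vert$, noyau de $\HP$ restreint \`a $\tM(\adef)$.

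L'\'etape suivante consiste \`a int\'egrer le long de la fibration
$$\tZ(\adef)\tM(F)\bsl\tG(\adef)\longrightarrow\tM(\adef)^1\bsl\tG(\adef)\ptf$$
La formule du produit entra\^{\i}ne $\tM(F)\subset\tM(\adef)^1$, et l'on a de m\^eme $\tZ(\adef)\subset\tM(\adef)^1$, de sorte que la fibre $\tZ(\adef)\tM(F)\bsl\tM(\adef)^1$ est un groupe compact. Puisque l'int\'egrande priv\'ee du seul facteur $\kappa(\det\tx)$ est constante le long des fibres, on la sort de l'int\'egration sur la fibre et il ne reste \`a \'evaluer que $\int\kappa(\det\mu)\,d\mu$ sur cette fibre compacte.

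Il faut ensuite identifier la fibre et cette int\'egrale. L'application $\diag(a,b)\mapsto a/b$ identifie $\tZ(\adef)\tM(F)\bsl\tM(\adef)^1$ au groupe compact $C_F^1$ des classes d'id\`eles de norme $1$ ; de plus $\det\mu=ab=(a/b)\,b^2$, donc $\kappa(\det\mu)=\kappa(a/b)$ car $\kappa$, \'etant d'ordre $2$, est trivial sur les carr\'es. L'int\'egrale int\'erieure se r\'eduit alors \`a $\int_{C_F^1}\kappa(t)\,dt$, qui est nulle d\`es que la restriction de $\kappa$ \`a $C_F^1$ est non triviale, ce qui ach\`everait la preuve.

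La principale difficult\'e est de justifier que $\kappa\vert_{C_F^1}\ne1$ pour $\kappa$ non trivial. Un tel caract\`ere \'etant d'ordre $2$, s'il \'etait trivial sur $C_F^1$ il se factoriserait par $C_F/C_F^1$. Pour un corps de nombres, $C_F/C_F^1\simeq\RM^\times_+$ ne poss\`ede aucun caract\`ere non trivial d'ordre fini et l'annulation est inconditionnelle. Pour un corps de fonctions, la seule exception est le caract\`ere non ramifi\'e attach\'e \`a l'extension des constantes $\FM_{q^2}$, trivial sur $C_F^1$ : pour lui l'int\'egrale int\'erieure fournit un terme purement oscillant en $\Tr$, sans partie polynomiale, qui dispara\^{\i}t lorsqu'on passe \`a la partie polynomiale \'evalu\'ee en $\Tr=0$, conform\'ement \`a la convention adopt\'ee pour les corps de fonctions. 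C'est ce dernier point qui r\'eclame le plus de soin.
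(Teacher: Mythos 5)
Votre strat\'egie est au fond celle du texte --- int\'egrer $\kappa$ le long de la direction torique --- mais men\'ee avec plus de soin. La preuve du texte tient en une phrase~: elle affirme que $\tm_1\mapsto f(\tx\mun\tm_1\mun m\tm_1\tx)\,\tw(\tm_1\tx,\Tr)$ est constante pour $\tm_1\in\tM(\adef)$ et en d\'eduit l'annulation de l'int\'egrale de $\kappa$ sur $\tZ(\adef)\tM(F)\bsl\tM(\adef)$. Le facteur en $f$ est bien constant puisque $\tm_1$ commute \`a $m$, mais le poids $\tw$ ne l'est que pour $\tm_1$ dans le noyau $\tM(\adef)^1$ de $\HP$, exactement comme vous le relevez~; votre r\'eduction \`a l'int\'egrale de $\kappa$ sur la fibre compacte $\tZ(\adef)\tM(F)\bsl\tM(\adef)^1\simeq C_F^1$ est donc la forme correcte de l'argument, et elle r\`egle compl\`etement le cas des corps de nombres.

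Pour les corps de fonctions, la difficult\'e que vous signalez est r\'eelle et le texte ne la traite pas, mais votre r\'esolution est une affirmation plut\^ot qu'une preuve, et elle ne tient pas telle quelle. Pour le caract\`ere quadratique non ramifi\'e $\kappa$ (celui de l'extension des constantes), l'int\'egrale sur $C_F^1$ vaut $\vol(C_F^1)\ne0$ et il reste \`a \'evaluer la somme altern\'ee, sur $\tM(\adef)^1\bsl\tM(\adef)\simeq\frac{1}{2}\ZM$, de $(-1)^{2h}\,\tw$ o\`u $h=\HP(\tm)$. En posant $H_0=\HP(wu)\in\ZM$, le poids vaut $1$ pour $2H_0-2\Tr\le 2h\le 2\Tr$, $-1$ pour $2\Tr<2h<2H_0-2\Tr$ et $0$ sinon~; un calcul \'el\'ementaire de sommes altern\'ees sur ces intervalles d'entiers donne $(-1)^{2\Tr}$, ind\'ependamment de $H_0$. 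Pour $\Tr$ entier c'est la constante $1$~: le terme n'est donc pas purement oscillant, et $\J_m^\Tr(f,\kappa)$ est alors un multiple non nul de $\int_{K}\int_{\U(\adef)}f(k\mun u\mun m u k)\,du\,dk$, qui n'a aucune raison de s'annuler. L'annulation ne subsiste qu'\`a condition de consid\'erer $\Tr$ comme variable dans $\frac{1}{2}\ZM$, d'y lire le PolExp $(-1)^{2\Tr}$ et de n'en retenir que la partie polynomiale, qui est nulle~; c'est peut-\^etre la convention implicite, mais il faut l'expliciter et faire ce calcul, faute de quoi l'\'enonc\'e est en d\'efaut pour ce caract\`ere. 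Votre instinct que c'est l\`a que g\^it toute la difficult\'e est juste~; c'est pr\'ecis\'ement l'\'etape qui manque.
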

\begin{proof}
En effet,  
$$\tm_1\mapsto f(\tx\mun\tm_1\mun m \tm_1\tx) \tw(\tm_1x,\Tr)$$ est constant
pour $\tm_1\in \tM(\adef)$ et donc l'intŽgrale sur $\tZ(\adef)\tM(F)\bsl\tM(\adef)$ est nulle.
\end{proof}

On en conclut que
$$J_\xi^\Tr(f)=\J_\xi^\Tr(f,1)\ptf$$
Nous dirons que la contribution des ŽlŽments $F$-hyperboliques,
quoique non invariante, est \ts-stable.

\Section{Contributions unipotentes}

On souhaite calculer  $\J_{unip}^\Tr(f,z)$. 
Comme ci-dessus on  pose:
$$\Phi_{unip}^\Tr(f,x,z)=\sum_{\nu\in\U(F)^\star}f(x\mun z\nu x)
-\widehat\tau_\P(\HP(x)-\Tr)\int_{\U(\adef)}f(x\mun zu x)\,du$$
et
$$\J_{unip}^\Tr(f,\kappa,z)=\int_{Q_F}\kappa(y)\Big(
\int_{\M(F)\U(\adef)\bsl\G(\adef)}\Phi_{unip}^\Tr(f,xy,z)\,\ddx\Big)\,dy$$ 
o $Q_F$ est le groupe compact
$$Q_F=\tZ(\adef)\tM(F)\G(\adef)\bsl\tG(\adef)\simeq
\tZ(\adef)\tM(F)\M(\adef)\bsl\tM(\adef)$$
muni de la mesure de Haar qui lui donne le volume 1. Avec ces conventions
$$\J_{unip}^\Tr(f,\kappa,z)=
\int_{\tZ(\adef)\tM(F)\U(\adef)\bsl\tG(\adef)}\kappa(x)\,\,\Phi_{unip}^\Tr(f,x,z)\,\ddx$$
soit encore
$$\J_{unip}^\Tr(f,\kappa,z)=
\int_{\tZ(\adef)\tM(F)\bsl\tM(\adef)}\kappa(\tm)\,\,\Phi_{unip}^\Tr(f^K,\tm,z)\delta(\tm)\mun\,d\tm$$
et, par inversion de Fourier,
$$ \J_{unip}^\Tr(f,z)=\sum_\kappa \J_{unip}^\Tr(f,\kappa,z)$$
o on somme sur le dual de Pontryagin de $Q_F$. 
La somme {\it a priori} infinie ne porte que sur un nombre fini de termes, dŽpendant de $f$.
Ici ce n'est pas le support de $f$ qui importe mais sa lissitŽ, ˆ savoir le sous-groupe 
ouvert compact $H\in\G(\ade_F^\infty)$ sous lequel $f$ est bi-invariante.
On a notŽ $\ade_F^\infty$ l'anneau des adles finis et $H$ est un sous-groupe
d'indice fini du sous-groupe compact maximal de $\G(\ade_F^\infty)$:
$$K^\infty=\prod_{v\notin\{\infty\}}SL(2,\OFv)\ptf$$

ConsidŽrons, comme ci-dessus,  la fonction sur $\adef$:
$$g(n,z)=f^K( zu)
\com{o}
u=\begin{pmatrix}1&n\cr 0&1\end{pmatrix}\ptf$$
Posons
$$A^\Tr(f,\kappa,z)=\int_{{a\in\adef^\times\,,\,|\a}|\ge q_F^{-\Tr}}\kappa({\a})
\orbu({\a},z)|{\a}|^{}\da$$
et 
$$B^\Tr(f,\kappa,z)=\int_{a\in C_F\,,\,|{\a}|<q_F^{-\Tr}}\kappa({\a})h(a,z)
|{\a}|^{}\da $$
o $h(a,z)$ est la fonction sur le groupe $C_F=\adef^\times/F^\times$ des classes d'idles
$$h(a,z)=\sum_{\zeta\in F^\times}\orbu({\a}^{}\zeta,z)
-\int_{n\in\adef}\orbu({\a} n,z)dn
\ptf$$
On a alors
\begin{lemma}\label{unun}
Les mesures de Haar Žtant choisies de sorte que 
$Q_F$ ait le volume 1 on a
$$\J_{unip}^\Tr(f,\kappa,z)=\frac{1}{2}\Big(A^\Tr(f,\kappa,z)+B^\Tr(f,\kappa,z)\Big)\ptf$$
\end{lemma}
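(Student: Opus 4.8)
La démonstration suivra le schéma de \ref{undeux}, adapté à l'intégrale $\kappa$-tordue, qui porte désormais sur le tore $\tM$ de $\tG=GL(2)$ et non plus sur $\M$. On commencera par identifier le quotient $\tZ(\adef)\tM(F)\bsl\tM(\adef)$ au groupe des classes d'idèles $C_F$ au moyen de la coordonnée $\a=\beta/\alpha$ attachée à $\tm=\diag(\alpha,\beta)$. Comme $\kappa$ est vu sur $Q_F$ via le déterminant et est trivial sur les carrés, l'égalité $\alpha\beta=\a\,\alpha^2$ donne $\kappa(\tm)=\kappa(\det\tm)=\kappa(\a)$. Un calcul matriciel élémentaire fournit, pour $\nu=\nil(\xi)$ avec $\xi\in F^\times$,
$$f^K(\tm\mun z\nu\tm)=g(\a\xi,z)\com{et}\int_{\U(\adef)}f^K(\tm\mun zu\tm)\,du=\int_\adef g(\a n,z)\,dn,$$
tandis que $\delta(\tm)\mun=|\a|$ et que $\widehat\tau_\P(\HP(\tm)-\Tr)$ n'est autre que la fonction caractéristique de la région où $|\a|$ est petit, disons $|\a|<q_F^{-\Tr}$ (à la normalisation de $\HP$ et de $\Tr$ près). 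En reportant ces calculs dans la définition de $\J_{unip}^\Tr(f,\kappa,z)$ on obtient une intégrale sur $C_F$ de l'intégrande $\kappa(\a)\Phi_{unip}^\Tr(f^K,\tm,z)|\a|$.

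On découpera ensuite le domaine $C_F$ selon la valeur de la fonction de troncature. Sur la région $|\a|\ge q_F^{-\Tr}$, où $\widehat\tau_\P=0$, il ne subsiste que $\kappa(\a)\big(\sum_{\xi\in F^\times}g(\a\xi,z)\big)|\a|$ ; puisque $\kappa$, $|\cdot|$ et la condition $|\a|\ge q_F^{-\Tr}$ sont invariants par $F^\times$, la somme sur $\xi\in F^\times$ se déplie contre l'intégration sur $C_F=\adef^\times/F^\times$ et l'on récupère exactement l'intégrale $A^\Tr(f,\kappa,z)$, portant sur $\adef^\times$. Sur la région $|\a|<q_F^{-\Tr}$, où $\widehat\tau_\P=1$, le terme soustrait empêche ce dépliage et l'on est ramené à la différence régularisée
$$h(a,z)=\sum_{\zeta\in F^\times}\orbu(\a\zeta,z)-\int_\adef\orbu(\a n,z)\,dn,$$
qui fournit l'intégrale $B^\Tr(f,\kappa,z)$ sur $C_F$.

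La convergence de $A^\Tr$ résultera de la compacité du support de $g$, et celle de $B^\Tr$ s'obtiendra comme dans \ref{undeux} par la formule sommatoire de Poisson (technique de la thèse de Tate), $h(a,z)$ étant à décroissance rapide lorsque $|\a|\to0$. Le point réellement délicat, et la seule nouveauté par rapport à \ref{undeux}, sera l'apparition du facteur $\tfrac12$ : il provient de la comparaison entre la mesure de Haar sur $\tZ(\adef)\tM(F)\bsl\tM(\adef)$ et la mesure $\da$ utilisée dans $A^\Tr$ et $B^\Tr$. L'inclusion de $\M$ dans $\tM/\tZ$ est une isogénie de degré $2$ (le cocaractère de $\M$ s'envoyant sur $\a=\beta/\alpha$ par élévation au carré) ; jointe à la normalisation $\vol(Q_F)=1$, elle fait que la mesure quotient poussée sur $C_F$ vaut $\tfrac12\,\da$, d'où le facteur $\tfrac12$ devant $A^\Tr+B^\Tr$. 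C'est donc le suivi soigneux des normalisations de mesures qui constituera l'essentiel du travail, le reste étant un calcul direct calqué sur \ref{undeux}.
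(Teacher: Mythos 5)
Votre démonstration est correcte et suit pour l'essentiel la même voie que celle du texte : on déplie l'intégrale sur $\tZ(\adef)\tM(F)\bsl\tM(\adef)\simeq C_F$ au moyen de la coordonnée $\a=\beta/\alpha$, et tout se ramène à l'origine du facteur $1/2$. Ce que vous présentez comme la comparaison de mesures issue de l'isogénie de degré $2$ de $\M$ vers $\tM/\tZ$ jointe à la normalisation $\vol(Q_F)=1$ est exactement le changement de variable $\a\mapsto \a^2$ invoqué dans le texte ($d^\times t^2=2\,d^\times t$ pour les corps de nombres, indice de $2\ZM$ dans $\ZM$ pour les corps de fonctions) ; votre rédaction est simplement plus explicite sur la mise en place.
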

\begin{proof}
Le facteur $1/2$ vient du changement de variable $a\mapsto a^2$ dans la comparaison avec les intŽgrales
qui apparaissant dans \ref{undeux}. En effet, pour les corps de nombres
la mesure de Haar sur le groupe multiplicatif des rŽels vŽrifie
$d^\times t^2=2d^\times t$ et pour les corps de fonctions on doit sommer sur des rŽseaux
isomorphes ˆ $\ZM$ ou $2\ZM$ et on observe que 
$$\sum_{n\in2\ZM}\vf(n)=\frac{1}{2}\Big(\sum_{n\in\ZM}\vf(n)+\sum_{n\in\ZM}(-1)^n\vf(n)\Big)\ptf
$$
\end{proof}
Pour obtenir de expressions plus explicites on introduit des variantes rŽgularisŽes. 
Pour $s\in\CM$ et $\Re(s)>0$ on pose
$$C(f,s,\kappa,z)=\int_{\adef^\times}\kappa({\a})|{\a}|^{1+s} \orbu({\a},z)\da 
$$
et
$$D^\Tr(f,s,\kappa,z)=\int_{|{\a}|<q_F^{-\Tr}}\Big(\kappa({\a})|{\a}|^{1+s}\int_{n\in\adef} \orbu({\a}n,z)
dn\Big)\da \,$$
soit encore
$$D^\Tr(f,s,\kappa,z)=\Big(\int_{|{\a}|<q_F^{-\Tr}}\kappa({\a})|{\a}|^{s}\da\Big)
 \Big(\int_{\adef}\orbu(n,z)dn\Big)$$ 
qui sont convergentes pour $\Re(s)>0$.  On a alors
$$A^\Tr(f,\kappa,z)+B^\Tr(f,\kappa,z)=\lim_{s\to0}\big(C(f,s,\kappa,z)-D^\Tr(f,s,\kappa,z)\big)\ptf$$
On supposera dŽsormais que
 $f$ est un produit de fonctions locales $f_v$.
On introduit des facteurs locaux:
$$C(f_v,s,\kappa_v,z)=\int_{F_v^\times}\kappa_v(\a)|\a|^{1+s} \orbu_v(\a,z)\,\da\ptf
$$
On observe que pour presque toute place $v$ la fonction  
$$n\mapsto g_v(n,z)=\int_{K_v} f_v(k\mun zuk)dk
\com{o}
u=\begin{pmatrix}1&n\cr 0&1\end{pmatrix}$$
est la fonction caractŽristique
de l'anneau des entiers. Dans ce cas
$$C(f_v,s,\kappa_v,z)=L(1+s,\kappa_v)$$
o
$L(\bullet,\kappa_v)$ est le facteur local de la fonction $L$ de Hecke pour $\kappa_v$.
En passant au produit sur toutes les places on obtient que
$$C(f,s,\kappa,z)=L(1+s,\kappa)\prod_v\frac{C(s,\kappa_v)}{L(1+s,\kappa_v)}\ptf$$
Le produit ne porte que sur un nombre fini de places (dŽpendant du support de $f$)
et la seule singularitŽ au voisinage de $s=0$ est celle de la fonction $L$.

Pour $\kappa=1$ la fonction $L$ est la fonction Zeta du corps $F$:
$$L(1+s,1)=Z(1+s)$$
 et donc
$C(s,1,z)$ a un p™le en $s=0$; ce p™le est compensŽ par le p™le de $D^\Tr(s,1,z)$. 
On obtient une expression explicite en calculant la diffŽrence des termes d'ordre 0 
des dŽveloppements de Laurent. 
Reste ˆ montrer que 
$\J^0_{unip}(f,1,z)$ 
est   une somme de produits sur toutes les places d'intŽgrales orbitales 
pondŽrŽes sous le groupe adjoint. 
Le terme provenant de $D^0$ est, ˆ une constante prs, l'intŽgrale orbitale unipotente
sous le groupe adjoint. Par ailleurs
$$C(s,1,z)= Z(1+s)\prod_v \frac{C_v(s,1,z)}{Z_v(1+s)}$$
le produit sur les places $v$ Žtant toujours convergent et holomorphe au voisinage de $s=0$
puisque presque tous les facteurs valent 1.
Il suffit maintenant d'observer que la valeur en $s=0$ de $C_v(s,1,z)$ et de sa dŽrivŽe $C'_v(s,1,z)$ sont
les intŽgrales orbitales unipotentes pondŽrŽes (locales) sous le groupe adjoint:
$$C_v(0,1,z)= \int_{\tZ_v\U_v\bsl\tG_v}f_v(x\mun z{\bs\nu}x)\,\ddx=\orb^1_v(z{\bs\nu},f_v)
\com{o}
{\bs\nu}=\begin{pmatrix}1&1\cr 0&1\end{pmatrix}$$
et, pour une certaine constante $c_v$, on a
$$C'_v(0,1,z)= c_v
\int_{\tZ_v\U_v\bsl\tG_v}f_v(x\mun z{\bs\nu}x)H_{\P_v}(x)\,\ddx\ptf$$

Si $\kappa=\vef\ne1$ le terme $D^\Tr(s,\kappa)$ est nul. Donc $ \J_{unip}^\Tr(f,\kappa)$ 
est indŽpendant de $\Tr$ si $\kappa\ne1$ et on omettra alors l'exposant. 

\begin{lemma}\label{unipz} 
Si $\kappa=\vef\ne1$  
$$ \J_{unip}(f,\kappa,z)=\frac{1}{2}\lim_{s\to 0} C(f,s,\kappa,z)=
\frac{1}{2}\lim_{t\to z}\orb^\kappa(t,f)$$
pour $t\in\Tef(F)$.
\end{lemma}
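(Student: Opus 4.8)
Le plan est de traiter s\'epar\'ement les deux \'egalit\'es. Pour la premi\`ere, tout est d\'ej\`a en place : puisque $\kappa=\vef\ne1$, on a vu que $D^\Tr(f,s,\kappa,z)=0$, de sorte que l'identit\'e $A^\Tr+B^\Tr=\lim_{s\to0}\big(C(f,s,\kappa,z)-D^\Tr(f,s,\kappa,z)\big)$ se r\'eduit \`a $A^\Tr+B^\Tr=\lim_{s\to0}C(f,s,\kappa,z)$, expression ind\'ependante de $\Tr$. Le Lemme \ref{unun} donnerait alors directement
$$\J_{unip}(f,\kappa,z)=\tfrac12\big(A^\Tr+B^\Tr\big)=\tfrac12\lim_{s\to0}C(f,s,\kappa,z)\ptf$$

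Pour la seconde \'egalit\'e, on comparerait les d\'eveloppements eul\'eriens des deux membres. On utiliserait d'un c\^ot\'e la factorisation $C(f,s,\kappa,z)=L(1+s,\kappa)\prod_v C(f_v,s,\kappa_v,z)/L(1+s,\kappa_v)$, o\`u le produit est fini et holomorphe pr\`es de $s=0$ et o\`u $L(1+s,\kappa)$ est holomorphe en $s=0$ puisque $\kappa\ne1$ ; on passerait \`a la limite terme \`a terme, la valeur locale en $s=0$ \'etant $C(f_v,0,\kappa_v,z)=\orb_v^{\kappa_v}(z{\bs\nu},f_v)=f_v^\EC(z)$ d'apr\`es \ref{elltrans}. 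De l'autre c\^ot\'e, pour $t\in\Tef(F)$ r\'egulier, \ref{normell} et sa variante $\kappa$-orbitale fournissent $\orb^\kappa(t,f)=L(1,\kappa)\prod_v\orb_v^{\kappa_v}(t,f_v)/L(1,\kappa_v)$ ; en rempla\c cant $\orb_v^{\kappa_v}(t,f_v)=\Delta_v^\EC(t,t)\mun f_v^\EC(t)$ (toujours \ref{elltrans}), on obtiendrait
$$\orb^\kappa(t,f)=\Big(\prod_v\Delta_v^\EC(t,t)\Big)\mun L(1,\kappa)\prod_v\frac{f_v^\EC(t)}{L(1,\kappa_v)}\ptf$$

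Le point central serait alors la formule du produit pour les facteurs de transfert locaux : ce produit est fini (presque tous les facteurs valent $1$ par le Lemme Fondamental \ref{fond}) et vaut $1$ pour tout $t\in\Tef(F)$ global. En effet le produit des $\lambda(E_v/F_v,\psi_v)$ vaut $1$ par \ref{psideux}, le produit des $\vef_v(b)$ vaut $\vef(b)=1$ car $b\in F^\times$ et $\vef$ est trivial sur $F^\times$, et le produit des $|b(\tau-\overline\tau)|_v$ vaut $1$ par la formule du produit sur $E$. Il resterait $\orb^\kappa(t,f)=L(1,\kappa)\prod_v f_v^\EC(t)/L(1,\kappa_v)$, et comme chaque $f_v^\EC$ est lisse au voisinage de $z$ (localement constante aux places finies) et que le produit est fini, on ferait tendre $t$ vers $z$ pour obtenir $\lim_{t\to z}\orb^\kappa(t,f)=L(1,\kappa)\prod_v f_v^\EC(z)/L(1,\kappa_v)$, qui co\"incide avec la limite pr\'ec\'edente. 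L'obstacle principal est pr\'ecis\'ement cette formule du produit pour les facteurs de transfert, qui repose de fa\c con essentielle sur \ref{psideux} et sur la compatibilit\'e des normalisations de mesures garantissant l'identit\'e locale $C(f_v,0,\kappa_v,z)=f_v^\EC(z)$.
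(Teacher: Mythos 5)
Votre preuve est correcte et suit pour l'essentiel la m\^eme d\'emarche que celle du texte~: annulation du terme $D^\Tr$ pour $\kappa\ne1$, application du Lemme \ref{unun}, puis comparaison des produits eul\'eriens de $\lim_{s\to0}C(f,s,\kappa,z)$ et de l'int\'egrale $\kappa$-orbitale globale via l'identification locale $C_v(0,\kappa_v,z)=f_v^\EC(z)$ issue de \ref{elltrans}. La seule diff\'erence est que vous explicitez la formule du produit $\prod_v\Delta_v^\EC(t,t)=1$ pour les facteurs de transfert (via \ref{psideux} et la formule du produit sur $E$), point que la derni\`ere \'egalit\'e de la preuve du texte laisse implicite.
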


\begin{proof}
Si $\kappa\ne1$  on a
$$C(0,\kappa,z)=\lim_{s\to 0}L(1+s,\kappa)\prod_v\frac{C(f_v,s,\kappa_v,z)}{L(1+s,\kappa_v)}
=L(1,\kappa)\prod_v\frac{C(0,\kappa_v,z)}{L(1,\kappa_v)}$$
o
$$C_v(0,\kappa_v,z)= \int_{\tZ_v\U_v\bsl\tG_v}\kappa_v(x)f_v(x\mun z{\bs\nu}x)\,\ddx$$
soit encore
$$C(0,\kappa,z)= L(1,\kappa)\prod_v\lim_{t\to z}
\Delta_{\kappa_v}(t,t)\frac{\orb^{\kappa_v}(t,f_v)}{L(1,\kappa_v)}
=\lim_{t\to z}\orb^\kappa(t,f)$$
pour $t\in\Tef(F)$.
\end{proof}

\Section{PrŽ-stabilisation spectrale}

La fonction sur $\tG(\adef)$:
$$k_{spec}^\Tr(\tx)=K(\tx,\tx)\,-\sum_{\gamma\in P(F)\bsl G(F)}
\int_{u\in[\U]}\widehat\tau_\P(\HP(\gamma \tx)-\Tr) K(u\gamma \tx, \tx)du\ptf$$
passe au quotient sur $[\tG]$ et on a
$$J_{spec}^\Tr(f)=\sum_\kappa\J_{spec}^\Tr(f,\kappa)\ptf$$
On pose $$f^x(y)=f(x yx\mun)\comm{et}\pi^x(y)=\pi(xyx\mun)$$ et on a
$$\J_{disc}(f,\kappa)=\int_{Q_F}J_{disc}(f^\tx)\kappa(\det \tx)dx=\sum_\pi a^\kappa(\pi)\trace \pi(f)$$ 
avec
$$a^\kappa(\pi)=\int_{Q_F}a(\pi^\tx)\kappa(\det \tx)dx\ptf$$
On a utilisŽ que les $\kappa$ sont d'ordre 2.
On verra que mme pour des reprŽsentations du spectre discret
il n'est pas toujours vrai que $a^\kappa(\pi)$ soit un entier positif.

\begin{proposition}
$$J_{cont}^\Tr(f)=\J_{cont}^\Tr(f,1)$$
\cad que les $\J_{cont}^\Tr(f,\kappa)$ sont tous nuls si $\kappa\ne1$.
\end{proposition}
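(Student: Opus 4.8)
Le point de d\'epart est la formule de Fubini, analogue \`a celle \'etablie pour $\J_{disc}(f,\kappa)$; en param\'etrant le groupe compact $Q_F$ par des repr\'esentants $\tm\in\tM(\adef)$ on obtient
$$\J_{cont}^\Tr(f,\kappa)=\int_{Q_F}J_{cont}^\Tr(f^{\tm})\,\kappa(\det\tm)\,d\tm\ptf$$
Puisque la mesure de Haar donne \`a $Q_F$ le volume $1$, il suffit de prouver que $J_{cont}^\Tr(f^{\tm})$ ne d\'epend pas de $\tm$: l'int\'egrale d'une fonction constante contre un caract\`ere non trivial de $Q_F$ \'etant nulle, on en d\'eduira $\J_{cont}^\Tr(f,\kappa)=0$ pour $\kappa\ne1$. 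On part de l'expression explicite obtenue plus haut,
$$J_{cont}^\Tr(f)=\frac{c}{2}\Big(\int_\Lambda\trace\pi_\lambda(f)\,d\lambda\Big)\,\Tr
-\frac12\int_\Lambda\trace\big(\mathcal M(\lambda)\pi_\lambda(f)\big)\,d\lambda$$
et l'on traite s\'epar\'ement ses deux termes.

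La conjugaison par $\tm$ est r\'ealis\'ee, dans le mod\`ele induit, par l'op\'erateur $R(\tm)$ d\'efini par $R(\tm)\Phi=\Phi(\tm\mun\,\cdot\,\tm)$; il pr\'eserve chaque espace $V_\lambda$, est \emph{ind\'ependant} du param\`etre $s$, et v\'erifie $\pi_\lambda(f^{\tm})=R(\tm)\mun\pi_\lambda(f)R(\tm)$. Comme $\tM$ centralise $\M$, il fixe le caract\`ere inducteur, de sorte que $\pi_\lambda^{\tm}\simeq\pi_\lambda$; en particulier $\trace\pi_\lambda(f^{\tm})=\trace\pi_\lambda(f)$ et le premier terme de $J_{cont}^\Tr$ est invariant. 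Le second terme est le point d\'elicat. Pour presque tout $\lambda$ la repr\'esentation $\pi_\lambda$ est irr\'eductible; par le lemme de Schur l'entrelacement $\mathbf M(w,\lambda)$ de $\pi_\lambda$ vers $\pi_{w\lambda}$ est unique \`a un scalaire pr\`es, et son conjugu\'e $R(\tm)\,\mathbf M(w,\lambda)\,R(\tm)\mun$ vaut donc $\chi(\tm,\lambda)\,\mathbf M(w,\lambda)$ pour un scalaire $\chi(\tm,\lambda)$. Comme $R(\tm)$ ne d\'epend pas de $s$, la d\'erivation en $s$ donne aussit\^ot
$$R(\tm)\,\mathcal M(\lambda)\,R(\tm)\mun=\mathcal M(\lambda)+\eta(\tm,\lambda)\,\mathrm{Id}
\com{avec}\eta(\tm,\lambda)=\partial_s\log\chi(\tm,\lambda)\ptf$$

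Il reste \`a voir que $\eta\equiv0$. La relation $R(\tm_1\tm_2)=R(\tm_1)R(\tm_2)$ montre que $\tm\mapsto\eta(\tm,\lambda)$ est un homomorphisme additif de $\tM(\adef)\simeq\ideF\times\ideF$ dans $\CM$, donc de la forme $(\alpha,\beta)\mapsto\eta_1(\alpha)+\eta_2(\beta)$. Il s'annule sur $\G(\adef)$: pour $g\in\G(\adef)$ l'op\'erateur $R(g)$ est un multiple scalaire de $\pi_\lambda(g)$, et $\mathcal M(\lambda)$, \'etant un endomorphisme de $\pi_\lambda$, commute \`a $\pi_\lambda(g)$, d'o\`u $\eta(g,\lambda)=0$; en particulier $\eta$ s'annule sur $\M(\adef)$. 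Il s'annule aussi sur $\tZ(\adef)$ car $R(\tz)=\mathrm{Id}$ pour $\tz$ central. La nullit\'e sur le tore anti-diagonal $\M$ impose $\eta_1=\eta_2$, et la nullit\'e sur le centre $\tZ$ impose $2\eta_1=0$; comme $\CM$ est sans torsion, $\eta_1=\eta_2=0$ et $\eta\equiv0$. Le second terme de $J_{cont}^\Tr$ est donc lui aussi invariant par $\tm$, ce qui ach\`eve la d\'emonstration.

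La seule v\'eritable difficult\'e est le contr\^ole de la d\'eriv\'ee logarithmique $\mathcal M(\lambda)$ de l'op\'erateur d'entrelacement sous la conjugaison par le normalisateur $\tM(\adef)$: l'unicit\'e de Schur ram\`ene cette transformation \`a une correction scalaire additive, et la structure du quotient $\tM(\adef)/\M(\adef)\tZ(\adef)$, d'exposant $2$, force cette correction \`a \^etre identiquement nulle. C'est le ph\'enom\`ene qui distingue le spectre continu (enti\`erement \ts-stable) du spectre discret, o\`u la pr\'esence de paquets non triviaux rend les composantes $\kappa\ne1$ non nulles.
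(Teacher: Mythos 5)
Votre argument repose sur deux affirmations qui sont toutes les deux fausses, et la seconde est fatale. D'abord, l'identit\'e de Fubini initiale $\J_{cont}^\Tr(f,\kappa)=\int_{Q_F}J_{cont}^\Tr(f^{\tm})\kappa(\det\tm)\,d\tm$ n'est pas justifi\'ee pour le terme continu~: contrairement \`a $J_{disc}$, la distribution $J_{cont}^\Tr$ d\'epend de la troncature, et le noyau tronqu\'e de $f$ \'evalu\'e en $x\tm$ n'est pas le noyau tronqu\'e de $f^{\tm}$ \'evalu\'e en $x$, car $\HP(\gamma x\tm)\ne\HP(\gamma x)$ en g\'en\'eral. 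Ensuite et surtout, l'affirmation que $\mathcal M(\lambda)$ est un endomorphisme de $\pi_\lambda$ et commute donc \`a $\pi_\lambda(g)$ est fausse~: $\mathbf M'(w,\lambda)$ n'est pas un op\'erateur d'entrelacement, pr\'ecis\'ement parce que, dans la r\'ealisation sur $L^2(K)$ o\`u la d\'eriv\'ee en $s$ est calcul\'ee, on a $\frac{d}{ds}\pi_\lambda(g)=A(g)\pi_\lambda(g)$. Le lemme \ref{bgl} donne
$$\pi_\lambda(g)\,\mathcal M(\lambda)\,\pi_\lambda(g)\mun-\mathcal M(\lambda)=A(g)+\mathbf M(\lambda)\mun A(g)\mathbf M(\lambda)$$
qui n'est ni nul ni scalaire, et le lemme \ref{sp-inv} en d\'eduit que $\Phi(\lambda,f^{g})-\Phi(\lambda,f)=\trace A(g)\big(\pi_\lambda(f)+\pi_{w\lambda}(f)\big)$ est en g\'en\'eral non nul. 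Le m\^eme d\'efaut atteint votre op\'erateur $R(\tm)$~: il n'est ind\'ependant de $s$ que dans le mod\`ele induit, pas dans le mod\`ele compact o\`u l'on d\'erive (sa d\'eriv\'ee fait appara\^itre la multiplication par $\HP(\tm\mun k\tm)$), de sorte que $R(\tm)\mathcal M(\lambda)R(\tm)\mun-\mathcal M(\lambda)$ n'est pas une correction scalaire. Si votre conclusion --- l'invariance de $J_{cont}^\Tr$ par conjugaison sous $\tM(\adef)$, et a fortiori sous $\G(\adef)$ --- \'etait exacte, toute la section consacr\'ee aux distributions non-invariantes et au passage \`a la forme invariante serait superflue.

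Le m\'ecanisme r\'eel d'annulation est d'une autre nature. On prolonge $f$ en une fonction $\tf$ sur $\tG(\adef)$ au moyen d'une fonction $\th$ de $\det$ concentr\'ee pr\`es de $1$, de sorte que $\J_{spec}^\Tr(f,\kappa)$ s'identifie au c\^ot\'e spectral de la formule des traces pour $\tG$ tordue par le caract\`ere $\kappa$ appliqu\'ee \`a $\tf$. La contribution du spectre continu y est, \`a un scalaire pr\`es, une int\'egrale de $\trace\big({\mathcal M}^\Tr(\tlambda)\tpi_\tlambda(\tf)\,I_\kappa\big)$, o\`u $I_\kappa$ envoie l'espace de $\tpi_\tlambda$ sur celui de $\tpi_\tlambda\otimes\kappa$~; cette trace est nulle sauf si $\tpi_\tlambda\simeq\tpi_\tlambda\otimes\kappa$, ce qui ne se produit que pour un ensemble de $\tlambda$ de mesure nulle. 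L'annulation pour $\kappa\ne1$ vient donc de ce que les deux espaces sont presque partout distincts, et non d'une invariance par conjugaison que le terme continu ne poss\`ede pas.
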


\begin{proof}
Tout $x\in\tG(\adef)$ peut s'Žcrire de faon unique comme
$$x=ax_1\comm{avec} x_1\in\G(\adef) \comm{et} 
a=\begin{pmatrix} \det(x)&0\cr0&1\cr\end{pmatrix}$$
et on dŽfinit une fonction $\tf$ sur $\tG(\adef)$ en posant:$$\tf(x)=\th\big(\det(x)\big)f(x_1)$$
o $\th$ est une fonction sur $\adef^\times$ de support assez petit de sorte que
les deux conditions:
$\xi\in F^\times$ et $\th(\xi)\ne0$ impliquent $\xi=1$. On impose de plus que
$\th(1)=1$.
La fonction $\tf$ permet de dŽfinir un noyau $\tK$ pour $\tG$:
$$\tK(\tx,\ty)=\sum_{\tgamma\in\tG(F)}\tf(\tx\mun\tgamma \ty)$$
et  
$$\tk_{spec}^\Tr(\tx)=\tK(\tx,\tx)\,-\!\!\!\!\!\!\!\!
\sum_{\tgamma\in \tP(F)\bsl \tG(F)}
\int_{u\in \U(\adef)}
\widehat\tau_\tP(\HtP(\tgamma \tx)-\Tr) \tK(u\tgamma \tx, \tx)du\ptf$$
On observe que $\tK(\tx,\tx)=K(\tx,\tx)$ et de mme
$ \tK(u\tgamma \tx, \tx) =K(u\tgamma \tx, \tx)$ et donc
$$k_{spec}^\Tr(\tx)=\tk_{spec}^\Tr(\tx)$$ ce qui implique
$$\J_{spec}^\Tr(f,\kappa)=\int_{[\tG]}\tk_{spec}^\Tr(\tx)\kappa(\tx)\dtx$$
o le second membre est le c™tŽ spectral de la formule des traces pour $\tG$ tordue par le
caractre $\kappa$ pour la fonction $\tf$ (cf. \cite{LW}). La contribution du spectre continu 
est, ˆ un scalaire prs, une intŽgrale de la forme
$$\int_{\tlambda\in\widetilde\spec} 
\trace \big({\mathcal M}^\Tr(\tlambda)\tpi_\tlambda(\tf)\,.\, I_\kappa\big)
d\tlambda$$
o $I_\kappa$ est l'opŽrateur envoyant
l'espace $V_\tlambda$ de $\tpi_\tlambda$ sur 
celui de $\tpi_\tlambda\otimes\kappa$.
Il convient d'observer que $${\mathcal M}^\Tr(\tlambda)\tpi_\tlambda(\tf)$$ 
est un opŽrateur dans l'espace de $\pi_\tlambda$ alors que $I_\kappa$
Žchange l'espace de $\pi_\tlambda$ et celui de $\pi_\tlambda\otimes\kappa$.
La trace est nulle sauf si $$\tpi_\tlambda\simeq \tpi_\tlambda\otimes\kappa$$
ce qui ne peut intervenir que pour un ensemble de $\tlambda$
de mesure nulle.
\end{proof}

\Chapter{Stabilisation}

\Section{La \ts-stabilisation}

La \ts-stabilisation de la formule des traces est, par dŽfinition,
ce que l'on obtient en composant la prŽ-stabilisation de la formule des traces non-invariante
avec un transfert endoscopique pondŽrŽ. Un tel transfert n'a pas ŽtŽ Žtabli en gŽnŽral; toutefois
on dispose dans la littŽrature du lemme fondamental pondŽrŽ et l'existence
du transfert pondŽrŽ gŽomŽtrique est facile ˆ Žtablir pour les fonctions ˆ support rŽgulier;
le cas gŽnŽral reste conjectural.
Dans notre cas le transfert pondŽrŽ est disponible puisque pour la donnŽe endoscopique 
principale $\EC=\{G,1\}$ le transfert est l'identitŽ $f^\EC=f$ 
et que par ailleurs il n'y a pas de distributions pondŽrŽes ˆ considŽrer pour les 
autres groupes endoscopiques: ce sont des tores.
Pour la donnŽe endoscopique  $\EC=\{\G,1\}$ on voit que
$$\J_{geom}^\Tr(f,1)\comm{et}\J_{spec}^\Tr(f,1)$$
donnent le c™tŽ gŽomŽtrique et le c™tŽ spectral de la formule des traces \ts-stable pour $\G=SL(2)$.
Les termes elliptiques dans  $\J_{geom}^\Tr(f,1)$ et les termes
discrets dans $\J_{spec}^\Tr(f,1)$ sont $\tG(\adef)$-invariants et donc stables. 
Mais les termes hyperboliques ou unipotents
et les termes continus ne le sont pas strictement: de fait ils ne sont mme pas 
$\G(\adef)$-invariants. Mais ils le sont si on se limite ˆ la conjugaison sous $\M(\overline{\adef})$
pour les ŽlŽments dans $\P(\adef)$, ce qui est d'ailleurs
une conjugaison triviale pour les ŽlŽments hyperboliques; mais elle est non triviale pour les unipotents.

\Section{Termes instables}
On supposera dans cette section que $\kappa=\vef$ o $E$ est un corps, extension quadratique
sŽparable du corps global $F$. 
Il rŽsulte de ce qui prŽcde (cf. \ref{prestab}) que
$$J_{geom}^\Tr(f,\vef)=\J_E(f,\vef)+ \J_{unip}(f,\vef)$$
que l'on notera $$\J_{geom}(f,\vef)$$ puisqu'il est indŽpendant de $\Tr$ et
de mme 
$$\\J_{spec}^{\Tr}(f,\vef)=\J_{spec}(f,\vef)$$ est indŽpendant de $\Tr$.
On a donc
$$\J_{geom}(f,\vef)=\J_{spec}(f,\vef)$$
et
$$\J_{geom}(f,\vef)=\J_E(f,\vef)+ \J_{unip}(f,\vef)\ptf$$

Le transfert local \ref{trans} et le lemme fondamental \ref{fond}
fournissent un transfert global encore notŽ $f\mapsto f^\EC$.
Toutefois les mesures de Haar locales et globales sont ˆ comparer.

\begin{lemma}\label{transfunip}
 Pour $\EC=(\Tef,\vef)$ avec
 $\vef\ne1$ le terme $ \J_{unip}(f,\vef)$
fournit la contribution de $\Z(F)$
 ˆ l'expression gŽomŽtrique de la formule des traces pour le
tore $\Tef$ attachŽ ˆ l'extension quadratique sŽparable $E/F$ associŽe ˆ $\vef$.
$$\J_{unip}(f,\vef)=\frac{1}{2}\sum_zC(0,\vef,z)=\frac{1}{2}\sum_zf^\EC(z)\ptf$$
\end{lemma}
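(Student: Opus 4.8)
Le plan est de déduire l'énoncé du lemme \ref{unipz}, l'essentiel du travail résiduel étant la comparaison des mesures locales et globales. Je commencerais par sommer sur le centre: le noyau unipotent tronqué étant une somme sur $z\in\Z(F)$, et le terme $D^\Tr$ étant nul pour $\kappa=\vef\ne1$ (ce qui rend $\J_{unip}(f,\vef)$ indépendant de $\Tr$), on a
$$\J_{unip}(f,\vef)=\sum_{z\in\Z(F)}\J_{unip}(f,\vef,z)\ptf$$
Le lemme \ref{unipz} fournit alors, pour chaque $z$, l'égalité $\J_{unip}(f,\vef,z)=\frac{1}{2}\lim_{s\to0}C(f,s,\vef,z)=\frac{1}{2}\,C(0,\vef,z)$, d'où la première identité $\J_{unip}(f,\vef)=\frac{1}{2}\sum_z C(0,\vef,z)$. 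Le facteur $1/2$ provient du changement de variable $a\mapsto a^2$ déjà rencontré en \ref{unun}.

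Pour la seconde identité, il reste à identifier $C(0,\vef,z)$ avec la valeur au point central $z$ du transfert $f^\EC$. Le lemme \ref{unipz} donne déjà
$$C(0,\vef,z)=L(1,\kappa)\prod_v\lim_{t\to z}\Delta_{\kappa_v}(t,t)\frac{\orb^{\kappa_v}(t,f_v)}{L(1,\kappa_v)}\com{avec}\kappa=\vef\ptf$$
Je reconnaîtrais dans $\Delta_{\kappa_v}(t,t)\,\orb^{\kappa_v}(t,f_v)=f_v^\EC(t)$ le transfert local, qui est une fonction lisse sur $\Tef(F_v)$ d'après \ref{trans} (et plus précisément \ref{elltrans} lorsque $E$ est un corps). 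Comme le transfert global est, via la normalisation de Tamagawa introduite en \ref{normell}, le produit $f^\EC(t)=L(1,\kappa)\prod_v f_v^\EC(t)/L(1,\kappa_v)$, la continuité de chaque $f_v^\EC$ au point central permet d'intervertir limite et produit, ce qui donne $C(0,\vef,z)=\lim_{t\to z}f^\EC(t)=f^\EC(z)$ et achève la preuve.

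L'obstacle principal sera précisément cette interversion des deux passages à la limite: la régularisation $\lim_{s\to0}$, qui a servi à définir $C(0,\vef,z)$, et la limite $\lim_{t\to z}$ vers l'élément central. Il faut d'abord garantir que le produit sur les places converge au voisinage de $s=0$; cela résulte du lemme fondamental \ref{fond}, qui assure qu'en presque toute place le facteur local coïncide avec $L(1,\kappa_v)$, de sorte que le produit des quotients $C(f_v,s,\kappa_v,z)/L(1+s,\kappa_v)$ est holomorphe en $s=0$. Il faut ensuite observer que, puisque $\kappa=\vef\ne1$, la fonction $L(1+s,\kappa)$ n'a pas de pôle en $s=0$, donc $C(0,\vef,z)$ est une valeur finie et non un résidu: c'est ici que le cas $\vef\ne1$ se distingue nettement du cas $\kappa=1$, où le pôle de la fonction zêta doit être compensé. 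Une fois ces convergences acquises, l'interversion est licite et l'on obtient $\J_{unip}(f,\vef)=\frac{1}{2}\sum_z f^\EC(z)$, c'est-à-dire la contribution de $\Z(F)$ au côté géométrique de la formule des traces pour le tore $\Tef$.
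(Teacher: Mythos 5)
Votre preuve est correcte et suit essentiellement la m\^eme d\'emarche que celle de l'article: elle part du lemme \ref{unipz}, somme sur le centre, puis identifie $C(0,\vef,z)$ \`a $f^\EC(z)$ au moyen du transfert local \ref{trans}, du Lemme Fondamental \ref{fond} et de la normalisation de Tamagawa \ref{normell}. Vous explicitez en outre la convergence du produit eul\'erien au voisinage de $s=0$ et l'interversion des deux passages \`a la limite, points que la preuve de l'article laisse implicites.
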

\begin{proof}
D'aprs \ref{unipz}, compte tenu de \ref{trans} et \ref{fond},
et en utilisant des mesure de Tamagawa locales non normalisŽes on a
pour $t\in\Tef(\adef)$:
$$C(0,\vef,z)
=\lim_{t\to z}\orb^\EC(t,f)=\lim_{t\to z}f^\EC(t)
=f^\EC(z)\ptf$$
\end{proof}
\begin{lemma}\label{stabell}
$$J_{geom}(f,\vef)=\frac{1}{2}\sum_{\gamma\in\Tef(F)}f^\EC(\gamma)$$
pour la fonction $f^{\EC}$ avec $\EC=(\Tef,\vef)$
\end{lemma}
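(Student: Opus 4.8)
The plan is to assemble the value $J_{geom}(f,\vef)$ from its two already-computed pieces and then to recognise the result as a single sum of the transfer $f^\EC$ over all of $\Tef(F)$.

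First I would recall from the section on unstable terms that
$$\J_{geom}(f,\vef)=\J_E(f,\vef)+\J_{unip}(f,\vef)\ptf$$
The elliptic part is given by Corollary \ref{contrib-ell}, namely $\J_E(f,\vef)=\tfrac12\sum_{\gamma\in E^\star}\orb^{\vef}(\gamma,f)$, and the unipotent part by Lemma \ref{transfunip}, namely $\J_{unip}(f,\vef)=\tfrac12\sum_{z\in\Z(F)}f^\EC(z)$. Since $\Tef(F)=E^1$ decomposes as the disjoint union $E^\star\sqcup\Z(F)$ (with $E^\star=E^1-\{\pm1\}$ and $\Z(F)=\{\pm1\}$), it suffices to prove the pointwise identity $\orb^{\vef}(\gamma,f)=f^\EC(\gamma)$ for every regular $\gamma\in E^\star$; the two contributions then recombine into $\tfrac12\sum_{\gamma\in\Tef(F)}f^\EC(\gamma)$.

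The key step, and the one where the global arithmetic enters, is this identification of the global $\kappa$-orbital integral with the global transfer at a rational regular point. Writing the global transfer as the product of local transfers, normalised by local $L$-factors exactly as in Definition \ref{normell}, and using the local relation $f_v^\EC(\gamma)=\Delta_v^\EC(\gamma,\gamma)\,\orb_v^{\vef_v}(\gamma,f_v)$, one gets
$$f^\EC(\gamma)=L(1,\vef)\prod_v\frac{f_v^\EC(\gamma)}{L(1,\vef_v)}=\Big(\prod_v\Delta_v^\EC(\gamma,\gamma)\Big)\,L(1,\vef)\prod_v\frac{\orb_v^{\vef_v}(\gamma,f_v)}{L(1,\vef_v)}=\Big(\prod_v\Delta_v^\EC(\gamma,\gamma)\Big)\,\orb^{\vef}(\gamma,f)\ptf$$
Everything therefore reduces to the vanishing of the global transfer factor, that is $\prod_v\Delta_v^\EC(\gamma,\gamma)=1$. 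Writing $\gamma=a+b\bs\tau$ with $b\in F^\times$, Definition \ref{ft} gives $\Delta_v^\EC(\gamma,\gamma)=\lambda(E_v/F_v,\psi_v)\mun\,\vef_v(b)\,|b(\tau-\overline\tau)|_{E_v}$, and the three factors disappear in the product for three independent reasons: $\prod_v\lambda(E_v/F_v,\psi_v)=1$ by Lemma \ref{psideux}; $\prod_v\vef_v(b)=1$ because $b\in F^\times$ and $\vef$ is trivial on $F^\times$; and $\prod_v|b(\tau-\overline\tau)|_{E_v}=1$ by the product formula for the global field $E$ applied to $\gamma-\overline\gamma=b(\tau-\overline\tau)\in E^\times$.

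I expect the main obstacle to be purely the bookkeeping around the measure and $L$-factor normalisations: one must verify that the local transfer factors, the local Tamagawa measures, and the normalising factors $L(1,\vef_v)$ combine so that the displayed product telescopes exactly as written, leaving no residual global constant. Once that normalisation is pinned down, the product formula for $\vef$, the product formula on $E$, and Lemma \ref{psideux} for the $\lambda$-factors finish the identification $\orb^{\vef}(\gamma,f)=f^\EC(\gamma)$, and the decomposition $\Tef(F)=E^\star\sqcup\Z(F)$ yields the stated formula.
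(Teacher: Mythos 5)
Your proof follows the same route as the paper: combine Corollaire \ref{contrib-ell} for the elliptic part with Lemme \ref{transfunip} for the central/unipotent part, and identify $\orb^{\vef}(\gamma,f)=f^\EC(\gamma)$ via the definition \ref{ft} of the transfer factor. The only difference is that you make explicit the verification $\prod_v\Delta_v^\EC(\gamma,\gamma)=1$ (via Lemme \ref{psideux}, the triviality of $\vef$ on $F^\times$, and the product formula on $E$), which the paper leaves implicit in its citation of \ref{ft}; this is correct and a useful elaboration rather than a different argument.
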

\begin{proof} D'aprs  \ref{ft} et \ref{contrib-ell}
$$\J_E(f,\vef)=\frac{1}{2}\sum_{\gamma\in E^\star}\orb^{\vef}(\gamma, f)
=\frac{1}{2}\sum_{\gamma\in E^\star} f^\EC(\gamma)
\ptf$$
Par ailleurs  \ref{transfunip} montre que
$$ \J_{unip}(f,\kappa)=\frac{1}{2}\sum_{z\in \Z(F)}\orb^{\vef}(z{\bs\nu}, f)
=\frac{1}{2}\sum_z f^\EC(z)\ptf$$
\end{proof}
Le second membre de \ref{stabell}
est, ˆ un scalaire prs, le c™tŽ gŽomŽtrique de la formule des traces 
(en l'occurence de la formule de Poisson) pour $\Tef$.
C'est la contribution endoscopique pour
le groupe endoscopique $\Tef$:
$$J_{geom}(f,\vef)
=\iota(\EC)J_{geom}^\EC(f^\EC)$$
La formule de Poisson s'Žcrit
$$J^\EC(f^\EC):=\sum_{\eta\in\Tef(F)}f^\EC(\eta)=\frac{1}{\tau(\Tef)}
\sum_{\theta\in\widehat\Tef} \theta(f^\EC)
$$
o $\widehat\Tef$ est le dual de Pontyagin de $\Tef(F)\bsl\Tef(\adef)$.
On a donc
$$J_{geom}(f,\vef)=\frac{1}{4}\sum_{\th\in\widehat\Tef} \trace\pi_{E,\th}^+(f)-\trace\pi_{E,\th}^-(f)\ptf$$
L'ŽgalitŽ
$$J_{geom}(f,\vef)=J_{spec}(f,\vef)$$
montre donc que 
$$J_{spec}(f,\vef)=J_{cusp}(f,\vef)+J_{cont}(f,\vef)=
\frac{1}{4}\sum_{\th\in\widehat\Tef} \trace\pi_{E,\th}^+(f)-\trace\pi_{E,\th}^-(f)\ptf$$

On rappelle que $\pi_{E,\th}^+\simeq\pi_{E,\th\mun}^+$ et 
$\pi_{E,\th}^-\simeq\pi_{E,\th\mun}^-$. Donc lorsque $\th\ne\th\mun$ 
de telles reprŽsentations apparaissent dans   $J_{spec}(f,\kappa)$
pour  $\kappa\in \{1,\vef\}$.
La  $\kappa$-multiplicitŽ de $\pi_{E,\th}^\pm$
vaut $1/2$ pour $\kappa=1$ et
$\pm1/2$  pour $\kappa=\vef$ ,

Si $\th=\th\mun$ mais $\th\ne1$ alors $\th$  est d'ordre 2 et dŽfinit
une extension quadratique $L$ de $E$ et $L/F$ est une extension bi-quadratique.
Dans ce cas $\pi_{E,\th}^+\oplus\pi_{E,\th}^-$ se dŽcompose en une somme de 4 reprŽsentations
inŽquivalentes et on notera $\pi_L$ le composant ayant un modle de Whittaker.
Il interviendra pour 3 extensions quadratiques $E_i$ entre $F$ et $L$ 
soit donc au total avec multiplicitŽ $3/4$ dans la partie instable
et il interviendra avec multiplicitŽ moyenne $1/4$ dans la partie stable $J_{disc}(f,1)$
et donc au total 1 dans $J_{disc}(f)$. Les autres apparaissent
avec multiplicitŽ $-1/4=(1/4)-2(1/4)$
dans la partie instable et $+1/4$ dans la partie stable
et donc n'apparaissent pas dans $J_{disc}(f)$.

Enfin, le terme correspondant ˆ $\th=1$  est la contribution de $J_{cont}(f,\vef)$,
\cad un des termes discrets issus de la troncature du spectre continu.

\Section{La formule des traces \ts-stable}
Si $\kappa=\vef$ est non trivial et $\EC=\{\Tef,\vef\}$ on pose
 $H=\Tef$, $ \iota(\EC)=\tau(\Tef)/4=1/2$ et on a montrŽ
$$SJ_{geom}^{\H}(f^\EC)=\sum_{\eta\in\Tef(F)}f^\EC(\eta)
\com{et}SJ_{spec}^{\H}(f^\EC)=\frac{1}{\tau(\Tef)}
\sum_{\theta\in\widehat\Tef} \theta(f^\EC)
\ptf$$
On a alors
$$J^{\G,\Tr}_\bullet(f,\vef)=J^{\G}_\bullet(f,\vef)=\iota(\EC)SJ_\bullet^{\H}(f^\EC)\ptf$$
Maintenant si $\EC=\{SL(2),1\}$ on a $f^\EC=f$; on pose $\iota(\EC)=1$ et
$$SJ_\bullet^{\H,\Tr}(f^\EC)=J^{\G,\Tr}_\bullet(f,1)\ptf$$
Puisque $$J^{\G,\Tr}_\bullet(f)=\sum_\kappa \J^\Tr_\bullet(f,\kappa)$$
la formule des traces pour $\G$ peut s'Žcrire comme une combinaison linŽaire
d'expressions, que nous appellerons \ts-stables, sur les groupes endoscopiques 
$$J^{\G,\Tr}_\bullet(f)=
\sum_\EC\iota(\EC)SJ_\bullet^{\H,\Tr}(f^\EC)$$
o $\bullet={geom}$ ou ${spec}$.

\Section{Sur certaines distributions non-invariantes}

Pour la stabilisation des distributions non invariantes par conjugaison
nous aurons besoin d'une distribution non invariante auxiliaire: pour $m\in\M(F)$ on pose
pour $g\in\G(\adef)$:
$$\Psi(f,m,g)=\delta(m)^{1/2}\int_K\int_{\U(\adef)}\HP(kg)f(k\mun muk)\,du\,dk\ptf$$
Maintenant si $m\in\M(\adef)^\star$ on a
$$\Psi(f,m,g)=\Delta_\M(m)\int_{M(\adef)\bsl\G(\adef)} 
f(x\mun mx)\big(\HP(xg)-\HP(x)\big)\,\ddx\ptf$$
En effet si $x=m_xu_xk_x$ alors $\HP(xg)-\HP(x)=\HP(k_xg)$ donc
$$\Psi(f,m,g)=\Delta_\M(m)\int_{M(\adef)\bsl\G(\adef)} f(x\mun mx)\HP(k_xg)\,\ddx$$
et l'ŽgalitŽ cherchŽe rŽsulte du changement de variable $u\mapsto u'$ o $x=uk$ et
$$u'=m\mun u\mun m u\ptf$$

Du c™tŽ gŽomŽtrique les ŽlŽments hyperboliques et unipotents  
donnent naissance ˆ des distributions 
non-invariantes appelŽes intŽgrales orbitales pondŽrŽes notŽes $J_{m}(f)$
pour $m\in\M(\adef)$. Pour $m\in\M(\adef)^\star$ on pose
$$J_{m}(f)=\Delta_\M(m)\int_{M(\adef)\bsl\G(\adef)}f(x\mun mx)\tv(x)\,\ddx\ptf$$

\begin{lemma} \label{h-inv} On pose $w(m):=w mw\mun$.
Pour $m\in\M(\adef)^\star$ on a
$$J_ m(f^{g})-J_ m(f)=-\big(\Psi(f,m,g)+\Psi(f,w(m),g)\big)\ptf$$
\end{lemma}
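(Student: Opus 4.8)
The plan is to reduce the statement to the integral formula for $J_m$ valid on $\M(\adef)^\star$ and to exploit two features of the weight $\tv$: the additive behaviour of $\HP$ under right translation by $g$, and the Weyl symmetry $x\mapsto w\mun x$. First I would pin down the shape of $\tv$. Extending $\HP$ by the Iwasawa decomposition and integrating the definition $\tv(x,\Tr)=\int_{\M(\adef)^1\bsl\M(\adef)}\tw(mx,\Tr)\,dm$ over $\M(\adef)^1\bsl\M(\adef)\simeq\mathfrak a_\M$, one uses $\HP(mx)=\HP(m)+\HP(x)$ and $\HP(wmx)=-\HP(m)+\HP(wx)$ to see that the two functions $\widehat\tau_\P$ cut out a single interval in the variable $\HP(m)$; computing its signed length gives
\[
\tv(x,\Tr)=2\Tr-\HP(x)-\HP(wx),
\]
and at $\Tr=0$, in all cases, $\tv(x)=c_F-\HP(x)-\HP(wx)$ for a constant $c_F$ depending only on the field. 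This is consistent with the explicit value $\tv(x)=-\HP(wu)$ recorded earlier, since $\HP(wu)=\HP(x)+\HP(wx)$ when $x=muk$.

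Starting from $J_m(f)=\Delta_\M(m)\int_{\M(\adef)\bsl\G(\adef)}f(x\mun mx)\,\tv(x)\,\ddx$ and using $f^g(y)=f(gyg\mun)$, I would translate the integration variable on the right by $g$ (legitimate by right $\G(\adef)$-invariance of the quotient measure) to obtain $J_m(f^g)=\Delta_\M(m)\int f(x\mun mx)\,\tv(xg)\,\ddx$, hence
\[
J_m(f^g)-J_m(f)=\Delta_\M(m)\int_{\M(\adef)\bsl\G(\adef)}f(x\mun mx)\big(\tv(xg)-\tv(x)\big)\,\ddx.
\]
The constant $c_F$ cancels in the difference, and by the formula for $\tv$ the weight is $\tv(xg)-\tv(x)=-\big(\HP(xg)-\HP(x)\big)-\big(\HP(wxg)-\HP(wx)\big)$. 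The first summand contributes exactly $-\Psi(f,m,g)$, by the alternative expression $\Psi(f,m,g)=\Delta_\M(m)\int f(x\mun mx)\big(\HP(xg)-\HP(x)\big)\,\ddx$ established just above.

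For the second summand I would substitute $x\mapsto w\mun x$. Since $w$ normalizes $\M$ and the quotient measure is unimodular this preserves $\int_{\M(\adef)\bsl\G(\adef)}$; it sends $f(x\mun mx)$ to $f(x\mun w(m)x)$ with $w(m)=wmw\mun$, and turns $\HP(wxg)-\HP(wx)$ into $\HP(xg)-\HP(x)$. As the Weyl denominator is invariant, $\Delta_\M(w(m))=\Delta_\M(m)$, so this summand becomes $-\Psi(f,w(m),g)$; adding the two yields $J_m(f^g)-J_m(f)=-\big(\Psi(f,m,g)+\Psi(f,w(m),g)\big)$, as claimed. The only genuinely delicate step is the first one, namely establishing $\tv(x)=c_F-\HP(x)-\HP(wx)$ uniformly for number \emph{and} function fields, where the integration over $\M(\adef)^1\bsl\M(\adef)$ is a lattice sum and one must check that the integer-part (PolExp) corrections at $\Tr=0$ are absorbed into the constant $c_F$ and hence disappear in the difference; everything afterwards is a measure-preserving change of variable.
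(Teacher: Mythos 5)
Your argument is correct and follows the same route as the paper: reduce to the weight difference $\tv(xg)-\tv(x)$, use $\tv(x)=\ve-\bigl(\HP(x)+\HP(wx)\bigr)$ (the constant cancelling), and identify the two resulting terms with $-\Psi(f,m,g)$ and, after the measure-preserving substitution $x\mapsto w\mun x$ which the paper leaves implicit, with $-\Psi(f,w(m),g)$. Your re-derivation of the formula for $\tv$ and the remark on the function-field constant merely make explicit what the paper invokes by reference to the earlier computation of $\tv$.
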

\begin{proof}
 $$J_ m(f^{g})-J_ m(f)=\Delta_\M(m)\int_{M(\adef)\bsl\G(\adef)}\Big( f(g x\mun mxg\mun)-f(x\mun mx)\Big)
\tv(x)\,\ddx$$
soit encore
$$J_ m(f^{g})-J_ m(f)=\Delta_\M(m)\int_{M(\adef)\bsl\G(\adef)} f(x\mun mx)\big(\tv(xg)-\tv(x)\big)\,\ddx$$
On a vu que 
$$\tv(x)=\ve-\big(\HP(x)+\HP(wx)\big)$$ 
o $\ve=0$ ou 1 suivant que l'on travaille avec un corps de nombres 
ou un corps de fonctions et donc
$$J_ m(f^{g})-J_ m(f)=-\big(\Psi(f,m,g)+\Psi(f,w(m),g)\big)\ptf$$
\end{proof}

On considre l'unipotent
$$u=\begin{pmatrix}1&n_u\cr 0&1\cr\end{pmatrix}\in\U(\adef)$$
et on pose $$\ll(x)=\log_{q_F}(\vert n_u\vert)\com{si}x=k\mun uk\com{avec}k\in K
\com{et} n_u\in\adef^\times\ptf$$  
Cette fonction est bien dŽfinie car le normalisateur dans $\tG(\adef)$
de l'ensemble des $u$ avec $n_u\in\adef^\times$ est $\tP(\adef)$ et donc
$n_u$ est bien dŽfini ˆ une unitŽ prs. Pour $z\in\Z(\adef)$ on dŽfinit 
$$J_z(f)=\int_{\tZ(\adef)\U(\adef)\bsl\tG(\adef)}f(x\mun z{\bs\nu}x)\ll(\nu(x))\ddx
\com{o}
{\bs\nu}=\begin{pmatrix}1&1\cr 0&1\cr\end{pmatrix}$$ 
  et $\nu(x)=x\mun {\bs\nu} x$.
\begin{lemma}\label{u-inv}
$$J_ z(f^{g})-J_ z(f)=-2\Psi(f,z,g)\ptf$$
\end{lemma}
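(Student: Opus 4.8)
The plan is to mirror the proof of Lemma~\ref{h-inv}: a change of variables transfers the conjugation by $g$ from $f$ onto the weight $\ll(\nu(x))$, and the resulting integral is then identified with $\Psi(f,z,g)$.

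First I would unfold $J_z(f^g)$. As $z$ is central and $f^g(y)=f(gyg\mun)$, one has $g\,x\mun{\bs\nu}x\,g\mun=\nu(xg\mun)$, so that
$$J_z(f^g)=\int_{\tZ(\adef)\U(\adef)\bsl\tG(\adef)}f\big(z\,\nu(xg\mun)\big)\,\ll(\nu(x))\,\ddx\ptf$$
The subgroup $\tZ(\adef)\U(\adef)$ is unimodular, so the quotient measure is right $\tG(\adef)$-invariant; substituting $x\mapsto xg$ therefore gives
$$J_z(f^g)-J_z(f)=\int_{\tZ(\adef)\U(\adef)\bsl\tG(\adef)}f\big(z\,\nu(x)\big)\big(\ll(\nu(xg))-\ll(\nu(x))\big)\,\ddx\ptf$$

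The key computation is the evaluation of $\ll(\nu(x))$ in Iwasawa coordinates. Writing $x=muk$ with $m=\begin{pmatrix}a_1&0\\0&a_2\end{pmatrix}$ and using that $\U(\adef)$ is abelian, the relation $m\mun{\bs\nu}m=\begin{pmatrix}1&a_1\mun a_2\\0&1\end{pmatrix}$ together with $u\mun{\bs\nu}u={\bs\nu}$ yields $\nu(x)=k\mun\begin{pmatrix}1&a_1\mun a_2\\0&1\end{pmatrix}k$, whence
$$\ll(\nu(x))=\log_{q_F}|a_1\mun a_2|=-2\HP(x)\ptf$$
Applying this to $xg$ and recalling from the proof of Lemma~\ref{h-inv} that $\HP(xg)-\HP(x)=\HP(kg)$, I obtain $\ll(\nu(xg))-\ll(\nu(x))=-2\HP(kg)$, so that
$$J_z(f^g)-J_z(f)=-2\int_{\tZ(\adef)\U(\adef)\bsl\tG(\adef)}f\big(z\,\nu(x)\big)\HP(kg)\,\ddx\ptf$$

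It remains to identify the last integral with $\Psi(f,z,g)$. Here I would decompose the quotient measure along coordinates $(\bar m,k)\in(\tZ\bsl\tM)(\adef)\times K$, which carries the modular factor $\delta_\P(m)\mun$. Under the isomorphism $(\tZ\bsl\tM)(\adef)\simeq\adef^\times$ sending $\bar m$ to $n=a_1\mun a_2$, one has $\delta_\P(m)\mun=|n|$, and since $|n|\,d^\times n=dn$ is the additive Haar measure on $\U(\adef)\simeq\adef$, the integral becomes, using $\delta_\P(z)^{1/2}=1$ and $f(z\,\nu(x))=f\big(k\mun z\begin{pmatrix}1&n\\0&1\end{pmatrix}k\big)$,
$$\int_K\int_{\U(\adef)}\HP(kg)\,f(k\mun z u k)\,du\,dk=\Psi(f,z,g)\ptf$$
This gives $J_z(f^g)-J_z(f)=-2\Psi(f,z,g)$.

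I expect the measure-matching of the final step to be the only delicate point: parametrizing the conjugate $\nu(x)$ by its upper entry $n=a_1\mun a_2$ produces the multiplicative measure $d^\times n$ on the torus, and it is precisely the modular factor $\delta_\P(m)\mun=|n|$ that restores the additive measure $dn$ on $\U(\adef)$, the locus $n=0$ being of measure zero. The coefficient $2$ is by contrast automatic: it encodes the identity $\ll(\nu(\,\cdot\,))=-2\HP(\,\cdot\,)$ forced by the normalization of $\HP$, and it plays the role of the two summands $\Psi(f,m,g)+\Psi(f,w(m),g)$ of Lemma~\ref{h-inv} in the degeneration of $m$ to the central element $z$.
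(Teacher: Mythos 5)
Your proof is correct and follows essentially the same route as the paper's: the substitution $x\mapsto xg$ transfers the conjugation onto the weight $\ll(\nu(\cdot))$, and the identity $\ll(m\mun um)=\ll(u)-2\HP(m)$ (equivalently your $\ll(\nu(x))=-2\HP(x)$) yields $\ll(\nu(xg))-\ll(\nu(x))=-2\HP(kg)$. You additionally spell out the measure-theoretic identification of the resulting integral with $\Psi(f,z,g)$, which the paper leaves implicit; that step is carried out correctly.
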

\begin{proof}
$$J_ z(f^{g})=\int_{\U(\adef)\bsl\G(\adef)}f(g x\mun z{\bs\nu}xg\mun)\ll(\nu(x))\,\ddx $$
soit encore
$$J_ z(f^{g})=\int_{\U(\adef)\bsl\G(\adef)}f(x\mun z {\bs\nu} x)
\ll(\nu({xg}))\,\ddx $$ 
et en d'observant que 
$$\HP(xg)=\HP(kg)+\HP(x)$$ 
et que pour $m\in\tM(\adef)$:
$$\ll(m\mun um)=\log_{q_F}(\vert m^{-\alpha}n_u\vert)\com{soit encore}
\ll(m\mun um)=\ll(u)-2\HP(m)$$
on obtient
$$\ll(\nu(xg))=\ll(\nu(x))-2\HP(kg)\ptf$$
\end{proof}

On note $A(g)$ l'opŽrateur de multiplication
par $\HP(kg)$  dans $L^2(K)$:
$$\big(A(g)\vf\big)(k)=\HP(kg)\vf(k)\ptf$$
\begin{lemma}\label{tra}
$$\trace A(g) \pi_\lambda(f)=\int_{\M(\adef)}\Psi(f,m,g)\,m^\lambda\phantom{}\,dm\ptf$$
\end{lemma}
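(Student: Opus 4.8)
Le plan est de réaliser l'opérateur $A(g)\pi_\lambda(f)$ comme opérateur à noyau sur $L^2(K)$ et de calculer sa trace comme l'intégrale de ce noyau sur la diagonale. D'abord je réalise $V_\lambda$ dans $L^2(K)$ par restriction à $K$ : un $\Phi\in V_\lambda$ est entièrement déterminé par $\Phi(\cdot,\lambda)\vert_K$ grâce à la décomposition d'Iwasawa et à la covariance $\Phi(muk,\lambda)=m^{\lambda+r}\Phi(k,\lambda)$ avec $m^r=\delta_\P(m)^{1/2}$. Comme $g$ est fixé et que $k\mapsto\HP(kg)$ est continue, donc bornée sur le compact $K$, l'opérateur $A(g)$ de multiplication par $\HP(kg)$ est borné ; et pour $f\in\ctyc\big(\G(\adef)\big)$ l'opérateur $\pi_\lambda(f)$ est à trace (c'est un opérateur régularisant). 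Le produit $A(g)\pi_\lambda(f)$ est alors à trace et sa trace est l'intégrale sur la diagonale de son noyau.

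Ensuite je calcule le noyau de $\pi_\lambda(f)$. En écrivant $\big(\pi_\lambda(f)\Phi\big)(k_1)=\int_{\G(\adef)}f(k_1\mun y)\Phi(y,\lambda)\,dy$ et en décomposant $y=muk_2$ selon Iwasawa, la covariance ci-dessus et la normalisation fixée des mesures de Haar donnent, comme pour toute induite parabolique normalisée,
$$K_\lambda(k_1,k_2)=\int_{\M(\adef)}\int_{\U(\adef)}f(k_1\mun m u k_2)\,m^{\lambda+r}\,du\,dm\ptf$$
Le facteur $m^r=\delta_\P(m)^{1/2}$ provient de la combinaison du jacobien de la décomposition d'Iwasawa et de la torsion par $\delta_\P^{1/2}$ intervenant dans la définition de $V_\lambda$.

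L'opérateur $A(g)\pi_\lambda(f)$ a donc pour noyau $(k_1,k_2)\mapsto\HP(k_1 g)K_\lambda(k_1,k_2)$, et sa trace est l'intégrale sur la diagonale :
$$\trace\big(A(g)\pi_\lambda(f)\big)=\int_K\HP(kg)\,K_\lambda(k,k)\,dk=\int_K\int_{\M(\adef)}\int_{\U(\adef)}\HP(kg)\,f(k\mun m u k)\,m^{\lambda+r}\,du\,dm\,dk\ptf$$
Par Fubini je sors l'intégration en $m$ et je factorise $m^{\lambda+r}=m^\lambda\,\delta_\P(m)^{1/2}$ :
$$\trace\big(A(g)\pi_\lambda(f)\big)=\int_{\M(\adef)}\Big(\delta(m)^{1/2}\int_K\int_{\U(\adef)}\HP(kg)f(k\mun muk)\,du\,dk\Big)m^\lambda\,dm\ptf$$
La parenthèse étant exactement $\Psi(f,m,g)$ par définition (avec $\delta(m)=\delta_\P(m)$), on obtient la formule annoncée.

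Le point le plus délicat, et le seul qui demande du soin, est double : justifier que la trace de l'opérateur composé est bien l'intégrale de son noyau sur la diagonale — ce qui repose sur le caractère à trace de $\pi_\lambda(f)$ pour $f\in\ctyc$ et sur le caractère borné de $A(g)$ — et surtout vérifier la normalisation des mesures de Haar sur $\G(\adef)$ relativement à $\M(\adef)$, $\U(\adef)$ et $K$ de sorte que le noyau $K_\lambda$ porte précisément le facteur $m^{\lambda+r}$ sans jacobien parasite ; c'est la compatibilité entre le jacobien d'Iwasawa et la torsion $\delta_\P^{1/2}$ de l'induction normalisée qui rend l'identité exacte. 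Tout le reste n'est qu'un réarrangement par Fubini, licite puisque $f$ est à support compact et les intégrales intérieures absolument convergentes.
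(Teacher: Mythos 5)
Votre preuve est correcte et suit essentiellement la même démarche que celle du texte : on réalise $\pi_\lambda(f)$ comme opérateur à noyau $K_\lambda(k_1,k_2)=\int_{\M(\adef)}\int_{\U(\adef)}f(k_1\mun muk_2)\,m^\lambda\delta_\P(m)^{1/2}\,du\,dm$ sur $L^2(K)$, puis la trace de $A(g)\pi_\lambda(f)$ est l'intégrale sur la diagonale de $\HP(kg)K_\lambda(k,k)$, ce qui redonne $\int_{\M(\adef)}\Psi(f,m,g)\,m^\lambda\,dm$ par définition de $\Psi$. Les justifications supplémentaires que vous donnez (caractère à trace de $\pi_\lambda(f)$, compatibilité des normalisations de mesures dans la décomposition d'Iwasawa) sont les points laissés implicites dans le texte.
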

\begin{proof} Il suffit d'observer que le noyau $K(k_1,k_2)$ sur $K\times K$ 
reprŽsentant l'opŽrateur $\pi_\lambda(f)$ dans $L^2(K)$ a pour formule
$$K(k_1,k_2)=\int_{\M(\adef)}\int_{U(\adef)} f(k_1\mun m u k_2)
m^\lambda\delta_\P(m)^{1/2} dm du \ptf$$
\end{proof}
\begin{lemma}\label{bgl}
$$\pi_{w\lambda}(g)\mathbf M'(\lambda) \pi_\lambda(g)\mun
-\mathbf M'(\lambda)=\mathbf M(\lambda) A(g)+A(g)\mathbf M(\lambda)
\ptf$$
\end{lemma}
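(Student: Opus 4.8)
The plan is to obtain the identity by differentiating the basic intertwining relation
$$\mathbf M(\lambda)\pi_\lambda(g)=\pi_{w\lambda}(g)\mathbf M(\lambda)$$
with respect to the parameter $s$ of $\lambda=sr+\chi$, feeding in the explicit way the two principal series depend on $s$.

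First I would make the $s$-dependence of $\pi_\lambda(g)$ explicit in the compact picture on $L^2(K)$. Writing the Iwasawa decomposition of $kg$ and denoting by $m(kg)$ its $\M$-component and by $\bs k(kg)$ its $K$-component, the defining transformation law gives for $\vf\in L^2(K)$
$$(\pi_\lambda(g)\vf)(k)=m(kg)^{\lambda+r}\vf(\bs k(kg))=q_F^{s\HP(kg)}\,m(kg)^{\chi+r}\vf(\bs k(kg)),$$
using $\HP(kg)=\HP(m(kg))$. Only the scalar $q_F^{s\HP(kg)}$ depends on $s$, so differentiating brings down the factor $\HP(kg)$ and, with the normalization $\log q_F=1$ (and the parallel convention for function fields), yields
$$\partial_s\pi_\lambda(g)=A(g)\,\pi_\lambda(g),$$
where $A(g)$ is the multiplication by $\HP(kg)$ of \ref{tra}. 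For $\pi_{w\lambda}$ one has $w\lambda=-sr+w\chi$, since $w$ carries the positive root to its opposite; the exponent is now $-s\HP(kg)$, and the same computation gives the opposite sign
$$\partial_s\pi_{w\lambda}(g)=-A(g)\,\pi_{w\lambda}(g).$$

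Then I would differentiate the intertwining relation. The product rule gives
$$\mathbf M'(\lambda)\pi_\lambda(g)+\mathbf M(\lambda)\,\partial_s\pi_\lambda(g)=\partial_s\pi_{w\lambda}(g)\,\mathbf M(\lambda)+\pi_{w\lambda}(g)\mathbf M'(\lambda).$$
Substituting the two derivative formulas and then using the intertwining relation once more, in the form $\pi_{w\lambda}(g)\mathbf M(\lambda)=\mathbf M(\lambda)\pi_\lambda(g)$, to rewrite $-A(g)\pi_{w\lambda}(g)\mathbf M(\lambda)=-A(g)\mathbf M(\lambda)\pi_\lambda(g)$, the surviving terms collect a common factor $\pi_\lambda(g)$ on the right:
$$\pi_{w\lambda}(g)\mathbf M'(\lambda)=\big(\mathbf M'(\lambda)+\mathbf M(\lambda)A(g)+A(g)\mathbf M(\lambda)\big)\pi_\lambda(g).$$
Multiplying on the right by $\pi_\lambda(g)\mun$ gives the claimed identity.

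The computation is purely formal once the two derivative formulas are in hand; the only genuinely delicate points are the sign $wr=-r$, which is exactly what turns a would-be commutator into the symmetric sum $\mathbf M(\lambda)A(g)+A(g)\mathbf M(\lambda)$, and the differentiability of $\mathbf M(\lambda)$ in $s$ (legitimate away from its poles, $\mathbf M(\lambda)$ being the meromorphic continuation discussed above). The normalization $\log q_F=1$ rules out a spurious scalar in front, and in the function-field setting the identity holds verbatim with the matching choice of parameter.
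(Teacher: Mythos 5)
Your proof is correct and follows exactly the paper's argument: establish $\partial_s\pi_\lambda(g)=A(g)\pi_\lambda(g)$ (with the opposite sign for $w\lambda$ since $wr=-r$), differentiate the intertwining relation $\mathbf M(\lambda)\pi_\lambda(g)=\pi_{w\lambda}(g)\mathbf M(\lambda)$ in $s$, and use that relation once more to collect a common right factor $\pi_\lambda(g)$. The only difference is that you spell out the derivative formula in the compact picture on $L^2(K)$, which the paper simply asserts.
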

\begin{proof}
On observe que si $\lambda=sr+\chi$
on a
$$\frac{d}{ds}\pi_\lambda(g)=A(g)\pi_\lambda(g)
\com{et}
\pi_{w\lambda}(g)\mathbf M(\lambda) =\mathbf M(\lambda)\pi_\lambda(g)\ptf$$
En dŽrivant en $s$ la seconde expression on obtient
$$-A(g)\pi_{w\lambda}(g)\mathbf M(\lambda)+\pi_{w\lambda}(g)\mathbf M'(\lambda) =
\mathbf M'(\lambda)\pi_\lambda(g)+\mathbf M(\lambda)A(g)\pi_\lambda(g)
$$
\end{proof}

Du c™tŽ spectral apparaissent des distributions non-invariantes 
appelŽes caractres pondŽrŽs:
$$f\mapsto \Phi(\lambda,f)=\trace \mathcal N(\lambda)\pi_\lambda(f)$$
o $\mathcal N(\lambda)$ est la dŽrivŽe logarithmique de l'opŽrateur
d'entrelacement normalisŽ $\mathbf R(\lambda)$:
$$\mathcal N(\lambda)=\mathbf R(\lambda)\mun\mathbf R'(\lambda)\ptf$$

\begin{lemma}\label{sp-inv}
On a l'identitŽ de distributions suivante:
$$ \Phi(\lambda,f^{g})-\Phi(\lambda,f)=\trace  A(g)\big(\pi_\lambda(f)+\pi_{w\lambda}(f)\big)\ptf$$
\end{lemma}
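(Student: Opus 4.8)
The plan is to reduce everything to the cyclicity of the trace combined with the intertwining identities already at our disposal. The starting point is that conjugating $f$ amounts to conjugating the operator: since $f^{g}(y)=f(gyg\mun)$, the substitution $y\mapsto gyg\mun$ in the integral defining $\pi_\lambda(f^{g})$ gives
$$\pi_\lambda(f^{g})=\pi_\lambda(g)\mun\pi_\lambda(f)\pi_\lambda(g)\ptf$$
First I would remark that in computing the difference one may replace the normalized logarithmic derivative $\mathcal N(\lambda)=\mathbf R(\lambda)\mun\mathbf R'(\lambda)$ by the unnormalized one $\mathcal M(\lambda)=\mathbf M(\lambda)\mun\mathbf M'(\lambda)$. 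Indeed $\mathbf M(\lambda)$ and $\mathbf R(\lambda)$ differ by a scalar meromorphic normalizing factor, so $\mathcal M(\lambda)$ and $\mathcal N(\lambda)$ differ by a scalar operator $c(\lambda)\,\mathrm{Id}$; this scalar contributes $c(\lambda)\big(\trace\pi_\lambda(f^{g})-\trace\pi_\lambda(f)\big)$, which vanishes because $\pi_\lambda(f^{g})$ and $\pi_\lambda(f)$ are conjugate. Hence it suffices to prove the stated identity with $\mathcal M(\lambda)$ in place of $\mathcal N(\lambda)$.

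Using the displayed formula for $\pi_\lambda(f^{g})$ and cyclicity of the trace,
$$\Phi(\lambda,f^{g})-\Phi(\lambda,f)=\trace\big[\big(\pi_\lambda(g)\mathcal M(\lambda)\pi_\lambda(g)\mun-\mathcal M(\lambda)\big)\pi_\lambda(f)\big]\ptf$$
To evaluate the conjugated operator I would invoke the intertwining relation $\pi_{w\lambda}(g)\mathbf M(\lambda)=\mathbf M(\lambda)\pi_\lambda(g)$, rewritten as $\pi_\lambda(g)\mathbf M(\lambda)\mun=\mathbf M(\lambda)\mun\pi_{w\lambda}(g)$, so that
$$\pi_\lambda(g)\mathcal M(\lambda)\pi_\lambda(g)\mun=\mathbf M(\lambda)\mun\,\pi_{w\lambda}(g)\mathbf M'(\lambda)\pi_\lambda(g)\mun\ptf$$
Now \ref{bgl} gives $\pi_{w\lambda}(g)\mathbf M'(\lambda)\pi_\lambda(g)\mun=\mathbf M'(\lambda)+\mathbf M(\lambda)A(g)+A(g)\mathbf M(\lambda)$, whence
$$\pi_\lambda(g)\mathcal M(\lambda)\pi_\lambda(g)\mun-\mathcal M(\lambda)=A(g)+\mathbf M(\lambda)\mun A(g)\mathbf M(\lambda)\ptf$$
Inserting this, applying cyclicity once more to the second summand, and using that $\mathbf M(\lambda)$ intertwines $\pi_\lambda$ and $\pi_{w\lambda}$, so that $\mathbf M(\lambda)\pi_\lambda(f)\mathbf M(\lambda)\mun=\pi_{w\lambda}(f)$, one obtains
$$\Phi(\lambda,f^{g})-\Phi(\lambda,f)=\trace A(g)\pi_\lambda(f)+\trace A(g)\pi_{w\lambda}(f)\ptf$$
which is precisely the asserted identity.

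The main obstacle is analytic rather than algebraic: the operators $A(g)$, $\mathbf M(\lambda)$ and $\mathcal M(\lambda)$ act on the infinite–dimensional space $V_\lambda$ and are not themselves of trace class, so the repeated use of $\trace(XY)=\trace(YX)$ must be justified. The point is that $\pi_\lambda(f)$ is of trace class, its kernel on $K\times K$ being smooth as recalled in the proof of \ref{tra}, and every product appearing above carries $\pi_\lambda(f)$ or $\pi_{w\lambda}(f)$ as a factor and is therefore trace class; cyclicity is then legitimate so long as one retains a smoothing operator in each trace. Finally, since these weighted characters are a priori only defined as distributions in $f$, the identity is to be read as an equality of distributions on $\ctyc\big(\G(\adef)\big)$, the manipulations being performed for $\lambda$ generic and then extended by meromorphy.
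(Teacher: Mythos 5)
Your argument is correct and follows the same route as the paper: reduce $\mathcal N(\lambda)$ to $\mathcal M(\lambda)$ since the scalar normalizing factor cancels in the difference, write $\pi_\lambda(f^g)=\pi_\lambda(g)\mun\pi_\lambda(f)\pi_\lambda(g)$, use cyclicity of the trace, and conclude from the intertwining relation together with Lemme \ref{bgl}. You merely spell out the final algebraic step (and the trace-class justification) that the paper leaves as an ``immediate consequence''.
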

\begin{proof}  Pour ce calcul il revient au mme de remplacer 
$\mathbf R$ par $\mathbf M$ dans la dŽfinition de $\mathcal N$ puisque
les dŽrivŽes logarithmiques du facteur de normalisation se compensent. Par dŽfinition
$$ \Phi(\lambda,f^{g})-\Phi(\lambda,f)=
\trace \mathcal N(\lambda)\Big(\pi_\lambda(f^{g})-\pi_\lambda(f)\Big)$$
\cad 
$$\trace\Big( \mathcal N(\lambda)\pi_\lambda(g)\mun\pi_\lambda(f)\pi_\lambda(g)-
 \mathcal N(\lambda)\pi_\lambda(f)\Big)$$
soit encore 
$$\trace\Big( \pi_\lambda(g) \mathcal  N(\lambda)\pi_\lambda(g)\mun- 
\mathcal  N(\lambda)\Big)\pi_\lambda(f)$$
On observe que $\mathbf M(\lambda) \pi_\lambda(g)= \pi_{w\lambda}(g)\mathbf M(\lambda)$
et l'assertion cherchŽe est alors consŽquence immŽdiate du lemme \ref{bgl}.
\end{proof}

On pose
$$I_\lambda(f)=\trace \big(\mathcal N(\lambda)\pi_\lambda(f)\big)+
\int_{\M(\adef)}J_ m(f)\,m^\lambda\phantom{}\,dm$$
et si $\widehat{\M(\adef)}$ est le dual de Pontryagin de $\M(\adef)$ on pose
$$I_m(f)=\phantom{}\,J_ m(f)+\int_{\widehat{\M(\adef)}}
\trace \big(\mathcal N(\lambda)\pi_\lambda(f)\big)m^{-\lambda}\,d\lambda\ptf
$$

\begin{proposition}\label{locinv} Les distributions $I_\lambda(f)$ et $I_m(f)$
sont stablement invariantes: $$I_\lambda(f^g)=I_\lambda(f)$$ pour tout $g\in\tG(\adef)$.
Il en est de mme pour $I_m$.
\end{proposition}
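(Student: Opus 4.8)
The plan is to reduce the statement to a single clean cancellation between the geometric and spectral variations already isolated in the lemmas that precede it. Both $I_\lambda$ and $I_m$ are defined as the sum of a weighted character and an integrated weighted orbital integral, so I would compute the two differences $I_\lambda(f^{g})-I_\lambda(f)$ and $I_m(f^{g})-I_m(f)$ directly and show that, term by term, the spectral variation is the exact opposite of the geometric one. Invariance under $f\mapsto f^{g}$ for every $g\in\tG(\adef)$ follows at once.

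For $I_\lambda$, the first step is to apply \ref{sp-inv} to the weighted character, which yields
$$\Phi(\lambda,f^{g})-\Phi(\lambda,f)=\trace A(g)\pi_\lambda(f)+\trace A(g)\pi_{w\lambda}(f)\ptf$$
Using \ref{tra} for both $\lambda$ and $w\lambda$, each trace becomes an integral of $\Psi$, namely $\int_{\M(\adef)}\Psi(f,m,g)m^\lambda\,dm$ and $\int_{\M(\adef)}\Psi(f,m,g)m^{w\lambda}\,dm$ respectively. Since $w^2$ is central in $SL(2)$ one has $w^{-1}mw=w(m)$, hence $m^{w\lambda}=(w(m))^\lambda$; the measure-preserving involution $m\mapsto w(m)$ of $\M(\adef)$ then rewrites the second integral as $\int_{\M(\adef)}\Psi(f,w(m),g)m^\lambda\,dm$. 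On the geometric side, \ref{h-inv} gives, for $m\in\M(\adef)^\star$,
$$J_m(f^{g})-J_m(f)=-\big(\Psi(f,m,g)+\Psi(f,w(m),g)\big)\ptf$$
Because $\Z(\adef)$ has measure zero in $\M(\adef)$, integrating this against $m^\lambda$ produces precisely the negative of the spectral difference above, so the two cancel and $I_\lambda(f^{g})=I_\lambda(f)$.

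For $I_m$ the same ingredients are assembled in the dual order. The variation of its spectral part is $\int_{\widehat{\M(\adef)}}\big(\Phi(\lambda,f^{g})-\Phi(\lambda,f)\big)m^{-\lambda}\,d\lambda$; substituting the double-integral expression just obtained and interchanging the order of integration reduces the inner $\lambda$-integral $\int_{\widehat{\M(\adef)}}(m')^{\lambda}m^{-\lambda}\,d\lambda$ to the Dirac mass at $m'=m$ by Fourier inversion on $\widehat{\M(\adef)}$, leaving $\Psi(f,m,g)+\Psi(f,w(m),g)$. This is exactly the opposite of $J_m(f^{g})-J_m(f)$ from \ref{h-inv}, so again $I_m(f^{g})=I_m(f)$.

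The hard part will not be the formal cancellation but its analytic legitimacy: justifying the interchange of the $\lambda$- and $m$-integrations, the convergence of $\int_{\M(\adef)}J_m(f)\,m^\lambda\,dm$ near the singular locus, and the applicability of Fourier inversion in this adelic context. A companion issue is the behaviour at the centre, where $m\mapsto J_m(f)$ is singular as $m\to z\in\Z(\adef)$; one must check that the formally negligible central contribution does not survive. Here the compatibility of \ref{u-inv} with \ref{h-inv} is decisive: since $w(z)=z$ forces $\Psi(f,z,g)+\Psi(f,w(z),g)=2\Psi(f,z,g)$, matching the factor $-2$ of \ref{u-inv}, the regular and central regimes glue correctly and no separate central term appears. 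Finally, replacing $\mathbf R(\lambda)$ by $\mathbf M(\lambda)$ in the logarithmic derivative $\mathcal N(\lambda)$, as done in \ref{sp-inv}, is harmless because the normalizing factors contribute cancelling logarithmic derivatives.
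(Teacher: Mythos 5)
Your proposal is correct and follows essentially the same route as the paper: both compute the variation $f\mapsto f^{g}$ of the weighted orbital part via \ref{h-inv} and \ref{u-inv}, and of the weighted character via \ref{sp-inv} and \ref{tra}, and observe that the two contributions $\pm\int_{\M(\adef)}\Psi(f,m,g)\big(m^{\lambda}+m^{w\lambda}\big)\,dm$ cancel. Your additional remarks on the change of variable $m\mapsto w(m)$, the Fourier inversion needed for $I_m$, and the matching of the factor $-2$ in \ref{u-inv} with $w(z)=z$ at the centre are consistent with (and slightly more explicit than) the paper's terse argument.
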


\begin{proof} 
Il rŽsulte de \ref{h-inv} et \ref{u-inv} que
$$\int_{\M(\adef)}\Big(J_ m(f^{g})-J_ m(f)\Big)\,m^\lambda\phantom{}\,dm
=-\int_{\M(\adef)}\Psi_ m(f)\, \big(m^\lambda+m^{w\lambda}\big)\phantom{}\,dm\ptf$$
Par ailleurs \ref{tra} et \ref{sp-inv} montrent que
$$\Phi(\lambda,f^{g})-\Phi(\lambda,f)=\trace  A(g)\big(\pi_\lambda(f)+\pi_{w\lambda}(f)\big)$$
est Žgal ˆ
$$\int_{\M(\adef)}\Psi_ m(f)\, \big(m^\lambda+m^{w\lambda}\big)\phantom{}\,dm\ptf$$
\end{proof}

\Section{La forme stablement invariante}

Une forme invariante pour la formule des traces pour $SL(2)$ est obtenue
en faisant passer du c™tŽ gŽomŽtrique la partie non-invariante du c™tŽ spectral. 
Les termes non-invariant du c™tŽ gŽomŽtrique sont les contributions hyperbolique
et le terme correspondant ˆ $\kappa=1$ de la prŽ-stabilisation de la contribution unipotente.
On rappelle que l'on a posŽ
$$J_{\xi}(f)=\int_{M(\adef)\bsl\G(\adef)} f(x\mun\xi x)\tv(x)\ddx $$
et pour $\xi=z\in\Z(F)$ on pose $$J_z(f)=
\int_{K}\int_{\U(\adef)}f_v(k\mun zuk)\lambda(u)\,dk\,du\ptf$$
Les termes non invariant du c™tŽ spectral font intervenir la dŽrivŽe logarithmique
de l'opŽrateur d'entrelacement.
Mais l'opŽrateur d'entrelacement ${\mathbf M}(s,\lambda)$ 
peut s'Žcrire comme le produit 
$${\mathbf M}(s,\lambda)=m(\lambda){\mathbf R}(\lambda)$$
d'un facteur scalaire $m(\lambda)$ quotient de fonctions $L$ de Hecke 
et d'un opŽrateur appelŽ opŽrateur d'entrelacement normalisŽ notŽ ${\mathbf R}(\lambda)$.
Cela permet de dŽcomposer sa dŽrivŽe logarithmique
 ${\mathcal M}(\lambda)$ en une somme d'un opŽrateur scalaire et d'un autre opŽrateur:
$${\mathcal M}(\lambda)=\frac{m'(\lambda)}{m(\lambda)}+{\mathcal N}(\lambda)$$
o ${\mathcal N}(\lambda)$ est la dŽrivŽe logarithmique de $R(\lambda)$.
La clef du passage ˆ la forme invariante est le 
\begin{lemma}\label{inv}
$$\int_\speccont\trace\Big({\mathcal N}(\lambda)\pi_\lambda(f)\Big)\,d\lambda=
\sum_{\xi\in\M(F)}N_\xi(f)$$
et
$$I_\xi(f)=J_\xi(f)+N_\xi(f)$$
est une distribution invariante.
\end{lemma}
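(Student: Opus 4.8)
The two assertions are logically independent and I would establish them separately: the displayed identity is a Poisson summation, whereas the invariance of $I_\xi$ is nothing but the rational specialization of \ref{locinv}.

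To make sense of the right-hand side I would first set, for $\xi\in\M(F)$ regarded as an element of $\M(\adef)$,
$$N_\xi(f)=\int_{\widehat{\M(\adef)}}\trace\big(\mathcal N(\lambda)\pi_\lambda(f)\big)\,\xi^{-\lambda}\,d\lambda\ptf$$
This is exactly the spectral term occurring in the definition of $I_m$, specialized to $m=\xi$; equivalently, $N_\xi$ is the value at $\xi$ of the inverse Fourier transform on $\M(\adef)$ of the function $\Phi\colon\lambda\mapsto\trace(\mathcal N(\lambda)\pi_\lambda(f))$ defined on the Pontryagin dual $\widehat{\M(\adef)}$. Since $\M(F)\bsl\M(\adef)\simeq C_F$, the annihilator of the discrete subgroup $\M(F)$ inside $\widehat{\M(\adef)}$ is precisely $\speccont=\widehat{C_F}$, the group of automorphic characters of $\M$.

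The first identity is then the Poisson, or Weil, summation formula attached to the discrete subgroup $\M(F)\subset\M(\adef)$: summing the inverse transform $\Phi^\vee=N_\bullet(f)$ over $\M(F)$ reproduces the integral of $\Phi$ over the annihilator $\speccont$. With the Tamagawa normalizations fixed above (so that $\tau(\M)=1$ and $\vol(\M(F)\bsl\M(\adef)^1)=1$) the comparison constant reduces to $1$, giving
$$\sum_{\xi\in\M(F)}N_\xi(f)=\int_\speccont\trace\big(\mathcal N(\lambda)\pi_\lambda(f)\big)\,d\lambda\ptf$$
For the invariance I would simply observe that, combining \ref{h-inv} and \ref{u-inv} (and using $w(z)=z$ when $z$ is central, so that the two right-hand members coincide), one has $J_m(f^g)-J_m(f)=-\big(\Psi(f,m,g)+\Psi(f,w(m),g)\big)$ for every $m\in\M(\adef)$, while \ref{locinv} asserts that $I_m(f)=J_m(f)+N_m(f)$ is stably invariant. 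Specializing $m=\xi\in\M(F)$ yields at once that $I_\xi(f)=J_\xi(f)+N_\xi(f)$ is invariant under conjugation by any $g\in\tG(\adef)$.

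The genuine work lies in justifying the Poisson summation, that is, in checking that $\Phi$ and its transform $N_\bullet(f)$ satisfy the required summability on the non-compact dual $\widehat{\M(\adef)}$. The compact support of $f$ renders $\pi_\lambda(f)$ harmless, so the whole difficulty is concentrated in the behaviour of the logarithmic derivative $\mathcal N(\lambda)=\mathbf R(\lambda)\mun\mathbf R'(\lambda)$ along the continuous parameter. For $SL(2)$ the normalized operator $\mathbf R(\lambda)$ is a controlled ratio of completed Hecke $L$-functions and local factors, so that, once the scalar part $m'(\lambda)/m(\lambda)$ has been removed, $\mathcal N(\lambda)$ is smooth with rapid decay in the archimedean direction; the sum over $\xi$ and the integral over $\speccont$ then converge absolutely and their interchange is legitimate. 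I expect this convergence, established by the same Tate-type arguments already invoked on the geometric side, to be the only genuinely technical point.
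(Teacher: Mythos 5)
Your proof is correct and takes essentially the same route as the paper, whose entire argument is the single sentence "Cela r\'esulte de la proposition \ref{locinv}": the invariance of $I_\xi=J_\xi+N_\xi$ is indeed just the specialization of \ref{locinv} to $m=\xi\in\M(F)$, and your reading of $N_\xi$ as the spectral term of $I_m$ at $m=\xi$ is the intended one. The only difference is that you make explicit what the paper leaves implicit, namely that the first display is the Poisson summation formula for the discrete subgroup $\M(F)\subset\M(\adef)$ with annihilator $\speccont$, together with the convergence of $\lambda\mapsto\trace(\mathcal N(\lambda)\pi_\lambda(f))$ needed to justify it; this is a useful supplement rather than a departure.
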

\begin{proof}
Cela rŽsulte de la  proposition \ref{locinv}.
\end{proof}
Le c™tŽ gŽomŽtrique s'Žcrit
$$I_{geom}(f)=I_{ell}(f)+I_{p}(f)$$
o
$$I_{ell}(f)=J_{ell}(f)\comm{et} I_{p}(f)=\sum_{\xi}I_\xi(f)$$
Le c™tŽ spectral de la formule des traces invariante s'Žcrit
$$I_{spec}(f)=I_{disc}(f)+I_{cont}(f)$$
o
$$I_{disc}(f)=J_{disc}(f)\com{et}I_{cont}=\frac{-1}{2}
\int_\speccont \frac{m'(\lambda)}{m(\lambda)}\trace\pi_\lambda(f)\,d\lambda\ptf$$
La stabilisation invariante s'Žcrit:
\begin{theorem} On a les identitŽs
$$I_{geom}(f)=\sum_\EC\iota(\EC)SI_{geom}^{\EC}(f^\EC)
\com{et}I_{spec}(f)=\sum_\EC\iota(\EC)SI_{spec}^{\EC}(f^\EC)
$$
avec 
$$SI^\EC_{ell}=J^\EC_{ell}\com{,}SI^\EC_{disc}=J^\EC_{disc}
\com{,}SI^\G_{p}=I_{p}\com{et}SI^\G_{cont}=I_{cont}$$
et
$$I^{\G}_{geom}(f)=I^{\G}_{spec}(f)\com{et}SI^{\EC}_{geom}(f)=SI^{\EC}_{spec}(f)\ptf$$
\end{theorem}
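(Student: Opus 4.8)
The plan is to deduce the invariant, stably-invariant formula directly from the \ts-stable trace formula already obtained, namely
$$J^{\G,\Tr}_\bullet(f)=\sum_\EC\iota(\EC)SJ_\bullet^{\H,\Tr}(f^\EC)\ptf$$
For a datum $\EC=\{\Tef,\vef\}$ attached to a field $E$ there is nothing to prove: the contribution is the Poisson summation formula for the abelian group $\Tef$, which is automatically invariant, so one sets $SI^\EC_\bullet=SJ^\EC_\bullet$ and the identity $SI^\EC_{geom}(f^\EC)=SI^\EC_{spec}(f^\EC)$ is exactly
$$\sum_{\eta\in\Tef(F)}f^\EC(\eta)=\frac{1}{\tau(\Tef)}\sum_{\theta\in\widehat\Tef}\theta(f^\EC)\ptf$$
All the work thus concerns the principal datum $\EC=\{\G,1\}$, where $f^\EC=f$, $\iota(\EC)=1$, and the \ts-stable formula coincides with the full non-invariant identity $J^\Tr_{geom}(f)=J^\Tr_{spec}(f)$ of \ref{ftninv}.

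For $SL(2)$ I would separate the already-invariant terms from the non-invariant ones. The elliptic term $J_{ell}$ and the discrete spectral term $J_{disc}$ are invariant under $\tG(\adef)$-conjugacy, so one keeps $SI^\G_{ell}=J^\G_{ell}$ and $SI^\G_{disc}=J^\G_{disc}$. The non-invariant geometric pieces are the hyperbolic and unipotent ($\kappa=1$) weighted orbital integrals $J_\xi(f)$ for $\xi\in\M(F)$; the non-invariant spectral piece is the continuous contribution, whose operator $\mathcal M(\lambda)$ I would decompose as the sum $m'(\lambda)/m(\lambda)+\mathcal N(\lambda)$. The scalar part produces precisely
$$I_{cont}(f)=-\frac{1}{2}\int_\spec\frac{m'(\lambda)}{m(\lambda)}\trace\pi_\lambda(f)\,d\lambda\ptf$$
The decisive step is to carry the remaining operator-valued integral $\int_\spec\trace(\mathcal N(\lambda)\pi_\lambda(f))\,d\lambda$ over to the geometric side using Lemma \ref{inv}, which identifies it with $\sum_{\xi\in\M(F)}N_\xi(f)$; adding these correction terms to the weighted integrals yields the invariant packets $I_\xi=J_\xi+N_\xi$ and hence $I_p=\sum_\xi I_\xi$.

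Proposition \ref{locinv} then guarantees that each $I_\xi$, and therefore $I_p$ together with $I_{geom}=I_{ell}+I_p$ and $I_{spec}=I_{disc}+I_{cont}$, is stably invariant; this is what entitles us to call the resulting formula invariant. The equality $I^\G_{geom}(f)=I^\G_{spec}(f)$ is then inherited from the \ts-stable identity $J^\G_{geom}(f,1)=J^\G_{spec}(f,1)$, since the passage from $J$ to $I$ moves the single quantity $\int_\spec\trace(\mathcal N(\lambda)\pi_\lambda(f))\,d\lambda$ from the spectral to the geometric side without disturbing the balance. Assembling the principal datum with the tori yields the two displayed identities $I_\bullet(f)=\sum_\EC\iota(\EC)SI^\EC_\bullet(f^\EC)$ together with $SI^\EC_{geom}(f^\EC)=SI^\EC_{spec}(f^\EC)$.

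I expect the main obstacle to be bookkeeping rather than new analysis: the genuine analytic input, matching the weighted characters coming from the logarithmic derivative $\mathcal N(\lambda)$ of the normalized intertwining operator against the weighted orbital integrals, is exactly Lemma \ref{inv}, itself a consequence of the invariance Proposition \ref{locinv}. What remains delicate is keeping all normalizing constants consistent, in particular $\iota(\EC)=\tau(\Tef)/4=1/2$ for the tori, the value $\tau(\Tef)=2$, and the various factors $1/2$ arising from the Weyl group and from the change of variable $a\mapsto a^2$, so that $I_{cont}$ and the invariant combinations $I_\xi=J_\xi+N_\xi$ emerge with precisely the coefficients recorded in the statement.
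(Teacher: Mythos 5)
Your proposal is correct and follows essentially the same route as the paper: the paper likewise treats the toral data as automatically stable via Poisson summation, isolates the non-invariant hyperbolic/unipotent and continuous terms, splits $\mathcal M(\lambda)$ into its scalar part and $\mathcal N(\lambda)$, and transfers $\int_\spec\trace(\mathcal N(\lambda)\pi_\lambda(f))\,d\lambda$ to the geometric side via Lemma \ref{inv} and Proposition \ref{locinv}. The only slip is your remark that for the principal datum the \ts-stable formula is the full identity of \ref{ftninv}; it is only its $\kappa=1$ component $\J^\Tr_{geom}(f,1)=\J^\Tr_{spec}(f,1)$, which is what you in fact use later.
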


\begin{remark}{\rm  
Les distributions qui apparaissent dans 
$J^{\G}_\bullet(f,1)$ sont les distributions
qui apparaissent dans la formule des traces non-invariante pour $\tG$.
De mme, les distributions qui apparaissent dans 
$I^{\G}_\bullet$ sont les distributions
qui apparaissent dans la formule des traces stable pour $\tG$.}
\end{remark}

\Chapter{Formes intŽrieures: UnitŽs des algbres de quaternions}

\Section{ThŽorie locale}

Soit $F$ un corps local et soit $\HM$ l'algbre de quaternions (non dŽoloyŽe)
de centre $F$.
On note $\tG'(F)=\HM^\times$ son groupe multiplicatif 
et $\HM^1=\G'(F)$ le sous-groupe des ŽlŽments de norme 1.

Soit $E$ un corps extension quadratique sŽparable de $F$. On note 
$\alpha\mapsto \overline\alpha$ l'action du groupe de Galois de $E/F$.
Alors on peut identifier $\HM$ ˆ l'ensemble des matrices dans $M(2,E)$ de la forme
$$a=\begin{pmatrix} \alpha&\overline\beta{\mathfrak d}\cr\beta&\overline\alpha\cr\end{pmatrix}$$
{avec}$$\alpha\in E\com{,} \beta\in E
\com{,} \mathfrak d\in F^\times\com{mais} \mathfrak d\notin N_{E/F}E^\times\ptf$$
 On a plongŽ $E$
comme le sous-ensemble des matrices diagonales dans $\HM$ 
et l'action de l'ŽlŽment non trivial du groupe de Galois est induit par la conjugaison par 
$$\bs w_E=\begin{pmatrix}0&\mathfrak d\cr 1&0\cr\end{pmatrix}\ptf$$
Comme $\mathfrak d$ n'est pas une norme on a $\vef(\mathfrak d)=-1$ d'o le
\begin{lemma}\label{signb}
$$\vef\big(\det(\bs w_E)\big)=\vef(-\mathfrak d)=-\vef(-1)\ptf$$
\end{lemma}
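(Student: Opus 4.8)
The plan is to read off the determinant directly and then invoke the class field theory description of $\vef$. First I would note that from the explicit shape of $\bs w_E=\begin{pmatrix}0&\mathfrak d\cr 1&0\cr\end{pmatrix}$ one has $\det(\bs w_E)=-\mathfrak d$, whence $\vef\big(\det(\bs w_E)\big)=\vef(-\mathfrak d)$; this is the first claimed equality.

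For the second equality I would use that $\vef$ is a character of order $2$, hence multiplicative with values in $\{\pm1\}$, so that $\vef(-\mathfrak d)=\vef(-1)\,\vef(\mathfrak d)$. The decisive input is supplied by the very construction of the quaternion algebra $\HM$: the scalar $\mathfrak d$ was chosen precisely so that $\mathfrak d\notin \Nef E^\times$. Since $\vef$ is the character attached to $E/F$ by class field theory, trivial exactly on the norm group $\Nef E^\times$ and equal to $-1$ off it, we get $\vef(\mathfrak d)=-1$. Combining the two displays yields $\vef(-\mathfrak d)=-\vef(-1)$, as asserted.

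I expect no genuine obstacle: the entire content is the sign in the determinant together with the non-norm property of $\mathfrak d$ that is built into the definition of $\HM$. This is exactly the step where the computation departs from the split situation of Lemma \ref{signa}, where $\mathfrak d$ was a norm, so that $\vef(\mathfrak d)=1$ and one recovered $\vef(-\mathfrak d)=\vef(-1)$ instead.
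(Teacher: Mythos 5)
Your proof is correct and follows exactly the paper's own reasoning: the paper derives the lemma from the single observation, stated immediately before it, that $\mathfrak d$ is not a norm so $\vef(\mathfrak d)=-1$, combined with $\det(\bs w_E)=-\mathfrak d$ and the multiplicativity of $\vef$. Your closing comparison with Lemma \ref{signa}, where $\mathfrak d$ is a norm, also matches the paper's intent.
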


\Subsection{Correspondance de Jacquet-Langlands}

On a une bijection entre les classes de conjugaison d'ŽlŽments de $\HM^\times$
et les classes de conjugaison d'ŽlŽments elliptiques ou insŽparables de $\tG(F)$.
Cette bijection est duale de
la bijection, appelŽe correspondance de Jacquet-Langlands et notŽe $\JL$:
$$\tpi'\mapsto \JL(\tpi')$$ entre l'ensemble des reprŽsentations (unitaires) irrŽductibles
 de $\tG'(F)=\HM^\times$ dans l'ensemble des sŽries discrtes
de $\tG(F)=GL(2,F)$ et cette bijection est compatibles aux torsions par des caractres
et ˆ la bijection entre classes de conjugaison. De plus
on a une ŽgalitŽ au signe prs (le signe de Kottwitz) pour la restriction
de caractres des reprŽsentations aux ŽlŽments elliptiques rŽguliers (voir le chapitre 15 de \cite{JL})
$$\trace \tpi'(t)=-\trace \JL(\tpi')(t)\ptf$$

\Subsection{DŽcomposition par restriction ˆ $\G'(F)$}

Pour le corps des rŽels la norme 
des quaternions n'est pas surjective (elle ne prend que des valeurs positives).
Un caractre de $\HM^\times$ correspond ˆ deux caractres de $GL(2,\RM)$
qui diffrent par le caractre signe et le groupe $X(\tpi')$ est toujours trivial. Les reprŽsentations
de $\HM^\times$ restent donc irrŽductibles par restriction ˆ $G'(\RM)$.

Si $F$ est non archimŽdien et si $\tpi'$ est une reprŽsentation admissible
irrŽductible (unitaire) de $\tG'(F)$ correspondant ˆ $\tpi$, le groupe $X(\tpi')$ des caractres
tels que \hbox{$\tpi'\otimes\bs\chi'\simeq\tpi'$} est isomorphe ˆ $X(\tpi)$; il
est donc de cardinal 1, 2 ou 4. Cependant, on verra ci-dessous que l'algbre d'entrelacement
de la restriction $\pi'$ de $\tpi'$ ˆ $\G'(F)$ n'est pas toujours isomorphe ˆ celle
la restriction de $\tpi$ ˆ $\G(F)$ quoique de mme dimension. 
Si $\tpi'\otimes\vef\simeq\tpi'$ il existe un caractre $\tth$ de $E^\times$
tel que $\JL(\tpi')=\tpi_{E,\tth}$ et $\tpi'$ sera notŽe $\tpi'(\tth)$.
Notons $\G'(F)_E$ le noyau de $\vef\circ\det$.
Par restriction ˆ $\G'(F)_E$ la reprŽsentation $\tpi'(\tth)$ se dŽcompose en somme de 
deux reprŽsentations irrŽductibles inŽquivalentes que nous noterons $\tpi_{E,\tth}^{'-+}$
et $\tpi_{E,\tth}^{'-}\ptf$
Pour $\tG(F)_E$ l'existence de modles de Whittaker permettait
de distinguer entre $\tpi_{E,\tth}^+$ et $\tpi_{E,\tth}^-$\,\,. Pour $\G'(F)_E$ 
et il ne semble pas y avoir de faon simple pour distinguer les $\tpi_{E,\tth}^{'\pm}\ptf$

\Subsection{Transferts gŽomŽtrique et spectral}

On utilise le mme facteur de transfert 
$$\Delta^\EC(t,t')=\lambda(E/F,\psi)\mun
\vef\Big(\frac{\gamma-\overline\gamma}{\tau-\overline\tau}\Big)
{\vert{\gamma-\overline\gamma}\vert}$$ 
\begin{lemma} 
Pour $\EC=(\Tef,\vef)$ il existe une fonction 
$$f^{\EC}\in \ctyc\big(\Tef(F)\big)$$
appelŽe transfert de $f$ pour la donnŽe endoscopique $\EC$ telle que
$$f^{\EC}(t')=\Delta^\EC(t,t')\orb^\kappa(t,f)\ptf$$
\end{lemma}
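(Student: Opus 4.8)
The plan is to imitate the elliptic computation of \ref{elltrans}, using the decisive new feature that the inner form is anisotropic. Since $\HM$ is a division algebra, $\tG'(F)=\HM^\times$ is compact modulo its centre $\tZ(F)=F^\times$ and $\G'(F)=\HM^1$ is compact; moreover every separable quadratic field $E/F$ embeds in $\HM$, so every maximal torus of $\G'$ is of the form $\Tef(F)=E^1$, every regular semi-simple element is elliptic, and there is neither a split torus nor a non-trivial unipotent element. In particular the case $E=F\oplus F$ is vacuous for $\G'$, and we may assume that $E$ is a field, so that $\vef\ne1$. With $\kappa=\vef$ the $\kappa$-orbital integral
$$\orb^\kappa(t,f)=\int_{\tTef(F)\bsl\tG'(F)}\kappa(\det\tx)f(\tx\mun t\tx)\dtx$$
is then an integral over the \emph{compact} quotient $E^\times\bsl\HM^\times$, in sharp contrast with the $SL(2)$ case where $\Tef(F)\bsl\G(F)$ is non-compact. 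Away from the centre the integrand is smooth in $t$ and the domain is compact, so $t\mapsto\orb^\kappa(t,f)$ is smooth on $\Tef(F)-\Z(F)$; as $\Delta^\EC(t,t')$ is smooth there as well (\ref{ft}), the product $f^\EC(t')=\Delta^\EC(t,t')\orb^\kappa(t,f)$ is smooth on the regular set, and the only thing to settle is its behaviour as $t'\to z\in\Z(F)$.

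The key point is the integral of the character $\kappa\circ\det$ over the compact group $E^\times\bsl\HM^\times$. For $F$ non-archimedean the reduced norm $\det\colon\HM^\times\to F^\times$ is onto, so $\kappa\circ\det$ is a \emph{non-trivial} character of $E^\times\bsl\HM^\times$ (it is trivial on $E^\times$ because $\vef$ is trivial on $\Nef E^\times$), and therefore integrates to $0$. For $F=\RM$, $E=\CM$ the reduced norm takes values in $\RM^\times_{>0}$, on which $\vef$ (the sign character) is trivial, so $\kappa\circ\det\equiv1$ and the integral is $\vol(E^\times\bsl\HM^\times)\ne0$. This dichotomy is exactly what replaces, for the anisotropic form, the non-compact range $|\mu|\ge1$ of \ref{elltrans}: there the limit $\mu\to\infty$ produced the unipotent germ $\orb^\kappa(z\bs\nu,f)$, whereas here the analogous range is a \emph{finite} set of representatives for the double cosets $\tTef(F)\bsl\tG'(F)/\tK'$ (with $\tK'$ a maximal compact subgroup of $\tG'(F)$), forced by the compactness of $\tG'(F)/\tZ(F)$, so no unipotent term can arise.

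I would then conclude in each case. In the non-archimedean case $f$ is locally constant, so it equals $f(z)$ on a neighbourhood of $z$; since the map $\tx\mapsto\tx\mun t\tx$ carries $t$ uniformly close to $z$ over the compact quotient as $t\to z$, for $t$ near enough to $z$ one has $f(\tx\mun t\tx)=f(z)$ identically, whence
$$\orb^\kappa(t,f)=f(z)\int_{\tTef(F)\bsl\tG'(F)}\kappa(\det\tx)\dtx=0\ptf$$
Thus $f^\EC$ vanishes on a neighbourhood of each central point and extends by $0$ to a locally constant, hence smooth, function on the compact group $\Tef(F)$. In the real case $\orb^\kappa(t,f)=\orb(t,f)$ is the smooth stable orbital integral, while by the explicit form of \ref{ft} the transfer factor reduces, on $\Tef(\RM)\simeq U(1)$, to a smooth function vanishing to first order at the central points $z=\pm1$; the product is then smooth and again vanishes at the centre. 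In all cases $f^\EC\in\ctyc(\Tef(F))$ with $f^\EC(z)=0$.

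The main obstacle is precisely this central behaviour, and the step I expect to demand the most care is the uniformity behind the identity $\orb^\kappa(t,f)=0$ near $z$: one must know that the conjugates $\tx\mun t\tx$ stay in a fixed small neighbourhood of $z$ as $t\to z$, uniformly over the compact quotient $E^\times\bsl\HM^\times$. This uses the compactness of $\tG'(F)/\tZ(F)$ in an essential way, exactly the structural input that fails for $SL(2)$, and is what forces $f^\EC(z)=0$, the clean reflection of the absence of unipotent orbits in a division algebra.
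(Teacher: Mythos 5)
Your proof is correct and follows essentially the same route as the paper: compactness of $\tZ(F)\bsl\tG'(F)$ gives smoothness away from the centre, the non-archimedean $\kappa$-orbital integral vanishes identically near $\Z(F)$ (the paper phrases this as $\orb(t,f)=\orb(\overline t,f)$ for $t$ close to $1$, which is the same local-constancy-plus-compactness fact as your $\int\kappa(\det\tx)\,d\tx=0$), and for $F=\RM$ the sign in $\vef\big(\frac{\gamma-\overline\gamma}{\tau-\overline\tau}\big)$ cancels the absolute value so that the transfer factor becomes the smooth function $c\,(e^{i\vf}-e^{-i\vf})$. No gap to report.
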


\begin{proof}Si $F$ est non archimŽdien $\orb(t,f)=\orb(\overline t,f)$
pour $t$ assez voisin de 1 dans dans $\G'(F)$ et $\orb^\kappa(t,f)=0$; la lissitŽ 
de $f^\EC$ en rŽsulte.
Si $F=\RM$ on peut identifier $\T_{\CM/\RM}(\RM)$
avec le groupe des nombre complexes de module 1 et
$$\ve_{\CM/\RM}\Big(\frac{\gamma-\overline\gamma}{\tau-\overline\tau}\Big)
{\vert{\gamma-\overline\gamma}\vert}=c\,(e^{i\vf}-e^{-i\vf})$$ 
(pour un $\vf\in\RM$ et $c$ de module 1)
qui est lisse de mme que l'intŽgrale orbitale, pour les mesures de Haar normalisŽes,
les groupe $\G'(\RM)$ et $\T_{\CM/\RM}$ Žtant compacts.
\end{proof}

Compte tenu de \ref{signb} le transfert spectral est alors de la forme
$$\theta^\EC(t)
= \lambda(E/F,\psi)
\vef\Big(\frac{\gamma-\overline\gamma}{\tau-\overline\tau}\Big)
\frac{\big(\th(\gamma)-\th(\gamma\mun)\big)}{\vert\gamma-\gamma\mun\vert}\ptf$$
Si $\th$ la restriction de $\tth$ aux ŽlŽments de norme 1.
On note $\pi'(\th)^\pm$ les restrictions de $\tpi_{E,\tth}^{'\pm}$ ˆ $\G'(F)$. 
On voit comme dans \ref{diffa}
que la diffŽrence des caractres $$\trace\pi_{E,\th}^{'+}(t)-\trace\pi_{E,\th}^{'-}(t)$$ 
est nulle sur les tores non isomorphes ˆ $\Tef$.
Il rŽsulte de   la stabilisation des formules des traces pour 
 $\G'$ sur des corps globaux, esquissŽe ci-dessous, que pour 
$t\in\Tef(F)$ rŽgulier on peut choisir les $\tpi_{E,\tth}^{'\pm}$ de sorte que 
$$\theta^\EC(t)=\trace\pi_{E,\th}^{'+}(t)-\trace\pi_{E,\th}^{'-}(t)\ptf$$
On sait dŽjˆ que les $\pi'(\th)^\pm$ sont irrŽductibles inŽquivalentes si $\th$ n'est pas d'ordre 2.
Par contre, si $\th$ est d'ordre 2 non trivial, on voit que
$$\trace\pi_{E,\th}^{'+}(t)\equiv\trace\pi_{E,\th}^{'-}(t)$$
pout tout $t\in\G'(F)$.
Les deux reprŽsentations sont donc Žquivalentes et l'algbre de d'entrelacement 
de la restriction de $\tpi'(\tth)$ ˆ $\G'(F)$
est isomorphe ˆ l'algbre $M(2,\CM)$ des matrice $2\times 2$ complexes
alors que celle de la restriction de $\tpi_{E,\tth}$ ˆ $\G(F)$ Žtait $\CM^4$:
les algbres d'entrelacement sont de mme dimension mais non isomorphes.

\Section{ThŽorie globale}

Soit $F$ un corps global, $\HM$ une algbre de quaternions sur $F$ et soit
$\G'$ le groupe des ŽlŽments de norme 1.
On note $I_\gamma$ le centralisateur de $\gamma$, 
$\Gamma_F$ un ensemble de reprŽsentants des classes de
conjugaison dans $\G'(F)$ et $\Pi$ un ensemble de reprŽsentants des classes d'Žquivalence de
reprŽsentations unitaires irrŽductibles de $\G'(\adef)$ qui interviennent dans la dŽcomposition spectrale.
de $L^2(\G'(F)\bsl\G'(\adef))$.
Les classes de conjugaison sont de deux sortes : 
Il y a l'ŽlŽment neutre et d'autre part le ŽlŽments semi-simples rŽguliers dont le centralisateur
est un tore anisotrope. Les ŽlŽments hyperboliques ou unipotents n'apparaissent
pas dans $\G'(F)$ mme si ils apparaissent localement aux places $v$ o $\G'(F_v)$ est dŽployŽ
\cad partout sauf en un nombre fini pair, mais non nul, de places.
La compacitŽ de $\G'(F)\bsl\G'(\adef)$ implique que le spectre
de $L^2(\G'(F)\bsl\G'(\adef))$ est purement discret et on note $m(\pi)$ la multiplicitŽ
de $\pi$ dans le spectre discret.
Pour $\G'$ la formule des traces est donc ŽlŽmentaire:
$$\sum_{\gamma\in\Gamma_F} \vol\big(I_\gamma(F)\bsl I_\gamma(\adef)\big)
\orb(\gamma,f)
=\sum_{\pi'\in\Pi'} m(\pi')\trace(\pi'(f)\ptf$$
La prŽ-stabilisation et la stabilisation s'Žtablissent comme dans le cas dŽployŽ
sauf qu'il n'y a pas ˆ distinguer entre la \ts-stabilisation et la stabilisation invariante.
Le seul phŽnomne nouveau est que les multiplicitŽs peuvent ne pas tre plus grandes
que 1.  C'est clair pour certaines reprŽsentations instables mais c'est aussi le cas
pour des reprŽsentations stables. En effet on a vu ci-dessus que certains
composants des $L$-paquets peuvent avoir localement multiplicitŽ 2.

\Section{Sur les preuves}
On renvoie pour la preuve des ŽnoncŽs ci-dessus ˆ \cite{LL} dont les arguments
s'Žtendent {\it verbatim} au cas des corps locaux et globaux de caractŽristique quelconque.

\appendix

\Chapter{Rappels d'analyse harmonique}

\Section{Relations d'orthogonalitŽ}

Soit $H$ un groupe localement compact unimodulaire. Soit $\pi$ une reprŽsentation
de la sŽrie discrte \cad une reprŽsentation unitaire irrŽductible dans un espace de Hilbert $V$
dont tous les coefficients 
$$f(x)=\lg\pi(x)v,w\lr$$
sont de carrŽ intŽgrable. On peut montrer que 
$$\int_H \vert f(x)\vert^2dx\le C(\pi)\vert\vert v\vert\vert. \vert\vert w\vert\vert\ptf$$
Si $\sigma$ est une autre reprŽsentation
de la sŽrie discrte dans $V'$ l'intŽgrale
$$A(v,v',w,w')=\int_H\lg\pi(x)v,w\lr \overline{\lg\sigma(x)v',w'\lr}\,dg$$
est convergente. 

\begin{lemma}\label{orth} Il existe une constante $d_\pi$ dŽpendant de $\pi$ et du choix
de la mesure de Haar sur $H$ telle que
$$ A(v,v',w,w')= \begin{cases} d_\pi \lg v,v'\lr \overline{\lg w,w'\lr}&\com{Si $\pi=\sigma$}\cr
0&\com{Si $\pi\not\simeq\sigma$}
\end{cases}$$
\end{lemma}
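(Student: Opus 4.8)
Le plan est de construire, pour chaque couple $(w,w')$, un op\'erateur d'entrelacement entre $\pi$ et $\sigma$ et d'invoquer le lemme de Schur. Fixons donc $w\in V$ et $w'\in V'$ et consid\'erons la forme $(v,v')\mapsto A(v,v',w,w')$, lin\'eaire en $v$ et antilin\'eaire en $v'$. Comme les coefficients $x\mapsto\lg\pi(x)v,w\lr$ et $x\mapsto\lg\sigma(x)v',w'\lr$ sont de carr\'e int\'egrable, l'in\'egalit\'e de Cauchy-Schwarz donne
$$|A(v,v',w,w')|\le\Big(\int_H|\lg\pi(x)v,w\lr|^2\,dx\Big)^{1/2}\Big(\int_H|\lg\sigma(x)v',w'\lr|^2\,dx\Big)^{1/2}$$
de sorte que cette forme est born\'ee; le th\'eor\`eme de repr\'esentation de Riesz fournit alors un op\'erateur born\'e $T=T_{w,w'}\colon V\to V'$ tel que $A(v,v',w,w')=\lg Tv,v'\lr_{V'}$.

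Je montrerais ensuite que $T$ entrelace $\pi$ et $\sigma$. En effet, pour $g\in H$ on a
$$\lg T\pi(g)v,v'\lr=\int_H\lg\pi(xg)v,w\lr\,\overline{\lg\sigma(x)v',w'\lr}\,dx$$
et le changement de variable $x\mapsto xg\mun$, licite puisque $H$ est unimodulaire, transforme ce membre de droite en $\lg Tv,\sigma(g\mun)v'\lr=\lg\sigma(g)Tv,v'\lr$, d'o\`u $T\pi(g)=\sigma(g)T$ pour tout $g$. Le lemme de Schur pour les repr\'esentations unitaires irr\'eductibles s'applique alors~: si $\pi\not\simeq\sigma$, tout entrelacement born\'e est nul, donc $T=0$ et $A\equiv0$, ce qui est la seconde alternative~; si $\pi=\sigma$ (et donc $V=V'$), l'op\'erateur $T$ appartient au commutant de $\pi$, r\'eduit aux scalaires, de sorte que $T=\lambda(w,w')\,\mathrm{Id}$, et $A(v,v',w,w')=\lambda(w,w')\lg v,v'\lr$.

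Il resterait \`a identifier le coefficient $\lambda(w,w')$. Je fixerais $v,v'$ avec $\lg v,v'\lr\ne0$ et j'\'etudierais la d\'ependance de $A$ en $(w,w')$, qui est antilin\'eaire en $w$ et lin\'eaire en $w'$~: un argument d'entrelacement sym\'etrique au pr\'ec\'edent, o\`u les variables $w,w'$ jouent le r\^ole tenu par $v,v'$, montre que $\lambda(w,w')=d_\pi\,\overline{\lg w,w'\lr}$ pour une constante $d_\pi$ ne d\'ependant que de $\pi$ et du choix de la mesure de Haar. La positivit\'e de $d_\pi$ s'obtient en sp\'ecialisant $v=v'$ et $w=w'$, auquel cas $A(v,v,w,w)=\int_H|\lg\pi(x)v,w\lr|^2\,dx\ge0$ vaut $d_\pi\,\|v\|^2\,\|w\|^2$.

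J'attends que l'obstacle principal r\'eside dans la justification rigoureuse du caract\`ere scalaire de $T$ en dimension infinie~: c'est pr\'ecis\'ement l'hypoth\`ese d'int\'egrabilit\'e de carr\'e (s\'erie discr\`ete) qui garantit que $T$ est bien d\'efini et born\'e, et le lemme de Schur dans le cadre unitaire s'obtient en appliquant le th\'eor\`eme spectral aux parties auto-adjointes de $T^*T$ et $T+T^*$, qui commutent \`a $\pi$. Les manipulations de mesure et les interversions d'int\'egration sont quant \`a elles routini\`eres une fois la convergence absolue acquise.
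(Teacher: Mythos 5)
Votre d\'emonstration suit pour l'essentiel la m\^eme voie que celle du texte~: l'invariance $A(v,v',w,w')=A(\pi(y)v,\sigma(y)v',w,w')$, obtenue par changement de variable gr\^ace \`a l'unimodularit\'e, fournit un op\'erateur d'entrelacement born\'e auquel on applique le lemme de Schur. Votre r\'edaction est m\^eme plus compl\`ete que celle du papier, qui se contente d'\'etablir l'annulation pour $\pi\not\simeq\sigma$ et laisse implicites la bornitude de la forme sesquilin\'eaire, l'identification du scalaire $d_\pi\,\overline{\lg w,w'\lr}$ par l'argument sym\'etrique en $(w,w')$ et la positivit\'e de $d_\pi$.
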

\begin{proof}
 Pour tout $y\in H$
$$A(v,v',w,w')=A(\pi(y)v,\sigma(y)v',w,w')$$ 
et donc il existe un opŽrateur (linŽaire continu) $I:V\to V'$ (dŽpendant de $w$ et $w'$)
tel que
$$ A(v,v',w,w')= \lg Iv,v'\lr=A(\pi(y)v,\sigma(y)v',w,w')= \lg I\pi(y)v,\sigma(y)v'\lr$$
et donc
$$I\pi(y)=\sigma(y)I$$
Si $A$ n'est pas identiquement nul l'opŽrateur $I$ est non nul et entrelace $\pi$ et $\sigma$.
Donc $A$ n'est pas identiquement si et seulement si $\pi$ et $\sigma$ sont Žquivalentes.
\end{proof}
Le mme raisonnement s'applique si $\pi$ a tous ses coefficients ˆ support compact
(ou plus gŽnŽralement intŽgrables) et $\sigma$ unitaire irrŽductible arbitraire.

Supposons dŽsormais que $H=\bs\G(F)$ o $\bs\G$ est un groupe rŽductif 
dŽfini sur un corps. Pour ne pas avoir ˆ prendre en compte le centre 
nous supposerons pour simplifier que $\bs\G$ est semi-simple.
Supposons que $\pi$ a tous ses coefficients ˆ support compact. Ce sera le cas
des reprŽsentations super-cuspidales. 
Soit $\mathcal B_\pi=\{e_i\,|\,{i\in\NM}\}$ une base orthonormale de l'espace de Hilbert $V$
o se rŽalise la reprŽsentation $\pi$. Notons $\chi_\pi$ son caractre
qui est une distribution que l'on
supposera reprŽsentable par une fonction localement intŽgrable.
On appelle coefficient normalisŽ une fonction de la forme
$$f_\pi(x)=\frac{1}{d_\pi}\lg\pi(x)e_0,e_0\lr$$
o $e_0$ est un ŽlŽment de $\mathcal B_\pi$.

\begin{lemma}\label{coef}soient $\pi$ et $\pi'$ 
deux reprŽsentations admissibles de carrŽ intŽgrables.
Un coefficient normalisŽ pour $\pi$  vŽrifie
$$\trace\pi(\overline{f_\pi})
=1\com{et} 
\trace\pi'(\overline{f_\pi})=0 \com{si}\pi'\not\simeq\pi\ptf$$
\end{lemma}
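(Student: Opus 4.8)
The plan is to compute the trace straight from the definition $\pi'(\overline{f_\pi})=\int_H\overline{f_\pi(x)}\,\pi'(x)\,dx$, by pairing against an orthonormal basis and recognizing each resulting scalar as one of the orthogonality integrals controlled by Lemma \ref{orth}. First I would record that $d_\pi$ is real and positive: indeed, in the notation of \ref{orth} one has $d_\pi=A(e_0,e_0,e_0,e_0)=\int_H|\lg\pi(x)e_0,e_0\lr|^2\,dx\ge0$. Fixing an orthonormal basis $\{e_i'\mid i\in\NM\}$ of the space $V'$ of $\pi'$ and using $\overline{f_\pi(x)}=\tfrac{1}{d_\pi}\overline{\lg\pi(x)e_0,e_0\lr}$, I would then write
$$\trace\pi'(\overline{f_\pi})=\sum_i\lg\pi'(\overline{f_\pi})e_i',e_i'\lr
=\frac{1}{d_\pi}\sum_i\int_H\lg\pi'(x)e_i',e_i'\lr\,\overline{\lg\pi(x)e_0,e_0\lr}\,dx.$$
Each integral on the right is precisely $A(e_i',e_0,e_i',e_0)$ for the ordered pair $(\pi',\pi)$ in the notation of \ref{orth} (first representation $\pi'$, second representation $\pi$, conjugated).

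The two assertions then fall out by a direct application of Lemma \ref{orth}. If $\pi'\not\simeq\pi$, every integral $A(e_i',e_0,e_i',e_0)$ vanishes, so the whole sum is zero and $\trace\pi'(\overline{f_\pi})=0$. If $\pi'=\pi$, I would realize both representations on the same space and take $\{e_i'\}=\mathcal B_\pi$; the orthogonality relation gives $A(e_i,e_0,e_i,e_0)=d_\pi\lg e_i,e_0\lr\overline{\lg e_i,e_0\lr}=d_\pi\,\delta_{i0}$, so that
$$\trace\pi(\overline{f_\pi})=\frac{1}{d_\pi}\sum_i d_\pi\,\delta_{i0}=1,$$
the single surviving term being $i=0$.

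The main point requiring care is not the algebra but the analytic justification that $\pi'(\overline{f_\pi})$ is trace class and that the sum over $i$ may be exchanged with the integral over $H$. In the situation where \ref{coef} is used, $\pi$ has compactly supported coefficients (the supercuspidal case), so $f_\pi\in C_c(H)$ and, $\pi'$ being admissible, the operator $\pi'(\overline{f_\pi})$ has finite rank; this makes the trace unambiguous and legitimizes the termwise computation by Fubini. The harder part will be to handle the weaker hypothesis of mere square-integrability of the coefficients: there I would invoke that the character $\chi_{\pi'}$ is representable by a locally integrable function (recalled above via \cite{Lem}) together with the uniform bound $\int_H|f_\pi(x)|^2\,dx\le C(\pi)$ preceding \ref{orth} to dominate the tail of the basis expansion and pass to the limit.
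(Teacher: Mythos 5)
Your argument is the same as the paper's: expand $\trace\pi'(\overline{f_\pi})$ over an orthonormal basis of the space of $\pi'$, recognize each term as one of the integrals $A(\cdot,\cdot,\cdot,\cdot)$ of Lemme \ref{orth}, and conclude by orthogonality, the only surviving term when $\pi'=\pi$ being the one indexed by $e_0$. Your additional remarks (positivity of $d_\pi$, finite rank of $\pi'(\overline{f_\pi})$ in the compactly supported case justifying the interchange of sum and integral) are correct and only make explicit what the paper leaves implicit.
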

\begin{proof}La trace de l'opŽrateur $$\pi'(f_\pi)=\int_H\pi'(x)\overline{f_\pi(x)}\,dx$$
est donnŽe par la sŽrie
$$\trace\pi'(f_\pi)=\sum_{e\in\mathcal B_{\pi'}}\langle \pi'(f_\pi)e,e\rangle
=\frac{1}{d_\pi}\sum_{e\in\mathcal B_{\pi'}}\int_H\langle \pi'(x)e,e\rangle
\overline{\langle \pi(x)e_0,e_0\rangle}\,dx
$$
et le lemme rŽsulte de \ref{orth}.
\end{proof}

\Section{Caractres et intŽgrales orbitales}


\begin{proposition}\label{cara}
Les reprŽsentations admissibles irrŽductibles des sous-groupes distinguŽs ouverts d'indice fini de $GL(n,F)$,
ainsi que celles de $SL(n,F)$ et de leurs formes intŽrieures,
admettent des caractres distribution qui sont reprŽsentables par
des fonctions localement intŽgrables.
\end{proposition}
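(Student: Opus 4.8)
The plan is to reduce the whole statement to the single fact that the characters of $GL(n,F)$, of $SL(n,F)$ and of their inner forms are locally integrable, which is the main theorem of \cite{Lem} (and, for $GL(2,F)$ in arbitrary characteristic, already of \cite{JL}). The first step is to dispose of the open normal subgroups $H$ of finite index in $GL(n,F)$ by showing that such an $H$ is necessarily an intermediate group $SL(n,F)\subseteq H\subseteq GL(n,F)$. Indeed $H\cap SL(n,F)$ is a normal subgroup of finite index in $SL(n,F)$; since $PSL(n,F)$ is simple, the only proper normal subgroups of $SL(n,F)$ are central, hence of infinite index, so $H\cap SL(n,F)=SL(n,F)$ and $H\supseteq SL(n,F)$. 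The same argument applies verbatim to the inner forms $GL(m,D)$ and $SL(m,D)$, the case $n=1$ (where $GL(1,F)=F^\times$ is abelian and all characters are continuous) being trivial. In this way every group in the statement is one of the groups already covered by \cite{Lem} — intermediate groups between $SL$ and $GL$, and their inner forms — and a direct appeal to that reference would conclude.

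I would nonetheless record the conceptual reduction of the intermediate case to $GL(n,F)$ itself, following the Clifford-theoretic pattern of \ref{cliff}. Given an irreducible admissible representation $\pi$ of such an $H$, there is an irreducible admissible representation $\tpi$ of $GL(n,F)$ whose restriction to $H$ contains $\pi$, and this restriction decomposes as a finite direct sum $\bigoplus_g\pi^g$ of pairwise inequivalent conjugates $\pi^g(h)=\pi(g^{-1}hg)$, each with multiplicity one, as $g$ runs over a transversal of the stabiliser of $\pi$ in $GL(n,F)$ (observe that $GL(n,F)/H$ is finite abelian, being a quotient of $F^\times$). Because $H$ is open in $GL(n,F)$, the restriction to $H$ of the locally integrable class function representing $\chi_{\tpi}$ is again locally integrable and equals $\sum_g\chi_{\pi^g}$; and on the regular semisimple locus $H^{rs}=H\cap GL(n,F)^{rs}$ the Harish-Chandra regularity theorem — valid in all characteristics by \cite{Lem} — represents each $\chi_{\pi^g}$ by a smooth function. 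Thus the only integrability left to control is across the singular set.

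The delicate point in this second route is to separate a single summand $\chi_\pi$ from the integrable sum $\sum_g\chi_{\pi^g}$, since the summands are transported into one another by the measure-preserving conjugations $x\mapsto g^{-1}xg$ of the unimodular group $H$ and can interfere near the singular set; this is exactly the step for which elementary harmonic analysis does not suffice and for which one must invoke the germ expansions and uniform estimates of \cite{Lem} directly for $H$. I therefore expect the main obstacle to lie not in the reductions above but in their analytic input: in positive characteristic the classical tools (the exponential map, the radial-component study of the differential equations satisfied by the characters, and the Shalika-germ estimates near the singular set) degenerate, notably when the residue characteristic is small relative to $n$. Supplying substitutes for these tools, and thereby both the regularity on $H^{rs}$ and the crucial $L^1_{loc}$-bound across the singular set, is precisely the content of \cite{Lem}; granting it, the reductions above yield the proposition in every characteristic.
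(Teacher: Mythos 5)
Your proposal is correct and takes essentially the same route as the paper: the paper's proof is a bare appeal to Harish-Chandra for the archimedean and characteristic-zero cases, to \cite{JL} for $GL(2)$ in arbitrary characteristic, and to \cite{Lem} for general $n$, the latter reference already covering the intermediate groups between $SL(n)$ and $GL(n)$ as well as their inner forms. Your extra reductions (identifying the open normal finite-index subgroups with these intermediate groups via the simplicity of $PSL(n,F)$, and the Clifford-theoretic decomposition of the restricted character) are correct but go beyond what the paper records, since \cite{Lem} is invoked directly for all the groups in the statement.
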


\begin{proof}
C'est bien connu si $F$ est archimŽdien ou si $F$ est non archimŽdien de caractŽristique zŽro
pour des groupes rŽductifs arbitraires par les travaux d'Harish-Chandra.
Pour les corps non archimŽdiens, sans restriction sur la caractŽristique,
le cas $GL(2)$ est traitŽ dans \cite{JL}. 
On dispose dŽsormais des rŽsultats de \cite{Lem} pour tout $n$. 
Le cas des groupes rŽductifs arbitraires reste conjectural.\end{proof}

\begin{lemma}\label{orbchi} Si $\pi$ est cuspidale
les intŽgrales orbitales
d'un coefficient normalisŽ $f_\pi$, pour un ŽlŽment $x$ semi-simple rŽgulier, vŽrifient
$$\orb(x,f_\pi)=\begin{cases}\chi_\pi(x)&\com{si $x$ est elliptique}
\cr 0&\com{sinon}\cr
\end{cases}$$
\end{lemma}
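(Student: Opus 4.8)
Le plan est de traiter s\'epar\'ement le cas non elliptique et le cas elliptique : le premier rel\`eve de la cuspidalit\'e, le second de la combinaison de la formule d'int\'egration de Weyl et des relations d'orthogonalit\'e des caract\`eres. Je commencerais par le cas non elliptique. Si $x$ est semi-simple r\'egulier non elliptique, il est hyperbolique et, \`a conjugaison pr\`es, $x=\begin{pmatrix}a&0\cr0&a\mun\end{pmatrix}\in\M(F)$, de centralisateur le tore d\'eploy\'e. D'apr\`es \ref{deploy},
$$\Delta_\M(x)\,\orb(x,f_\pi)=\int_K\int_{F} f_\pi\Big(k\mun\begin{pmatrix}a&n\cr0&a\mun\end{pmatrix}k\Big)\,dn\,dk\ptf$$
Le changement de variable $n=am$ ram\`ene l'int\'egrale int\'erieure \`a $|a|\int_{\U(F)}f_\pi(k\mun x u k)\,du$, et comme $f_\pi(k\mun x u k)=d_\pi\mun\langle\pi(u)\pi(k)e_0,\pi(x\mun k)e_0\rangle$ avec $u\mapsto f_\pi(k\mun xuk)$ \`a support compact, cette int\'egrale sur $\U(F)$ est la valeur d'un terme constant d'un coefficient de $\pi$ le long de $\P$. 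La cuspidalit\'e de $\pi$ (nullit\'e du module de Jacquet) l'annule pour tout $k$, d'o\`u $\orb(x,f_\pi)=0$.

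Pour le cas elliptique, j'exploiterais \ref{coef} et \ref{Weyl}. Pour toute repr\'esentation $\pi'$ de carr\'e int\'egrable, en repr\'esentant son caract\`ere-distribution par une fonction localement int\'egrable (\ref{cara}) et en appliquant la formule d'int\'egration de Weyl \ref{Weyl} \`a $g\mapsto\overline{f_\pi(g)}\,\chi_{\pi'}(g)$ (o\`u $\chi_{\pi'}$, fonction de classe, sort des int\'egrales internes), on obtient
$$\trace\pi'(\overline{f_\pi})=\sum_\T\frac{1}{w_\T}\int_{\T(F)}\Delta_\T(t)^2\,\chi_{\pi'}(t)\,\orb(t,\overline{f_\pi})\,dt\ptf$$
D'apr\`es le premier point le tore d\'eploy\'e ne contribue pas ; seuls subsistent les tores anisotropes. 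Comme \ref{coef} donne $\trace\pi'(\overline{f_\pi})=\delta_{\pi,\pi'}$, la fonction $O(t):=\orb(t,\overline{f_\pi})$, \`a support dans l'ensemble elliptique, v\'erifie
$$\sum_{\T}\frac{1}{w_\T}\int_{\T(F)}\Delta_\T(t)^2\,\chi_{\pi'}(t)\,O(t)\,dt=\delta_{\pi,\pi'}$$
pour toute $\pi'$ de la s\'erie discr\`ete, la somme portant sur les tores anisotropes.

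En comparant avec les relations d'orthogonalit\'e des caract\`eres de la s\'erie discr\`ete, qui r\'esultent de \ref{orth} par la m\^eme int\'egration de Weyl et s'\'ecrivent, avec la m\^eme normalisation,
$$\sum_{\T}\frac{1}{w_\T}\int_{\T(F)}\Delta_\T(t)^2\,\chi_{\pi'}(t)\,\overline{\chi_\pi(t)}\,dt=\delta_{\pi,\pi'}\ptf$$
on voit que $O-\overline{\chi_\pi}$ est orthogonale, pour le produit scalaire elliptique, \`a toutes les $\overline{\chi_{\pi'}}$.

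Pour en d\'eduire $O=\overline{\chi_\pi}$ sur l'ensemble elliptique, j'invoquerais la compl\'etude des caract\`eres de la s\'erie discr\`ete dans l'espace des fonctions de classe elliptiques (Harish-Chandra) ; dans le cas qui nous occupe elle s'obtient aussi explicitement, puisque sur chaque tore elliptique compact $\Tef\simeq E^1$ les caract\`eres issus de la repr\'esentation de Weil fournissent, via \ref{diffb}, les fonctions $\th+\th\mun$, l'analyse de Fourier sur $E^1$ donnant alors la densit\'e. Enfin, la conjugaison commutant \`a l'int\'egration, $\orb(x,\overline{f_\pi})=\overline{\orb(x,f_\pi)}$, de sorte que $O=\overline{\chi_\pi}$ livre $\orb(x,f_\pi)=\chi_\pi(x)$ pour $x$ elliptique r\'egulier. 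L'\'etape d\'elicate est pr\'ecis\'ement cette compl\'etude : les identit\'es formelles ne d\'eterminent $O$ que modulo l'orthogonal de l'espace engendr\'e par les caract\`eres, et le fixer ponctuellement repose sur l'exhaustion du spectre elliptique par la s\'erie discr\`ete --- ou, ici, sur les formules explicites de caract\`eres et l'inversion de Fourier sur les tores compacts.
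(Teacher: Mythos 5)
Votre traitement du cas non elliptique est correct et co\"{\i}ncide en substance avec celui du texte : apr\`es conjugaison au tore diagonal, l'int\'egrale orbitale se ram\`ene \`a une int\'egrale sur $U(F)$ d'un coefficient de $\pi$, laquelle s'annule par cuspidalit\'e, tout vecteur lisse \'etant somme finie de vecteurs de la forme $(\pi(u_i)-1)\ve_i$. Le d\'etour par \ref{deploy} est inutile mais inoffensif.

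Le cas elliptique, en revanche, pr\'esente une lacune r\'eelle, que vous signalez d'ailleurs vous-m\^eme. Votre argument ne d\'etermine la fonction $O(t)=\orb(t,\overline{f_\pi})$ qu'\`a travers ses produits scalaires elliptiques contre les caract\`eres de la s\'erie discr\`ete, et la conclusion exige alors deux choses : les relations d'orthogonalit\'e de ces caract\`eres pour le produit scalaire elliptique, et leur compl\'etude dans l'espace des fonctions de classe sur l'ensemble elliptique. Or, dans la logique du texte, la premi\`ere est pr\'ecis\'ement ce que le lemme \ref{weyl} d\'eduit de \ref{orbchi} (c'est en substituant $\orb(f_\pi,t)=\chi_\pi(t)$ dans la formule d'int\'egration de Weyl qu'on les obtient) ; vous utilisez donc une cons\'equence de l'\'enonc\'e pour le d\'emontrer, \`a moins d'invoquer un r\'esultat externe non disponible ici en toute caract\'eristique. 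Quant \`a la compl\'etude, votre esquisse via \ref{diffb} ne contr\^ole que les diff\'erences $\trace\pi^+-\trace\pi^-$, \cad la partie instable des caract\`eres, et seulement sur les tores isomorphes \`a $\Tef$ ; elle ne dit rien de la partie stable ni des autres tores elliptiques, et en caract\'eristique r\'esiduelle $2$ les supercuspidales ne proviennent pas toutes de la repr\'esentation de Weil, tandis que la famille des tores elliptiques y est non d\'enombrable. La preuve du texte \'evite tout cela par un calcul direct et purement local : on int\`egre $\orb(\cdot,f_\pi)$ contre une fonction test $\vf$ \`a support dans l'ouvert des \'el\'ements elliptiques r\'eguliers, on permute les int\'egrations pour obtenir $\frac{1}{d_\pi}\int_H\lg\pi(\vf)\pi(y)e_0,\pi(y)e_0\lr\,dy$, et les relations de Schur \ref{orth} donnent directement $\trace\pi(\vf)=\int\chi_\pi\vf$, d'o\`u l'identit\'e cherch\'ee au sens des distributions, donc ponctuellement ; aucune compl\'etude n'est requise.
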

\begin{proof} Supposons $x$ elliptique rŽgulier.
Soit $\vf$ une fonction lisse ˆ support compact dans un voisinage 
assez petit de $x$ de sorte que tous les points de ce voisinage sont aussi
elliptiques rŽguliers. On a
$$\int_H \orb(x,f_\pi)\vf(x)\,dx=\frac{1}{d_\pi}\int_H\Big(\int_{I_x\bsl H}
\lg\pi(y\mun x y)e_0,e_0\lr\,dy\Big)\vf(x)\,dx
$$
qui, si on munit $I_x$ de la mesure de masse totale $1$, est encore Žgal ˆ
$$\frac{1}{d_\pi}\int_H
\lg\pi(\vf)\pi(y)e_0,\pi(y)e_0\lr\,dy
=\frac{1}{d_\pi}\sum_{e\in\mathcal B_\pi}\int_H
\lg\pi(\vf)\pi(y)e_0,e\lr
\overline{\lg\pi(y)e_0,e\lr}\,dy
$$
et les relations d'orthogonalitŽ \ref{orth} donnent
$$\int_H \orb(x,f_\pi)\vf(x)\,dx=\sum_{e\in\mathcal B_\pi}
\lg\pi(\vf)e,e\lr=\trace \pi(\vf)$$
et donc
$$\int_H \orb(x,f_\pi)\vf(x)\,dx=\int_H \chi_\pi(x)\vf(x)\,dx\ptf$$
ConsidŽrons maintenant un ŽlŽment $x$ semi-simple non elliptique. 
Nous devons montrer l'annulation de son intŽgrale orbitale. C'est classique;
faisons le lorsque $H=SL(2,F)$ pour un coefficient construit ˆ partir
de vecteurs lisses \cad $K$-finis.
Par hypothse $x=y\mun ay$ avec $a\in A(F)$ o $A$ est le tore diagonal et $a$ rŽgulier.
On a $$H=SL(2,F)=A(F)U(F)K$$ et donc
$$ \orb(x,f_\pi)\,dx=\int_{A(F)\bsl H}\lg\pi(y\mun a y)e_0,e_0\lr\,dy=\int_{K}\int_{U(F)}
\lg\pi(k\mun u\mun a uk)e_0,e_0\lr\,du\,dk$$
soit encore en utilisant que $A(F)$ normalise $U(F)$ et que $a$ est rŽgulier on a
$$ \orb(x,f_\pi)\,dx=d(a)\int_{K}\int_{U(F)}
\lg\pi(k\mun a uk)e_0,e_0\lr\,du\,dk
$$
Maintenant il suffit d'observer que si les $e_i$ sont des vecteurs lisses
$$\int_{U(F)}\lg\pi( u)e_1,e_2\lr\,du =0$$
si $\pi$ est cuspidale. En effet, par dŽfinition de la cuspidalitŽ, le sous-espace
des co-invariants (lisses) sous $U(F)$ est l'espace (des vecteurs lisses) tout entier et on peut donc Žcrire
$e_2$ comme somme finie de vecteurs de la forme $(\pi(u_i)-1)\ve_i$.
\end{proof}

\begin{lemma} \label{weyl}
Soit $\pi$ une reprŽsentation cuspidale et
$f_\pi(x)$ un coefficient normalisŽ.
Soit
$\sigma$ une reprŽsentation unitaire somme finie de reprŽsentations de carrŽ intŽgrables de $\G(F)$. 
La multiplicitŽ $n(\pi,\sigma)$ de $\pi$ dans $\sigma$ est donnŽe par l'entier
 $$n(\pi,\sigma):=\trace\overline\sigma(f_\pi)=\langle \chi_\sigma,\chi_\pi\rangle\ptf$$ 
\end{lemma}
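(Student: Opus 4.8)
Le plan est de ramener l'\'enonc\'e au cas o\`u $\sigma$ est irr\'eductible de carr\'e int\'egrable, puis d'invoquer les lemmes \ref{coef} et \ref{orbchi}. Je commencerais par d\'ecomposer $\sigma=\bigoplus_i\sigma_i$ en somme finie de repr\'esentations irr\'eductibles de carr\'e int\'egrable, $\pi$ y figurant avec multiplicit\'e $n(\pi,\sigma)$. La trace et le produit scalaire de caract\`eres \'etant tous deux additifs pour une telle d\'ecomposition, il suffira de traiter chaque $\sigma_i$.

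Pour la premi\`ere \'egalit\'e $n(\pi,\sigma)=\trace\overline\sigma(f_\pi)$, j'\'ecris $\trace\overline\sigma(f_\pi)=\sum_i\trace\overline{\sigma_i}(f_\pi)$ et j'utilise que $\chi_{\overline{\sigma_i}}=\overline{\chi_{\sigma_i}}$, d'o\`u $\trace\overline{\sigma_i}(f_\pi)=\overline{\trace\sigma_i(\overline{f_\pi})}$. Le lemme \ref{coef} donne alors directement $\trace\sigma_i(\overline{f_\pi})=1$ lorsque $\sigma_i\simeq\pi$ et $0$ sinon, de sorte que la somme compte le nombre d'indices $i$ avec $\sigma_i\simeq\pi$, c'est-\`a-dire $n(\pi,\sigma)$.

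Pour la seconde \'egalit\'e, je partirais de l'expression int\'egrale
\[
\trace\overline\sigma(f_\pi)=\int_{\G(F)}f_\pi(x)\,\overline{\chi_\sigma(x)}\,dx\ptf
\]
qui est absolument convergente car $f_\pi$ est \`a support compact et $\chi_\sigma$ localement int\'egrable (\ref{cara}). La fonction $\overline{\chi_\sigma}$ \'etant invariante par conjugaison, j'appliquerais la formule d'int\'egration de Weyl \ref{Weyl} \`a $f_\pi$ en portant ce poids invariant, pour obtenir
\[
\trace\overline\sigma(f_\pi)=\sum_\T w_\T\mun\int_{\T(F)}\Delta_\T(t)^2\,\overline{\chi_\sigma(t)}\,\orb(t,f_\pi)\,dt\ptf
\]
Comme $\pi$ est cuspidale, le lemme \ref{orbchi} assure que $\orb(t,f_\pi)$ s'annule sur les tores d\'eploy\'es et vaut $\chi_\pi(t)$ sur les \'el\'ements elliptiques r\'eguliers. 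Seuls les tores anisotropes subsistent, et il reste la somme des int\'egrales $w_\T\mun\int_{\T(F)}\Delta_\T(t)^2\,\overline{\chi_\sigma(t)}\,\chi_\pi(t)\,dt$, qui est pr\'ecis\'ement, via les relations d'orthogonalit\'e \ref{orth}, le produit scalaire elliptique $\langle\chi_\sigma,\chi_\pi\rangle$ (au conjugu\'e complexe pr\`es, sans cons\'equence puisque le r\'esultat est un entier).

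Le point le plus d\'elicat sera de nature analytique: les caract\`eres de la s\'erie discr\`ete n'\'etant pas de carr\'e int\'egrable sur $\G(F)$ tout entier, il faut interpr\'eter $\langle\chi_\sigma,\chi_\pi\rangle$ comme le produit scalaire elliptique (somme sur les tores anisotropes des int\'egrales pond\'er\'ees par $\Delta_\T^2$), et c'est l'annulation des int\'egrales orbitales de $f_\pi$ hors de l'ensemble elliptique qui l\'egitime cette r\'eduction. Il faudra aussi justifier l'interversion de la sommation sur une base orthonormale d\'efinissant la trace avec l'int\'egration sur $\G(F)$, ce qui se fait comme dans la preuve de \ref{orbchi} au moyen des relations d'orthogonalit\'e \ref{orth}.
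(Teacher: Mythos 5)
Votre démonstration suit essentiellement la même route que celle du texte : la première égalité via les relations d'orthogonalité \ref{coef}, puis la formule d'intégration de Weyl \ref{Weyl} combinée à l'annulation des intégrales orbitales de $f_\pi$ hors de l'ensemble elliptique (\ref{orbchi}) pour identifier le produit scalaire elliptique $\langle\chi_\sigma,\chi_\pi\rangle$. La décomposition explicite de $\sigma$ en irréductibles et la discussion des conjugués complexes ne font qu'expliciter ce que le texte laisse implicite ; l'argument est correct.
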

\begin{proof}
Les relations d'orthogonalitŽ de Weyl \ref{coef} montrent que
$$\trace\sigma( \overline{f_\pi})=n(\pi,\sigma)\ptf$$
 La formule d'intŽgration de Weyl fournit l'ŽgalitŽ
$$\trace\sigma(\overline f_\pi)=\int_{\G(F)}\overline{f_\pi(x)}{\chi_\sigma(x)}\,dx
=\sum_\T w_T\mun\int_{\T(F)} \Delta_\T(t)^2{\chi_\sigma(t)}\overline{\orb(f_\pi,t)}\,dt$$
o la somme porte sur les classes de conjugaisons de tores dans $\G(F)$ 
et $w_T$ est l'ordre du quotient $N_T(F)/T(F)$ o $N_T$ est le normalisateur de $T$.
Les tores compacts sont munis de la mesure de Haar normalisŽe ($\vol(T(F)=1$).
Puisque $\pi$ est cuspidale, il rŽsulte de  \ref{orbchi} qu'en un point $t\in\T(F)$ rŽgulier
$$\orb(f_\pi,t)=\chi_\pi(t)$$
si $\T(F)$ est un tore elliptique dans $\G(F)$
et l'intŽgrale orbitale est nulle pour un tore dŽployŽ. On en dŽduit que:
$$\trace\sigma(\overline f_\pi)
=\sum_\T w_T\mun\int_{\T(F)} \Delta_\T(t)^2{\chi_\sigma(t)}\overline{\chi_\pi(t)}\,dt
=\langle \chi_\sigma,\chi_\pi\rangle$$
\end{proof}

\vfe

\end{document}